\documentclass{amsart}
\usepackage[margin=1in]{geometry}
\usepackage[utf8]{inputenc}
\usepackage{amsmath, amsthm, amssymb, amsfonts, mathtools}
\usepackage[usenames,dvipsnames,svgnames,table]{xcolor}
\usepackage{enumerate}
\usepackage{microtype}
\usepackage{cite}
\usepackage[colorlinks,linkcolor = blue,citecolor = ForestGreen]{hyperref}
\usepackage{esint}
\usepackage{nicefrac}
	\newcommand{\nfrac}{\nicefrac}
	\newcommand{\sfrac}{\nfrac}
\usepackage{stmaryrd} 
\usepackage{caption}
\usepackage{cancel}

\usepackage[bb=ncmbbr]{mathalpha}

\newcommand{\lowvee}{ \raisebox{-0.2ex}{\scalebox{0.5}{$\vee$}}}

\usepackage{mathtools}
\mathtoolsset{showonlyrefs}

\newcounter{num} \numberwithin{num}{section}
\newtheorem{theorem}[num]{Theorem}
\newtheorem*{restatedthm}{\restatedthmname}
\newcommand{\restatedthmname}{}
\newenvironment{recallthm}[1]
  {\renewcommand{\restatedthmname}{Theorem \ref*{#1}}\begin{restatedthm}}
  {\end{restatedthm}}

\newtheorem{proposition}[num]{Proposition}
\newtheorem{lemma}[num]{Lemma}
\newtheorem{corollary}[num]{Corollary}
\theoremstyle{definition}

\theoremstyle{remark}

\numberwithin{equation}{section}

\usepackage{zref-clever}
	\newcommand{\Cref}[1]{\zcref[S]{#1}}

\AddToHook{env/theorem/begin}
    {\zcsetup{countertype={num=theorem}}}
\AddToHook{env/proposition/begin}
    {\zcsetup{countertype={num=proposition}}}
\AddToHook{env/lemma/begin}
	{\zcsetup{countertype={num=lemma}}}
\AddToHook{env/definition/begin}
	{\zcsetup{countertype={num=definition}}}
\AddToHook{env/remark/begin}
	{\zcsetup{countertype={num=remark}}}
\AddToHook{env/corollary/begin}
	{\zcsetup{countertype={num=corollary}}}
\AddToHook{env/heuristic/begin}
	{\zcsetup{countertype={num=heuristic}}}
    \zcRefTypeSetup{heuristic}{
		Name-sg = Heuristic Result,
		name-sg = heuristic result,
		Name-pl = Heuristic Results,
		name-pl = heuristic results
	}
\AddToHook{env/table/begin}
	{\zcsetup{countertype={num=table}}}

\DeclareMathOperator{\im}{Image}

\newcommand{\R}{\mathbb R}

\newcommand{\eps}{\varepsilon}
\newcommand{\e}{\eps}

\newcommand{\1}{\mathbb{1}}

\newcommand{\cK}{\mathcal{K}}
\newcommand{\cL}{\mathcal{L}}

\renewcommand{\phi}{\varphi}

\newcommand{\BNORM}[1]{{\left\vert\kern-0.25ex\left\vert\kern-0.25ex\left\vert #1 
    \right\vert\kern-0.25ex\right\vert\kern-0.25ex\right\vert}_{\mathcal{B}_{T,\alpha}^{2}}}

\newcommand{\be}{\begin{equation}}
\newcommand{\ee}{\end{equation}}

\newcommand{\dz}{\partial_z}

\newcommand\smallO{
  \mathchoice
    {{\scriptstyle\mathcal{O}}}
    {{\scriptstyle\mathcal{O}}}
    {{\scriptscriptstyle\mathcal{O}}}
    {\scalebox{.7}{$\scriptscriptstyle\mathcal{O}$}}
  }

\begin{document}

\title{Front Location for Go or Grow Models of Aerotaxis}

\author{Mete Demircigil}
\address[Mete Demircigil]{Department of Mathematics, University of Arizona}
\email{mete.demircigil@math.cnrs.fr}

\author{Christopher Henderson}
\address[Christopher Henderson]{Department of Mathematics, University of Maryland}
\email{ckhend@umd.edu}

\begin{abstract}
We investigate the pushed-to-pulled transition for a minimal model for invasive fronts 
influence by ``aerotaxis,'' that is, when organisms follow oxygen gradients. 
We consider two singular reaction-advection-diffusion models for this.  The version of primary interest arises as a hydrodynamic limit of a system of branching, rank-based interacting Brownian particles and features a nonlinear, nonlocal advection. The second version is introduced here as a local counterpart. We establish well-posedness for both models, with the local case requiring a novel use of the ``shape defect function.'' We further characterize the front location up to $O(1)$ precision in all cases, including the delicate boundary ``pushmi-pullyu'' case.  
\end{abstract}

\maketitle


\section{Introduction}\label{s.intro}

\subsection{The model and its context}

\subsubsection{The model}

In this work we study one-dimensional reaction-diffusion-advection equations of ``\textit{go} or \textit{grow}''-type describing the invasion of a proliferating cell population along a narrow capillary tube. The spatial coordinate $x\in\R$ represents position along the tube. The cell population is initially placed on the left and invades the region towards the right.
The central feature of these ``\textit{go} or \textit{grow}'' models is a sharp dichotomy between two regimes: a migratory ``\textit{go}'' phase and a proliferative ``\textit{grow}'' phase. 
In the \textit{go} regime, to which the trailing cells further to the left are subject, cells do not divide, but their motion is polarized to the right through a biased drift with strength $\chi\geq 0$. . 
By contrast, the leading cells in the invasion front are subject to the \textit{grow} regime: their diffusive motion is undirected and they 
undergo cell division, thereby creating new mass near the leading edge. 

Our analysis focuses on two \textit{go} or \textit{grow} models that differ in how the transition between the \textit{go} and \textit{grow} regime is triggered. In a first, \emph{nonlocal model}, with cell density $\rho(t,x)$, the state of a cell at position $x$ is governed by the cumulative mass to its right,
\be
\label{defi.P}
P(t,x) := \int_x^{+\infty} \rho(t,y)\,dy.
\ee
The switch between the two phases is mediated by $P(t,x)$. More precisely, after a renormalization of the switching threshold to the value 1, cells at positions $x$ with $P(t,x)\geq 1$ are in the migratory \textit{go} regime and those with $P(t,x)<1$ in the proliferative \textit{grow} regime. In a second, \emph{local model}, with density $u(t,x)$, the switch is instead mediated directly by the local density $u(t,x)$: the \textit{go} regime is determined by $u(t,x)\geq 1$, and the grow regime by $u(t,x)<1$. These switching rules are encoded at the PDE level by the following one-dimensional reaction–diffusion–advection equations for $\rho$ and $u$:
\begin{align}
    \label{pde.rho}
	&\rho_t + \chi (\alpha(P) \rho )_x = \rho_{xx} + \rho(1 - \alpha(P)),\\
    \label{pde.u}
	&u_t + \chi (\alpha(u) u   )_x = u_{xx} + u(1-\alpha(u)),
\end{align}
where $\chi \geq 0$ and
\be 
\alpha(s):=\mathbb{1}_{\{s\geq 1\}}.
\ee
{Before further discussion on the meaning of these equations, let us observe two obstructions to their mathematical analysis that are clear from their structure. First, these models are degenerate.  Indeed, $\alpha(u)$ and $\alpha(P)$, the coefficients of the transport terms, are discontinuous, so that~\eqref{pde.rho} and~\eqref{pde.u} must be interpreted in divergence form.  Second,~\eqref{pde.rho} does not enjoy a comparison principle.}

In both models, the \textit{go} regime corresponds to the region where $\alpha=1$: there, cells undergo advection with drift $\chi\geq 0$ and diffusion, but no cell division. The \textit{grow} regime corresponds to the region where $\alpha=0$: there, cells divide at rate $1$ and diffuse without drift. For the local model, the state $u\equiv 1$ is stable, and any initial density $u_{\rm in}$ with values in $[0,1]$ remains in $[0,1]$ for all later times. Accordingly, in the local case we restrict our attention only to solutions $u$ taking values in $[0,1]$. By contrast, this property fails for the nonlocal model: even at points where the density $\rho\geq 1$, it may happen that $P<1$ and cells would still be in the \textit{grow} regime. Thus, in the nonlocal case we allow $\rho$ to take values in $[0,+\infty)$.

To quantify the progression of the population, we track the position of the invasion front. In the local model \eqref{pde.u}, we define
\be 
\label{defi.xt.loc}
    \overline{x}^u(t) := \sup\left\{x\in\R  |  u(t,x) = 1\right\},
\ee
\textit{i.e.}, the rightmost point of the \textit{go} region, where the density has reached the stable state $u\equiv 1$. In the nonlocal model \eqref{pde.rho}, using the cumulative mass (to the right) $P$ from \eqref{defi.P}, we define the front position by
\be
\label{defi.xt.nonloc}
    \overline{x}^P(t) := \sup\left\{x\in\R | P(t,x) = 1\right\}.
\ee
Since we have $\rho(t,x)>0$ for every $t>0$, the level-set equation $P(t,x)=1$ admits a unique solution, which is the front position $\overline{x}^P(t)$. In the sequel, we drop the superscripts and simply write $\overline{x}(t)$, whenever the model we refer to is clear. A central goal of this work is to obtain asymptotic descriptions up to order 1 of $\overline{x}(t)$ as $t\to\infty$ for both the local and the nonlocal dynamics. {In particular, we aim to understand the relationship between the strength of the aerotaxis, represented by $\chi$, and the front position $\overline x(t)$.}

{From the biological point of view, our main interest is in~\eqref{pde.rho}, which first appeared in~\cite{demircigil2025}. It is, in some sense, more faithful to the model introduced in~\cite{cochet2021}. 
Mathematically, however, it is instructive to also consider~\eqref{pde.u} at the same time.  Initially our interest in~\eqref{pde.u} was mainly as a local counterpart for~\eqref{pde.rho}; however, as we detail in \Cref{ss.BerestyckiBrunetDerrida}, it is related to a free boundary problem introduced by Berestycki, Brunet, and Derrida~\cite{berestycki2018}.}

\subsubsection{The biological motivation}\label{sss.bio}

The present work is motivated by models of aerotactic cell invasion. 
In earlier work of one of the authors \cite{cochet2021,demircigil2022}, a ``go or grow''\footnote{The initial occurrence of ``\textit{go} or \textit{grow}'' describes a slightly different mechanism in glioma cells \cite{hatzikirou2012}.} mechanism was proposed for the amoeba \textit{Dictyostelium discoideum}. In that experimental setting, cells migrate up a self-generated oxygen gradient: as cells consume oxygen, they create a depleted zone behind them, which drives their displacement toward oxygen-rich regions.
There, the switching between migratory and proliferative behavior is mediated by the local oxygen concentration. Cells in oxygen-rich regions are in an accommodating environment and enter an essentially non-migratory but proliferative \textit{grow} state, whereas cells in oxygen-poor regions find themselves in a depleted environment and respond by engaging in directed aerotactic motion without dividing. This is consistent with the observation that cell division and aerotactic motion do not occur simultaneously.

In subsequent work \cite{demircigil2025}, this mechanism was revisited at the level of a finite-population stochastic individual-based model, in which particles on the real line branch and switch between \textit{go} and \textit{grow} states according to their relative position within the population. In this formulation, the oxygen-mediated switch is replaced by a rank-based rule that encodes the depletion of a limiting resource. 
A fixed threshold rank $K$ separates leading cells that have already traversed an accommodating, nutrient-rich environment from trailing cells that remain in a depleted region: the first $K$ cells are in the \textit{grow} state, while the remaining cells are in the \textit{go} state.
In a suitable large-population limit, this rank-based dynamics gives rise precisely to our nonlocal model \eqref{pde.rho}. The local model \eqref{pde.u} introduced here follows the same ``go or grow'' mechanism, but replaces the nonlocal switching based on the cumulative mass to the right by a purely density-dependent rule acting directly on the cell density $u(t,x)$. This can be interpreted as modeling cells that escape crowded regions, analogously to how cells in the aerotaxis setting leave zones of low oxygen.

\subsubsection{A broader reaction–diffusion–advection class}
A fundamental issue with analyzing the nonlocal model~\eqref{pde.rho} is that it does not enjoy a comparison principle.  Our first contribution is to observe that, by integrating~\eqref{pde.rho}, we obtain an equation for the cumulative mass $P$ for which comparison applies.  Indeed, using $P_x=-\rho$ one finds
\be\label{pde.P}
	P_t + \chi (A^P(P))_x = P_{xx} + P-A^P(P),
\ee
where
\be\label{e.A^P}
A^P(s):=\int_0^s\alpha(p)\,dp = (s-1)_+.
\ee
Here, for any $r\in \R$, $r_+ := \max\{0,r\}$ is the positive part.
This puts $P$ into a broader class of scalar reaction–diffusion–advection equations of the form
\be \label{pde.v}
v_t+ \chi (A(v))_x = v_{xx} + v-A(v),
\ee
for a nondecreasing nonlinearity $A$. {These equations {\em do} have a comparison principle.} The nonlinearity $A$ is the flux associated with the advective term, but in this class of models it is also intrinsically tied to the reaction term, which takes the form $v-A(v)$. This class \eqref{pde.v} includes, for instance, the classical Burgers–FKPP \cite{murray2002,leach2016,an2021} equation obtained by choosing $A(s)=s^2$, as well as both of our ``go or grow'' models. Indeed, setting
\be\label{e.A^u}
A^u(s):=s\alpha(s),
\ee
we see that the local model \eqref{pde.u} is another instance of \eqref{pde.v} with $A=A^u$. 

{There are several important differences, however, between the fluxes considered in our work versus what was considered in the previous investigations~\cite{AnHendersonRyzhik_Quantitative,an2021}.  First, our flux terms are significantly less regular: $A^P$ is only Lipschitz and $A^u$ is even discontinuous at $s=1$.  Second, it is crucial in previous work that $A(1)=1$, or, more generally, that the reaction term $v- A(v)$ vanishes at some positive $v$.  This provides an immediate $L^\infty$-bound on $v$. In our nonlocal case, we see that $P - A^P(P) >0$ for all $P>0$.  Finally, it is crucial in the analysis of~\cite{AnHendersonRyzhik_Quantitative,an2021} that the $A$ appearing there is strictly convex, which is neither the case for~\eqref{pde.P}, nor for~\eqref{pde.u}.  The sum of these differences is that the analysis is significantly more delicate in our setting.  In many places, completely new arguments are required.  In fact, even the well-posedness of the models requires nontrivial analysis.}

In the sequel, when referring to the nonlocal model we shall freely switch between the density formulation \eqref{pde.rho} and the cumulative mass formulation \eqref{pde.P}. These two descriptions are, of course, equivalent, since $\rho=-P_x$, and should be viewed as two faces of the same model rather than as distinct equations. Conceptually, we regard \eqref{pde.rho} as the primary, biologically motivated description of the dynamics, while~\eqref{pde.P} is the more mathematically tractable model.  In the sequel, we use the fact that~\eqref{pde.P} fits into the general class of models~\eqref{pde.v}.  This is convenient for the well-posedness analysis and for the study of the shape defect function.

\subsubsection{Pushed, pulled, and pushmi-pullyu fronts}

Our main interest in this work is to understand how the aerotaxis term influences front propagation starting from localized initial data, for instance
\be\label{e.c030701}
    \rho_{\rm in}(x),\,u_{\rm in}(x) = \mathbb{1}_{(-\infty,0]}(x),
\ee
describing a populated region on the left invading the empty state on the right. For both the local
and the nonlocal dynamics, the state $\rho,u\equiv 0$ is unstable, and one therefore expects invasion fronts of this state by mass from the left. The front positions $\overline x^P(t),\overline x^u(t)$ introduced above, \eqref{defi.xt.loc} and \eqref{defi.xt.nonloc}, captures this invasion.

A closely related situation has been analyzed for the Burgers–FKPP equation, which corresponds
to \eqref{pde.v} with the smooth flux $A(s)=s^2$; see, for example, \cite{an2021}. In that setting,
one finds a family of traveling waves connecting $1$ on the left to $0$ on the right, with a minimal
speed $c_*(\chi)$ depending on the drift strength $\chi$. The traveling waves are explicit, and the
minimal speed is given by
\be \label{e.c_star_chi}
    c_*(\chi)
    =\left\{ \begin{array}{ll}
        \chi+\frac1\chi
            \quad& \text{ if }\chi\geq 1\\
        2
            \quad& \text{ if }\chi \in [0,1)\\
        \end{array} \right.,
\ee
so that the parameter $\chi$ naturally splits the dynamics into three regimes usually referred to as
pulled ($\chi\in [0,1)$), pushed ($\chi>1$), and an intermediate, or pushmi–pullyu, case at $\chi=1$. 

Our two ``go or grow'' models admit explicit traveling waves with exactly the same minimal speed function $c_*(\chi)$. Indeed, for both $\rho$, solution of \eqref{pde.rho}, and $u$, solution of \eqref{pde.u}, there exists a minimal speed $c_*(\chi)$, given by \eqref{e.c_star_chi}, and a corresponding traveling wave. This fact has already been observed in \cite{cochet2021,demircigil2022} for the analogous ``go or grow'' model. The speed and the traveling wave shapes can be summarized as follows:

\medskip
\begin{center}
\begin{tabular}{ || c | c | c | c | c ||}
	\hline\rule{0pt}{2.2ex}
	 &  $c_*(\chi)$ & ${u_{\rm tw}}(x)$ & ${\rho_{\rm tw}}(x)$ & ${P_{\rm tw}}(x)$ \\
	\hline\hline
		\rule[-4.0ex]{0pt}{9.2ex}
	$\chi \in [0,1)$ & $2$ & 
     $\begin{cases}  1  & \text{ if } x \leq 0 ,\\
     ((1-\chi)x + 1) e^{-x}  & \text{ if } x > 0.\end{cases}$
     & $\frac{1}{2-\chi}u_{\rm tw}(x)$ & $\begin{cases}
        1 - \frac{x}{2-\chi}, &  \text{ if }  x \leq 0,\\[0.15cm]
        \frac{(1-\chi)x + 2 - \chi}{2-\chi}\, e^{-x}, & \text{ if }  x > 0.
      \end{cases}$
	\\
	\hline 
		\rule[-3.5ex]{0pt}{8.2ex}
        $\chi =1$ & $2$ &
        $\min\{1, e^{-x}\}$
        &$u_{\rm tw}(x)$
 &      
      $\begin{cases}
        1 - x, &  \text{ if }  x \leq 0,\\[0.1cm]
        e^{-x}, &  \text{ if } x > 0,
      \end{cases}$
     \\
	\hline
		\rule[-3.5ex]{0pt}{8.2ex}
	$\chi > 1$ & $\chi + \frac{1}{\chi}$ & 
    $
     \min\{1, e^{-\chi x}\}$ & $\chi u_{\rm tw}(x)$ &
      $\begin{cases}
        1 - \chi x, &  \text{ if } x \leq 0,\\[0.1cm]
        e^{-\chi x}, &  \text{ if } x > 0.
      \end{cases}$
     \\
	\hline
\end{tabular}
\captionof{table}{The minimal speed traveling waves for the local and nonlocal ``go or grow'' models. $c_*(\chi)$ denotes the minimal wave speed and is given by \eqref{e.c_star_chi}, $u_{\rm tw}$ is a traveling wave solution of the local equation \eqref{pde.u}, $\rho_{\rm tw}$ a traveling wave solution of the nonlocal density equation \eqref{pde.rho}, and 
$P_{\rm tw}$ the cumulative mass to the right of $\rho_{\rm tw}$, i.e. $P_{\rm tw}(x)=\int_x^{+\infty} \rho_{\rm tw}(y)\,dy$, which is itself a traveling wave solution of 
the cumulative mass equation \eqref{pde.P}.
} \label{table.tw}
\end{center}

From  this table one sees that the qualitative behavior changes precisely at $\chi=1$.  When $\chi \in [0,1)$, the minimal speed is fixed at $c_*(\chi)=2$ and for all $\chi\in [0,1)$ the traveling wave profile has the same qualitative shape: its tail is always exponential, multiplied by a linear prefactor. In this regime, the selected speed coincides with the linear spreading speed determined by the behavior near $\rho,u=0$, and one speaks of a \emph{pulled} front.
When $\chi>1$, the situation is different: the speed $c_*(\chi)=\chi+\chi^{-1}$ increases strictly with $\chi$ and the profile  has a purely exponential tail with decay rate $\chi$. Here, the nonlinear dynamics near the bulk of the front modify the speed selected by the linearization, and the front is said to be \emph{pushed}.
The borderline case $\chi=1$ is particularly sensitive, with a purely exponential tail at speed $2$ but both linear and nonlinear effects playing a role; following \cite{an2021}, we refer to this intermediate regime as \emph{pushmi–pullyu}.

The differences between these three cases also manifest in the front asymptotics for the Cauchy problem.  In general, one expects pulled fronts to have a Bramson delay \cite{AveryScheel,ebert2000,berestycki2018,needham2004, hamel2013,an2021, AnHendersonRyzhik_Quantitative,giletti2022, roberts2013, bramson1978, bramson1983}:
\be\label{e.pulled_delay}
	\overline x(t) = 2t - \frac{3}{2}\log t + O(1),
\ee
which is notable for the unbounded error term between the front $\overline x$ for the solution of~\eqref{pde.rho} or \eqref{pde.u} starting from initial data like~\eqref{e.c030701} and that of the traveling wave.  Here we are abusing notation by allowing $\overline x$ to refer to the front for any problem.  In contrast, pushed fronts do not have such a delay~\cite{fife1977,rothe1981,roquejoffre1997,lucia2004}:
\be\label{e.pushed_front}
	\overline x(t) = c_*(\chi) t + O(1).
\ee
Finally, pushmi-pullyu fronts keep a logarithmic delay, but with the coefficient $-\sfrac12$ in place of the classical Bramson value $-\sfrac32$~\cite{ebert2000, needham2004, an2021, berestycki2019,AnHendersonRyzhik_Quantitative,giletti2022}:
\be\label{e.pp_delay}
	\overline x(t) = 2t - \frac12\log t + O(1).
\ee

Our goal is to establish the front asymptotics as above for the equations~\eqref{pde.rho} and \eqref{pde.u}.  We leave the question of stability of the traveling wave for a future work.

\subsubsection{The shape defect function}

A central tool in our analysis is the \emph{shape defect function}, which was introduced in \cite{an2021,AnHendersonRyzhik_Quantitative,AnHendersonRyzhik_convergence} to quantify how far a solution is from a traveling wave profile. We adapt this notion to the general flux equation \eqref{pde.v}.

Let $v_{\rm tw}$ be a nonincreasing traveling wave solution of \eqref{pde.v} with speed $c>0$, and assume that its image is either $[0,1]$ (\textit{e.g.} for $u_{\rm tw}$) or $[0,\infty)$  (\textit{e.g.} for $P_{\rm tw}$). We associate to $v_{\rm tw}$ a \emph{wave profile function} $\eta\geq0$ defined implicitly by 
\be
\label{def.eta}
    - v_{\rm tw}'(z)= \eta\left(v_{\rm tw}(z)\right), \quad z\in\R.
\ee
In other words, $\eta(s)$ records the slope of the traveling wave at points where the solution takes the value $s$.  We emphasize that properties of $\eta^u$ and $\eta^P$ can be derived from the explicit traveling waves; see \Cref{table.tw}.

Given a wave profile function $\eta$ and a nontrivial solution $v(t,x)$ of \eqref{pde.v}, we then define the corresponding \emph{shape defect function}
\be \label{def.omega}
    \omega(t,x)
    := -v_x(t,x) - \eta\left(v(t,x)\right).
\ee
By~\eqref{def.eta}, $\omega\equiv 0$ if and only if $v$ is exactly a translate of the traveling wave $v_{\rm tw}$ associated with $\eta$. 
More generally, the sign and magnitude of $\omega$ measure how much the spatial profile $x\mapsto v(t,x)$ deviates from the traveling wave shape. In particular, $\omega\geq 0$ corresponds heuristically to a profile that is at least as steep as the reference wave.

We use the notation $\omega$, when referring to the shape defect function associated with $v$ solution to \eqref{pde.v} and use $w$ (resp. $W$), when referring specifically to the shape defect function associated with $u$ (resp. $P$) solution to \eqref{pde.u} (resp. \eqref{pde.P}):
\be\label{def.w}
\begin{split}
w=-u_x-\eta^u(u)
\\
W:=-P_x-\eta^P(P).
\end{split}\ee
A key result of the paper is that, for the minimal speed waves, the nonnegativity of $w$ and $W$ is preserved by the dynamics. This provides control on the steepness of the front and will be used repeatedly to derive estimates on the front position and on the decay of the solution ahead of the invasion front.

\subsection{Main Results}\label{ss.main_results}

We begin by addressing the well-posedness of the models. For the nonlocal model \eqref{pde.P}, the flux $A^P$ is Lipschitz continuous. Consequently, its well-posedness and the corresponding comparison principle follow from standard parabolic tools. On the other hand, the well-posedness of the local model \eqref{pde.u} is much more delicate due to the discontinuity of the flux $A^u$, and we highlight this result here.  Define $\zeta_\gamma(x) = \exp\{-\gamma \sqrt{1 + x^2}\}.$

\newcommand{\MainTheoremText}{
Fix $\chi \geq 0$, $\gamma \in (0,\chi^{-1})$, and let $c=c_*(\chi)$ be as in~\eqref{e.c_star_chi}. 
Consider $u_{\rm in} \in L^2(\R, \zeta_\gamma dx)$ taking values in $[0,1]$ such that $u_{\rm in} \not\equiv 0,1$. 
Suppose that, in the distributional sense, 
\be
    w_{\rm in}
    :=-u_{{\rm in},x}
        -\eta^u(u_{\rm in})\geq 0,
\ee
where $\eta^u$ is the wave profile function associated with the wave speed $c$. 
Then, there exists a unique weak solution $u\in C([0,+\infty); L^2(\R, \zeta_\gamma dx))\cap L^2_{\rm loc}([0,+\infty); H^1(\R, \zeta_\gamma dx))$ to \eqref{pde.u} satisfying $u(0,\cdot)=u_{\rm in}$, such that the associated shape defect function $w:=-u_{x}-\eta^u(u)$ satisfies 
\be 
    w\geq 0 \qquad \text{ and }\qquad w\in L^2_{\rm loc}([0,+\infty) ;H^1_{\rm loc}(\R)).
\ee
Moreover, define
\be
    \overline x(t) = \sup \{ y : u(t,y) = 1\}.
\ee   
If $\chi = 0$, then $u \in C^{1-\delta,2-\delta}_{\rm loc}((0,+\infty)\times \R)$ and $\overline x \in C^{\sfrac12 - \delta}_{\rm loc}((0,+\infty))$ for any $\delta\in (0,\nicefrac{1}{2})$. 
If $\chi > 0$, then $u \in C^{\nicefrac{1}{2},1}_{\rm loc}((0,+\infty)\times \R)$ and $\overline x \in C^{\sfrac12}_{\rm loc}((0,+\infty))$.
}

\begin{theorem}[Well-posedness for the Local Model]
\label{thm.wellposedness_u}
\MainTheoremText
\end{theorem}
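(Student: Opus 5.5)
We plan to prove the theorem by regularization, with the shape defect function supplying the compactness and the identification of the limit. First replace the discontinuous flux $A^u(s)=s\mathbb{1}_{\{s\ge1\}}$ by smooth, nondecreasing approximations $A_\eps$, with $A_\eps(s)=0$ for $s\le 1-\eps$ and $A_\eps(s)=s$ for $s\ge1$, so that $A_\eps\to A^u$ pointwise. Then $v-A_\eps(v)\ge0$ vanishes at $0$ and $1$. Mollify $u_{\rm in}$ compatibly with its $[0,1]$ range. Standard quasilinear parabolic theory in $L^2(\R,\zeta_\gamma dx)$ gives global smooth solutions $u^\eps\in[0,1]$, with comparison. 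The weight assumption $\gamma<\chi^{-1}$ enters the weighted energy estimate: testing with $u^\eps\zeta_\gamma$, the flux term contributes at most $\chi\gamma\int |u^\eps_x| u^\eps \zeta_\gamma$, since $|\zeta_\gamma'|\le\gamma\zeta_\gamma$ and $0\le A_\eps(u)\le u$. This term is absorbed by the diffusion, giving uniform bounds in $C([0,T];L^2(\zeta_\gamma))\cap L^2(0,T;H^1(\zeta_\gamma))$.

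The key step is to show that $\omega_\eps:=-u^\eps_x-\eta_\eps(u^\eps)\ge0$ is preserved, where $\eta_\eps$ is the wave profile function of the minimal-speed wave for the flux $A_\eps$. One could also keep $\eta^u$ and carry an $o(1)$ error; we choose $\eta_\eps$ and need it to converge to $\eta^u$. Differentiating the equation and using the profile equation $\eta\eta'' $-type ODE satisfied by $\eta_\eps$, we derive a linear parabolic equation for $\omega_\eps$:
\[
\omega_t+\big(\chi A_\eps'(u)-c-\eta'(u)-\ldots\big)\omega_x=\omega_{xx}+\big(\ldots\big)\omega+\mathcal{R},
\]
with a remainder $\mathcal R\ge0$ precisely at the minimal speed. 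This is where minimality is used: for the pulled case, the linear prefactor in the tail makes the relevant convexity-type quantity of $\eta$ have a sign; in the pushed and pushmi-pullyu cases, $\eta(s)=\chi s$ near $0$. The maximum principle then gives $\omega_\eps\ge0$, provided the initial smoothing preserves $w_{\rm in}\ge0$ approximately. We expect this step to be the main obstacle. Since $A^u$ is not convex, the sign of the reaction coefficient must be checked by hand on each piece $s<1$ and $s>1$, and near $s=1$ the regularization produces large terms $A_\eps''$ whose sign must be arranged favourably. We would first try choosing $A_\eps$ convex on $[1-\eps,1]$, so that these terms carry the right sign, and check that the minimal-speed $\eta_\eps$ retains the tail structure of Table~\ref{table.tw}.

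Given $\omega_\eps\ge0$, we get $-u^\eps_x\ge\eta_\eps(u^\eps)\ge0$, so $u^\eps$ is nonincreasing. Together with the energy bound, this yields local $H^1$ bounds on $\omega_\eps$, obtained from the parabolic equation away from the degeneracy. The family is therefore compact in $L^2_{\rm loc}$. Monotonicity makes the set $\{u=1\}$ a half-line $(-\infty,\overline x(t)]$, so $A_\eps(u^\eps)\to A^u(u)$ almost everywhere and we may pass to the limit in the weak formulation. Uniqueness among solutions with $w\ge0$ follows from an $L^1$-contraction (Kružkov-type) argument, which is valid because monotone profiles give a well-defined single interface $\overline x(t)$ on which the flux jump satisfies a Rankine--Hugoniot-type condition.

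For regularity, when $\chi=0$ the equation reads $u_t=u_{xx}+u\mathbb{1}_{u<1}$ with a bounded right-hand side, so parabolic $W^{2,1}_p$ estimates for all $p$ give $C^{1-\delta,2-\delta}$. When $\chi>0$, the flux $\chi A^u(u)$ is bounded, so the equation is in divergence form with an $L^\infty$ flux, giving $u_x\in L^\infty$ and $C^{1/2,1}$. The Hölder regularity of $\overline x$ then follows from that of $u$ combined with the lower bound $-u_x\ge\eta^u(u)$, which is bounded below near $u=1$ by Table~\ref{table.tw}: for small $h$, $u(t+h,\overline x(t)+Kh^{1/2})<1$ for suitable $K$, and symmetrically.
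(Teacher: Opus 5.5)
Your outline follows the paper's architecture: regularize the flux, propagate $w\ge0$, pass to the limit, and use steepness for the front. However, each step that carries the real difficulty is either left open or justified by an argument that does not work.

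\textbf{The initial sign.} The maximum principle needs $\omega_\eps(0,\cdot)\ge0$ \emph{exactly}, and mollifying $u_{\rm in}$ works against you. You have $-(u_{\rm in}*\phi_\delta)_x=(-u_{{\rm in},x})*\phi_\delta\ge\eta(u_{\rm in})*\phi_\delta$, but Jensen gives $\eta(u_{\rm in})*\phi_\delta\le\eta(u_{\rm in}*\phi_\delta)$ for concave $\eta$. For example, a mollified pulled traveling wave has $-u_x-\eta^u(u)<0$. So you would need a quantitative margin between $\eta_\eps$ and $\eta^u$ tied to the mollification scale, which you do not provide.

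Nor is there a remainder $\mathcal R$ to exploit. By \Cref{l.w_eqn}, the $\omega$-equation is exact and homogeneous for any profile of the regularized flux, so positivity hinges entirely on the initial sign. Concavity of $Q^\eps$ is what the later uniform estimates and uniqueness require.

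The paper avoids mollification altogether, since weak solutions in $L^2(\zeta_\gamma dx)$ from \Cref{thm.wellposedness_P} suffice. It chooses a piecewise-linear $A^\eps$ whose profile $\eta^\eps$ is explicit: a rescaling of $\eta^u$ on $[0,1-\eps]$ and linear on $[1-\eps,1]$. It then proves $\eta^\eps\le\eta^u$ and that $Q^\eps$ is concave (\Cref{l.approx_eta_loc}), which gives $w^\eps_{\rm in}\ge w_{\rm in}\ge0$ with no approximation. Even this is delicate: for $\chi\in(0,1)$ the comparison of $\eta^\eps(1-\eps)$ with $\eta^u(1-\eps)$ is decided only at order $\eps^2$. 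With a generic smooth convex $A_\eps$ you would have to redo it for a non-explicit ODE.

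\textbf{Uniform bounds.} You have no estimate uniform in $\eps$ across the interface, where the drift $\chi A_\eps'(u^\eps)$ is of size $\eps^{-1}$. The energy bound plus Aubin--Lions gives $L^2_{\rm loc}$ compactness. But a bounded divergence-form flux does not give $u_x\in L^\infty$ (only $u_x\in L^p$ for $p<\infty$). Bounds ``away from the degeneracy'' cannot give $w\in L^2_{\rm loc}H^1_{\rm loc}(\R)$.

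The missing idea is the Nash-type \Cref{l.Linf_L1} applied to the weighted shape defect $\hat w^\eps$. Its constant depends only on the signs $a_z\le0$ (from monotonicity of $(A^\eps)'$ and of $u^\eps$) and $b\ge0$ (from concavity of $Q^\eps$), not on $\|a\|_\infty$. Hence $0\le\tilde w^\eps\le Ct^{-1/2}e^{-z/\max\{1,\chi\}}$ uniformly in $\eps$. This yields Lipschitz bounds through $-u^\eps_x=w^\eps+\eta^\eps(u^\eps)$, $C^{1/2}$ in time through $\tilde u^\eps_t=-\tilde w^\eps_z-R^\eps(\tilde u^\eps)\tilde w^\eps$, and the $L^2H^1$ bound on $w$.

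\textbf{Identifying the limit.} Monotonicity shows $\{u=1\}$ is a half-line, but it does not give $A_\eps(u^\eps)\to1$ there. Since $u^\eps<1$ for $t>0$ and $A_\eps$ ramps up on $[1-\eps,1]$, the convergence $u^\eps\to1$ is compatible with $A_\eps(u^\eps)\not\to1$. The paper takes a weak limit $g$ and uses the continuity of $\overline x$ to make $\{x<\overline x(t)\}$ open; this continuity therefore has to be proved \emph{before} the identification. It then derives $\chi g_x=1-g$ on that set and gets $g\equiv1$ from boundedness.

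\textbf{Uniqueness.} Monotonicity and Rankine--Hugoniot are not what make a Kru\v{z}kov argument work. On $\{0<u_1-u_2<\delta\}$ the jump $A^u(u_1)-A^u(u_2)$ is not $O(\delta)$, and for a nonincreasing supersolution $u_2$ the flux term has the \emph{bad} sign. The diffusion absorbs it provided $-u_{2,x}\ge\chi-O(\delta)$ near $\{u_2=1\}$, which is a quantitative use of $w_2\ge0$. This is what \Cref{thm.CP_u} does, through an $L^2$ estimate on $\tilde u_t=-\tilde w_z-R(\tilde u)\tilde w$. There the flux enters only via the Lipschitz $Q$ and the signed terms $-\1_{\{\Delta\tilde u>0\}}\tilde w_2$ and $-\tilde w_2(\Delta\tilde u)_+\Delta R$.
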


The proof of \Cref{thm.wellposedness_u} is based on a regularization procedure. We approximate the discontinuous flux $A^u$ by a family of Lipschitz regularizations. By choosing a convenient regularization, we are able to prove the preservation of nonnegativity for the shape defect function. The difficulty here is subtle; the definition of $w$ depends on $\eta$, which, in turn, depends on the flux $A$.  Hence, when one regularizes the flux, it is not clear that the initial nonnegativity of $w$ for the unregularized problem is preserved for the regularized problem.

The existence for the regularized problem is standard. The obstacle is in establishing strong enough bounds to use compactness in the deregularization limit.  We do this by applying the ideas of Nash to obtain $L^1-L^\infty$ bounds on the shape defect function.  This provides spatial  Lipschitz regularity of $u$, which, after interpolation, yields regularity in time as well.

At this point one may apply compactness to extract a convergent subsequence that inherits both the regularity and the nonnegative shape defect. The regularity of this limit then allows us to establish the regularity of the front position $\overline{x}(t)$. Finally, we identify the limit and show that it is a weak solution to \eqref{pde.u}. The primary technical difficulty in this final step is justifying the convergence of the nonlinear term $(A^u(u))_x$, which we are able to do by using the regularity of $\overline{x}(t)$.

The uniqueness of this solution relies on a surprising comparison principle (see \Cref{thm.CP_u} for a precise statement). Indeed, the comparison principle requires the supersolution to have its associated shape defect function to be nonnegative. We were unable to prove a comparison principle without this restriction. To our knowledge, this is the only example of an approach where the shape defect function plays such a crucial role in establishing uniqueness. 

With the well-posedness established, our second main result characterizes the long-time behavior up to order $\mathcal{O}(1)$ of the invasion front $\overline{x}(t)$.

\begin{theorem}[Front asymptotics]
\label{thm-asymptotics}
Fix $\chi \geq 0$ and let $c_*(\chi)$ be as in~\eqref{e.c_star_chi}.
        Suppose that $P$ (resp. $u$) solves~\eqref{pde.P} (resp.~\eqref{pde.u}) with nontrivial initial data. Assume the initial shape defect function is nonnegative, meaning $W_{\rm in} := -P_{{\rm in},x} - \eta^P(P_{\rm in}) \geq 0$ (resp. $w_{\rm in} := -u_{{\rm in},x} - \eta^u(u_{\rm in}) \geq 0$), where $\eta^P$ (resp. $\eta^u$) is the wave profile function associated with the wave speed $c_*(\chi)$. 
        Suppose additionally that, for $\chi_{\lowvee} := \max\{ 1, \chi \}$ and some $\gamma>0$, the following integrability and decay assumptions are satisfied:
        \begin{itemize}
            \item \textbf{Nonlocal case:} 
            \be 
                \int P_{\rm in}(x)e^{\nicefrac{x}{\chi_{\lowvee}}}\,dx<\infty
                \qquad\text{and}\qquad
                W_{\rm in}(x)\leq \gamma^{-1}e^{-\gamma x_+^2}.
            \ee
            Furthermore, if $\chi \in [0,1)$, we assume the strict bound $\| \rho_{\rm in}\|_\infty < 1$.
            
            \item \textbf{Local case:}
            \be 
                \int u_{\rm in}(x)e^{\nicefrac{x}{\chi_{\lowvee}}}\,dx<\infty
                \qquad\text{and}\qquad
                0\leq u_{\rm in}(x)\leq \min\left\{1,\gamma^{-1}e^{-\gamma x_+^2}\right\}.
            \ee
        \end{itemize}
        Then, the position $\overline x(t)$ of the front exhibits the following trichotomy. There exists $C>0$ such that, for $t\geq 0$:
        \begin{itemize}
            \item \textbf{Pushed case:} if $\chi>1$, then
            \be
                c_*(\chi)t  - C\leq \overline{x}(t) \leq c_*(\chi)t  + C;
            \ee
            \item \textbf{Pushmi-pullyu case:} if $\chi=1$, then, recalling $c_*(\chi)=2$,
            \be
                2t - \frac{1}{2} \log t - C\leq \overline{x}(t) \leq 2t - \frac{1}{2} \log t + C;
            \ee
            \item \textbf{Pulled case:} if $\chi\in [0,1)$, then, recalling $c_*(\chi)=2$,
            \be
                2t - \frac{3}{2} \log t - C\leq \overline{x}(t) \leq 2t - \frac{3}{2} \log t + C.
            \ee
        \end{itemize}
\end{theorem}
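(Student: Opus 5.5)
The plan is to work throughout in the unified framework \eqref{pde.v}, with $v=P$ (flux $A^P$) or $v=u$ (flux $A^u$). Comparison is available for both: for $P$ by standard theory, and for $u$ through \Cref{thm.CP_u}, which requires the supersolution to have nonnegative shape defect. The key structural input is the preservation of $\omega\ge 0$ (that is, $W\ge0$ or $w\ge0$). Once $\omega\ge 0$, we have $-v_x\ge \eta(v)$. Near the front, the explicit profiles in \Cref{table.tw} give $\eta(s)\ge \min\{\chi_{\lowvee}^{-1}\ldots\}$; concretely, $\eta^u(1^-)$ and $\eta^P(1)$ are bounded below by a positive constant. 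On the go region, $v_x$ is affine-type for $P$ because $\rho$ is controlled there.

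Two consequences follow. First, the solution ahead of $\overline x(t)$ is bounded above by a translate $v_{\rm tw}(x-\overline x(t))$. This holds because an ODE comparison for $-v_x\ge\eta(v)$ starting from $v(\overline x)=1$ forces $v(t,x)\le v_{\rm tw}(x-\overline x(t))$ for $x\ge\overline x(t)$. Second, the reaction term on $\{x>\overline x(t)\}$ is linear, namely $v_t+\ldots=v_{xx}+v$ with no drift, so the problem reduces to a linear heat equation with growth on a moving half-line. The boundary condition there is $v(t,\overline x(t))=1$ together with a Robin-type inequality $-v_x\ge \eta(1)$ at the boundary.

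Next, pass to the moving frame $z=x-2t$ (or $x-c_*t$ in the pushed case) and write $v=e^{-z}\psi$ (respectively $e^{-\chi z}\psi$). In the pulled and pushmi-pullyu cases, $\psi$ then solves $\psi_t=\psi_{zz}$ for $z>\overline x(t)-2t$. The three regimes are distinguished by the effective boundary condition inherited from the traveling wave.
\begin{itemize}
\item In the pushed case, $e^{\chi z}v$ has a nonzero drift $(\chi-1/\chi)$ pushing mass toward the boundary. A direct super- and subsolution built from $v_{\rm tw}(x-c_*t\mp C)$ plus a decaying exponential correction, allowed by the initial decay $\int v_{\rm in}e^{x/\chi}<\infty$, gives $O(1)$.
\item In the pushmi-pullyu case, $\eta(1)=1$ makes the Robin condition effectively Neumann for $\psi$. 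Conservation of $\int \psi$ then forces $\psi(t,\overline x)\sim t^{-1/2}$, whence $\overline x=2t-\tfrac12\log t+O(1)$.
\item In the pulled case, $\eta(1)>1$ effectively gives a Dirichlet condition. The first moment is conserved, so $\psi(t,z)\sim z t^{-3/2}$, yielding the $-\tfrac32\log t$ delay.
\end{itemize}
For the rigorous two-sided bounds, I will follow the strategy of \cite{an2021,AnHendersonRyzhik_Quantitative}. For the upper bound, a supersolution is constructed from the explicit heat kernel on a half-line with the appropriate boundary condition, placed at the candidate position $2t-k\log t+C$ and glued to $1$ behind it. For the lower bound, the weighted quantity $\int v\,e^{x/\chi_{\lowvee}}$ (respectively the first moment) is used to show that the front cannot lag. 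For the lower bound in the nonlocal case, $P$ is not bounded by $1$, so I will use $\|\rho_{\rm in}\|_\infty<1$ when $\chi<1$. This gives $P$ a slope strictly less than $\eta^P$-compatible values behind the front, which keeps the go region from accumulating mass that would distort the boundary condition.

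The main obstacle I expect is the pushmi-pullyu lower and upper bounds. There the boundary condition is only asymptotically Neumann, and errors of size $O(t^{-1/2})$ in the boundary flux accumulate logarithmically. To control this, I would first try to use the $L^2$-in-time bound of $\omega$ at the front (from the Gaussian decay $W_{\rm in}\le\gamma^{-1}e^{-\gamma x_+^2}$ propagated by the shape defect equation) to show that $\int_0^\infty \omega(t,\overline x(t))\,dt<\infty$. That would make the total perturbation of $\int\psi$ bounded, giving the $O(1)$ error.
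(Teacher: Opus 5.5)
Your heuristic picture (a linear problem ahead of the front, $\psi=e^{z}v$, three effective boundary conditions) is reasonable. However, the sketch lacks the estimates that make it rigorous, above all for the nonlocal model.

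\textbf{Behind the front in the nonlocal model.} There $P$ grows linearly and $\rho$ has no a priori $L^\infty$ bound, since \eqref{pde.rho} has no comparison principle. Your remark that $\rho$ ``is controlled'' on the go region is exactly the missing estimate. Each lower bound for $\chi\ge1$ extracted from $I(t)=\int e^{z/\chi_{\lowvee}}\tilde P\,dz$ needs a bound on $\int_{-\infty}^{\overline x(t)-ct}e^{z/\chi_{\lowvee}}\tilde P$, hence on $\tilde\rho$ there. The paper gets this in \Cref{c.rhoLinfty} from the maximum principle for $\rho_t+\chi\rho_x=\rho_{xx}$ on $\{P\ge1\}$. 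The boundary value there is $\rho=W+\eta^P(1)$ at $\overline x(t)$, controlled by the Nash-type decay of $\hat W$ in \Cref{thm.W.decay}. The resulting bound contains $\sup_{s\le t}e^{-(\overline x(s)-cs)/\chi_{\lowvee}}/\sqrt{1+s}$, which is the unknown itself. It is closed only by the bootstrap \Cref{l.upperboundgpush} (resp.\ \Cref{l.upperboundgpushmi}).

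\textbf{Upper bounds in the nonlocal model.} A supersolution ``glued to $1$'' cannot dominate $P_{\rm in}$, which grows linearly at $-\infty$.
\begin{itemize}
\item For $\chi<1$ the paper glues $\beta zEGH$ to a line of slope $-\frac{1+s}{2}$. That line must dominate $P_{\rm in}$ at $-\infty$ and also form a concave cusp with a profile of slope $\approx-1$. This is where $\|\rho_{\rm in}\|_\infty<1$ is used, in the upper bound. The pulled lower bound, \Cref{l.lower_pulled}, is a Fisher--KPP comparison needing only $W_{\rm in}\ge0$.
\item At $\chi=1$ no such line works; see the discussion at the end of \Cref{s.pp_local_upper}. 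This is why the paper builds a supersolution from $L$ and $R$ for the equation of $W$ (\Cref{l.pp_W_upper}) and then integrates $-(e^z\breve P)_z=e^z\breve W$.
\item The same obstruction defeats your pushed-case plan. $W_{\rm in}\ge0$ only gives $\rho_{\rm in}\ge\chi$ on $\{P_{\rm in}\ge1\}$. So $\rho_{\rm in}=2\chi\1_{(-\infty,0)}$ is admissible, and $P_{\rm in}$ lies below no translate of $P_{\rm tw}$. The paper's pushed upper bound is instead immediate from conservation of $I$ (\Cref{c.easyupperbound}), and its lower bound splits $I(0)=I(t)$ at the front. A Fife--McLeod subsolution would require a stability analysis of the wave that you do not supply.
\end{itemize}

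\textbf{The obstacle at $\chi=1$.} You have misidentified it, and the proposed remedy fails. For $\chi\ge1$ the whole-line moment $I(t)$ is exactly conserved (\Cref{l.expmoment}). Whatever $\psi$ loses through $\overline x(t)$ is accounted for by the region behind, so there is no accumulating boundary error. What is actually needed is to turn conservation into pointwise information:
\begin{itemize}
\item $\omega\ge0$ gives $\tilde P\le e^{-(z-(\overline x(t)-2t))}$ on $[\overline x(t)-2t,N\sqrt t]$;
\item the tail beyond $N\sqrt t$ needs a Gaussian-scale estimate, which the paper gets from $I_m(t)=\int(e^{mz}+e^{-mz})e^z\tilde\rho\,dz\le Ce^{Cm^2t}$ with $m=t^{-1/2}$;
\item \Cref{l.upperboundgpushmi} then concludes.
\end{itemize}
Your bound $\int_0^\infty\omega(t,\overline x(t))\,dt<\infty$ is not derived; an $L^2$-in-time bound gives no $L^1$ control on $[0,\infty)$. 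It should not be expected either: near the front the paper only has $\breve W\lesssim 1/t$, believed to be the true rate (\Cref{sss.convergence_pp}).

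\textbf{Two smaller points.} In the pulled case $\eta(1)<1$ ($\eta^u(1^-)=\chi$, $\eta^P(1)=\frac{1}{2-\chi}$), not $\eta(1)>1$. For the local model, each supersolution must be shown to have $\overline w\ge0$ before \Cref{thm.CP_u} applies. In the pulled case this is a nontrivial Lambert-$W$ computation that is absent from your sketch.
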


This single theorem summarizes six distinct results (an upper and a lower bound for each of the three regimes $\chi>1$, $\chi=1$, and $\chi \in [0,1)$), each requiring its own specific assumptions. The integrability and decay conditions listed in \Cref{thm-asymptotics} guarantee that the requirements for all six individual cases are met simultaneously. However, we refer the reader to the statements of the individual propositions in the subsequent sections for the exact assumptions required for our proofs of each specific bound.

Regarding the bound $\|\rho_{\rm in}\|_\infty < 1$ imposed in the pulled regime ($\chi \in [0,1)$), we note that this condition is natural, as the minimal traveling wave obeys an even stronger bound (see \Cref{table.tw}). We impose this restriction purely for technical reasons to facilitate the construction of our supersolution for the upper bound. We expect that this is merely an artifact of our proof and could be relaxed at the expense of a more delicate argument.

While the proofs for each regime differ in spirit, they primarily rely on two main tools: the estimation of exponential moments and the construction of sub- and super-solutions. Several aspects of these proofs are noteworthy. First, any supersolution constructed for the local model must possess a nonnegative shape defect function in order for the comparison principle to apply, as we have mentioned above. Second, for the upper bound in the pushmi-pullyu case ($\chi=1$) of the nonlocal model, we are not able to construct a supersolution directly for $P$. Instead, we construct a supersolution directly on the shape defect function $W$ and deduce the bound on $\overline{x}(t)$ from this framework. This appears to be a novel approach in the study of front propagation {that is significantly more efficient than previous arguments. We believe this could be useful in other problems, and we see it as one of the main mathematical contributions of this paper.} 
Finally, an additional technical difficulty arises in the nonlocal model: because of the structure of the equation for $\rho$ \eqref{pde.rho}, there is no easy \textit{a priori} upper bound for its $L^\infty$-norm (unlike more standard reaction-diffusion equations where an upper bound of 1 follows straightforwardly). To overcome this, we utilize the shape defect function and a decay property of its weighted version to control the $L^\infty$-norm of $\rho$ dynamically, which ultimately allows us to conclude the argument through a functional inequality.

\subsection{Connection Between the Local Model and a Model of Berestycki, Brunet, and Derrida}\label{ss.BerestyckiBrunetDerrida}

Interestingly, the local equation~\eqref{pde.u} is intimately connected to a free-boundary problem introduced by Berestycki, Brunet, and Derrida \cite{berestycki2018}. To study the front location asymptotics of Fisher-KPP equations in a more explicitly computable way, they proposed replacing the nonlinear Fisher-KPP equation with a linear equation coupled to a free boundary condition:
\be\label{e.BBD}
    \begin{cases}
        u_t = u_{xx} + u \qquad \text{for } x > \overline x(t),\\
        u(t, \overline x(t)) = 1 \quad\text{and}\quad u_x(t,\overline x(t)^+) = - \chi.
    \end{cases}
\ee
Note that, in~\eqref{e.BBD}, $\overline x$ is treated as an unknown that allows the system to support the overdetermined Dirichlet and Neumann boundary conditions.

In fact, the solution $u$ to our local model \eqref{pde.u} exactly satisfies the free-boundary problem \eqref{e.BBD}: on the set $\{x > \overline x(t)\}$, we have $u < 1$, so $\alpha(u)=0$ and the nonlinear equation \eqref{pde.u} simplifies to $u_t = u_{xx} + u$. The boundary conditions are recovered through the regularity of the shape defect function $w = -u_x - \eta^u(u)$. By \Cref{thm.wellposedness_u}, $w \in L^2_{\rm loc}([0,+\infty); H^1(\R, \zeta_\gamma dx))$, which implies that for almost every $t > 0$, $x \mapsto w(t, x)$ is continuous. Because $\eta^u(u) \to \chi$ as $u \to 1^-$, the continuity of $w(t,\cdot)$ at the free boundary $x = \overline x(t)$ guarantees that $u$ satisfies a Rankine-Hugoniot type condition and in particular that 
\be
\label{e.RH_u}
u_x(t, \overline x(t)^+) = -\chi.
\ee 
Thus, our solution satisfies the free-boundary problem for almost every $t$. As a side note, the density $\rho$ in the nonlocal model satisfies a similar Rankine-Hugoniot condition across the free boundary:
\be
\label{e.RH_rho}
    \rho_x(t,\overline{x}(t)^+) - \rho_x(t,\overline{x}(t)^-) = -\chi \rho(t,\overline{x}(t)),
\ee
for almost every $t\geq 0$. While we do not explicitly use this condition in the present article, it can be established through similar arguments by showing that $W \in L^2_{\rm loc}([0,+\infty); H^2(\R, \zeta_\gamma dx))$. 

The formulation \eqref{e.BBD} can be viewed as a generalization of the widely studied $\chi=0$ regime. In the probabilistic literature, the $\chi=0$ case arises naturally as the complementary cumulative distribution function (the cumulative mass to the right) of the hydrodynamic limit of the $N$-branching Brownian motion ($N$-BBM) \cite{demasi2019}. Furthermore, it is closely connected to the inverse first passage time problem (see, e.g., Theorem B.1 in \cite{berestycki2024}). For this $\chi=0$ regime, well-posedness in the context of classical solutions was established in \cite{lee2018,berestycki2019}. 

In contrast to the aforementioned literature, which fundamentally relies on classical regularity, our work establishes well-posedness for weak solutions. Within this weak framework, the $\chi=0$ regime is structurally much simpler because the discontinuous advection term $\chi(A^u(u))_x$ vanishes entirely, allowing us to prove uniqueness without requiring the nonnegativity of the shape defect function $w$. For $\chi>0$, however, the presence of this discontinuous term makes the weak formulation more delicate and necessitates the restriction $w \geq 0$ to ensure uniqueness (see \Cref{thm.CP_u}).

Very recently, Berestycki, Penington, and Tough \cite{berestycki2025} conducted a rigorous study of the general $\chi \geq 0$ free-boundary problem \eqref{e.BBD}, relying primarily on probabilistic methods. In Theorem 1.14 of \cite{berestycki2025}, they establish well-posedness for classical solutions when $\chi>0$ via an explicit one-to-one transformation that maps the problem back to the $\chi=0$ case. Regarding the front asymptotics, they achieve $\smallO(1)$ accuracy across the pulled, pushmi-pullyu, and pushed regimes, and they are able to classify the convergence to non-minimal traveling waves depending on the decay properties of the initial condition.

While their impressive work provides highly precise asymptotics for the linear free-boundary formulation, our approach remains distinct and complementary. We achieve $\mathcal{O}(1)$ accuracy for the convergence to the minimal wave speed in the case of steep initial data, but we do so by working directly with the PDE formulation \eqref{pde.u} on $\R$. Rather than relying on probabilistic representations, our methodology uses inherently PDE techniques, leveraging the shape defect function and comparison principles to extract the front dynamics. Although we did not pursue this thread of inquiry in general here, our approach lays the ground for sharp quantitative estimates on the convergence to the traveling wave; see~\cite{AnHendersonRyzhik_convergence}, the forthcoming work of Patterson~\cite{Patterson}, and the discussion in \Cref{sss.convergence_pp}. {Additionally, we point out that \Cref{thm.wellposedness_u} appears to be the first result establishing H\"older regularity of the free boundary (the work in~\cite{berestycki2019,berestycki2025} obtains only continuity).}

{We remind the reader that, from the point of view of the application to understanding aerotaxis (recall \Cref{sss.bio}), our main interest lies in the nonlocal model.  This is another distinction between the works, as~\cite{berestycki2025} does not investigate~\eqref{pde.rho}.}

\subsection{Organization of the paper}

The remainder of the paper is organized roughly in two main parts. 
The first half of the paper focuses on the development of a theory of the shape defect function in our setting and then establishes the well-posedness of solutions to our equations. The second half of the paper considers the front propagation problem and relies on much of the general theory developed in the first half.  We give a more precise outline now.

The first well-posedness half of the paper consists in \Cref{s.shapedefect} and \Cref{s.wp}.  First, \Cref{s.shapedefect} is devoted to the shape defect function $\omega$ associated with the general equation \eqref{pde.v}. We derive its governing evolution equation under the assumption that the flux $A$ is Lipschitz continuous, a condition that notably excludes the local model. We then detail the structural properties of the traveling wave profile functions $\eta^P$ and $\eta^u$. Next, in \Cref{s.wp}, we establish the well-posedness and comparison principles for both the local and nonlocal models, alongside rigorous proofs for the nonnegativity and decay estimates of the shape defect functions.

The front propagation half of the paper is the remainder of the sections. 
In \Cref{s.prelim}, we introduce a family of exponential moments and establish a preliminary, rough upper bound on the front location $\overline{x}(t)$ that is utilized throughout the subsequent analysis. \Cref{s.pushed} treats the pushed regime ($\chi > 1$), where we leverage the exponential moment from \Cref{s.prelim} combined with the nonnegativity of the shape defect function to derive a matching lower bound and conclude the $\mathcal{O}(1)$ asymptotics. \Cref{s.pp} is dedicated to the pushmi-pullyu regime ($\chi = 1$). Here, the lower bound employs techniques that are similar to, though slightly more subtle than, those used in the pushed regime, whereas the upper bound relies on the construction of explicit supersolutions. In particular, for the nonlocal model, the supersolution is constructed directly on the shape defect function $W$. In \Cref{s.pulled}, we address the pulled regime ($\chi \in [0,1)$), deriving the classical Bramson delay via comparison with the Fisher-KPP equation and further supersolution constructions. These sections comprise the theory of the front location.

\subsection{Notation}\label{ss.notation}
Throughout the paper, we use $C$ to be a positive constant, depending only on $\chi$ and the initial data.  We often transform our solutions in several ways.  We adopt consistent notation for each of these.  Let us elucidate this here.  Fix any function $v: [0,+\infty)\times  \R \to \R$.  We use the tilde for the $ct$ moving frame:
\be
    \tilde v(t,z) = v(t,z+ct).
\ee
We also change to using a $z$ variable. 
We use breve for the log-shifted moving frame:
\be
    \breve v(t,z) = v(t,z+ct - r \log (t+t_0)).
\ee
where $r= \sfrac12$ or $\sfrac32$ and $t_0$ is a large constant.  We use hat to denote an exponential transform such as
\be
    \hat v(t,x) = e^{\nicefrac{x}{\chi_{\lowvee} }} v(t,x),
\ee
where $\chi_{\lowvee} :=\max\{ \chi, 1\}$. Finally, we use bar to denote a supersolution $\overline v$ (resp. subsolution $\underline v$) to the equation $v$ solves.  Often we use bar and hat in conjunction with one of the shifts associated to tilde or breve; however, we do not use both decorations at the same time.  For example, we simply write $\hat v$ in place of $\hat{\tilde v}$ for simplicity.

\subsubsection*{Acknowledgements}

CH warmly thanks Julien Berestycki and \'Eric Brunet for discussions about their prior work, and both authors thank Julien Berestycki, Sarah Penington, and Oliver Tough for a discussion of their work at the BIRS workshop ``Emerging Connections between Reaction-Diffusion, Branching Processes, and Biology'' while both manuscripts were being completed.

CH was supported by NSF grants DMS-2204615, DMS-2337666, and DMS-2617615. MD has received funding from the European Research Council (ERC)
under the European Union’s Horizon 2020 research and innovation program (grant agreement
No 865711). The authors acknowledge support of the Institut Henri Poincar\'e (UAR 839 CNRS-Sorbonne Université), and LabEx CARMIN (ANR-10-LABX-59-01).

\section{The Shape Defect Function $\omega$}
\label{s.shapedefect}

In this section, we present a general framework to study \emph{shape defect functions}
associated with~\eqref{pde.v}. We first work at an abstract level, under structural
assumptions on the flux $A$ and on the traveling wave, and then specialize to our two
models of interest:
\be
    A^P(s) = (s-1)\1_{\{s\geq 1\}}
    \quad\text{(nonlocal model)}, 
    \qquad\text{ and }\qquad
    A^u(s) = \1_{\{s=1\}}
    \quad\text{(local model)}.
\ee
In order to introduce this quantity, let $v_{\rm tw}$ be a nonincreasing traveling wave solution of~\eqref{pde.v} with speed $c>0$, whose image satisfies $\overline{\im(v_{\rm tw})} = [0,1]$ or $[0,\infty)$. We associate to $v_{\rm tw}$ a  wave profile function $\eta\geq 0$, defined implicitly by
\be
\label{d.eta}
    - v_{\rm tw}'= \eta(v_{\rm tw}).
\ee
By differentiating \eqref{d.eta}, we have $-v_{\rm tw}''=\eta'(v_{\rm tw})v_{\rm tw}'=-\eta'(v_{\rm tw})\eta(v_{\rm tw})$. Furthermore, since $v_{\rm tw}$ is a traveling wave and
\be\label{pde.general_tw}
    - c v_{\rm tw}'
        + \chi A(v_{\rm tw})'
        - v_{\rm tw}''
        = v_{\rm tw} - A(v_{\rm tw}),
\ee
we find that $\eta$ satisfies, for $s\in \im(v_{\rm tw})$,
\be 
\label{ode.eta}
    c\eta(s)-\eta'(s)\eta(s)-\chi A'(s)\eta(s)
        =s-A(s).
\ee
At the boundary $\partial \im(v_{\rm tw})$, we extend $\eta$ by setting $\eta=0$. Indeed, if $s \in \partial \im(v_{\rm tw}) \cap \im(v_{\rm tw})$, then $v_{\rm tw}'=0$ and thus $\eta(s)=0$ by~\eqref{d.eta}. If instead $s \in \partial \im(v_{\rm tw}) \setminus \im(v_{\rm tw})$, then $\eta(\sigma)\to 0$ as $\sigma\to s$ with $\sigma\in\im(v_{\rm tw})$. Conversely, any nonnegative function $\eta$ solving~\eqref{ode.eta} on
$\im(v_{\rm tw})$ with $\eta=0$ on $\partial\im(v_{\rm tw})$ determines a traveling wave
$v_{\rm tw}$ (up to translation) through~\eqref{d.eta}.

Given a wave profile function $\eta$ and a solution $v(t,x)$ to the parabolic equation~\eqref{pde.v}, we define the associated shape defect function
\be \label{d.w}
    \omega
    =-v_x-\eta(v).
\ee
We recall that in the nonlocal model ($v=P$), the corresponding shape defect function is denoted by $W$, while in the local model ($v=u$), it is denoted by $w$. 
Intuitively, $\omega$ measures how much the spatial profile of $v(t,\cdot)$ deviates from the traveling wave shape.

The analysis of shape defect functions is a powerful tool in the study of convergence toward traveling waves \cite{AnHendersonRyzhik_convergence}. 
In the sequel, we rely on two crucial properties of the shape defect function: the preservation of nonnegativity, which is used below in the proof of the upper and lower bounds of $\overline {x}(t)$, and a weighted decay property for the nonlocal shape defect function $W$, which is used to establish the bound on $\rho$ in \Cref{c.rhoLinfty}. 
The rigorous proof of both properties is the purpose of the present section, together with the next one. 
More precisely, in the present section we lay the analytical groundwork by deriving an evolution equation for the shape defect function $\omega$ and by studying the structural properties of the profile function $\eta$ in the local and nonlocal cases. In the next section, we establish the well-posedness theory for both models. 
This framework will then allow us to prove the two key properties described above: preservation of nonnegativity and the weighted decay estimate used in the asymptotic analysis.

In \Cref{ss.eq_omega}, we first rewrite the equation for $v$ in the moving frame in terms of the shape defect function $\omega$.
Under the sole assumption that 
\be 
\label{d.Q}
Q:=\eta+\chi A
\ee
is Lipschitz continuous, we obtain a formulation of the evolution of $v$ (in the moving frame) in terms of $\omega$ and the auxiliary quantity
\be
\label{d.R}
R=c-Q'.
\ee
We note that $R$ allows us to consolidate~\eqref{ode.eta} into the compact form:
\be 
\label{e.R}
    \eta(s)R(s)=s-A(s).
\ee
Under the stronger assumption that both $A$ and $\eta$ are Lipschitz continuous, we further derive a parabolic equation satisfied by $\omega$ itself. This latter formulation applies directly in the nonlocal model, where the flux $A^P$ is Lipschitz continuous, but not in the local model, where $A^u$ is discontinuous.

In \Cref{ss.prop_eta}, we then establish the structural properties of the traveling wave profile functions $\eta^P$ and $\eta^u$ associated with the nonlocal and local models, respectively. In particular, we describe their regularity, concavity, and boundary behavior, as well as the corresponding properties of the functions $Q^P$ and $Q^u$. These results will be essential in the subsequent analysis of the shape defect function in both models.

\subsection{Equation of the shape defect function $\omega$.}
\label{ss.eq_omega}

In this subsection, we first derive an equation for the evolution of $v$, solution of~\eqref{pde.v}, in terms of the shape defect
function
\be 
  \omega = -v_x - \eta(v).
\ee 
Then, under certain structural assumptions, we derive an evolution equation of the shape defect function $\omega$. More precisely, $\omega$ solves a parabolic equation with drift
and reaction terms expressed in terms of $\eta$ and the auxiliary function $R$ defined
in~\eqref{d.R}. This evolution equation on $\omega$ applies in the nonlocal case, where the flux $A^P$ is Lipschitz, but not directly in the local case. 
Regardless, this equation of the shape defect function will be the starting point for our nonnegativity and decay estimates, in the nonlocal and in the local case.

\begin{proposition}\label{l.w_eqn} Let $\chi\geq 0$ and $v\in L^2_\text{\rm loc}([0,+\infty)\times \R)\cap L^2_\text{\rm loc}([0,+\infty); H^1_\text{\rm loc}(\R)) $ be a nonnegative solution to~\eqref{pde.v}.
Consider $\eta$, a wave profile function satisfying \eqref{ode.eta} for a certain wave speed $c$, as well the corresponding shape defect function $\omega=-v_x-\eta(v)$. Assume that $Q:=\eta+\chi A$ is Lipschitz continuous.
Then, the equation on $v$ in the moving frame (with $\tilde{v}(t,z):=v(t,z+ct), \tilde{\omega}(t,z):=\omega(t,z+ct)$) can be written as
\be 
\label{e.fw}
\tilde{v}_t = -\tilde{\omega}_z -R(\tilde{v})\tilde{\omega},
\ee 
where $R=c-Q'$. 
Furthermore, if we assume $A$ to be Lipschitz continuous, then $\tilde{\omega}$ satisfies, in the distributional sense,
 \be\label{e.w_eqn}
 	\tilde \omega_t  = \tilde{\omega}_{zz} + (R(\tilde{v})\tilde{\omega})_z +\eta'(\tilde{v})\tilde{\omega}_z+\eta'(\tilde{v})R(\tilde{v})\tilde{\omega}.
 \ee
\end{proposition}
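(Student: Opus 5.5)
In the moving frame, $\tilde v$ satisfies
\[
\tilde v_t - c\tilde v_z + \chi(A(\tilde v))_z = \tilde v_{zz} + \tilde v - A(\tilde v).
\]
The plan is to rewrite every term through $\tilde\omega = -\tilde v_z - \eta(\tilde v)$, i.e. $\tilde v_z = -\tilde\omega - \eta(\tilde v)$. This gives the compact form \eqref{e.fw} at the level of $v$. Differentiating in $z$ then yields \eqref{e.w_eqn}, provided all chain rules are justified. For \eqref{e.fw}, I will group the second-order and advective terms as $\tilde v_{zz} + c\tilde v_z - \chi(A(\tilde v))_z$. Since $Q = \eta + \chi A$ is Lipschitz, the chain rule for Lipschitz functions composed with $H^1_{\rm loc}$ functions gives $(Q(\tilde v))_z = Q'(\tilde v)\tilde v_z$ a.e. Here $Q'(\tilde v)\tilde v_z$ is interpreted as $0$ where $\tilde v_z = 0$, which handles the level sets where $Q'$ is undefined. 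Writing $\chi(A(\tilde v))_z = (Q(\tilde v))_z - (\eta(\tilde v))_z$, and substituting $\tilde v_{zz} = -\tilde\omega_z - (\eta(\tilde v))_z$, the $(\eta(\tilde v))_z$ terms cancel. We are left with
\[
\tilde v_t = -\tilde\omega_z - Q'(\tilde v)\tilde v_z + c\tilde v_z + \tilde v - A(\tilde v) = -\tilde\omega_z + R(\tilde v)\tilde v_z + \tilde v - A(\tilde v).
\]
Inserting $\tilde v_z = -\tilde\omega - \eta(\tilde v)$ and using $\eta R = s - A$ from \eqref{e.R}, the zeroth-order terms cancel exactly, which gives \eqref{e.fw}. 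All of this holds in the distributional sense, since each term is a derivative of an $L^2_{\rm loc}$ function or is $L^2_{\rm loc}$.

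For \eqref{e.w_eqn}, now assuming $A$ Lipschitz, $\eta = Q - \chi A$ is also Lipschitz. I apply $-\partial_z$ to the equation for $\tilde v$ and subtract $\partial_t\eta(\tilde v) = \eta'(\tilde v)\tilde v_t$, which gives
\[
\tilde\omega_t = -\partial_z\tilde v_t - \eta'(\tilde v)\tilde v_t.
\]
Substituting \eqref{e.fw} in both places gives
\[
\tilde\omega_t = \tilde\omega_{zz} + (R(\tilde v)\tilde\omega)_z + \eta'(\tilde v)\tilde\omega_z + \eta'(\tilde v)R(\tilde v)\tilde\omega,
\]
which is exactly \eqref{e.w_eqn}. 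The algebra is immediate; the content lies in justifying the products.

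The main obstacle is rigor in the second step. We must make sense of $\eta'(\tilde v)\tilde v_t$ and $\eta'(\tilde v)\tilde\omega_z$ when $\eta'$ is only $L^\infty$ and $\tilde v_t$, $\tilde\omega_z$ are only distributions a priori. I would proceed by mollification. Let $\tilde v^\epsilon = \tilde v * \phi_\epsilon$ in $(t,z)$; it satisfies \eqref{e.fw} up to a commutator, and $\eta$ is replaced by a smooth $\eta_\delta$. I derive the identity for the smooth objects, test against a compactly supported test function, and move derivatives off $\tilde\omega$ onto the test function wherever possible. The only delicate term is $\eta'(\tilde v)\tilde v_t$. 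Here I would use parabolic regularity from the form \eqref{e.fw}: $\tilde v_t \in L^2_{\rm loc}(H^{-1})$ combined with $\tilde v \in L^2_{\rm loc}(H^1)$ places $\tilde v$ in $C(L^2)$ with $\tilde\omega\in L^2_{\rm loc}$. Together with the identity $\partial_t G(\tilde v) = G'(\tilde v)\tilde v_t$ for Lipschitz $G$ (proved via the Lions–Magenes-type lemma), this lets me pass $\epsilon,\delta\to0$. I also use that $\eta'(\tilde v)\tilde v_z$ is unambiguous a.e. by the Lipschitz chain rule, as in the first step.
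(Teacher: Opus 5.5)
Your derivation of \eqref{e.fw} is the same as the paper's, and so is the algebra giving \eqref{e.w_eqn}. The gap is in how you justify the second part. You only have $\tilde v_t\in L^2_{\rm loc}(H^{-1})$, read off from \eqref{e.fw}, and $\tilde\omega\in L^2_{\rm loc}$. At that regularity the products $\eta'(\tilde v)\tilde v_t$ and $\eta'(\tilde v)\tilde\omega_z$ are undefined. The factor $\eta'(\tilde v)$ is merely $L^\infty$, and in the cases of interest it genuinely jumps: for $\chi>0$, $\eta^P$ has a kink at $s=1$, where $\eta'$ drops from $\chi$ to $0$. So it is not an admissible multiplier for $H^{-1}$ distributions.

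The Lions--Magenes chain rule you invoke pairs $\tilde v_t$ with $G'(\tilde v)\phi$ in $L^2(H^1)$. That requires $G'$ to be Lipschitz, not just $G$. For merely Lipschitz $G$ the local identity $\partial_t G(\tilde v)=G'(\tilde v)\tilde v_t$ is not even well-posed at this regularity. Likewise, moving the $z$-derivative off $\tilde\omega$ in $\eta'(\tilde v)\tilde\omega_z$ puts it on $\eta'(\tilde v)$. That gives $\partial_z[\eta'(\tilde v)]$, a measure concentrated on the level set $\{\tilde v=1\}$, which you have no way to control. So your mollification limit $\epsilon,\delta\to0$ does not close.

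The missing idea is where the Lipschitz continuity of $A$ is really used; you only use it to make $\eta$ Lipschitz. Since $A$ is Lipschitz, $\chi(A(\tilde v))_z=\chi A'(\tilde v)\tilde v_z\in L^2_{\rm loc}$. The original equation then gives
\[
\tilde v_t-\tilde v_{zz}=c\tilde v_z-\chi A'(\tilde v)\tilde v_z+\tilde v-A(\tilde v)\in L^2_{\rm loc},
\]
and $L^2$ parabolic regularity upgrades this to $\tilde v_t,\tilde v_{zz}\in L^2_{\rm loc}$. Consequently:
\begin{itemize}
\item $\tilde v\in H^1_{\rm loc}$ jointly in $(t,z)$, so the Lipschitz chain rule gives $(\eta(\tilde v))_t=\eta'(\tilde v)\tilde v_t$ almost everywhere;
\item $\tilde\omega_z=-\tilde v_{zz}-\eta'(\tilde v)\tilde v_z\in L^2_{\rm loc}$.
\end{itemize}
Every product in \eqref{e.w_eqn} is then an honest $L^1_{\rm loc}$ function, and your algebra goes through directly with no mollification. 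This is exactly the paper's argument.
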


Before moving on with the proof, let us make several remarks. For the first identity \eqref{e.fw} it suffices that $Q$ be Lipschitz continuous. For the evolution equation~\eqref{e.w_eqn} of $\omega$, we additionally assume that $A$ is Lipschitz continuous. This, along with the assumption on $Q$ yields the regularity of $\eta$ and makes all terms in~\eqref{e.w_eqn} well-defined. In this case, it is clear from~\eqref{e.w_eqn} that $\omega$ remains nonnegative if it is initially nonnegative, as desired. In the next subsection, we show that the Lipschitz continuity assumption 
on $Q$ is met in the nonlocal case (see \Cref{l.l.eta_nonloc}). Moreover, we  see that in the $\chi \geq 1$ case applying \eqref{e.w_eqn} to $\tilde{\omega}=\tilde W$ and using that 
\be
    R
        \equiv\frac{1}{\chi}
    \qquad\text{ and }\qquad
    \eta'(\tilde{P})
        =\chi(1-\alpha(\tilde{P}))
\ee
leads to  
\be 
\label{e.tildeWchipush}
\tilde W_t +\left( \chi \alpha(\tilde P)-c\right) \tilde W_z = \tilde{W}_{zz} +(1-\alpha(\tilde P)) \tilde W.
\ee
In the general case, and thus in the $\chi\in [0,1)$ case, we obtain 
\be 
\label{e.tildeWchipull}
\tilde W_t +\left( \chi \alpha(\tilde P)-c\right) \tilde W_z = \tilde{W}_{zz} +  \left( R'(\tilde{P})\tilde{P}_z+
\left(\eta^P\right)'(\tilde{P})R(\tilde{P})
\right) \tilde{W}.
    \ee
Using the identity $R+\eta'=c-\chi A'$, which follows from the definition $R=c-Q'$ and $Q=\eta+\chi A $, and differentiating $\eta(s)R(s)=s-A(s)$, we may express the equation as follows
\be 
\tilde W_t +\left( \chi \alpha(\tilde P)-c\right) \tilde W_z = \tilde{W}_{zz} +(1-\alpha(\tilde P)) \tilde W+2Q''(\tilde P)\eta(\tilde{P}) \tilde{W}+Q''(\tilde P) \tilde{W}^2.
\ee
We observe that for $\chi\geq 1$, we have $Q''\equiv 0$ and thus we recover \eqref{e.tildeWchipush}.

In the local case, there are issues interpreting~\eqref{e.w_eqn} due to the lower regularity of $\eta$.  
Nevertheless, from the well-posedness theory developed below in \Cref{ss.loc.wp}, we still get that the shape defect function $\tilde{w}=-\tilde{u}_z-\eta(\tilde{u})$ preserves nonnegativity.

\begin{proof}[Proof of \Cref{l.w_eqn}]
Rewriting, $\omega$ and taking a derivative, we see that, in the distributional sense
\be
    v_{zz} = - (\eta(v))_z - \omega_z.
\ee
Additionally, we find that 
\be
    (Q(\tilde{v}))_z
    = Q'(\tilde{v})\tilde{v}_z.
\ee
Recall that $Q'\in L^\infty$ by assumption. We, thus, obtain
\be 
\begin{aligned}
    \tilde v_t&=\tilde v_{zz}+c\tilde v_z-\chi (A(\tilde v))_z+(\tilde v-A(\tilde v)) 
    =-\tilde \omega_z+\left(c\tilde v-\eta(\tilde v)-\chi A(\tilde v)\right)_z+(\tilde v-A(\tilde v))
    \\&
    =-\tilde \omega_z+\left(c\tilde v-Q(\tilde v)\right)_z+(\tilde v-A(\tilde v))
    =-\tilde \omega_z+\left(c-Q'(\tilde v)\right)\tilde v_z+(\tilde v-A(\tilde v))
    \\&
    =-\tilde \omega_z+R(\tilde{v})(-\eta(\tilde v)-\tilde \omega)+(\tilde v-A(\tilde v))
    =-\tilde \omega_z-R(\tilde{v})\tilde{\omega}+(\tilde v-A(\tilde v)) -R(\tilde{v})\eta(\tilde v)
    \\&
    =-\tilde \omega_z-R(\tilde{v})\tilde{\omega},
\end{aligned}
\ee
where the last equality is due to the identity~\eqref{e.R}: $\eta R=s-A(s)$.

Next, under the additional assumption that $A$ be Lipschitz continuous, the solution $\tilde v  \in L^2_\text{\rm loc}([0,+\infty)\times \R)\cap L^2_\text{\rm loc}([0,+\infty); H^1_\text{\rm loc}(\R))$ has increased regularity. Indeed,
\be 
    \tilde v_t-\tilde v_{xx}
    = c \tilde v_x - \chi A'(\tilde v)\tilde v_x+\tilde v-A(\tilde v)
    \in L^2_\text{\rm loc}([0,+\infty)\times \R),
\ee
and, by standard parabolic regularity theory, we have that $\tilde v_t\in L^2_\text{\rm loc}([0,+\infty)\times \R)$. Observe that the Lipschitz continuity of $A$ and $Q$ yields the same of $\eta$; recall~\eqref{d.Q}.  This justifies the application of the chain rule:
\be
    (\eta(\tilde v))_t=\eta'(\tilde v)\tilde v_t.
\ee
To obtain then \eqref{e.w_eqn}, we differentiate $\tilde{\omega}=-\tilde{v}_z-\eta(\tilde{v})$ with respect to $t$ and use~\eqref{e.fw}, leading to
\be 
\tilde{\omega}_t=\tilde{\omega}_{zz}+(R(\tilde{v})\tilde{\omega})_z+\eta'(\tilde{v})\tilde{\omega}_z+\eta'(\tilde{v})R(\tilde{v}) \tilde{\omega},
\ee
which is the desired result.
\end{proof}

\subsection{Properties of the Traveling Wave Profile Functions $\eta^P$ and $\eta^u$.}\label{ss.prop_eta}

In this subsection, we describe in detail the traveling wave profile functions $\eta^P$ and $\eta^u$
associated with the nonlocal and local fluxes $A^P$ and $A^u$, respectively. Our goal is to understand
the regularity and monotonicity properties of $\eta$ and the quantity
$Q=\eta+\chi A$.

First, we can notice that for $\chi\geq 1$ and $c=\chi+\sfrac{1}{\chi}$, the profile function satisfies $\eta(s)=\chi(s-A(s))$. This expression is the consequence of a straightforward algebraic manipulation and is true for all $A$. In our specific cases, this gives
\be\label{e.eta_and_Q_expressions}
\begin{cases}
    \eta^u(s)=\chi s \1_{\{s<1\}},\\
    \eta^P(s)=\chi \min\{1,s\},
\end{cases}
\quad\text{ and }\quad
\begin{cases}
    Q^u(s)=\chi s, \\
    Q^P(s)=\chi s .
\end{cases}
\ee
In the regime $\chi<1$, the wave profile functions $\eta^u$ and $\eta^P$ are no longer piecewise linear and exhibit a more involved structure than in the case $\chi\geq 1$. 
Using the explicit expressions of the traveling waves (see~\Cref{table.tw}), we represent the corresponding profile functions in terms of the secondary real branch $W_{-1}$ of the Lambert $W$ function. 
Recall that $W_{-1}$ is the inverse of the mapping $(-\infty,-1]\ni r\mapsto r e^{r}\in[-e^{-1},0)$. 
This leads to the following explicit formulas and structural properties.

\begin{lemma}[Properties of $\eta^u$ and $Q^u$.]\label{l.l.eta_loc}
Fix $\chi \geq 0$, $c=c_*(\chi)$ as in~\eqref{e.c_star_chi}. The traveling wave profile, $\eta = \eta^u$, associated to $A^u$ (recall~\eqref{e.A^u}) is given by, for $s\in [0,1)$,
\be 
\label{eta.exp_u}
\eta(s)=
\begin{cases}
    \chi s
    & \text{ if } \chi \geq 1,
    \\
    \left( 1+ \frac{1}{W_{-1}(-\kappa s)}\right)s
    & \text{ if } \chi \in [0,1),
\end{cases}
    \ee
    where 
    \be 
    \kappa:= \frac{1}{1-\chi}e^{-\frac{1}{1-\chi}},\ee
    and $W_{-1}$ is the secondary branch of the Lambert $W$ function. 
    For $s=1$, we have $\eta(1)=0$. 

\smallskip
    
\noindent
\textbf{Behavior near $s=0$ and near $s=1$.} The profile $\eta$ satisfies
\be 
\label{value.eta_u'}
\begin{split}
   &\eta(0^+) =0,
    \quad
    \eta'(0^+) = \max\{1,\chi\},
    \quad
    \eta(1^-)= \chi,
    \quad \text{and}\quad
    \eta'(1^-) = 
    \begin{cases}
     c - \frac{1}{\chi}
    \qquad 
    &\text{if } \chi>0, \\
    -\infty
    \qquad 
    &\text{if } \chi=0,
    \end{cases},
\end{split}
\ee
For $\chi \geq 1$ and $s< 1$, $\eta''(s)=0$, whereas for $\chi \in [0,1)$, we have
\be
\label{value.eta_u''}
    \eta''(0^+)=
    -\infty
    \quad\text{and}\quad
    \eta''(1^-)=
    -\frac{(1-\chi)^2}{\chi^3}.
\ee

\smallskip

\noindent
\textbf{Uniform bounds of $\eta$.} For all $s \in [0,1]$, the function $\eta$ satisfies
\be 
\label{eta.unif.bound_u}
    \chi (s - A^u(s))
    \leq \eta^u(s)
    \leq \max\{1,\chi\}(s - A^u(s)).
\ee

\noindent
\textbf{Properties of $Q$.} The associated function $Q=Q^u$ belongs to $C^0([0,1]) \cap C^1_{\rm loc}([0,1))$, and if $\chi>0$ then $Q \in C^1([0,1])$. 
Moreover, for all $s\in (0,1)$,
\be
    Q''(s) \leq 0
		\quad
		\text{ and }
		\quad
        \max\{1,\chi\}
		\geq Q'(s)
		\geq \begin{cases}
     c - \frac{1}{\chi}
    \quad 
    &\text{if } \chi>0, \\
    -\infty
    \quad 
    &\text{if } \chi=0 .
\end{cases}
\ee
\end{lemma}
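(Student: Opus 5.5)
The plan is to avoid working with the Lambert $W$ representation until the very end and instead describe $\eta=\eta^u$ parametrically along the explicit minimal traveling wave $u_{\rm tw}$ of \Cref{table.tw}. Since $-u_{\rm tw}'=\eta(u_{\rm tw})$, each quantity attached to $\eta$ can be read off along the wave:
\[
\eta(s)=-u_{\rm tw}'(x),\qquad
\eta'(s)=\frac{-u_{\rm tw}''(x)}{u_{\rm tw}'(x)},\qquad
\eta''(s)=\frac{\tfrac{d}{dx}\eta'(u_{\rm tw}(x))}{u_{\rm tw}'(x)},
\qquad s=u_{\rm tw}(x),\ x>0 .
\]
Here $x\mapsto u_{\rm tw}(x)$ is a decreasing bijection from $(0,\infty)$ onto $(0,1)$.

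\textbf{The case $\chi\ge1$.} This case is immediate. We have $u_{\rm tw}=e^{-\chi x}$ for $x>0$, so $\eta(s)=\chi s$ on $[0,1)$ and $\eta''=0$ there. We also get $\eta'(1^-)=\chi=c-\chi^{-1}$. Finally $Q(s)=\chi s$ on $[0,1]$, using $A^u(1)=1$, so every claim holds trivially.

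\textbf{The case $\chi\in[0,1)$: explicit formula.} Put $a:=1-\chi$. For $x>0$ we have
\[
s=(ax+1)e^{-x},\qquad -u_{\rm tw}'(x)=(ax+\chi)e^{-x},
\]
and therefore $\eta=s-ae^{-x}$. To eliminate $x$, set $y=ax+1\ge1$ and $r=-y/a$. Then
\[
re^r=-\tfrac{y}{a}e^{-y/a}=-\kappa s .
\]
Since $r\le -1/a\le -1$, this forces $r=W_{-1}(-\kappa s)$. Using $e^{-x}=s/y$ we obtain
\[
\eta(s)=s\bigl(1-a/y\bigr)=s\bigl(1+1/W_{-1}(-\kappa s)\bigr),
\]
which is \eqref{eta.exp_u}. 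At $s=1$, i.e.\ on the plateau $x\le0$, the wave is constant, so $\eta(1)=0$.

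\textbf{The case $\chi\in[0,1)$: derivatives.} Differentiating $-u_{\rm tw}'=(ax+\chi)e^{-x}$ gives $-u_{\rm tw}''=(a-ax-\chi)e^{-x}$, hence
\[
\eta'(s)=1-\frac{a}{ax+\chi}.
\]
One further differentiation gives
\[
\eta''(s)=-\frac{a^2e^{x}}{(ax+\chi)^3}<0 .
\]
All the limits in \eqref{value.eta_u'} and \eqref{value.eta_u''} now follow by sending $x\to\infty$ (which is $s\to0^+$) or $x\to0^+$ (which is $s\to1^-$):
\begin{itemize}
\item $\eta(0^+)=0$ and $\eta'(0^+)=1$;
\item $\eta(1^-)=\chi$;
\item $\eta'(1^-)=1-a/\chi=2-1/\chi=c-1/\chi$ if $\chi>0$, and $\eta'(1^-)=-\infty$ if $\chi=0$;
\item $\eta''(0^+)=-\infty$ and $\eta''(1^-)=-a^2/\chi^3$.
\end{itemize}

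\textbf{Uniform bounds and properties of $Q$.} For \eqref{eta.unif.bound_u} on $[0,1)$, the parametric formulas reduce the claim to the elementary inequalities $\chi(ax+1)\le ax+\chi\le ax+1$ for $x\ge0$. At $s=1$ both sides vanish because $1-A^u(1)=0$.

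For $Q=\eta+\chi A^u$, note that $Q=\eta$ on $[0,1)$ and $Q(1)=\chi=\eta(1^-)$. This gives continuity, and $C^1$ regularity on $[0,1)$ follows from the smoothness of the parametrization. When $\chi>0$, $Q'(1^-)$ is finite, so $Q\in C^1([0,1])$ (one-sided at the endpoint). Concavity $Q''\le0$ is the sign of $\eta''$ computed above. Because $Q'$ is monotone, it lies between its endpoint values $\eta'(0^+)=\max\{1,\chi\}$ and $\eta'(1^-)$, which is the stated two-sided bound.

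The only delicate step is selecting the correct branch $W_{-1}$, which rests on checking $r\le-1$. A secondary point is the endpoint behaviour when $\chi=0$, where $u_{\rm tw}'(0)=0$ makes the limits singular. Everything else is direct computation.
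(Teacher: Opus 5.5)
Your proof is correct and follows the same route as the paper. Like the paper, you read $\eta$, $\eta'$, $\eta''$ off the explicit wave of \Cref{table.tw}; your $\eta''=-a^2e^{x}/(ax+\chi)^3$ is exactly the paper's $-\frac{1}{u_{\rm tw}'}\bigl(\frac{u_{\rm tw}''}{u_{\rm tw}'}\bigr)'$, and you likewise get the bounds on $Q'$ from concavity and the endpoint limits. You also correctly supply the Lambert-$W$ branch selection and the uniform bounds \eqref{eta.unif.bound_u}, both of which the paper omits.
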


\begin{lemma}[Properties of $\eta^P$ and $Q^P$.]\label{l.l.eta_nonloc}
Fix $\chi \geq 0$, $c=c_*(\chi)$ as in~\eqref{e.c_star_chi}. The traveling wave profile, $\eta = \eta^P$, is given by, for $s\in [0,1)$,
\be 
\label{eta.exp_P}
\eta(s)=
\begin{cases}
    \chi s & \text{ if } \chi \geq 1,\\
      \left( 1+ \frac{1}{W_{-1}(-\lambda  s)}\right)s & \text{ if } \chi \in [0,1),
\end{cases}
    \ee
    where 
    \be 
    \lambda:= \frac{2-\chi}{1-\chi}e^{-\frac{2-\chi}{1-\chi}},\ee
    and $W_{-1}$ is the secondary branch of the Lambert $W$ function. 
    For $s\geq 1$, we have 
    \be 
\eta(s)= \frac{1}{c-\chi}
    \ee

\smallskip
    
\noindent
\textbf{Behavior near $s=0$ and $s=1$.} The profile $\eta$ satisfies
\be 
\begin{split}
    &\eta(0^+) = 0,
    \quad
    \eta'(0^+) = \max\{1,\chi\},
    \quad
    \eta(1^-)=\frac{1}{c-\chi},
    \quad\text{and}\quad
    \eta'(1^-)
    =\chi.
    \end{split}
\ee
For $\chi \geq 1$ and $s\neq 1$, $\eta''(s)=0$, whereas for $\chi \in [0,1)$, we have
\be
    \eta''(0^+)=
    -\infty
    \quad\text{and}\quad
    \eta''(1^-)=
    -(2-\chi)(1-\chi)^2.
\ee

\smallskip

\noindent
\textbf{Uniform bounds of $\eta$.} For all $s \geq 0$, the function $\eta$ satisfies
\be 
\label{eta.unif.bound_P}
    \frac{1}{c - \chi}(s - A^P(s))
    \leq \eta(s)
    \leq \max\{1,\chi\}(s - A^P(s)).
\ee

\noindent
\textbf{Properties of $Q$.} The associated function $Q=Q^P$ belongs to $C^2((0,1)\cup(1,+\infty))$,  $Q'' $ is bounded at $s=1$, and $Q$ satisfies for $s> 0$,
\be
    Q''(s) \leq 0
		\quad
		\text{ and }
		\quad\chi 
			\leq Q'(s)
			\leq 
         \max\{1,\chi\}.
\ee
\end{lemma}

We now prove both lemmata in order.

\begin{proof}[Proof of \Cref{l.l.eta_loc}]
The case $\chi \geq 1$ follows directly from~\eqref{e.eta_and_Q_expressions}.  We consider only the case $\chi \in [0,1)$. The formula~\eqref{eta.exp_u} follows from straightforward algebraic manipulations using the definition of the secondary branch $W_{-1}$ of the Lambert function. We omit the details.

In the sequel, we use often the following, which follow directly \Cref{table.tw} and straightforward computations: for $z>0$,
\be \label{e.c011501}
    u_{\rm tw}(z)
        = ((1-\chi)z + 1) e^{-z}
    \quad
    u'_{\rm tw}(z)
    =-((1-\chi)z + \chi )e^{-z}
    \quad\text{ and }
    \quad
    u''_{\rm tw}(z)
    =((1-\chi)z + 2\chi-1 )e^{-z}.
\ee  
From this, we prove the preliminary estimates on $\eta$.  The behavior as $s\searrow 0$ is:
\be
\begin{split}
    \eta(0^+)
    = \lim_{s \to 0^+}\eta(s)
    = \lim_{z \to +\infty} - u'_{\rm tw}(z)
    = 0
    \quad\text{ and }\quad
    \eta'(0^+)
    =\lim_{u \to 0}\frac{\eta(u)}{u}
    = \lim_{z \to +\infty} \frac{- u'_{\rm tw}(z)}{u_{\rm tw}(z)}
    = 1.
\end{split}
\ee
The behavior as $s\nearrow1$ is:
\be
\begin{split}
    &\eta(1^-)
    =
    \lim_{u \to 1^-}\eta(u)
    = \lim_{z \to 0^+} - u'_{\rm tw}(z)
    = \chi
    \quad\text{ and}
    \\&
    \eta'(1^-)
    = \lim_{s\to 1^-} \frac{\eta(1^-) - \eta(s)}{1 - s}
    = \lim_{z \to 0^+}\frac{\chi + u'_{\rm tw}(z)}{1 - u_{\rm tw}(z)}
    = \begin{cases}
     2 - \frac{1}{\chi}
    \qquad 
    &\text{if } \chi>0, \\
    -\infty
    \qquad 
    &\text{if } \chi=0 .
\end{cases}
\end{split}
\ee

Now, we establish the properties of $Q$. 
From the values above, we see that $Q(0^+)=0$ and $Q(1^-)=\chi$.  Since $Q(0)=0$ and $Q(1)=\chi$, we have that $Q\in C^0([0,1])$. Then, using the fact that $\eta'=Q'$ on $(0,1)$, we have that $Q\in C^1([0,1])$ for $\chi>0$ and $Q\in C^1([0,1))$ for $\chi=0$.

Let us then show that $Q'' = \eta'' \leq 0$.  Notice that $\eta'(u_{\rm tw}) u_{\rm tw}' = - u_{\rm tw}''$.  Differentiating once more, we have
\be
    \eta''(u_{\rm tw}) (u_{\rm tw}')^2 + \eta'(u_{\rm tw}) u_{\rm tw}''
        = - u_{\rm tw}'''.
\ee
This leads to
\be
    \eta''(u_{\rm tw})
        = \frac{ \left(u''_{\rm tw}\right)^2- u'''_{\rm tw} u'_{\rm tw}}{( u'_{\rm tw})^3}
        =-\frac{1}{ u'_{\rm tw}}\left( \frac{\ u''_{\rm tw}}{ u'_{\rm tw}}\right)'.
\ee
Recall that $u_{\rm tw}' < 0$.  Also, using that $ u''_{\rm tw}(z)=((1-\chi)z + 2\chi-1 )e^{-z}$ for $z>0$ yields
\be
\left( \frac{u''_{\rm tw}}{u'_{\rm tw}}\right)' =-\left( \frac{(1-\chi)z + 2\chi-1}{(1-\chi)z + \chi }\right)' =-\left( \frac{\chi-1	}{(1-\chi) z +\chi}\right)^2.
\ee 
From this, we deduce that $\eta'' = Q'' \leq 0$ in $(0,1)$ as well as the limit values of $\eta''$ in \eqref{value.eta_u''}.

Observe that~\eqref{eta.unif.bound_u} follows directly from~\eqref{e.c011501} and the fact that $\eta(u_{\rm tw}) = -u_{\rm tw}'$.  We omit the details.

Finally, we obtain the estimates on $Q$ by concavity.  Indeed, for $s\in (0,1)$,
\be 
    \max\{1,\chi\}
    = \eta'(0^+)
    = Q'(0^+)
    \geq Q'(s)
    \geq Q'(1^-)
    = \begin{cases}
     2 - \frac{1}{\chi}
    \qquad 
    &\text{if } \chi>0, \\
    -\infty
    \qquad 
    &\text{if } \chi=0 ,
\end{cases}
\ee
which concludes the proof.
\end{proof}

\begin{proof}[Proof of \Cref{l.l.eta_nonloc}.]
The proof follows the same strategy as above, using the explicit expressions of the traveling waves in \Cref{table.tw}.
For $\chi\in [0,1)$ and $z>0$,
\be
    P_{\rm tw}'(z)=-\frac{1}{2-\chi}\left((1-\chi)z + 1 \right)e^{-z}
    \quad\text{ and }\quad
    P_{\rm tw}''(z)=\frac{1}{2-\chi}\left((1-\chi)z + \chi \right)e^{-z},
\ee
while, for $z<0$,
\be 
    P'_{\rm tw}(z)
    =-\frac{1}{2-\chi}.
\ee
We merely compute the values $\eta''(0^+)$ and $\eta''(1^-)$ for $\chi<1$:
\be
\begin{split}
    &\eta''(0^+)
    = \lim_{s\to 0^+} \frac{\eta'(s) - \eta'(0^+)}{s}
    = \lim_{z \to +\infty}\frac{-\frac{P''_{\rm tw}(z)}{P'_{\rm tw}(z)} -1}{P_{\rm tw}(z)}
    = \lim_{z \to +\infty}\frac{-P''_{\rm tw}(z)-P'_{\rm tw}(z)}{P_{\rm tw}(z)P'_{\rm tw}(z)}
    = -\infty
    \quad\text{ and}\\
    &\eta''(1^-)
    = \lim_{s\to 1^-} \frac{\eta'(1^-) - \eta'(s)}{1-s}
    = \lim_{z \to 0^+}\frac{\chi +\frac{P''_{\rm tw}(z)}{P'_{\rm tw}(z)} }{1-P_{\rm tw}(z)}
    = \lim_{z \to 0^+}\frac{\chi P'_{\rm tw}(z)+P''_{\rm tw}(z)}{P'_{\rm tw}(z)(1-P_{\rm tw}(z))}
    = -(2-\chi)(1-\chi)^2.
\end{split}
\ee
The rest of the proof is omitted.
\end{proof}

\section{Well-Posedness and Comparison Principle} 
\label{s.wp}

The purpose of this section is to provide the well-posedness and comparison tools needed for both the local model~\eqref{pde.u} and the nonlocal model~\eqref{pde.P}. 
In addition, within this well-posedness framework, we establish two fundamental properties of the associated shape defect function: preservation of nonnegativity and a weighted decay estimate. These properties were used repeatedly in the preceding asymptotic analysis.

Throughout this section, solutions are constructed in a weighted energy space adapted to the unbounded spatial domain and to accommodate solutions with at most linear growth (\textit{e.g.} $P(t,x)=\int_x^{+\infty}\rho(t,y)dy$ will have linear growth as $x\to-\infty$ as $\rho $ will roughly behave like a plateau at $x=-\infty$). More precisely, for $\gamma\in (0, \chi^{-1})$, we introduce an exponentially decaying weight
\be 
\label{d.zeta}
\zeta_\gamma(x)=e^{-\gamma\sqrt{1+x^2}}
\ee
and the functional space
\be 
\label{d.V}
V_\gamma:=C([0,T]; L^2(\R,\zeta_\gamma(x) dx))\cap L^2([0,T]; H^1(\R,\zeta_\gamma(x) dx)).
\ee 
The weight allows us to justify integrations by parts on  $\R$, whereas the restriction $\gamma\in (0, \chi^{-1})$ ensures that the derivatives of $\zeta_\gamma$ can be absorbed by the reaction term and plays a purely technical role.  It can almost certainly be relaxed, but this is not our main interest.

In \Cref{ss.nonlocal}, we study the nonlocal model. First, the well-posedness theory is established more generally for \eqref{pde.v}, which we recall
\be 
v_t + \chi (A(v))_x - v_{xx} = v - A(v),
\ee
under the condition that $A$ be nondecreasing and Lipschitz continuous with $A(0)=0$. Recall that $A^P(s)=(s-1)_+$ satisfies these conditions. This general framework is needed for several arguments developed later in the section: the local model is constructed through Lipschitz regularizations $A^\e$, and several steps in its analysis rely on estimates that are robust at this level of generality. Then, we establish the preservation of nonnegativity and a weighted decay estimate  for the nonlocal shape defect function $W$.

In \Cref{ss.loc.wp}, we turn our attention to the local model \eqref{pde.u}. To overcome the difficulty posed by the discontinuity of $A^u$ at $u=1$, our analysis proceeds through a regularization procedure. First, we introduce a Lipschitz approximation $A^\e$ of the flux and carefully detail the properties of its associated wave profile function $\eta^\e$. We then construct a family of approximate solutions $u^\e$ and establish crucial uniform bounds, including the preservation of nonnegativity and the weighted decay of the approximate shape defect function $w^\e$. From the uniform bounds on $w^\e$, we deduce uniform H\"{o}lder bounds and extract a converging subsequence. Leveraging this H\"{o}lder regularity, we establish the regularity of $\overline{x}(t)$, which is a crucial element in proving that the limit of the subsequence satisfies the PDE \eqref{pde.u}. Finally, we conclude by proving the comparison principle.

\subsection{The Nonlocal Model}
\label{ss.nonlocal}

We begin with the well-posedness of~\eqref{pde.v} and its comparison principle in \Cref{sss.well-posedness-nonlocal}.  Afterwards, in \Cref{sss.shape-defect-nonlocal}, for the shape defect function we prove nonnegativity and establish a decay estimate.

\subsubsection{Well-posedness of the Nonlocal Model}\label{sss.well-posedness-nonlocal}

\begin{proposition}[Well-posedness for a Lipschitz continuous flux $A$]
\label{thm.wellposedness_P}
Consider a Lipschitz continuous and nondecreasing flux $A$ with $A(0)=0$.
For any $v_{\rm in}\in L^2(\R,\zeta_\gamma dx)$, there exists a unique weak solution $v\in V_\gamma$, given by \eqref{d.V},
to \eqref{pde.v} satisfying $v(0,\cdot)=v_{\rm in}$.
\end{proposition}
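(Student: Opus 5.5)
We prove \Cref{thm.wellposedness_P} by a Galerkin (or fixed-point) construction in the weighted space, with uniqueness from an $L^2(\zeta_\gamma dx)$ energy estimate on the difference of two solutions. The weak formulation is: for every test function $\phi\in H^1(\R,\zeta_\gamma dx)$ and a.e.\ $t$,
\be
\frac{d}{dt}\int v\,\phi\,\zeta_\gamma\,dx+\int v_x(\phi\zeta_\gamma)_x\,dx-\chi\int A(v)(\phi\zeta_\gamma)_x\,dx=\int (v-A(v))\phi\,\zeta_\gamma\,dx .
\ee
The two structural facts used throughout are $|A(s)|\le L|s|$, which follows from $A(0)=0$ and the Lipschitz bound, and $|\zeta_\gamma'|\le \gamma\zeta_\gamma$. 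Together they control every lower-order term by $\|v\|_{L^2(\zeta_\gamma)}$ and $\|v_x\|_{L^2(\zeta_\gamma)}$.

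\emph{A priori estimate.} Test with $\phi=v$. The diffusion term gives $\int v_x^2\zeta_\gamma-\int v v_x\zeta_\gamma'\zeta_\gamma^{-1}\zeta_\gamma$. The second piece is bounded by $\gamma\|v\|\|v_x\|$.

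For the advection term, write $-\chi\int A(v)(v\zeta_\gamma)_x$. Set $G(s)=\int_0^s A'(r)r\,dr$, so that $A(v)v_x$ is not needed directly. Instead, integrate by parts to get $\chi\int A'(v)v_x v\zeta_\gamma=\chi\int G(v)_x\zeta_\gamma=-\chi\int G(v)\zeta_\gamma'$. This is bounded by $\chi L\gamma\|v\|^2_{L^2(\zeta_\gamma)}$ because $|G(s)|\le Ls^2/2$. The remaining piece $-\chi\int A(v)v\zeta_\gamma'$ is bounded the same way. (Alternatively, bound $|\chi\int A(v)v_x\zeta_\gamma|\le\chi L\|v\|\|v_x\|$ and absorb it with Young's inequality; either route works.)

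The reaction term is $\le(1+L)\|v\|^2$. Gronwall then gives
\be
\sup_{[0,T]}\|v\|^2_{L^2(\zeta_\gamma)}+\int_0^T\|v_x\|^2_{L^2(\zeta_\gamma)}\le C_T\|v_{\rm in}\|^2_{L^2(\zeta_\gamma)} .
\ee
This estimate also bounds $v_t$ in $L^2(0,T;(H^1(\zeta_\gamma))^*)$.

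\emph{Existence.} Regularize by truncating to $[-n,n]$ with Dirichlet data, or by smoothing $A$ and the initial data. The classical theory for uniformly parabolic quasilinear equations with Lipschitz flux provides smooth approximate solutions $v^n$ satisfying the uniform bounds above. Aubin--Lions on bounded intervals gives strong $L^2_{\rm loc}$ convergence. Hence $A(v^n)\to A(v)$ strongly, because $A$ is Lipschitz, and we can pass to the limit in the weak formulation. The weight's decay, combined with the uniform weighted bounds, handles the tails. The time continuity $v\in C([0,T];L^2(\zeta_\gamma))$ follows from the standard Lions--Magenes lemma in the Gelfand triple $H^1(\zeta_\gamma)\subset L^2(\zeta_\gamma)\subset (H^1(\zeta_\gamma))^*$.

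\emph{Uniqueness.} Let $v_1,v_2$ be two solutions and set $d=v_1-v_2$. Test the difference of the weak formulations with $d$; this is justified because of the time-derivative regularity just noted. The advection difference is $\chi\int (A(v_1)-A(v_2))(d\zeta_\gamma)_x$. It is bounded by $\chi L\int|d|(|d_x|+\gamma|d|)\zeta_\gamma$ and absorbed into the dissipation using Young's inequality. The reaction difference is $\le(1+L)\|d\|^2$. Gronwall then gives $d\equiv0$.

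The main obstacle is the uniqueness energy estimate, specifically handling the advection term with only Lipschitz $A$ in the weighted space. It closes precisely because the flux difference is pointwise controlled by $|d|$, so Young's inequality absorbs it into $\int d_x^2\zeta_\gamma$. A secondary point needing care is justifying $\frac{d}{dt}\|d\|^2=2\langle d_t,d\rangle$ in the weighted duality. For that we rely on the Lions--Magenes argument rather than on pointwise regularity.
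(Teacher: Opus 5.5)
Your proposal is correct and follows essentially the same route as the paper: a weighted $L^2(\zeta_\gamma\,dx)$ energy estimate closed by Young and Gr\"onwall, existence by approximation plus Aubin--Lions, and uniqueness via a Gr\"onwall estimate on the difference of two solutions. The only variation is that you test with $d=v_1-v_2$ and use only $|A(s)|\le L|s|$ and the Lipschitz bound, so you need neither the monotonicity of $A$ nor $\gamma\chi<1$; the paper instead tests with $(v_1-v_2)_+\zeta_\gamma$ and uses monotonicity, which costs nothing extra and yields the comparison principle \Cref{thm.CP_P} at once.
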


Let us note that the argument used to establish uniqueness in \Cref{thm.wellposedness_P} readily extends to sub- and super-solutions.  As a result, we easily deduce a comparison principle.

\begin{corollary}[Comparison Principle for a Lipschitz continuous flux $A$]\label{thm.CP_P}
Consider a Lipschitz continuous and nondecreasing flux $A$ with $A(0)=0$, and 
$
\underline{v},\overline{v}\in  V_\gamma
$, given by \eqref{d.V},
a sub- and a super-solution  to \eqref{pde.v}, \textit{i.e.} 
\be\begin{split}
&\underline{v}_t+\chi (A(\underline{v}))_x-\underline{v}_{xx}-(\underline{v}-A(\underline{v}))\leq 0 \quad\text{ and} \\
&\overline{v}_t+\chi (A(\overline{v}))_x-\overline{v}_{xx}-(\overline{v}-A(\overline{v}))\geq 0.
\end{split}\ee
If $ \underline{v}_{\rm in}\leq \overline{v}_{\rm in}$, then $\underline v \leq \overline v$.
\end{corollary}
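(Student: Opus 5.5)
The plan is to mimic the uniqueness argument of \Cref{thm.wellposedness_P}, applied to the difference $\phi := \underline v - \overline v$ and tested against the weighted positive part $\phi_+\zeta_\gamma$. Concretely, subtract the two inequalities in weak form and test against $\phi_+\zeta_\gamma$. This test function is admissible because $\underline v,\overline v\in V_\gamma$, so $\phi_+\in L^2(0,T;H^1(\R,\zeta_\gamma dx))$. For the time term we would use a Steklov average (or the Lions--Magenes lemma in the weighted space) to justify
\[
\int_0^t \langle \phi_t,\phi_+\zeta_\gamma\rangle = \tfrac12\int \phi_+^2(t)\zeta_\gamma - \tfrac12\int \phi_+^2(0)\zeta_\gamma ,
\]
and the last term vanishes since $\underline v_{\rm in}\le \overline v_{\rm in}$.

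Next we estimate the remaining terms. Write $D:=A(\underline v)-A(\overline v)$. Because $A$ is nondecreasing and $L$-Lipschitz, on $\{\phi>0\}$ we have $0\le D\le L\phi_+$, and $D\le 0$ elsewhere.
\begin{itemize}
\item The diffusion term gives
\[
-\int \phi_+^{\,2}{}_x\,\zeta_\gamma\;-\;\int \phi_{+}\phi_{+,x}\,\zeta_\gamma' ,
\]
where we use $|\zeta_\gamma'|\le \gamma\zeta_\gamma$.
\item The advection term, after integration by parts, becomes $\chi\int D\,(\phi_+\zeta_\gamma)_x$. It is bounded by $\chi L\int \phi_+|\phi_{+,x}|\zeta_\gamma + \chi L\gamma\int\phi_+^2\zeta_\gamma$, using $|D|\mathbb 1_{\{\phi>0\}}\le L\phi_+$.
\item The reaction term $\int(\phi - D)\phi_+\zeta_\gamma$ is at most $\int\phi_+^2\zeta_\gamma$, since $D\phi_+\ge0$.
\end{itemize}
Applying Young's inequality to absorb the gradient products into the good diffusion term, we get
\[
\frac{d}{dt}\int\phi_+^2\zeta_\gamma\le C(\chi,L,\gamma)\int\phi_+^2\zeta_\gamma .
\]
Gronwall's inequality then yields $\phi_+\equiv0$, i.e.\ $\underline v\le\overline v$.

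The main obstacle is the rigor of the weak formulation on $\R$: integrating by parts with the unbounded domain and the weight. We would handle this with a cutoff $\psi_R(x)$, letting $R\to\infty$, using that $\phi_+\in L^2H^1(\zeta_\gamma)$ and $D\in L^2(\zeta_\gamma)$ so that boundary contributions vanish. We would also check that the sub-/supersolution inequalities, understood distributionally against nonnegative test functions, extend by density to $\phi_+\zeta_\gamma\psi_R\ge0$. This step is exactly the one already carried out in the uniqueness proof of \Cref{thm.wellposedness_P}, with equalities replaced by inequalities and nonnegativity of the test function preserved.
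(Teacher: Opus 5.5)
Your proposal is correct and follows the paper's route: test the difference inequality against $(\underline v-\overline v)_+\zeta_\gamma$, use the monotonicity and Lipschitz bound of $A$ to control the flux and reaction terms, absorb the gradient products by Young's inequality, and close with Gr\"onwall, exactly as in the uniqueness part of \Cref{thm.wellposedness_P}. The only cosmetic difference is that you bound the weight-derivative flux term by $\chi L\gamma\int\phi_+^2\zeta_\gamma$ rather than absorbing it into $-\int D\phi_+\zeta_\gamma$ via $\gamma\chi<1$, as the paper's constant suggests it does; either choice works.
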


\begin{proof}[Proof of \Cref{thm.wellposedness_P}]
Let us first obtain an energy estimate using \eqref{pde.v} and integrating by parts:
\be
\begin{split}
\frac{1}{2}\frac{d}{dt}\int v^2 \zeta_\gamma
&=
-\int \left(v_x\right)^2\zeta_\gamma -\int vv_x(\zeta_\gamma)_x
+\chi \int v_x A(v) \zeta _\gamma
+\chi \int vA(v) (\zeta_\gamma)_x
+\int v^2\zeta_\gamma
-\int vA(v) \zeta_\gamma.
\end{split}
\ee
Using the fact that $A(0)=0$ and $A$ nondecreasing, we have $vA(v)\geq 0$. Combined with the fact that $|(\zeta_\gamma)_x|\leq \gamma \zeta_\gamma$, this leads to
\be 
\chi \int vA(v) (\zeta_\gamma)_x-\int vA(v) \zeta_\gamma\leq \gamma\chi \int vA(v) \zeta_\gamma-\int vA(v) \zeta_\gamma\leq 0,
\ee
since $\gamma\chi<1$.
Then, again using $A(0)=0$ and the fact that $A$ is Lipschitz continuous, we have that for $s\in \R$,
\be 
\label{bd.A.Lip}
|A(s)|\leq \text{Lip}(A)|s|.
\ee
Using this bound with Young's inequality leads to the energy estimate
\be
\label{bd.energy_est}
\frac{1}{2}\frac{d}{dt}\int v^2 \zeta_\gamma + \frac12 \int \left(v_x\right)^2\zeta_\gamma
\leq  \left(\gamma^2 + \chi^2 \text{Lip}(A)^2 + 1 \right)\int v^2\zeta_\gamma.
\ee
Via a classical Galerkin approximation combined with this energy estimate and the Aubin-Lions lemma, we obtain existence of a solution $v\in V_\gamma$, where $V_\gamma$ is given by \eqref{d.V}. 

Uniqueness follows by obtaining a similar energy estimate \eqref{bd.energy_est}. Consider two solutions $v_1,v_2$ of \eqref{pde.v}, testing $(v_1-v_2)_+\zeta_\gamma$ against the difference equation and using $(v_1-v_2)_+(A(v_1)-A(v_2))\geq 0$ by monotonicity of $A$ yields
\be
\label{bd.energy_est_unique}
\frac{1}{2}\frac{d}{dt}\int (v_1-v_2)_+^2 \zeta_\gamma + \frac12 \int \left((v_1-v_2)_+\right)_x^2\zeta_\gamma
\leq  \left(\gamma^2 + \chi^2 \text{Lip}(A)^2 + 1 \right)\int (v_1-v_2)_+^2\zeta_\gamma.
\ee
Uniqueness is then a simple consequence of Gr\"onwall's inequality.
\end{proof}

\subsubsection{The shape defect function for the nonlocal model}\label{sss.shape-defect-nonlocal}

We prove here that nonnegativity is preserved.  Recall the wave profile function $\eta^P$ from \Cref{l.l.eta_nonloc}.

\begin{proposition}\label{prop.W.pos}
    Suppose that  $P\in V_\gamma$ solves~\eqref{pde.P} and consider the associated shape defect function $W:=-P_{x}-\eta^P(P)$.
    Suppose that 
    \be 
    W_{\rm in}:=-P_{{\rm in},x}-\eta^P(P_{\rm in})\geq 0,
    \ee
    then on $[0,+\infty)\times \R$,
    \be 
W\geq 0.
    \ee
\end{proposition}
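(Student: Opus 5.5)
The plan is to apply \Cref{l.w_eqn} to $v=P$, $A=A^P$, $\eta=\eta^P$. By \Cref{l.l.eta_nonloc}, $Q^P$ is Lipschitz (since $\chi\le (Q^P)'\le\max\{1,\chi\}$), and so is $A^P(s)=(s-1)_+$. Hence $\eta^P=Q^P-\chi A^P$ is Lipschitz as well. Therefore $\tilde W$ satisfies, in the distributional sense, the linear equation \eqref{e.w_eqn}:
\[
\tilde W_t=\tilde W_{zz}+(R(\tilde P)\tilde W)_z+\eta'(\tilde P)\tilde W_z+\eta'(\tilde P)R(\tilde P)\tilde W .
\]
Here the coefficients $R(\tilde P)=c-(Q^P)'(\tilde P)$ and $\eta'(\tilde P)$ are bounded measurable functions. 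Indeed, $R\in[c-\max\{1,\chi\},c-\chi]$ and $|\eta'|\le \max\{1,\chi\}+\chi$. We view $\tilde W$ as a weak solution of a linear parabolic equation in divergence form with bounded drift and bounded zeroth order coefficient. Nonnegativity then follows from a weighted energy estimate on the negative part, done exactly as in the uniqueness part of \Cref{thm.wellposedness_P}.

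The key step is the energy estimate. Test the equation against $\tilde W_-\,\zeta_\gamma$, where $\tilde W_-=\max\{-\tilde W,0\}$; the weight could be shifted to the moving frame, or one could simply use $\zeta_\gamma(\cdot+ct)$, whose derivatives are still bounded by $\gamma\zeta_\gamma$ up to a time derivative term bounded by $c\gamma\zeta_\gamma$. Every term on the right is linear in $\tilde W$, so integration by parts produces the following:
\begin{itemize}
\item the diffusion term gives $-\int|(\tilde W_-)_z|^2\zeta_\gamma$ plus $\gamma\int|\tilde W_-||(\tilde W_-)_z|\zeta_\gamma$;
\item the two drift terms give at most $C\int|\tilde W_-||(\tilde W_-)_z|\zeta_\gamma+C\int\tilde W_-^2\zeta_\gamma$;
\item the reaction term gives at most $C\int \tilde W_-^2\zeta_\gamma$.
\end{itemize}
After Young's inequality we obtain
\[
\frac{d}{dt}\int \tilde W_-^2\zeta_\gamma\le C\int\tilde W_-^2\zeta_\gamma .
\]
Since $\tilde W_-(0)=0$, Gr\"onwall's inequality gives $\tilde W_-\equiv0$. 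No sign or structure of the coefficients is needed, only their boundedness.

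The main obstacle is justifying this computation at the regularity available. We need $\tilde W\in L^2_{\rm loc}H^1$ with $\tilde W_t$ in the dual space, and the weighted integrals must be finite. From $P\in V_\gamma$ and parabolic regularity (as in the proof of \Cref{l.w_eqn}, $P_t,P_{xx}\in L^2_{\rm loc}$), we have $W=-P_x-\eta(P)\in L^2_{\rm loc}H^1_{\rm loc}$. To get global weighted control, I would differentiate \eqref{pde.P}: $-P_x$ solves a divergence-form linear equation with bounded coefficients, namely $\rho_t+\chi(\alpha(P)\rho)_x=\rho_{xx}+(1-\alpha(P))\rho$. This yields a weighted $L^2H^1$ bound for $\rho$ by the same energy method, possibly after starting at a positive time or first approximating $P_{\rm in}$ by smooth data. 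Since $|\eta(P)|\le C|P|$ and $|(\eta(P))_x|\le C|P_x|$, this gives $W\in L^2H^1(\zeta_\gamma)$, and a Lions–Magenes/Steklov-averaging argument then justifies testing with $\tilde W_-\zeta_\gamma$.

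If the time-zero regularity is problematic, I would use a fallback. Approximate $P_{\rm in}$ by smooth data with $W^n_{\rm in}\ge0$, for instance by mollifying $P_{\rm in}$ and using the concavity of $\eta^P$ (so $\eta(\phi*P)\ge\phi*\eta(P)$, hence the mollified data keep $W\ge0$). Then apply the argument to the smooth solutions and pass to the limit using continuous dependence from \Cref{thm.CP_P}.
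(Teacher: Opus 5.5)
Your main argument is correct, and it takes a different route from the paper's. The paper expands $(R(\tilde P)\tilde W)_z$ in \eqref{e.w_eqn} using $\tilde P_z=-\tilde W-\eta(\tilde P)$ and $\eta R=s-A$. This gives the nondivergence form \eqref{e.tildeW}, with reaction terms $(1-\alpha(\tilde P))\tilde W+2Q''(\tilde P)\eta(\tilde P)\tilde W+Q''(\tilde P)\tilde W^2$, and the paper then invokes $Q''\le 0$ from \Cref{l.l.eta_nonloc}. You keep the divergence form and treat \eqref{e.w_eqn} as a linear equation for $\tilde W$ whose coefficients $R(\tilde P)$ and $\eta'(\tilde P)$ are merely bounded; this uses only that $Q^P$ and $A^P$ are Lipschitz. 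Your route is more robust. You never meet $Q''(\tilde P)$, which is unbounded as $\tilde P\to0^+$ when $\chi<1$. You also avoid the quadratic term, which on its own has the unfavorable sign for the negative part: it contributes $-\int Q''(\tilde P)\,\tilde W_-^3\,\zeta_\gamma\ge0$ to $\frac{d}{dt}\frac12\int\tilde W_-^2\zeta_\gamma$. In the paper's form one must therefore regroup it as $Q''(\tilde P)\bigl(\eta(\tilde P)+\tilde\rho\bigr)\tilde W$ and use $\tilde\rho\ge0$, as in \eqref{exp.c}. What the paper's form buys is reuse: the same expanded equation is exactly what \Cref{thm.W.decay} needs in order to apply \Cref{l.Linf_L1}, and there concavity is genuinely required.

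The step that fails is your fallback for rough initial data. For concave $\eta$, Jensen gives $\eta(\phi*P_{\rm in})\ge\phi*\eta(P_{\rm in})$, hence
\[
-(\phi*P_{\rm in})_x-\eta(\phi*P_{\rm in})\le\phi*W_{\rm in},
\]
which is an upper bound, not a lower bound. Mollification really does destroy the property. Take $P_{\rm in}=P_{\rm tw}$, so that $W_{\rm in}\equiv0$. The mollified datum has shape defect $\phi*\eta(P_{\rm tw})-\eta(\phi*P_{\rm tw})\le0$, and this is strictly negative near the point where $P_{\rm tw}=1$; for $\chi>1$ this comes from the corner of $\eta^P=\chi\min\{1,s\}$. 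So no approximation that averages translates of $P_{\rm in}$ can work. Starting at a positive time $t_1$ does not help either, since you would then need $\|\tilde W_-(t_1)\|_{L^2(\zeta_\gamma)}\to0$ as $t_1\to0$, which is the same issue. The simple fix is to assume $\rho_{\rm in}\in L^2(\zeta_\gamma)$, which holds in all the applications since there $\rho_{\rm in}$ is bounded. Then your energy argument for the linear $\rho$-equation (justified via difference quotients) holds from $t=0$. It gives $\tilde W\in L^2(0,T;H^1(\zeta_\gamma))$ with $\tilde W_t$ in the dual space, hence $\tilde W\in C([0,T];L^2(\zeta_\gamma))$, and your Gr\"onwall argument runs with $\tilde W_-(0)=0$. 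The paper does not address this point either.
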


\begin{proof}
   We work in the moving frame $(t,z)=(t,x-ct)$, with $c=c_*(\chi)$ as in~\eqref{e.c_star_chi}, so that we may directly apply \Cref{l.w_eqn}, which is stated in that frame. Therefore, as $A^P$ and $\eta^P$ are Lipschitz continuous, $\tilde{W}(t,z)=W(t,z+ct)$ satisfies \eqref{e.w_eqn}. By using that $R+\eta'=c-\chi A'$ and differentiating $\eta(s)R(s)=s-A(s)$, we may rewrite the equation as follows
\be 
\label{e.tildeW}
\tilde W_t +\left( \chi \alpha(\tilde P)-c\right) \tilde W_z = \tilde{W}_{zz} +(1-\alpha(\tilde P)) \tilde W+2Q''(\tilde P)\eta(\tilde{P}) \tilde{W}+Q''(\tilde P) \tilde{W}^2.
\ee
Since $Q''\leq 0$, we have that $Q''(\tilde P) \tilde{W}^2 \leq 0$ and thus by a classical argument, we can show that the $L^2$-norm of the negative part satisfies a Gr\"onwall inequality, from which we conclude that $\tilde{W}(t,\cdot)\geq 0 $  for all $t\geq 0$.
\end{proof}

Now we introduce a weighted version of the shape defect function,
\be 
\label{d.hatW}
\hat{W}(t,z)=e^{\nicefrac{z}{\chi_{\lowvee}}}\tilde{W}(t,z),
\ee
where $\chi_{\lowvee}= \max\{ 1, \chi \}$. This weighted version has a nice decay property, which is a key property used in the proof of the upper bound on $\rho$ (\Cref{c.rhoLinfty}) and enables us to control the $L^\infty$-norm of $\rho$ in the go region $\{P\geq 1\}$. 

\begin{proposition}\label{thm.W.decay}
    Suppose that  $P\in V_\gamma$ solves~\eqref{pde.P}, that $\hat W$ is given by~\eqref{d.hatW}, and that 
    \be \int P_{\rm in}(x)e^{\nicefrac{x}{\chi_{\lowvee}} }dx<+\infty.
    \ee
    Then, there exists $C>0$ such that 
	\be
    \label{bd.W.decay}
		\left\| \sqrt{t}\hat{W} \right\|_{L^\infty([0,+\infty)\times\R)} \leq  C\left\| \hat{W}_{\rm in} \right\|_{L^1(\R)} .
	\ee
\end{proposition}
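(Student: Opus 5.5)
The plan is to derive a closed parabolic equation for $\hat W$ whose zeroth-order coefficient is nonpositive and whose drift has a favourable one-signed derivative. From there, Nash's argument gives the $L^1\to L^\infty$ decay at rate $t^{-1/2}$. I work under the standing assumption $W_{\rm in}\ge 0$, so that $\tilde W\ge0$ by \Cref{prop.W.pos}. Without it, I would run the same argument on $\hat W_\pm$, which is an obstacle I have not resolved.

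\textbf{Step 1: equation for $\hat W$.} I start from \eqref{e.tildeW} and set $\mu:=1/\chi_{\lowvee}$, writing $\tilde W=e^{-\mu z}\hat W$. A direct computation gives
\be
\hat W_t + b\,\hat W_z = \hat W_{zz} + \Big(\mu^2+\mu(\chi\alpha(\tilde P)-c) + 1-\alpha(\tilde P)\Big)\hat W + \big(2Q''(\tilde P)\eta(\tilde P)+Q''(\tilde P)\tilde W\big)\hat W,
\ee
where $b:=\chi\alpha(\tilde P)-c+2\mu$. Checking the bracket case by case:
\begin{itemize}
\item If $\chi\ge1$, so $c=\chi+\chi^{-1}$ and $\mu=\chi^{-1}$, the bracket vanishes both on $\{\alpha=1\}$ and on $\{\alpha=0\}$.
\item If $\chi<1$, so $c=2$ and $\mu=1$, the bracket equals $0$ on $\{\alpha=0\}$ and $\chi-1<0$ on $\{\alpha=1\}$.
\end{itemize}
The last group of terms is $\le0$ because $Q''\le0$ (\Cref{l.l.eta_nonloc}), $\eta\ge0$ and $\tilde W\ge0$. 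Hence $\hat W\ge0$ is a subsolution of
\be
\hat W_t + b\,\hat W_z \le \hat W_{zz}.
\ee
The key structural fact is that $b_z=\chi\,\partial_z\alpha(\tilde P)\le 0$ as a measure. Indeed, $\tilde P$ is nonincreasing because $-\tilde P_z=\tilde W+\eta(\tilde P)\ge 0$, so $\alpha(\tilde P)$ jumps from $1$ to $0$ at the front.

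\textbf{Step 2: $L^1$ and $L^2$ dissipation.} Integrating by parts,
\be
\frac{d}{dt}\int\hat W \le \int b_z\hat W\le 0
\qquad\text{and}\qquad
\frac{d}{dt}\int \hat W^2\le -2\int\hat W_z^2+\int b_z\hat W^2\le -2\int \hat W_z^2 .
\ee
I then combine the monotonicity of $\|\hat W\|_1$ with Nash's inequality $\|f\|_2^{6}\le C\|f\|_1^4\|f_z\|_2^2$. This gives the ODE inequality $\frac{d}{dt}\|\hat W\|_2^2\le -C\|\hat W\|_2^6/\|\hat W_{\rm in}\|_1^4$, and hence
\be
\|\hat W(t)\|_2\le C t^{-1/4}\|\hat W_{\rm in}\|_1 .
\ee

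\textbf{Step 3: $L^2\to L^\infty$ by duality.} Let $S(t,s)$ denote the solution operator of the linear equation $f_t+bf_z=f_{zz}$, with $b$ frozen from the given solution. Its adjoint solves the backward equation $\varphi_t + \varphi_{zz} + (b\varphi)_z=0$. Running the same computation backward in time, the sign $b_z\le 0$ again gives $L^1$ contraction and $L^2$ dissipation for the adjoint: the cross term is $\int (b\varphi)_z\varphi=\tfrac12\int b_z\varphi^2\le0$. So the adjoint also satisfies the $L^1\to L^2$ bound with rate $t^{-1/4}$. By duality $S$ maps $L^2\to L^\infty$ with norm $\le Ct^{-1/4}$. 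Splitting the time interval $[0,t]$ at $t/2$ then gives
\be
\|\hat W(t)\|_\infty\le Ct^{-1/2}\|\hat W_{\rm in}\|_1
\ee
for all $t>0$, which is \eqref{bd.W.decay}. The comparison $\hat W\le S\hat W_{\rm in}$ is justified by the maximum principle for the linear equation.

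\textbf{Main obstacle.} The hardest step is rigor. The drift $b$ is discontinuous, with a delta-like $b_z$ located at the free boundary $\tilde P=1$. Moreover, $\hat W$ has only the regularity provided by $V_\gamma$ and \Cref{l.w_eqn}, and the weight $e^{\mu z}$ is unbounded, so boundary terms at $z=+\infty$ must vanish. My first attempt would be to regularize $\alpha$ by a smooth nondecreasing $\alpha_\varepsilon$ composed with $\tilde P$, which keeps $b^\varepsilon_z\le0$. I would then cut off the weight $e^{\mu z}$ by truncating at large $z$, run the estimates above uniformly in the parameters, and pass to the limit. Finiteness of the right-hand side $\|\hat W_{\rm in}\|_1$ is related to the assumption $\int P_{\rm in}e^{\mu x}<\infty$ through $\tilde W=-\tilde P_z-\eta(\tilde P)$ and an integration by parts; I would verify this relation in the same step.
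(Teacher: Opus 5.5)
There is a genuine gap. The proposition makes no sign assumption on $W_{\rm in}$, and the paper's proof explicitly treats the general case. Your argument uses $\tilde W\ge 0$ in two essential places, so as written it proves only the special case $W_{\rm in}\ge 0$. First, you obtain that $\tilde P$ is nonincreasing, and hence $b_z\le 0$, from $-\tilde P_z=\tilde W+\eta(\tilde P)\ge 0$. Second, for $\chi\in[0,1)$ you need $\tilde W\ge0$ to discard $Q''(\tilde P)\tilde W\hat W$ and treat $\hat W$ as a nonnegative subsolution. Your fallback of running the argument on $\hat W_\pm$ does not work. Since $Q''(\tilde P)\tilde W\hat W=Q''(\tilde P)\tilde W^2e^{z}\le 0$ whatever the sign of $\hat W$, this term is a \emph{source} for $-\hat W$, so $\hat W_-$ is not a subsolution of the drift--diffusion equation. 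For $\chi\ge1$ only the first issue arises: there $Q''\equiv0$, and the $\hat W$ equation is exactly linear with no zeroth-order term.

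The missing idea is to use positivity of the density, $\tilde\rho=\tilde W+\eta(\tilde P)\ge 0$, in place of $\tilde W\ge 0$. This gives $\tilde P_z=-\tilde\rho\le0$ directly. It also lets you eliminate $\tilde W$ from the zeroth-order coefficient. Substitute $\tilde W=-\tilde P_z-\eta(\tilde P)$ and use $R=c-Q'$, $(R\eta)'=1-\alpha$ and $R+\eta'=2-\chi\alpha$. For $\chi\in[0,1)$ one gets
\be
\hat W_t+\chi\alpha(\tilde P)\hat W_z+b\,\hat W=\hat W_{zz},\qquad b=\big(R(\tilde P)-1\big)\big(1-\eta'(\tilde P)\big)+Q''(\tilde P)\,\tilde P_z\ \ge\ 0.
\ee
The sign holds because $R$ is nondecreasing with $R(0)=1$, $\eta'\le\eta'(0^+)=1$, and both $Q''\le 0$ and $\tilde P_z\le 0$. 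This is the paper's computation~\eqref{exp.c}; its last term should read $+Q''(\tilde P)\tilde P_z$, which is the nonnegative one.

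The equation is now genuinely linear in $\hat W$, with $a_z\le 0$ and $b\ge 0$ regardless of the sign of $\hat W$. You can therefore split $\hat W_{\rm in}$ into positive and negative parts, solve with each, and apply your Steps 2--3. Those steps go through unchanged with a nonnegative zeroth-order term, for the adjoint as well. With that repair, your duality argument in Step 3 is a legitimate alternative to the paper's route in \Cref{l.Linf_L1}. The paper instead combines the $L^\infty$ contraction from the maximum principle with the Gagliardo--Nirenberg inequality at a time $s\in[t/2,t]$.
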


In place of proving the \Cref{thm.W.decay} in this specific setting, we show that it is a consequence of a general lemma that we state now, but whose proof is momentarily postponed.

\begin{lemma}[$L^1$-$L^\infty$ bound]
\label{l.Linf_L1} Consider $a(t,x)\in L^\infty([0,+\infty)\times \R)$ and $b(t,x)\in L^\infty_{\rm loc}((0,+\infty)\times \R)$ satisfying
 \be 
a_x\leq 0, \quad \text{ and }\quad b\geq 0.
    \ee
Suppose that $g$ is a solution to
    \be 
g_t-g_{xx}+a(t,x)g_x+b(t,x)g=0,
    \ee
    with initial datum $g_{\rm in}\in L^1(\R)$.
    
    Then, there exists a universal constant $C>0$ independent of $a$ and $b$, such that for all $t>0$,
    \be 
 \left\| \sqrt{t}g \right\|_{L^\infty([0,+\infty)\times\R)} \leq  C\left\| g_{\rm in} \right\|_{L^1(\R)}
    \ee
    Moreover, if $g_{\rm in}\in L^2(\R)$, then for all $T>0$, there exists $C_T>0$ such that 
    \be 
    \label{bd.nash.l2dx}
    \| g_x\|_{L^2([0,T]\times\R)}
 \leq C_T\| g_{\rm in}\|_{L^2(\R)}.
    \ee
\end{lemma}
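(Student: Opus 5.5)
The plan is to run Nash's classical argument. First derive an $L^1$ contraction and an $L^2$ energy dissipation that use only the structural signs $a_x\le 0$ and $b\ge 0$. Then combine them, via the one-dimensional Nash inequality, into an $L^1\to L^2$ decay of order $t^{-1/4}$. Finally, apply the same reasoning to the adjoint (backward) equation and use duality to upgrade $L^1\to L^2$ into $L^1\to L^\infty$ of order $t^{-1/2}$. None of the constants will depend on $a$ or $b$, only on the signs.

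\emph{Step 1 ($L^1$ and $L^2$ bounds).} Write $a g_x=(ag)_x-a_xg$, so that $g_t=g_{xx}-(ag)_x+(a_x-b)g$. Testing against $\operatorname{sgn}(g)$ (via a Kato-type regularization $\sqrt{g^2+\delta}$) gives $\frac{d}{dt}\int|g|\le\int(a_x-b)|g|\le 0$, because the flux term integrates to zero. Testing the original equation against $g$ gives
\[
\frac12\frac{d}{dt}\int g^2=-\int g_x^2-\int a\,\bigl(\tfrac{g^2}{2}\bigr)_x-\int bg^2=-\int g_x^2+\frac12\int a_xg^2-\int bg^2\le-\int g_x^2 .
\]
Integrating in time immediately yields \eqref{bd.nash.l2dx}, with $C_T$ in fact independent of $T$.

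\emph{Step 2 (Nash).} Let $M=\|g_{\rm in}\|_{L^1}\ge\|g(t)\|_{L^1}$ and $E(t)=\|g(t)\|_{L^2}^2$. The one-dimensional Nash inequality $\|f\|_2^2\le C\|f\|_1^{4/3}\|f_x\|_2^{2/3}$ turns Step 1 into $E'\le -cE^3/M^4$. Integrating this differential inequality gives $E(t)\le CM^2t^{-1/2}$, that is, $\|g(t)\|_{L^2}\le CMt^{-1/4}$.

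\emph{Step 3 (duality).} Let $S(t,s)$ denote the solution operator from time $s$ to time $t$. Its adjoint is the solution operator of the backward equation $-h_t-h_{xx}-(ah)_x+bh=0$. In reversed time $\tau$ this reads $h_\tau=h_{xx}+(ah)_x-bh$, whose $L^1$ norm is nonincreasing (the flux integrates out and $-b\le0$). Its $L^2$ energy satisfies
\[
\frac12\frac{d}{d\tau}\int h^2=-\int h_x^2+\frac12\int a_xh^2-\int bh^2\le -\int h_x^2,
\]
using $\int(ah)_xh=-\int a\,(h^2/2)_x=\frac12\int a_x h^2$, and this is again $\le 0$ by $a_x\le0$. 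So Step 2 applies verbatim to the adjoint, and $\|S(t,s)^*\|_{L^1\to L^2}\le C(t-s)^{-1/4}$. By duality, $\|S(t,s)\|_{L^2\to L^\infty}\le C(t-s)^{-1/4}$. Composing $S(t,0)=S(t,t/2)S(t/2,0)$ then gives $\|g(t)\|_{L^\infty}\le C t^{-1/2}\|g_{\rm in}\|_{L^1}$, which is the first claim.

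\emph{Main obstacle.} The expected difficulty is the justification rather than the estimates. The coefficient $b$ is only locally bounded (it may blow up as $t\to0$), and $a$ is merely bounded with $a_x\le0$ in the distributional sense. The integrations by parts and the duality pairing $\langle S(t,s)f,h\rangle=\langle f,S(t,s)^*h\rangle$ therefore need care. I would first mollify $a$ and $b$, preserving $a_x\le0$ and $b\ge0$ (mollification keeps both signs). For the mollified problem all the computations above are classical, and the constants are universal. I would then pass to the limit using the uniform energy bound of Step 1 together with uniqueness of weak solutions, handling $b$ near $t=0$ by working on $[t_0,\infty)$ and letting $t_0\to0$. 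Since $b\ge0$ only helps, this limiting procedure should go through.
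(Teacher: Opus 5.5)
Your proof is correct. Your Steps 1--2 match the paper's Steps 2--3: the $L^1$ contraction, the energy identity giving the $L^2$ gradient bound, and Nash's inequality giving $\|g(t)\|_{L^2}\le Ct^{-1/4}\|g_{\rm in}\|_{L^1}$.

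The genuine difference is how you upgrade from $L^2$ to $L^\infty$.

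\emph{The paper's route.} It stays with the forward equation throughout.
\begin{itemize}
\item It uses the maximum principle, which needs only $b\ge 0$, to see that $\|g(t)\|_{L^\infty}$ is nonincreasing.
\item It integrates the energy inequality over $[t/2,t]$ to select a time $s\in[t/2,t]$ with $\|g_x(s)\|_{L^2}\le Ct^{-3/4}\|g_{\rm in}\|_{L^1}$.
\item It then applies the one-dimensional Gagliardo--Nirenberg (Agmon) inequality $\|f\|_\infty^2\le\|f\|_2\|f_x\|_2$ at that time.
\end{itemize}

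\emph{Your route.} You run the classical Nash duality argument, and your key observation is right. The backward equation $h_\tau=h_{xx}+(ah)_x-bh$ is in divergence form, so its $L^1$ norm is nonincreasing with no condition on $a$. Its $L^2$ dissipation again uses exactly $a_x\le 0$. Duality together with $S(t,0)=S(t,t/2)S(t/2,0)$ then gives the $t^{-1/2}$ rate.

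\emph{What each buys.} The paper's route is lighter to make rigorous with rough, time-dependent coefficients. It needs no adjoint problem and no pairing identity $\langle S(t,s)f,h\rangle=\langle f,S(t,s)^*h\rangle$. However, it relies on an interpolation inequality special to one space dimension. Yours does not use the maximum principle and carries over to higher dimensions, with $a_x\le 0$ replaced by $\nabla\cdot a\le 0$. The price is the approximation argument you describe for the adjoint problem.

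One small technical point: working on $[t_0,\infty)$ does not by itself make $b$ bounded, because $b$ is only locally bounded in $x$ as well. Truncating to $\min\{b,N\}$, as the paper does, handles both $t\to 0$ and $|x|\to\infty$ at once while keeping the sign $b\ge 0$.
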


Let us observe that \eqref{bd.nash.l2dx} is only briefly used in the sequel to show that $w$ the shape defect function associated with $u$ is in $L^2_{\rm{loc}}([0,+\infty);H^1(\R,\zeta_\gamma dx))$, which we then use to observe that the solution of the local model satisfies the Rankine-Hugoniot condition \eqref{e.RH_u} almost everywhere.

Let us now check that \Cref{l.Linf_L1} applies to $\hat W$.

\begin{proof}[Proof of \Cref{thm.W.decay}]
Let us note that, up to approximation, we may assume that $W_{\rm in}$ is continuous and bounded.  Using the arguments developed below, it is easy to check that these properties persist forward in time.

In the $\chi \geq 1$ case, we find
\be 
\label{e.hatWpush}
\hat{W}_t + \underbrace{\left( \chi \alpha(\tilde P)-\chi+\frac{1}{\chi}\right) }_{a(t,z):=}\hat{W}_z =  \hat{W}_{zz}, 
\ee
Recall that $\alpha = \1_{[1,\infty]}$ so that $a_z = \chi \alpha'(\tilde P)\tilde P_z \leq 0$.  Hence \Cref{l.Linf_L1} applies.

In the $\chi \in [0,1)$ case, we find
\be 
\label{e.hatWpull}
\hat{W}_t + \underbrace{ \chi \alpha(\tilde P) }_{=:a(t,z)}\hat{W}_z + \underbrace{ \left((1-\chi)\alpha(\tilde P)-2Q''(\tilde P)\eta(\tilde P)-Q''(\tilde{P})\tilde{W}\right) }_{=: b(t,z)}\hat{W}=  \hat{W}_{zz}.
\ee
Clearly $a$ is bounded and nonincreasing since $\alpha$ is bounded and nondecreasing and $\tilde P$ is nonincreasing.  Hence, we need only consider $b$. Let us recall from \Cref{l.l.eta_nonloc} that for $\chi\geq 1$, $Q''\equiv 0$ and thus $b=(1-\chi)\alpha(\tilde P)\in L^\infty([0,+\infty)\times \R)$. For $\chi\in [0,1)$, $Q''(0^+)=-\infty $, but is elsewhere bounded.
Yet, $\tilde{P}(t,z)=\int_z^\infty\tilde{\rho}(t,y)dy$ is clearly locally bounded from below by a positive constant since $\tilde{\rho}(t,z)> 0$ for any $t>0$ and $z\in \R$, and thus $b\in L^\infty_{\rm{loc}}((0,+\infty)\times \R)$. 
If we had additionally assumed that $W_{\rm in}\geq 0$, we would be finished at this point, since $Q''\leq 0$, according to  \Cref{l.l.eta_nonloc}. Let us show how to proceed without the additional assumption on $W_{\rm in}$.

We first rewrite $b$:
\be 
\begin{aligned}
    b(t,z)&=(1-\chi)\alpha(\tilde P)-2Q''(\tilde P)\eta(\tilde P)-Q''(\tilde{P})\tilde{W}
    \\
    &=(1-\chi)\alpha(\tilde P)-Q''(\tilde P)\eta(\tilde P)-Q''(\tilde{P})\tilde{P}_z \quad &(\text{using }\tilde{W}=-\tilde P_z-\eta(\tilde{P}))
    \\ 
    &=(1-\chi)\alpha(\tilde P)+R'(\tilde P)\eta(\tilde P)-Q''(\tilde{P})\tilde{P}_z
    \quad &(\text{using }Q''=-R')
    \\ 
    &=(1-\chi)\alpha(\tilde P)+1-\alpha(\tilde P)-R(\tilde P)\eta'(\tilde P)-Q''(\tilde{P})\tilde{P}_z
    \quad &(\text{differentiating }R(s)\eta(s)=s-A(s))
    \\ 
    &=1-\chi\alpha(\tilde P)-R(\tilde P)\eta'(\tilde P)-Q''(\tilde{P})\tilde{P}_z
    \\ 
    &=-1+R(\tilde P)+\eta'(\tilde P)-R(\tilde P)\eta'(\tilde P)-Q''(\tilde{P})\tilde{P}_z 
    \quad &(\text{using }R=2-\chi \alpha -\eta')
    \\ 
    &=\left(R(\tilde P)-1\right)\left(1-\eta'(\tilde P)\right)-Q''(\tilde{P})\tilde{P}_z .
\end{aligned}
\label{exp.c}
\ee
By \Cref{l.l.eta_nonloc}, $-Q'' \tilde P_z \geq 0$.  Hence, we consider only the terms in parentheses.  Since $\eta$ is concave and $\eta'(0) = 1$, we immediately see that $\eta'(\tilde P) \leq 1$.  For the other term, notice that $R$ is nondecreasing due to the concavity of $Q$ (recall \Cref{l.l.eta_nonloc}).  Hence
\be
    R(\tilde P) - 1
    \geq R(0) - 1
    = 0.
\ee
It follows that $b\geq 0$ and \Cref{l.Linf_L1} applies, which completes the proof.
\end{proof}

We conclude this section by giving the proof of \Cref{l.Linf_L1}, which is based on a standard application of the Nash inequality. 

\begin{proof}[Proof of \Cref{l.Linf_L1}] If $b\geq 0$ is only locally bounded, the result follows by approximation with the truncations $b^N=\min\{b,N\}$. Hence, without loss of generality, we assume $b\in L^\infty([0,+\infty)\times\R)$ in the proof. Observe that, unlike $b$, the coefficient $a$ cannot be truncated (\textit{e.g.} $a^N=\min\{\max\{a,-N\},N\}$) without affecting the sign condition $a_x\leq 0$. For this reason, global boundedness of $a$ is assumed from the outset.

\noindent
    {\bf \# Step (1): $L^\infty$-contraction. } 
Since $b\geq 0$, observe that the parabolic maximum principle yields that the $L^\infty$-norm of $g$ is decreasing in time.

\noindent
    {\bf \# Step (2): $L^1$-contraction. } 
Momentarily consider the case where $g_{\rm in}\geq 0$, and therefore $g \geq 0$. Integration by parts gives
\be 
\frac{d}{dt}\int_{\R} g(t,x)\,dx= \int_{\R} a_x(t,x) g(t,x)dx-\int_{\R} b(t,x)g(t,x)dx
\leq  0,
\ee
as $a_x\leq 0$, $b\geq 0$, and $g\geq 0$. Hence,
\be 
\label{bd.L1_contract_pos}
\|g(t)\|_{L^1(\R)} \leq \|g_{\rm in}\|_{L^1(\R)}.
\ee 
For general initial data, let $p$ and $n$ solve the same equation with initial data $p(0,\cdot)=(g_{\rm in})_+$ and $n(0,\cdot)=(g_{\rm in})_-$. Then $p,n\geq 0$ and by linearity
$g=p-n$, so $|g|\le p+n$. Using \eqref{bd.L1_contract_pos} for $p$ and $n$,
\be
\label{bd.L1_contract}
\|g(t)\|_{L^1(\R)}\leq \|p(t)\|_{L^1(\R)}+\|n(t)\|_{L^1(\R)}
\leq  \|p(0)\|_{L^1(\R)}+\|n(0)\|_{L^1(\R)}
=\|g_{\rm in}\|_{L^1(\R)}.
\ee

\noindent
    {\bf \# Step (3): $L^1$-$L^2$ bound using Nash inequality.} 
    Multiplying the equation by $g$ and integrating over $\R$, we obtain
\be
\label{bd.energy}
\frac{d}{dt}\int_{\R} g^2 dx + 2\int_{\R} g_x^2 dx
= \int_{\R} a_x g^2 dx - 2\int_{\R} b g^2 dx \leq 0,
\ee
using $a_x\leq 0$ and $b\geq 0$. \eqref{bd.nash.l2dx} is a direct consequence of this bound. Furthermore, using the Nash inequality, we obtain the differential inequality 
	\be
		\frac{d}{dt} \int g^2 dx
			\leq -2\int_{\R} g_x^2 dx \leq  - \frac{1}{C} \frac{\left( \int g^2 dx\right)^3}{\left( \int |g|\right)^4}
			\leq - \frac{1}{C} \frac{\left( \int g^2 dx\right)^3}{\left( \int| g_{\rm in}| dx\right)^4},
	\ee
	which can easily be solved and leads to
    \be 
\int g^2 dx \leq \frac{C}{\sqrt{t}}\left(\int |g_{\rm in}| dx\right)^2.
    \ee
    
\noindent
    {\bf \# Step (4): $L^1$-$L^\infty$ bound using Gagliardo-Nirenberg inequality.}
    Integrating \eqref{bd.energy} in time leads to
\be 
\int_{\frac{t}{2}}^t  \int g_x(t,x)^2 dx dt  \leq \int g\left( \frac{t}{2},x \right)^2 dx \leq \frac{C}{\sqrt{t}} \| g_{\rm in}\|_{L^1(\R)}^2
\ee
By the mean value theorem, there exists  $s\in \left[ \frac{t}{2},t\right]$, such that:
\be 
 \frac{t}{2}\| g_x(s)\|_{L^2(\R)}^2\leq  \frac{C}{\sqrt{t}} \| g_{\rm in}\|_{L^1(\R)}^2,
 \ee
 which may be rewritten as
 \be 
 \| g_x(s)\|_{L^2(\R)}\leq \frac{C}{t^{\frac{3}{4}}}  \| g_{\rm in}\|_{L^1(\R)}.
\ee 
Then, using the Gagliardo-Nirenberg inequality and the fact that the $L^\infty$-norm is decreasing, we have
\be
 \| g(t)\|_{L^\infty(\R)}^2 
 \leq  \| g(s)\|_{L^\infty(\R)}^2 
 \leq \| g(s)\|_{L^2(\R)}\| g_x(s)\|_{L^2(\R)}
 \leq \frac{C}{s^{\frac{1}{4}}}   \| g_{\rm in}\|_{L^1(\R)} \frac{C}{t^{\frac{3}{4}}}  \| g_{\rm in}\|_{L^1(\R)}
 \leq \frac{C}{t}   \| g_{\rm in}\|_{L^1(\R)}^2,
\ee
which finishes the proof.
\end{proof}

\subsection{The Local Model.}
\label{ss.loc.wp}

\subsubsection{Approximating $A^u$}
Before diving into the well-posedness theory, we define a Lipschitz approximation of $A^u$ and state some of the key properties of its profile function.  This approximation is used in the construction of solutions, and the properties of the profile function are key to establishing the nonnegativity of the shape defect function for the local problem. We define the approximation:
\be\label{def.A.eps}
    A^\varepsilon(s)
    = \left\{\begin{array}{ll}
0 & \text{ if } s\in [0,1-\varepsilon],\\
\frac{1}{\varepsilon}(s-(1-\varepsilon)) & \text{ if } s\in [1-\varepsilon,1].
\end{array}
\right.
\ee
We now state a technical lemma related to $A^\eps$.

\begin{lemma}[Approximation of $\eta^u$]
\label{l.approx_eta_loc}
For $\chi \geq 1$ and any $s\in [0,1]$, set
\be 
\eta^\e(s):=\chi(s-A^\e(s)).
\ee
For $\chi \in [0,1)$, we define
\be 
    \psi_*^{\e,+}
        = \frac{(\chi-2\e)+ \sqrt{(\chi-2\e)^2+4\e(1-\e)}}{2}
    \quad\text{ and}\quad
    m_\e
    = \frac{\psi_*^{\e,+}}{\e},
\ee
as well as $k_\e$, which is defined implicitly as the unique solution in $(-\infty,-\log(1-\e))$ to the equation
\be 
\label{d.keps}
e^{-k^\e}\eta^u\left(e^{k^\e}(1-\e)\right) = \psi_*^{\e,+}.
\ee
For $s\in [0,1]$, let
\be \label{e.eta_eps_e_k_eps}
\eta^\e(s):=\begin{cases}
e^{-k^\e}\eta^u\left(e^{k^\e}s\right)  &  \text{ if }s\in [0,1-\e], \\
    m_\e(1-s) & \text{ if }s\in (1-\e,1],
\end{cases}
\ee
where we recall $\eta^u$ from \Cref{l.l.eta_loc}.

Then, $\eta^\varepsilon$ is wave profile function associated to~\eqref{pde.general_tw} with the flux $A^\eps$ and the speed $c_*(\chi)$, which is defined in~\eqref{e.c_star_chi}.  
In addition, $Q^\e:=\eta^\e+\chi A^\e$ is concave, 
\be 
\label{bd.eta_e}
    \eta^\varepsilon\leq\eta^u
    \text{ for $\eps$ sufficiently small }
    \qquad\text{ and }\qquad
    \eta^\e \xrightarrow[\e \to 0]{C^1([0,1-\delta]) } \eta^u
\ee
for any $\delta\in(0,1)$. 
\end{lemma}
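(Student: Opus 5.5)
The proof is a direct verification of the ODE~\eqref{ode.eta} on each of the two pieces where $A^\e$ is smooth, followed by checks of continuity at $s=1-\e$, concavity of $Q^\e$, and the comparison and convergence statements. The case $\chi\geq 1$ is immediate: the identity $\eta=\chi(s-A(s))$ solves~\eqref{ode.eta} with $c=\chi+\chi^{-1}$ for every flux $A$ (this is the computation behind~\eqref{e.eta_and_Q_expressions}). It then remains to observe:
\begin{itemize}
\item $Q^\e=\chi s$ is linear, hence concave;
\item $A^\e\geq A^u$ pointwise (up to the single point $s=1$), so $\eta^\e\leq\eta^u$;
\item $\eta^\e=\eta^u$ on $[0,1-\e]$, so the $C^1([0,1-\delta])$ convergence is trivial once $\e<\delta$.
\end{itemize}
So I focus on $\chi\in[0,1)$, where $c=2$.

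\textbf{The ODE on each piece.} On $[0,1-\e]$ we have $A^\e=0$, and~\eqref{ode.eta} reads $2\eta-\eta\eta'=s$. The rescaling $\eta_k(s)=e^{-k}\eta^u(e^ks)$ has $\eta_k'(s)=(\eta^u)'(e^ks)$. Hence
\[
2\eta_k-\eta_k\eta_k'=e^{-k}\bigl(2\eta^u-\eta^u(\eta^u)'\bigr)(e^ks)=s,
\]
provided $e^ks<1$. This is exactly why we need $k^\e<-\log(1-\e)$.

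On $(1-\e,1]$ we have $A^{\e\prime}=1/\e$, and the right-hand side of~\eqref{ode.eta} is
\[
s-\frac{s-(1-\e)}{\e}=\frac{1-\e}{\e}(1-s).
\]
Inserting $\eta=m(1-s)$ gives the condition $m(2+m-\chi/\e)=(1-\e)/\e$. With $m=\psi/\e$ this becomes
\[
\psi^2+(2\e-\chi)\psi-\e(1-\e)=0,
\]
whose positive root is $\psi_*^{\e,+}$. The boundary condition $\eta^\e(1)=0$ holds automatically.

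\textbf{Matching at $s=1-\e$.} Continuity is exactly~\eqref{d.keps}, since $m_\e\e=\psi_*^{\e,+}$. For existence and uniqueness of $k^\e$, write
\[
e^{-k}\eta^u\bigl(e^k(1-\e)\bigr)=(1-\e)\,\frac{\eta^u(\sigma)}{\sigma},\qquad \sigma=e^k(1-\e)\in(0,1).
\]
By concavity of $\eta^u$ and $\eta^u(0)=0$ (\Cref{l.l.eta_loc}), the map $\sigma\mapsto\eta^u(\sigma)/\sigma$ is strictly decreasing from $1$ to $\chi$. So it suffices to check $\chi(1-\e)<\psi_*^{\e,+}<1-\e$, which follows from the expansion $\psi_*^{\e,+}=\chi+O(\e)$ for small $\e$ (with a separate check when $\chi=0$).

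\textbf{Concavity of $Q^\e$.} Concavity inside each piece comes from concavity of $\eta^u$ and linearity on $(1-\e,1]$. At the junction we need the slope of $Q^\e$ not to increase:
\[
(\eta^u)'(\sigma_\e)\;\geq\;-m_\e+\frac{\chi}{\e}=\frac{\chi-\psi_*^{\e,+}}{\e},\qquad \sigma_\e:=e^{k^\e}(1-\e).
\]
Expanding, $\psi_*^{\e,+}=\chi+\e(\chi^{-1}-2)+O(\e^2)$ for $\chi>0$, so the right-hand side tends to $2-\chi^{-1}=(\eta^u)'(1^-)$. Since $(\eta^u)'$ is decreasing, the inequality reduces to comparing $\sigma_\e$ with $1$ at second order.

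\textbf{The bound $\eta^\e\leq\eta^u$.} The same second-order information decides this. On $[0,1-\e]$, $\eta^\e(s)=s\,\eta^u(e^ks)/(e^ks)\leq\eta^u(s)$ as soon as $k^\e\geq0$. Since $\eta^u(\sigma)/\sigma$ is decreasing, $k^\e\geq0$ is equivalent to $\psi_*^{\e,+}\leq\eta^u(1-\e)$. On $(1-\e,1]$, the linear piece lies below $\eta^u$ because $\eta^\e$ is below $\eta^u$ at $s=1-\e$, while $\eta^\e(1)=0<\chi=\eta^u(1^-)$, and $\eta^u$ is concave.

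\textbf{Main obstacle.} The delicate step is precisely this expansion. Both $\psi_*^{\e,+}$ and $\eta^u(1-\e)$ equal $\chi+\e(\chi^{-1}-2)+O(\e^2)$, so they agree to first order. I will therefore expand both to order $\e^2$, using $(\eta^u)''(1^-)=-(1-\chi)^2/\chi^3$ from~\eqref{value.eta_u''} and the Taylor expansion of the square root in $\psi_*^{\e,+}$, and compare the $\e^2$ coefficients. This comparison should give both $k^\e\geq0$ and the concavity inequality at the junction. The case $\chi=0$, where $(\eta^u)'(1^-)=-\infty$ and the junction inequality is trivial, will be handled separately.

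\textbf{Convergence.} Since $\psi_*^{\e,+}\to\chi=\eta^u(1^-)$ and $\eta^u(\sigma)/\sigma$ has nonzero derivative near $\sigma=1$ (it is strictly decreasing with a $C^1$ limit), we get $k^\e\to0$. Then $C^1([0,1-\delta])$ convergence of $e^{-k^\e}\eta^u(e^{k^\e}\cdot)$ to $\eta^u$ follows from uniform continuity of $\eta^u$ and $(\eta^u)'$ on compact subsets of $(0,1)$, together with the control of $(\eta^u)'(0^+)$ near $s=0$.
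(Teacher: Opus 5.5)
Your verification of the ODE on each piece, the matching condition, the existence of $k^\e$, the reduction of $\eta^\e\le\eta^u$ on $[0,1-\e]$ to $k^\e\ge0\iff\psi_*^{\e,+}\le\eta^u(1-\e)$, the second-order comparison for that inequality, and the convergence via $k^\e\to0$ all work and essentially follow the paper. The genuine gap is the concavity of $Q^\e$ across $s=1-\e$.

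You reduce concavity there to $(\eta^u)'(\sigma_\e)\ge(\chi-\psi_*^{\e,+})/\e$. You then plan to get this from the same second-order expansion that gives $k^\e\ge0$. That cannot work, because the two sides are exactly equal. Using $(\eta^u)'=2-s/\eta^u$ on $(0,1)$ and the matching condition $\eta^u(\sigma_\e)=e^{k^\e}\psi_*^{\e,+}$ with $\sigma_\e=e^{k^\e}(1-\e)$, you get
\[
(\eta^u)'(\sigma_\e)=2-\frac{\sigma_\e}{\eta^u(\sigma_\e)}=2-\frac{1-\e}{\psi_*^{\e,+}}.
\]
The identity $2-(1-\e)/\psi=(\chi-\psi)/\e$ is, after multiplying by $\e\psi$, exactly the quadratic $\psi^2+(2\e-\chi)\psi-\e(1-\e)=0$ that defines $\psi_*^{\e,+}$. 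So any expansion agrees at every order and can never decide the inequality.

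Moreover, the information $k^\e\ge0$, i.e.\ $\sigma_\e\ge1-\e$, points the wrong way. Since $(\eta^u)'$ is decreasing, it gives an upper bound on $(\eta^u)'(\sigma_\e)$, whereas you need a \emph{lower} bound. Your claim that the junction is trivial for $\chi=0$ is also wrong: there both one-sided slopes equal $1-\e^{-1/2}\to-\infty$.

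The missing idea, which is the paper's argument, is that the traveling-wave ODE itself forces $Q^\e$ to be $C^1$. It can be written as $(Q^\e)'(s)=2-(s-A^\e(s))/\eta^\e(s)$. The right-hand side is continuous on $(0,1)$, because $A^\e$ is continuous and $\eta^\e$ is continuous and positive there; continuity at $1-\e$ is precisely the defining property of $k^\e$. Concavity on $[0,1-\e]$ and on $[1-\e,1]$, together with continuity of $(Q^\e)'$ across the junction, then gives concavity on $[0,1]$ with no expansion at all.

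Two smaller points:
\begin{enumerate}
\item $\psi_*^{\e,+}=\chi+O(\e)$ alone does not give $\chi(1-\e)<\psi_*^{\e,+}$, since both sides are $\chi+O(\e)$. The paper's exact evaluation $h^\e(\chi(1-\e))=-\e(1-\e)(1-\chi)^2<0$ of $h^\e(\mu)=\mu^2-(\chi-2\e)\mu-\e(1-\e)$ settles it for every $\e$.
\item For $\chi=0$ you still owe $\sqrt\e-\e\le\eta^u(1-\e)$, which the paper obtains from $(\eta^u(1-\e))^2=2\e+o(\e)$.
\end{enumerate}
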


The proof of \Cref{l.approx_eta_loc} is given in \Cref{ss.eta_eps}.

\subsubsection{Approximate Solutions and Their Regularity}

For each $\e>0$, let $A^\e$ be given by \eqref{def.A.eps}. As $A^\eps$ is Lipschitz continuous, \Cref{thm.wellposedness_P} yields $u^\e\in V$, a solution to \eqref{pde.v} with $A=A^\e$, \textit{i.e.}
    \be 
\label{e.f.molA}
u_{t}^\e +\chi\left( A^\e(u^\e)\right)_x = u_{xx}^\e+u^\e-A^\e(u^\e),
\ee
starting from the initial data $u_{\rm in}$.

We prove that these solutions enjoy nice regularity properties that persist in the limit $\eps\to0$.
\begin{proposition}\label{p.u eps regularity}
    Let $u^\eps\in V_\gamma$ be the solution of~\eqref{e.f.molA} with measurable initial data $u_{\rm in}$ 
    taking values in $[0,1]$ and satisfying
    \be
        \int u_{\rm in}(x) e^{\sfrac{x}{\chi_{\lowvee}}} < \infty,
    \ee
    where, $\chi_{\lowvee} := \max\{1,\chi\}$.  Suppose that, in a distributional sense,
    \be
        w_{\rm in} := - u_{{\rm in},x} - \eta^u(u_{\rm in}) \geq 0.
    \ee
    Fix $T>0$.  Then the shape defect function $w^\eps := - u_x^\eps - \eta^\eps(u^\eps)$ is nonnegative.  Also, there is a constant $C$, depending only on $\chi$, such that
    \be
    \label{e.vhminH1}
    \|u^\e\|_{L^\infty([0,T]; L^2(\R, \zeta_\gamma dx))}^2
        + \|u^\e_x\|_{L^2([0,T]; L^2(\R,\zeta_\gamma dx))}^2
        \leq e^{CT}\|u_{\rm in}\|_{L^2(\R,\zeta_\gamma dx)}^2,
    \ee
    where $\zeta_\gamma$ is given by~\eqref{d.zeta}.  
    
    Moreover, define
\be
    \hat w^\e(t,z) = e^{\sfrac{z}{\chi_{\lowvee}}} \tilde w^\e(t,z),
\ee
where we recall that the tilde indicates a shift to the moving frame $(t,x) = (t,z + ct)$.  There exists a constant $C$, depending only on $\chi$ and $u_{\rm in}$, such that
    \be\label{e.w eps decay}
        \|\sqrt t \hat w^\eps\|_{L^\infty([0,\infty)\times \R)}
        \leq C,
    \ee    
and there exists, for any $T>0$, a constant $C_T>0$, depending only on $T$, $\chi$, and $u_{\rm in}$, such that
    \be\label{e.w_eps_dx}
        \left\|\hat w^\eps_z\right\|_{L^2([0,T]\times \R)}
        \leq C_T.
    \ee    
\end{proposition}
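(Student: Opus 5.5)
The plan is to treat \eqref{e.f.molA} as an instance of \eqref{pde.v} with the Lipschitz flux $A^\e$, and to import the machinery already developed for the nonlocal model. Each of the four conclusions will come from a result stated earlier.

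\textbf{Energy bound.} The bound \eqref{e.vhminH1} is the energy estimate \eqref{bd.energy_est} from the proof of \Cref{thm.wellposedness_P}, integrated via Gr\"onwall. The only point to watch is $\mathrm{Lip}(A^\e)=\e^{-1}$, which would make the constant depend on $\e$. To avoid this, I will first show $0\le u^\e\le 1$ using \Cref{thm.CP_P}, since $0$ and $1$ are solutions of \eqref{e.f.molA}: indeed $1-A^\e(1)=0$. Then $0\le A^\e(u^\e)\le 1$, so the flux term is bounded by $\chi\int |u^\e_x|\zeta_\gamma$. After Young's inequality and $\int\zeta_\gamma<\infty$, the constant depends only on $\chi$ and $\gamma$. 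An additive constant may appear here; it can be absorbed because $u_{\rm in}\not\equiv 0$, or handled by keeping the transport term in the form $\chi\int A^\e(u)u\,(\zeta_\gamma)_x$, which is controlled by $\gamma\chi<1$ as in the paper.

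\textbf{Nonnegativity of $w^\e$.} By \Cref{l.approx_eta_loc}, $\eta^\e$ is a wave profile function for $A^\e$ at speed $c_*(\chi)$, and $Q^\e$ is concave. Since $A^\e$ and $\eta^\e$ are both Lipschitz on $[0,1]$, \Cref{l.w_eqn} applies. Repeating the manipulation that leads to \eqref{e.tildeW}, $\tilde w^\e$ solves an equation with drift $\chi (A^\e)'(\tilde u^\e)-c$, linear terms, and a quadratic term $(Q^\e)''\,(\tilde w^\e)^2\le 0$. The Gr\"onwall argument on the negative part used in \Cref{prop.W.pos} then gives $w^\e\ge0$, provided $w^\e_{\rm in}\ge0$. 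This holds because $\eta^\e\le\eta^u$ from \eqref{bd.eta_e}, so $w^\e_{\rm in}=-u_{{\rm in},x}-\eta^\e(u_{\rm in})\ge w_{\rm in}\ge 0$. The second derivative $(Q^\e)''$ is a measure at the kinks $s=1-\e$ and possibly $s=1$. I would therefore run the argument either with a mollified concave $Q^\e$ and pass to the limit, or by noting that the singular parts carry the correct sign.

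\textbf{Decay of $\hat w^\e$ and the gradient bound.} Multiplying by $e^{z/\chi_{\lowvee}}$ gives, exactly as in the proof of \Cref{thm.W.decay}, an equation $\hat w^\e_t+a\hat w^\e_z+b\hat w^\e=\hat w^\e_{zz}$. Here $a=\chi(A^\e)'(\tilde u^\e)+\mathrm{const}$. This is nonincreasing in $z$ because $A^\e$ is convex on $[0,1]$ (slope $0$, then $\e^{-1}$) and $\tilde u^\e$ is nonincreasing, which follows from $w^\e\ge0$. However, $a$ is bounded only by $\chi/\e$, which is harmless because the constant in \Cref{l.Linf_L1} does not depend on $a$. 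For $b$, I will redo the computation \eqref{exp.c} with $R^\e=c-(Q^\e)'$: the monotonicity of $R^\e$ (from concavity of $Q^\e$), $R^\e(0)\ge1$, and $(\eta^\e)'\le (\eta^\e)'(0)$ should give $b\ge0$. In the case $\chi\ge1$ this is immediate, since $b$ reduces to a sign-definite multiple of $(A^\e)'$.

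\Cref{l.Linf_L1} then yields \eqref{e.w eps decay} with $C\lesssim\|\hat w^\e_{\rm in}\|_{L^1}$ and \eqref{e.w_eps_dx} with $C_T\lesssim\|\hat w^\e_{\rm in}\|_{L^2}$. These must be bounded uniformly in $\e$. Writing $\hat w^\e_{\rm in}=\hat w_{\rm in}+e^{z/\chi_{\lowvee}}(\eta^u-\eta^\e)(u_{\rm in})$, the correction is at most $\max\{1,\chi\}\,u_{\rm in}e^{z/\chi_{\lowvee}}$, which lies in $L^1$ by the moment hypothesis. The term $\hat w_{\rm in}$ is handled by integrating $-u_{{\rm in},x}e^{x/\chi_{\lowvee}}$ by parts, using the same moment together with $u_{\rm in}\le1$. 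The $L^2$ version needs additional regularity of the data, and I would obtain it by first approximating the data.

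\textbf{Main obstacle.} I expect the hardest step to be verifying $b\ge0$ for $\chi\in[0,1)$ in the $\e$-regularized setting, given the piecewise form of $\eta^\e$ and the singular part of $(Q^\e)''$ at $1-\e$. My first attempt would be to check the sign separately on $[0,1-\e]$, where $\eta^\e$ is a rescaling of $\eta^u$, and on $(1-\e,1]$, where it is linear, and then to show that the jump contributions have the favorable sign.
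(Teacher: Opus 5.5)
The step that fails is the gradient bound \eqref{e.w_eps_dx}. For the other conclusions your route matches the paper's proof:
\begin{itemize}
\item nonnegativity of $w^\e$, via $\eta^\e\le\eta^u$ giving $w^\e_{\rm in}\ge0$ and the negative-part argument of \Cref{prop.W.pos};
\item the energy bound, with the correction below;
\item the $\sqrt t$ decay of $\hat w^\e$, via \Cref{l.Linf_L1} and the uniform bound on the mass of $\hat w^\e_{\rm in}$ from the exponential moment.
\end{itemize}

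\Cref{l.Linf_L1} only gives $\|\hat w^\e_z\|_{L^2([0,T]\times\R)}\le C_T\|\hat w^\e_{\rm in}\|_{L^2(\R)}$. Under the stated hypotheses, $\hat w^\e_{\rm in}$ is in general only a nonnegative measure. For $u_{\rm in}=\1_{(-\infty,0]}$ one has $\eta^\e(u_{\rm in})=\eta^u(u_{\rm in})=0$, so $\hat w^\e_{\rm in}=\delta_0$. For every fixed $\e$ the quantity in \eqref{e.w_eps_dx} is then infinite: near $t=0$ the solution carries the heat-kernel singularity of a Dirac mass, and $\|\hat w^\e_z(t)\|_{L^2}^2\gtrsim t^{-3/2}$ is not integrable at $0$. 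Approximating the data cannot repair this, because the $L^2$ norms of any approximation of $\delta_0$ diverge.

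The paper's own proof is silent on \eqref{e.w_eps_dx}, so this is a defect of the statement. You should say so and prove a correct substitute, in one of two ways:
\begin{itemize}
\item \emph{Add the hypothesis $\hat w_{\rm in}\in L^2(\R)$.} Uniformity in $\e$ then follows from $0\le\eta^u-\eta^\e\le\chi_{\lowvee}s$ and from $e^{x/\chi_{\lowvee}}u_{\rm in}\in L^1\cap L^\infty$. The $L^\infty$ bound holds because $u_{\rm in}$ is nonincreasing with a finite exponential moment.
\item \emph{Work on $[\tau,T]$.} The $L^1$-contraction and the decay bound give $\|\hat w^\e(\tau)\|_{L^2}^2\le\|\hat w^\e(\tau)\|_{L^1}\|\hat w^\e(\tau)\|_{L^\infty}\le C\tau^{-1/2}$, uniformly in $\e$. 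Restarting the energy inequality behind \eqref{bd.nash.l2dx} at time $\tau$ yields $\|\hat w^\e_z\|_{L^2([\tau,T]\times\R)}\le C\tau^{-1/4}$. This suffices for Step (6) of the proof of \Cref{thm.wellposedness_u}, with $L^2_{\rm loc}((0,\infty);H^1_{\rm loc}(\R))$ in place of $L^2_{\rm loc}([0,\infty);H^1_{\rm loc}(\R))$.
\end{itemize}

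Three smaller corrections.

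\textbf{Energy estimate.} Bounding $A^\e(u^\e)\le1$ leaves an additive term $\chi^2\int\zeta_\gamma$. This cannot be absorbed into $e^{CT}\|u_{\rm in}\|^2_{L^2(\R,\zeta_\gamma dx)}$ with $C$ independent of $u_{\rm in}$. Use instead $0\le A^\e(s)\le s$ on $[0,1]$, which is the paper's replacement for \eqref{bd.A.Lip}. The computation of \Cref{thm.wellposedness_P} then runs with $\mathrm{Lip}(A)$ replaced by $1$. Your preliminary step $0\le u^\e\le1$ via \Cref{thm.CP_P} is a useful addition that the paper leaves implicit.

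\textbf{The anticipated obstacle for $b\ge0$ does not arise.}
\begin{itemize}
\item By \eqref{ode.Q_eps}, $(Q^\e)'$ is continuous at $1-\e$, so $(Q^\e)''\le0$ has no singular part.
\item $\eta^\e$ is concave (its slope drops by $\chi/\e$ at $1-\e$) with $(\eta^\e)'(0^+)=1$, and $R^\e\ge R^\e(0)=1$.
\item Hence $b=(R^\e-1)(1-(\eta^\e)')+(Q^\e)''(\tilde u^\e)\,\tilde u^\e_z\ge0$ exactly as in \eqref{exp.c}; the last term is a product of two nonpositive factors.
\end{itemize}

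\textbf{The case $\chi\ge1$.} With the weight $e^{z/\chi}$ the zeroth-order coefficient is identically zero, as in \eqref{e.hatWpush}. It is not a multiple of $(A^\e)'$.
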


Before proving this proposition, let us observe that the bound on $\hat w^\eps$ implies that $\tilde u^\e$ is locally Lipschitz continuous in space (away from $t=0$).  On the other hand, by~\Cref{p.u eps regularity}, \Cref{l.w_eqn}, and \Cref{l.approx_eta_loc}, the time derivative satisfies
\be
    \tilde u^\eps_t
        = - \tilde w^\eps_z - R^\eps(\tilde u^\eps) \tilde w^\eps,
\ee
which implies that $ \tilde u^\eps_t \in L^\infty_{\rm{loc}}((0,+\infty);W_{\rm{loc}}^{-1,\infty}(\R))$. 
Through a standard interpolation argument, we then obtain that $\tilde u^\e\in W^{1-s,\infty}_{\rm{loc}}((0,+\infty);W_{\rm{loc}}^{2s-1,\infty}(\R))$.
Taking $s=\sfrac12$, we find that $\tilde u^\e$ is locally $C^{\sfrac12}$ in time (away from $t=0$). Of course, $u^\e$ inherits the same regularity properties as $\tilde u^\e$. In conclusion, we summarize this without a proof now:
\begin{corollary}\label{c.regularity}
    Under the assumptions of \Cref{p.u eps regularity}, $u^\e\in C^{\nicefrac{1}{2},1}_{\rm loc}((0,+\infty)\times \R)$ uniformly in $\e$, \textit{i.e.} for any $T>0$ and $L>0$, there is a constant $C_{T,L}$ such that for any $\e>0$,
    \be
    \label{bd.equicontinuity}
    |u^\eps(t,x) - u^\eps(s,y)|
    \leq C_{T,L}\left(|x-y| + \sqrt{|t-s|}\right),
    \ee
    for all $t,s \in \left[\sfrac{T}{2},T\right]$ and $x,y \in [-L,L]$.
\end{corollary}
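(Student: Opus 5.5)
The corollary follows from three ingredients of \Cref{p.u eps regularity}: the nonnegativity $w^\eps\ge0$, the weighted decay \eqref{e.w eps decay}, and the identity \eqref{e.fw} from \Cref{l.w_eqn}, which applies because $Q^\eps=\eta^\eps+\chi A^\eps$ is concave, hence locally Lipschitz. All three hold uniformly in $\eps$. I would first prove spatial Lipschitz bounds, then a time derivative bound in a negative Sobolev norm, and finally interpolate. Throughout I work in the moving frame and return to $u^\eps$ at the end. Since $|ct-cs|\le C\sqrt{|t-s|}$ on $[\sfrac T2,T]$, once spatial Lipschitz continuity is known the frame change costs nothing.

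\textbf{Spatial Lipschitz bound.} From \eqref{e.w eps decay}, for $t\in[\sfrac T2,T]$ and $|z|\le L+cT$,
\[
0\le \tilde w^\eps(t,z)\le C e^{(L+cT)/\chi_{\lowvee}}\,(T/2)^{-1/2}.
\]
Since $-\tilde u^\eps_z=\tilde w^\eps+\eta^\eps(\tilde u^\eps)$, it suffices to bound $\eta^\eps$ on $[0,1]$ uniformly in $\eps$. The bound $\eta^\eps\le\eta^u\le\max\{1,\chi\}$ from \eqref{bd.eta_e} and \eqref{eta.unif.bound_u} would give this. There is one caveat: on $(1-\eps,1]$ we have $\eta^\eps(s)=m_\eps(1-s)\le m_\eps\eps=\psi_*^{\eps,+}$, which is bounded. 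This gives $|\tilde u^\eps_z|\le C_{T,L}$ on the required region, and the spatial part of \eqref{bd.equicontinuity} follows. The bound $0\le u^\eps\le1$ should also be recorded, via comparison (\Cref{thm.CP_P}) with the constants $0$ and $1$, both of which are solutions.

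\textbf{Time regularity.} Fix $|z_0|\le L'$ and $h=\sqrt{|t-s|}$, and let $\phi$ be a smooth bump of unit mass supported in $[z_0-h,z_0+h]$ with $|\phi_z|\le Ch^{-2}$. Testing \eqref{e.fw} against $\phi$ gives
\[
\frac{d}{dt}\int\tilde u^\eps\phi=\int\tilde w^\eps\phi_z-\int R^\eps(\tilde u^\eps)\tilde w^\eps\phi .
\]
Here $R^\eps=c-(Q^\eps)'$ is bounded uniformly in $\eps$: by concavity, $(Q^\eps)'$ lies between its endpoint values, and these are uniformly controlled. This step needs care because $(Q^\eps)'$ near $s=1$ is $-m_\eps+\chi/\eps$. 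I expect this to be the main obstacle: the uniform bound on $R^\eps\tilde w^\eps$ in $L^\infty_{\rm loc}$ is not obvious from the stated lemma. If a direct bound on $R^\eps$ fails, I would instead rewrite $R^\eps\tilde w^\eps=R^\eps\cdot(-\tilde u_z^\eps-\eta^\eps)$ and use $R^\eps\eta^\eps=s-A^\eps$ from \eqref{e.R}, leaving only $R^\eps\tilde u^\eps_z$. That term can be handled by integrating by parts as $\int (c\tilde u^\eps-Q^\eps(\tilde u^\eps))\phi_z$, which needs only the uniform boundedness of $Q^\eps$ on $[0,1]$. With this, the right-hand side is bounded by $C_{T,L}h^{-1}$, so
\[
\Bigl|\int(\tilde u^\eps(t)-\tilde u^\eps(s))\phi\Bigr|\le C|t-s|h^{-1}=C\sqrt{|t-s|}.
\]
The spatial Lipschitz bound then gives
\[
\Bigl|\tilde u^\eps(t,z_0)-\int\tilde u^\eps(t)\phi\Bigr|\le Ch ,
\]
and the same for $s$. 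Combining the three estimates yields $|\tilde u^\eps(t,z_0)-\tilde u^\eps(s,z_0)|\le C\sqrt{|t-s|}$. Transferring back to $u^\eps$ through the frame change completes \eqref{bd.equicontinuity}.
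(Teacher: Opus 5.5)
Your proposal is correct and follows essentially the paper's route. You get spatial Lipschitz bounds from the uniform decay of $\hat w^\eps$ and the boundedness of $\eta^\eps$, then a uniform $W^{-1,\infty}_{\rm loc}$ bound on $\tilde u^\eps_t$ from \eqref{e.fw}, and you interpolate, doing the interpolation explicitly with a bump of width $\sqrt{|t-s|}$.

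Keep your fallback that rewrites $R^\eps(\tilde u^\eps)\tilde w^\eps$ via $\eta^\eps R^\eps = s - A^\eps(s)$, which amounts to returning to the divergence form of the equation. On $[1-\eps,1]$ one has $R^\eps=(1-\eps)/\psi_*^{\eps,+}$, which is uniformly bounded only when $\chi>0$ and blows up like $\eps^{-1/2}$ when $\chi=0$.
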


We are now in a position to prove the crucial regularity estimates in \Cref{p.u eps regularity}.

\begin{proof}[Proof of \Cref{p.u eps regularity}]
Let us begin with~\eqref{e.vhminH1}. We may repeat exactly the same arguments that led us to obtain the energy estimate in the proof of \Cref{thm.wellposedness_P} by replacing \eqref{bd.A.Lip} with for all $s\in \R$,
    \be 
|A^\e(s)|\leq |s|.
    \ee Thus, we obtain 
    \be
\begin{split}
\frac{1}{2}\frac{d}{dt}\int \left(u^\e\right)^2 \zeta_\gamma + \frac12 \int \left(u^\e_x\right)^2\zeta_\gamma
&\leq  \left(\gamma^2 + \chi^2  + 1 \right)\int \left(u^\e\right)^2\zeta_\gamma.
\end{split}
\ee
Solving this differential inequality, we find that, for any $T>0$,
\be\label{e.vhminH1}
    \|u^\e\|_{L^\infty([0,T]; L^2(\R, \zeta_\gamma dx))}^2
        + \|u^\e_x\|_{L^2([0,T]; L^2(\R,\zeta_\gamma dx))}^2
        \leq e^{CT}\|u_{\rm in}\|_{L^2(\R,\zeta_\gamma dx)}^2.
\ee

Next we show the nonnegativity of $w^\eps$.  Using \Cref{l.approx_eta_loc}, we find, for $\eps$ sufficiently small,
\be 
    w^\e_{\rm in}
    :=
    -u_{{\rm in},x}-\eta^\e(u_{\rm in})
    \geq 
    -u_{{\rm in},x}-\eta^u(u_{\rm in})
    =
    w_{\rm in}
    \geq
    0.
\ee
Furthermore, $A^\e$ and $\eta^\e$ are Lipschitz continuous and, thus, $w^\e$ satisfies \eqref{e.w_eqn} according to \Cref{l.w_eqn}. Using the same arguments from \Cref{prop.W.pos} that lead us to establish the preservation of nonnegativity for the nonlocal shape defect function, we may conclude that,
on $[0,+\infty)\times \R$,
\be
\label{e.weps_pos}
    w^\e
    \geq 0.
\ee

Finally, we establish the decay of $w^\eps$. Without loss of generality, we proceed under the assumption that, up to $t=0$, $u$ is $C^1$ and, thus, $w_\eps$ is continuous. Indeed, were this not the case, parabolic regularity theory shows that $w^\eps(t)$ is continuous for all $t>0$.  The proof then proceeds using an approximation procedure as $t\to 0$.

When $\chi \geq 1$, computing as in the proof of \Cref{thm.W.decay} yields that $\hat{w}^\e$ satisfies
    \be 
\hat{w}^\e_t + \underbrace{\left( \chi (A^\e)'(\tilde u^\e)-\chi+\frac{1}{\chi}\right) }_{=:a(t,z)}\hat{w}^\e_z =  \hat{w}^\e_{zz}.
\ee
Recall that $(A^\e)'(s) =\e^{-1}\1_{[1- \e,1]}(s)$ so that $a_z = \chi (A^\e)''(\tilde u^\e)\tilde u^\e_z \leq 0$ (since $\tilde u^\e$ has values in $[0,1]$).  Hence \Cref{l.Linf_L1} applies.

In the $\chi \in [0,1)$ case, we find
\be 
\hat{w}^\e_t + \underbrace{ \chi (A^\e)'(\tilde u^\e) }_{=:a(t,z)}\hat{w}^\e_z + \underbrace{ \left((1-\chi)(A^\e)'(\tilde u^\e)-2(Q^\e)''(\tilde u^\e)\eta^\e(\tilde u^\e)-(Q^\e)''(\tilde{u^\e})\tilde{w}^\e\right) }_{=: b(t,z)}\hat{w}^\e=  \hat{w}^\e_{zz}.
\ee
Clearly $a$ is bounded and decreasing. One readily checks that $b\in L^\infty_{\rm{loc}}((0,+\infty)\times\R)$ by the same arguments as in the proof of \Cref{thm.W.decay}. Moreover, the same computations as in \eqref{exp.c} apply to $b$ here, and we find
\be 
b(t,z)=\left(R^\e(\tilde u^\e)-1\right)\left(1-(\eta^\e)'(\tilde u^\e)\right)-(Q^\e)''(\tilde{u}^\e)\tilde{u}^\e_z 
\ee
Using the concavity of $Q^\e$ established in \Cref{l.approx_eta_loc}, the same arguments as in the proof of \Cref{thm.W.decay} yield
\be 
b(t,z) \geq 0.
\ee
Thus, \Cref{l.Linf_L1} applies and yields
\be
\label{bd.aux.decay}
	\left\| \sqrt{t}\hat{w}^\e\right\|_{L^\infty(\R)}
    \leq C\left\| \hat{w}^\e_{\rm  in} \right\|_{L^1(\R)}.
\ee
To conclude, we must show that $\hat w^\eps_{\rm in}$ is uniformly bounded in $L^1$.  Observe that
\be
    \lim_{L\to\infty}\int_{-\infty}^L  \hat w^\eps_{\rm in}
    = \|\hat w^\eps_{\rm in}\|_{L^1(\R)}.
\ee
Then, we integrate by parts to find
\be
\begin{split}
    \int_{-\infty}^L \hat w^\eps_{\rm in}
    &= \int_{-\infty}^L e^{\sfrac{z}{\chi_{\lowvee}}} \left( - u_{{\rm in},x} - \eta^\eps(u_{\rm in})\right)
    = - e^{\frac{L}{\chi_{\lowvee}}} u_{\rm in}(L)
        + \int_{-\infty}^L e^{\sfrac{z}{\chi_{\lowvee}}} \left( u_{\rm in} - \eta^\eps(u_{\rm in})\right)
    \leq \int e^{\sfrac{z}{\chi_{\lowvee}}} u_{\rm in}(z).
\end{split}
\ee
The proof is, thus, complete.
\end{proof}

\subsubsection{Well-posedness and the Statement of the Comparison Principle}
We now establish our main well-posedness result, which we recall here for convenience:
\begin{recallthm}{thm.wellposedness_u}[Well-posedness for the Local Model]
\MainTheoremText
\end{recallthm}

We are now ready to prove the existence and uniqueness of solutions in the local case. 
The uniqueness in \Cref{thm.wellposedness_u} is due to the comparison principle.  Due to the singular nature of the equation, it is somewhat delicate to prove.  Let us state our comparison principle here.  We prove it after \Cref{thm.wellposedness_u}.

\begin{proposition}[Comparison principle for the local model]\label{thm.CP_u}
Consider sub- and supersolutions $\underline{u},\overline{u}\in V
$, respectively, to~\eqref{pde.u}; \textit{i.e.} 
\be
\begin{split}
&\underline{u}_t+\chi (A^u(\underline{u}))_x-\underline{u}_{xx}-(\underline{u}-A^u(\underline{u}))\leq 0 \quad\text{ and} \\
&\overline{u}_t+\chi (A^u(\overline{u}))_x-\overline{u}_{xx}-(\overline{u}-A^u(\overline{u}))\geq 0.
\end{split}
\ee 
In the case that  $\chi > 0$, suppose additionally that the shape defect associated with $\overline{u}$, \textit{i.e.} $\overline{w}=-\overline{u}_x-\eta^u(\overline u)$, is nonnegative. 
If $\underline{u}_{\rm in}\leq \overline{u}_{\rm in}$, then $\underline u \leq \overline u$.
\end{proposition}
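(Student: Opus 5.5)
The plan is an $L^1$ (Kato-type) contraction argument in the weight $\zeta_\gamma$, rather than the $L^2$ argument of \Cref{thm.wellposedness_P}. The reason is that $A^u$ is discontinuous, so the term $\chi\int (A^u(\underline u)-A^u(\overline u))\,(\phi\zeta_\gamma)_x$ cannot be absorbed by a Lipschitz bound. The hypothesis $\overline w\geq 0$ is what makes the singular flux tractable.

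\textbf{Step 1: structure of the supersolution.} First I will exploit $\overline w\ge 0$. Since $\eta^u\geq 0$, it gives $\overline u_x\le -\eta^u(\overline u)\le 0$, so $\overline u(t,\cdot)$ is nonincreasing. It follows that $\{\overline u(t,\cdot)=1\}=(-\infty,\overline x(t)]$, and hence $A^u(\overline u)=\1_{(-\infty,\overline x(t)]}$ and $(A^u(\overline u))_x=-\delta_{\overline x(t)}$. Because $\eta^u(1^-)=\chi>0$ by \Cref{l.l.eta_loc}, we moreover get $\overline u_x\le -\chi/2$ on a neighbourhood $(\overline x(t),\overline x(t)+r)$ to the right of the free boundary. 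This quantitative steepness is the only place the shape defect enters. When $\chi=0$ the flux term is absent and the standard argument applies verbatim, so no such hypothesis is needed.

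\textbf{Step 2: regularized Kato inequality.} Set $\phi=\underline u-\overline u$ and test the difference of the two inequalities against $H_\delta(\phi)\zeta_\gamma$, where $H_\delta$ is a smooth nondecreasing approximation of $\1_{(0,\infty)}$ with $H_\delta'$ supported in $(0,\delta)$. Justifying this test function in $V$ is routine, possibly after a Steklov average in time. The diffusion term gives $-\int H_\delta'(\phi)\phi_x^2\zeta_\gamma\le 0$ plus an absorbable $\zeta_\gamma'$ term, and the reaction term is controlled by $\int\phi_+\zeta_\gamma$. For the flux, note that on $\{\phi>0\}$ we have $\overline u<1$, so $A^u(\underline u)-A^u(\overline u)=\1_{\{\underline u=1\}}\ge 0$. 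The reaction contribution $-(A^u(\underline u)-A^u(\overline u))H_\delta(\phi)$ therefore has a good sign. What remains is
\begin{equation*}
\chi\int \1_{\{\underline u=1\}}\big(H_\delta(\phi)\zeta_\gamma\big)_x
=\chi\int \1_{\{\underline u=1,\ \overline u<1\}}\Big(-H_\delta'(1-\overline u)\,\overline u_x\zeta_\gamma+H_\delta(\phi)\zeta_\gamma'\Big),
\end{equation*}
using that $\underline u_x=0$ a.e. on $\{\underline u=1\}$. The $\zeta_\gamma'$ piece is absorbed since $\gamma\chi<1$, exactly as in the proof of \Cref{thm.wellposedness_P}. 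The Dirac mass coming from $\overline u$ is paired with $H_\delta(\phi(\overline x))=H_\delta(\underline u(\overline x)-1)=0$, so it contributes nothing.

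\textbf{Step 3: the main obstacle.} The hard part is the term $-\chi\int\1_{\{\underline u=1,\overline u<1\}}H_\delta'(1-\overline u)\overline u_x\zeta_\gamma$, which is nonnegative and does not vanish trivially. By Step 1, $\overline u$ is strictly decreasing with slope at least $\chi/2$ on $\{1-\delta<\overline u<1\}$. So I can change variables $s=\overline u(x)$ on this thin layer, of width $O(\delta)$, just to the right of $\overline x(t)$, which gives the bound $\chi\,\zeta_\gamma(\overline x(t))\,\1\{\underline u=1 \text{ somewhere right of } \overline x(t)\}+o(1)$. I will try to show this boundary contribution is compensated. The idea is to integrate the subsolution inequality for $\underline u$ against a cutoff localized on the layer: the subsolution inequality encodes a matching $+\chi\delta$ at the right edge of $\{\underline u=1\}$, and the steepness of $\overline u$ near the free boundary gives the compensation. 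This would yield $\frac{d}{dt}\int\phi_+\zeta_\gamma\le C\int\phi_+\zeta_\gamma+o_\delta(1)$. If this compensation fails, my fallback is to first prove comparison between $\underline u$ and the approximations $\overline u^\e$ obtained with the flux $A^\e$ (for which \Cref{thm.CP_P} applies and $w^\e\ge0$ by \Cref{p.u eps regularity}), and then pass to the limit using \Cref{c.regularity}. Either way, letting $\delta\to0$ and applying Gr\"onwall with $\phi_+(0)=0$ gives $\underline u\le\overline u$.
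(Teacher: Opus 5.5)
\textbf{Verdict: there is a genuine gap, at Step~3.} Step~3 is the crux of the argument, and it is left unresolved. Neither your proposed compensation nor your fallback works.

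\emph{The compensation mechanism.} In the bad scenario the layer $\{1-\delta<\overline u<1\}$ lies in the interior of $\{\underline u=1\}$. There every term of the subsolution inequality vanishes, so testing it against a cutoff on the layer gives no information. The $-\chi\delta$ carried by $(A^u(\underline u))_x$ sits on the boundary of $\{\underline u=1\}$, away from the layer. In the un-integrated form it is exactly the source of your bad term, not its cancellation.

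\emph{The fallback.} Here $\overline u$ is an arbitrary supersolution in $V$: for instance, the explicit barriers used for the upper bounds on $\overline x(t)$, or a second weak solution when this proposition is used for uniqueness. It is not a limit of the $A^\e$-scheme. Moreover, $\underline u$ is not a subsolution of the $A^\e$-equation. So \Cref{thm.CP_P} cannot be applied to any pair built from them.

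\textbf{The missing idea.} The compensation comes from the diffusion term $-\int H_\delta'(\phi)\phi_x^2\zeta_\gamma$, which you discarded in Step~2. Since $\phi_x=-\overline u_x$ a.e.\ on $\{\underline u=1\}$, the flux and diffusion terms combine there into
\[
\int_{\{\underline u=1\}}H_\delta'(1-\overline u)\,(-\overline u_x)\,(\chi+\overline u_x)\,\zeta_\gamma\,dx
\;\le\;\sup_{s\in(1-\delta,1)}\bigl(\chi-\eta^u(s)\bigr)_+ .
\]
On $\{\underline u<1\}$ the diffusion term is simply $\le 0$. The displayed inequality uses:
\begin{itemize}
\item $\overline w\ge 0$ in the sharp form $\chi+\overline u_x\le\chi-\eta^u(\overline u)$;
\item the strict monotonicity of $\overline u$ on $\{0<\overline u<1\}$, which lets you substitute $s=\overline u(x)$;
\item $\int H_\delta'=1$ and $\zeta_\gamma\le 1$.
\end{itemize}
The right side tends to $0$ because $\eta^u(1^-)=\chi$.

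Your Step~1 bound $|\overline u_x|\ge\chi/2$ would not suffice. Knowing only $-\overline u_x\ge a$ with $a<\chi$ leaves a term of size $(\chi-a)\,\zeta_\gamma(\overline x(t))$. You need the slope to approach $\chi$ as $\overline u\to 1^-$, and that is exactly what $\overline w\ge0$ provides.

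\textbf{Comparison with the paper.} With this repair Gr\"onwall closes, and your $L^1$ route becomes a genuine alternative to the paper's. The paper instead runs an $L^2$ estimate on $(\tilde u_1-\tilde u_2)_+$ in the moving frame, via $\tilde u_t=-\tilde w_z-R(\tilde u)\tilde w$. There the singular contribution reduces to $-\chi\int\1_{\{\tilde u_1>\tilde u_2,\ \tilde u_1=1\}}\tilde w_2\,\zeta_\gamma\le0$, since $\tilde w_1=0$ on $\{\tilde u_1=1\}$. The extra term $-\int\tilde w_2(\tilde u_1-\tilde u_2)_+\bigl(R(\tilde u_1)-R(\tilde u_2)\bigr)\zeta_\gamma$ is signed by the concavity of $Q^u$. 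The repaired $L^1$ argument needs neither that reformulation nor the concavity of $Q^u$. It uses $\overline w\ge 0$ only through the monotonicity of $\overline u$ and its slope as $\overline u\to1^-$.
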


We now establish the well-posedness of~\eqref{pde.u}.

\begin{proof}[Proof of \Cref{thm.wellposedness_u}]
    We construct the solution as the limit of solutions $u^\e$ to \eqref{e.f.molA}.

\medskip
\noindent
    {\bf \# Step (1): A compact approximating sequence $u^\e$.}
    From \Cref{p.u eps regularity}, we have the energy bound \eqref{e.vhminH1}  uniformly in $\e$. As a consequence of \eqref{e.f.molA}, we also have that, for any $T>0$, $u^{\e }_t \in L^2([0,T],H^{-1}(\R, \zeta_\gamma dx)))$. In particular, we have that the sequence $(u^{\e})_{\e }$ is bounded in 
 \be
    \left\{ g \in L^2([0,T],H^{1}_{\text{\rm loc}}(\R)): 
        g_t \in L^2([0,T],H^{-1}_{\text{\rm loc}}(\R))
    \right\}.
 \ee
 Through the standard diagonal application of the Aubin-Lions lemma on an increasing sequence of compacts, we may extract a converging subsequence as $\e\to 0$, which converges strongly to a limit denoted $u\in C([0,T],L^2_{\rm loc}(\R))$ and converges weakly to the same $u\in L^2([0,T],H^1_{\text{\rm loc}}(\R))$.

\medskip
\noindent
    {\bf \# Step (2): Regularity of $u$.}
When $\chi>0$, the equicontinuity bound \eqref{bd.equicontinuity} from \Cref{c.regularity} allows us to apply a classical Arzel\`{a}-Ascoli argument. Passing to a subsequence, we obtain a locally uniform limit that satisfies the same equicontinuity bound. Since this limit must coincide with the $L^2$-limit, we can conclude that
$u\in C^{\nicefrac{1}{2},1}_{\rm loc}((0,+\infty)\times \R)$. When $\chi = 0$, we note that $u^\eps$ solves
\be
    u_t^\eps - u^\eps_{xx}
    = u^\eps - A_\eps(u^\eps).
\ee
Since the terms on the right hand side are bounded, parabolic regularity theory implies that $u^\eps$, $u^\eps_t$, and $u^\eps_{xx}$ are locally bounded in $L^p$ uniformly in $\eps$ for any $p>1$. By Sobolev embedding, it follows that $u^\e \in C^{1-\nicefrac{1}{p},2-\nicefrac{1}{p}}_{\rm{loc}}([0,+\infty)\times \R)$ uniformly in $\eps$ and thus this regularity is preserved in the limit as $\eps\to 0$.

\medskip
\noindent
    {\bf \# Step (3): Nonnegativity of $w:=-u_x-\eta^u(u)$.}
Recall from \Cref{p.u eps regularity} that $w^\eps \geq 0$.  Define the measurable function
\be
    \overline  x(t) = \sup\{ x : u(t,x) = 1\}.
\ee
where we allow for the possibility that $\overline  x(t) = -\infty$.  We rule this out in the next step.

From $w^\e \geq 0$, we have $u^\e_x\leq 0$ and thus $u_x\leq 0$. Thus, when $x < \overline  x(t)$, we have that $u=1$ and $w = 0$, as $u$ has values in $[0,1]$.  Let us next consider the case where $x > \overline  x(t)$.  Fix any compact subset of $\{u < 1\} = \{x> \overline  x(t)\}$.  On this set $u^\eps$ is smooth (by parabolic regularity theory) and bounded by $1-\delta$ for some $\delta>0$ (by compactness of this set).  It, hence, converges in $C^1$ to $u$.  Additionally, $\eta^\eps$ converges to $\eta$ in $C^1$ on $[0,1-\delta]$ by \Cref{l.approx_eta_loc}.  Hence, the composition $\eta^\eps(u^\eps)$ converges to $\eta^u(u)$ in $C^1$ on this compact set  It follows that $w^\eps$ converges to $w$ on this compact set.  Using the arbitrariness of the compact set, we have that $w\geq 0$ on $\{u < 1\}$.

It follows that $w\geq 0$ for all $x\neq \overline  x(t)$.  Since $\{(t,x) : x = \overline  x(t)\}$ is a measure zero set, we conclude that $w \geq 0$ almost everywhere.

\medskip
\noindent
    {\bf \# Step (4): Regularity of the front $\overline  x$.}
    We show only the proof for $\chi \leq 1$.  The modifications for $\chi > 1$, however, are straightforward.
    
    Let us start by showing that $\overline x$ is finite. 
    Since $w_{\rm in}\geq 0$, we must have $\int u_{\rm in}(z) e^z dz<+\infty$.  It is then easy to show that $\int \tilde{u}(t,z) e^z dz<+\infty$ (see the proof of \Cref{l.expmoment}). From this, we find that $\overline x(t) < \infty$ for all $t$.  We, thus, focus on showing that $\overline x(t) > -\infty$.  Consider $\underline{u}$ the solution to the Fisher-KPP equation with initial data $u_{\rm in}$. Since $u^\eps$ is a supersolution to the Fisher-KPP equation for all $\eps$, we have $\underline{u}\leq  u^\eps$, which remains true in the limit. Thus, there is $b_t$, which can be lower bounded by the analogous level set of $\underline{u}$ and in particular $b_t>-\infty$, such that
\be
    u(t,b_t) = ((1-\chi) + 1) e^{-1}=u_{\rm tw}(1),
\ee
where we recall that $u_{\rm tw}$ is the traveling wave given in \Cref{table.tw}.
Then, since $w\geq 0$, we thus have by steepness, for all $x < b_t$,
\be
    u(t,x)
    \geq
    u_{\rm tw}(1 - (b_t-x)).
\ee
It follows that $u(t,x) = 1$ whenever $x \leq b_t - 1$. Thus, $\overline  x(t) \geq b_t - 1$ and $\overline  x(t)$ is finite.

We now show the H\"older regularity of $\overline  x$. 
Fix any $t>0$ and $h \neq 0$ such that $t+h > 0$.  Let us suppose, without loss of generality, that $\overline x(t) < \overline x(t+h)$.  Denote
\be
    \Delta_h \overline x = |\overline x(t+h) - \overline x(t)|.
\ee
We then have that
\be\label{e.c012202}
\begin{split}
    |u(t,\overline x(t))
        - u(t, \overline x(t+h))|
    &= 1 - u(t,\overline x(t+h))
    \geq 1 - u_{\rm tw}\left( \Delta_h \overline x\right)
    \\&
    = 1 - \left((1-\chi) \Delta_h \overline x + 1\right) e^{-\Delta_h \overline x},
\end{split}
\ee
where we used again that $u(t,\cdot)$ is steeper than $u_{\rm tw}$.

Let us now fix $\chi >0$.  Using the regularity in \Cref{c.regularity} and the definition of $\overline  x$, we find that
\be\label{e.c012201}
    |u(t,\overline x(t))
        - u(t, \overline x(t+h))|
    = |1 - u(t, \overline x(t+h))|
    = |u(t+h, \overline x(t+h)) - u(t,\overline x(t+h))|
    \leq C_T \sqrt h.
\ee
Combining these, we see that
\be\label{e.c010901}
    1 - \left((1-\chi) \Delta_h \overline x + 1\right) e^{-\Delta_h \overline x}
        \leq C_T \sqrt h.
\ee
Setting $g(p):= 1 - \left((1-\chi) p + 1\right) e^{-p}$ and observing that $g$ is increasing for $p\geq 0$, we obtain that
\be 
\Delta_h \overline x
    \leq g^{-1}\left(C_T \sqrt h\right).
\ee
Finally, using that $g^{-1}(0)=0$ and $\left(g^{-1}\right)'$ is locally bounded on $[0,1)$ when $\chi \in (0,1)$, we can conclude that $\overline  x \in C^{\sfrac12}_{\rm loc}((0,+\infty))$, which completes the proof of regularity of $\overline  x$ when $\chi >0$.

When $\chi = 0$, we replace~\eqref{e.c012201} by
\be
    |u(t,\overline x(t))
        - u(t, \overline x(t+h))|
    = |1 - u(t, \overline x(t+h))|
    = |u(t+h, \overline x(t+h)) - u(t,\overline x(t+h))|
    \leq C_T h^{1 - 2\delta}
\ee
for any $\delta>0$.  Then, observing that $\nicefrac{g^{-1}(s)}{\sqrt{s}}$ is locally bounded on $[0,1)$ when $\chi =0$, we obtain that $\Delta_h \overline x=\mathcal{O}\left(h^{\nicefrac{1}{2}-\delta}\right)$ and from there, we can conclude that $\overline  x \in C^{\sfrac12-\delta}_{\rm loc}((0,+\infty))$ for any $\delta >0$.

\medskip
\noindent
    {\bf \# Step (5): Identification of the limit $u$ as a weak solution to \eqref{pde.u}.}
 Next,  we need to show that $u$ is a solution to \eqref{pde.u}. Since the linear terms converge in the sense of distributions, it remains only to show that
 \be
    A^\e(u^\e)\rightharpoonup A(u)
    \quad\text{ in } L^2([0,T], L^2_\text{\rm loc}(\R)).
\ee
 Clearly, $A^\e(u^\e)$ is bounded in $L^2([0,T], L^2_\text{\rm loc}(\R))$.  After passing to a subsequence, we thus find a weak limit $g\in L^2([0,T], L^2_\text{\rm loc}(\R))$.  We immediately see that, weakly,
 \be\label{pde.u g}
    u_t + \chi g_x
    = u_{xx} + u - g.
 \ee
 Our goal is to show that $g = A(u)$.

As noted in Step (2), $u^\e$ converges locally uniformly to $u$. Hence, $A^\eps(u^\eps)$ converges to $0 = A(u)$ on compact subsets of $\{u < 1\}$, as on such sets $A^\eps(u^\eps)$ identically vanishes for $\eps$ sufficiently small. Thus, $g = A(u)$ when $x > \overline  x(t)$.

Next, we consider the case $x < \overline  x(t)$.  Due to the regularity of $\overline  x(t)$ established in Step (4), $\{x < \overline  x(t)\}$ is an open set in $(t,x)$.  We, thus, see that $u\equiv 1$ and, hence, $u_t = u_{xx} = 0$ on $\{x < \overline  x(t)\}$.  It immediately follows from~\eqref{pde.u g} that 
\be\label{e.g_x}
    \chi g_x
    = u - g
    \qquad \text{ in } (-\infty, \overline  x(t)).
\ee
When $\chi = 0$, we deduce that $g =1= A(u)$ immediately from this.  Otherwise,~\eqref{e.g_x} yields
\be
    g(t,x) = 1 - \theta(t) e^{- \sfrac{x}\chi}
\ee
for some function $\theta(t)$ depending only on $t$.  Since $g$ takes values in $[0,1]$ (recall it is the limit of $A^\eps(u^\eps)$)), this may only occur if $\theta(t) = 0$, otherwise $g$ is unbounded as $x\to-\infty$.  We again deduce that $g = 1 = A(u)$.

From the above, we have shown that $g = A(u)$ on $\{x \neq \overline  x(t)\}$.  Hence, $g = A(u)$ almost everywhere.  This concludes the proof that $u$ solves~\eqref{pde.u}.

\medskip
\noindent
    {\bf \# Step (6): Regularity $w\in L^2_{\rm loc}([0,+\infty), H^1_{\rm loc}(\R))$.} Using \eqref{e.w_eps_dx}, we have that $w^\eps$ is uniformly bounded in $L^2_{\rm loc}([0,+\infty), H^1_{\rm loc}(\R))$ and thus admits a weakly converging subsequence. Furthermore $\eta^\e(u^\e)$ converges pointwise to $\eta(u)$ on $\{x \neq \overline  x(t)\}$ and thus almost everywhere. Since $\eta^\e,\eta$ are bounded, we therefore have that $\eta^\e(u^\e)$ converges to $\eta(u)$ in $L^2_{\rm loc}([0,+\infty)\times \R)$ by the Dominated Convergence Theorem. Therefore, $w^\e=-u^\e_x-\eta^\e(u^\e)$ converges in the sense of distributions to $w=-u_x-\eta(u)$ and by uniqueness of the limits, $w\in L^2_{\rm loc}([0,+\infty), H^1_{\rm loc}(\R))$.

\medskip
\noindent
    {\bf \# Step (7): Uniqueness.} This follows directly from \Cref{thm.CP_u}.
\end{proof}

\subsubsection{Proving the comparison principle}

We now establish the comparison principle.

\begin{proof}[Proof of \Cref{thm.CP_u}]
In the $\chi=0$ case, the irregular advection term disappears, and we can readily obtain an energy estimate like~\eqref{bd.energy_est_unique} that we obtained in the nonlocal case. We omit the details.

For the remainder of the proof, we, thus, only consider the $\chi >0$ case.  Additionally, let us change notation with $u_1$ denoting the subsolution $\underline u$ and $u_2$ denoting the supersolution $\overline u$.  This allows us to decorate the functions with tildes more easily and to compactify statements with the subscript $i$.

In the moving frame $(t,z)=(x-ct)$ with $c=c_*(\chi)$ as in~\eqref{e.c_star_chi} and the notation $\tilde{u}_i(t,z)=u_i(t,x-ct)$ and $\tilde{w}_i(t,z)=w_i(t,x-ct)$, following \Cref{l.w_eqn}, we have for $i\in \{1,2\}$ 
    \be 
    \label{pde.uw}
\tilde{u}_{i,t} = -\tilde{w}_{i,z} - R(\tilde{u}_{i})\tilde{w}_{i},
    \ee
    where $R=c-Q'$ and $Q=\eta+\chi A$.
    
For the sake of readability, we introduce the following notation:
\be 
\label{def.Delta_not}
	\Delta f := f_1-f_2.
\ee
We now compute:
\be 
\label{energy.w}
\begin{split}
\frac{d}{dt} \frac{1}{2 }\int_\R  &(\Delta \tilde u)_+^2 \zeta_\gamma
    =
    \int_\R (\Delta \tilde u)_+ \Delta \left( -\tilde{w}_z -R\tilde{w} \right)\zeta_\gamma
    \\& 
    =
    \int_\R  \1_{\{\Delta \tilde u>0\}} (\Delta \tilde u)_z \Delta \tilde{w}\zeta_\gamma
    +\int_\R  (\Delta \tilde u)_+  \Delta \tilde{w}(\zeta_\gamma)_z 
    - \int_\R  (\Delta \tilde u)_+ \Delta \left( R\tilde{w} \right)\zeta_\gamma
    \\&
    = -  \int_\R \1_{\{\Delta \tilde u>0\}}(\Delta \tilde w)^2\zeta_\gamma
    - \int_\R \1_{\{\Delta \tilde u>0\}} \Delta \eta  \Delta \tilde{w} \zeta_\gamma
    + \int_\R (\Delta \tilde{u})_+  \Delta \tilde{w} (\zeta_\gamma)_z
    \\&\qquad
    - \int_\R (\Delta \tilde u)_+ \left(R_1\Delta  \tilde{w}+\tilde{w}_2 \Delta R \right)\zeta_\gamma
    \\&
    = -  \int_\R \1_{\{\Delta \tilde u>0\}}(\Delta \tilde w)^2 \zeta_\gamma
    - \int_\R \1_{\{\Delta \tilde u>0\}} \Delta \left( Q-\chi A\right) \Delta \tilde{w}\zeta_\gamma
    + \int_\R (\Delta \tilde{u})_+  \Delta \tilde{w} (\zeta_\gamma)_z
    \\&\qquad
    - \int_\R  R_1 (\Delta \tilde u)_+ \Delta  \tilde{w}\zeta_\gamma
    - \int_\R \tilde{w}_2 (\Delta \tilde u)_+ \Delta R\zeta_\gamma
    \\&
    =
    -  \int_\R \1_{\{\Delta \tilde u>0\}}(\Delta \tilde w)^2\zeta_\gamma
    - \int_\R  \1_{\{\Delta \tilde u>0\}}\Delta Q  \Delta \tilde{w} \zeta_\gamma
    +\chi \int_\R \1_{\{\Delta \tilde u>0\}}  \Delta A \Delta \tilde{w}\zeta_\gamma
    \\&\qquad
    + \int_\R (\Delta \tilde{u})_+  \Delta \tilde{w}(\zeta_\gamma)_z
    - \int_\R  R_1(\Delta \tilde u)_+ \Delta  \tilde{w} \zeta_\gamma
    - \int_\R  \tilde{w}_2 (\Delta \tilde u)_+ \Delta R\zeta_\gamma.
\end{split}
\ee
The first term has a good sign.  The second, fourth, and fifth terms are bounded by 
\be
    (\|Q'\|_\infty + \gamma + \|R\|_\infty) |\Delta \tilde u| |\Delta \tilde w| \zeta_\gamma
\ee
due to \Cref{l.l.eta_loc} and the fact that $|(\zeta_\gamma)_z|\leq \gamma\zeta_\gamma$.  After applying Young's inequality, we get an upper bound of these three terms in the form:
\be
    \frac{1}{2}\left( \Delta \tilde{w} \right)^2 \zeta_\gamma +\frac{\left(\|Q'\|_\infty+\gamma +\|R\|_\infty\right)^2}{2}  \left( \Delta \tilde u \right)^2 \zeta_\gamma. 
\ee
The third term in the last line of~\eqref{energy.w} has a good sign.  Indeed,
\be 
	\Delta  A \Delta \tilde w=(A(\tilde u_1)-A(\tilde u_2))(\tilde w_1-\tilde w_2) = \left\{
	\begin{array}{ll}
	0 & \text{ if } \tilde u_1=\tilde u_2 = 1, \\
	-\tilde w_1 & \text{ if } \tilde u_1<1, \tilde u_2 = 1, \\
	-\tilde w_2 & \text{ if } \tilde u_1=1,\tilde u_2 < 1, \\
	0 & \text{ if } \tilde u_1,\tilde u_2 < 1.
	\end{array}
	\right.
\ee
Hence
\be
    \1_{\{\Delta \tilde u>0\}} \Delta   A \Delta \tilde w
    = -\1_{\{\Delta \tilde u>0\}} \tilde  w_2
    \leq 0.
\ee
The final term in~\eqref{energy.w} has a good sign; indeed, $R$ is increasing and $\tilde w_2$ is nonnegative by assumption.

Putting together all estimates above, we deduce that
\be
\begin{split}
\frac{d}{dt} \frac{1}{2 }\int_\R  (\Delta \tilde u)_+^2 \zeta_\gamma
\leq & -  \frac{1}{2}\int_\R \left( \Delta \tilde{w} \right)^2 \zeta_\gamma +\frac{\left(\|Q'\|_\infty+\gamma+\|R\|_\infty\right)^2}{2}  \int_\R  \left( \Delta \tilde u \right)^2 \zeta_\gamma.
\end{split}
\ee
Hence, using Gr\"onwall's inequality, we obtain 
\be 
\int_\R  \left( \Delta \tilde u(t) \right)_+^2 \zeta_\gamma \leq e^{Ct}\int_\R  \left( \Delta \tilde u_{\rm in} \right)_+^2 \zeta_\gamma ,
\ee
with $C=\frac{\left(\gamma + \|Q'\|_\infty+\|R\|_\infty \right)^2}{2}$.  Since $(\Delta u_{\rm in})_+ =0$, we see that $(\Delta \tilde u(t))_+ = 0$ for all $t$.  The proof is complete.
\end{proof}

\subsubsection{The Approximating Flux $A^\e$. Proof of \Cref{l.approx_eta_loc}}
\label{ss.eta_eps}

Here we investigate the approximation $A^\eps$, defined in~\eqref{def.A.eps}.  In particular, we prove \Cref{l.approx_eta_loc}, which establishes several key properties of $\eta^\eps$ used in the well-posedness theory of $u$.

In the $\chi \geq 1$ case, this lemma is a direct consequence of the fact that
\be 
\eta^\e(s)=\chi \left( s-A^\e(s)\right)
\ee
is the traveling wave profile associated with the wave speed $c= \chi+\frac{1}{\chi}$.

In the $\chi\in [0,1)$ case, we may rewrite \eqref{ode.eta} to obtain
\be 
\label{ode.eta_eps}
\left(\eta^\e\right)'(s)=2-\frac{s-A^\e(s)}{\eta^\e(s)}-\chi \left(A^\e\right)'(s).
\ee
We can observe that $\eta^\eps$ and $\eta^u$ satisfy the same ODE on $[0,1-\e]$, as on that interval $A^\e\equiv 0 \equiv A^u$. Therefore, a quick computation shows that for $\eta^\e$ given by \eqref{e.eta_eps_e_k_eps} solves \eqref{ode.eta_eps} on $[0,1-\e]$. 

On $[1-\e,1]$, \eqref{ode.eta_eps} may be rewritten as
\be 
\label{ode.eta_eps_2} 
    (\eta^\e)'(s)
        =2-\frac{(1-\e)(1-s)}{\e\eta^\e(s)}-\frac{\chi}{\e}.
\ee
One readily checks that
\be\label{e.c011601}
    \eta^\eps(s) = m_\eps(1-s)
\ee
is a solution to~\eqref{ode.eta_eps_2}.

The constant $k^\e$ is then selected in such a way that $\eta^\e$ is continuous at $s=1-\e$, which corresponds to
\be 
\eta^\e((1-\e)^-)=\eta^\e((1-\e)^+),
\ee
which is equivalent to \eqref{d.keps}.

We now verify the existence of $k^\eps$. 
Using \eqref{eta.exp_u} from \Cref{l.l.eta_loc}, leads to
\be 
\label{e.keps_W1}
(1-\e)\left(1+\frac{1}{W_{-1}(-\kappa e^{k^\e}(1-\e))}\right)
=  \psi_*^{\e,+} .
\ee
This equation admits a unique solution. Indeed, on the one hand, let us set 
\be 
g^\e(p):=(1-\e)\left(1+\frac{1}{W_{-1}(-\kappa p)}\right),
\ee
and observe that $\kappa= \frac{1}{1-\chi}e^{-\frac{1}{1-\chi}}\leq e^{-1}$. Therefore, we have that $W_{-1}(-\kappa p)$ is well defined for $p\in (0,1)$ and $g^\e$ is strictly decreasing on $(0,1)$. But, we also have that $g^\e(0^+)=1-\e$ and $g^\e(1^-)=(1-\e)\chi$, as a consequence of the known boundary behavior of $\frac{\eta(s)}{s}$. 

On the other hand, $\psi_*^{\e,+}$ is the positive root of the quadratic
\be 
h^\e(\mu)=\mu^2-(\chi-2\e)\mu -\e(1-\e).
\ee
But $h^\e(1-\e)>0$ and $h^\e((1-\e)\chi)<0$ for $\e>0$ small enough. Thus, we have that
\be
g^\e(1^-)=(1-\e)\chi < \psi_*^{\e,+} <1-\e=g^\e(0^+),
\ee
and, by the intermediate value theorem, we can find a unique $p_\ast^\e\in (0,1)$ such that $g^\e(p_\ast^\e)=\psi_*^{\e,+}$, which leads to a unique 
\be 
k^\e =\log\left( \frac{p_\ast^\e}{1-\e}\right) \in \left( -\infty, -\log(1-\e)\right) 
\ee
solving \eqref{e.keps_W1}.

Let us furthermore observe that, since $g^\e(1^-)=(1-\e)\chi$ and by a straightforward computation
\be 
\lim_{\e \to 0} \frac{\psi^{\e,+}_\ast}{1-\e} = \chi.
\ee
Thus, by inverting $\nicefrac{g^\e }{1-\e}$ and using the continuity of the inverse, we must have that
\be 
\lim_{\e \to 0} p ^\e_\ast=1.
\ee
Consequently,
\be 
\lim_{\e \to 0} k ^\e=0.
\ee
From this we easily deduce the limit in~\eqref{bd.eta_e}.

Now, let us show that $Q^\e$ is concave on $[0,1]$. On $[0,1-\e]$, we have $Q^\e=\eta^\e$ and $\eta^\e$ is clearly concave on that interval, since $\eta^u$ is concave on $[0,1)$. On $[1-\e,1]$, $Q^\e=\eta^\e+\chi A^\e$ is concave as the sum of two linear functions. Furthermore, $Q^\e$ satisfies
\be 
\label{ode.Q_eps}
\left(Q^\e\right)'(s)=2-\frac{s-A^\e(s)}{\eta^\e(s)},
\ee
and therefore $Q^\e\in C^1((0,1))$, which combined with its concavity on $[0,1-\e]$ and on $[1-\e,1]$ shows that $Q^\e$ is concave on $[0,1]$.

Finally, we must show that $\eta^\e(s)\leq \eta^u(s)$ for $s\in [0,1]$. 
The first step is to establish that 
\be\label{e.c011602}
    \eta^\e(1-\e)\leq \eta^u(1-\e).
\ee
This yields $\eta^\e(s)\leq \eta^u(s)$ on $[0,1-\e]$ because they are both solutions to the same ODE on this domain.

Let us consider~\eqref{e.c011602} when $\chi \in (0,1)$ first.  
By a standard expansion of $\eta^\e(1-\e)$, we find that, for $\chi\in (0,1)$,
\be
\eta^\e(1-\e)
=\psi^{\eps,+}_*
=\chi
    +\e \left(\frac{1}{\chi}-2 \right)
    -\e^2\left(\frac{(\chi-1)^2}{\chi^3}\right) +O(\e^3).
\label{expansion.theta_eps}
\ee
On the other hand, using from \Cref{l.l.eta_loc} the values for $(\eta^u)'(1^-)$ and $(\eta^u)''(1^-)$ given by \eqref{value.eta_u'} and \eqref{value.eta_u''}, we obtain through a Taylor expansion, for $\chi\in (0,1)$,
\be\begin{split}
\eta^u(1-\e)
&
= \eta^u(1^-)-\e \left(\eta^u\right)'(1^-)+\frac{\e^2}{2} \left(\eta^u\right)''(1^-)+O(\e^3)
\\&
=\chi
    + \e \left(\frac{1}{\chi}-2\right)
    -\e^2\left(\frac{(\chi - 1)^2}{2\chi^3}\right) +O(\e^3).
\label{expansion.theta_u}
\end{split}
\ee
Comparing~\eqref{expansion.theta_eps} and~\eqref{expansion.theta_u}, we  deduce~\eqref{e.c011602} for $\e>0$ small enough.

Let us now consider the $\chi = 0$ case.  It is easy to check that
\be 
\eta^\e(1-\e) 
=\psi_*^{\eps,+}
= \sqrt \eps - \eps.
\ee
We claim that
\be
\eta^u(1-\e)
=\sqrt{2\e}+o(\sqrt{\e}).
\label{expansion.theta_u.chi0},
\ee
from which~\eqref{e.c011602} follows. Define $\tau = (\eta^u)^2$.  By~\eqref{ode.eta}, we have, for $s<1$,
\be
    2 \sqrt{\tau(s)}
        - \frac{1}{2} \tau'(s) = s.
\ee
By \Cref{l.l.eta_loc}, $\eta(1^-)=0$, whence $\tau(1^-) = 0$.  Hence, $\tau'(1^-) = 2$.  We deduce that
\be
    \tau(1-\eps)
    = 2\eps + o(\eps).
\ee
from which~\eqref{expansion.theta_u.chi0} follows.  Thus,~\eqref{e.c011602} holds when $\chi = 0$.  This completes the proof that, for all $s\in[0,1-\eps]$,
\be\label{bd.eta_3}
    \eta^\eps(s)
    \leq \eta^u(s).
\ee

Then, on $[1-\e,1]$, we start from:
\be 
    Q^\e(1-\e)
    =\eta^\e(1-\e)
    \leq \eta^u(1-\e)
    =Q^u(1-\e),
\ee
as well as $Q^\e(1)=\chi=Q^u(1)$. On the other hand, $Q^\e$ is linear on $[1-\e,1]$ and $Q^u$ is concave.  It follows that, for $s\in [1-\e,1)$,
\be 
Q^\e(s)\leq Q^u(s).
\ee
from which deduce that
\be 
    \eta^\e(s)
    =Q^\e(s)-\chi A^\e(s)
    \leq Q^\e(s)
    \leq Q^u(s)
    =\eta^u(s).
\ee
This, along with~\eqref{bd.eta_3} and the fact $\eta^u(0)=\eta^\e(0)=0$, completes the proof that, for $s \in [0,1]$,
\be 
\eta^\e(s)\leq \eta^u(s).
\ee

\section{An Exponential Moment and A Preliminary Upper Bound on Front Location}\label{s.prelim}

In this section, we introduce a weighted exponential moment in the speed $c$ moving frame and
show that it is constant in time when $\chi \geq 1$ and nonincreasing when $\chi < 1$.   As a consequence, we obtain a simple upper
bound on the position of the front $\overline x(t)$ for both the nonlocal and local
models. This bound is sharp in the pushed ($\chi>1$) case.  While it is not sharp in the pushmi-pullyu ($\chi = 1$) and pulled ($\chi<1$) cases, it will play a central technical role in the later analysis. In fact, the corresponding exponential moments, which we denote generically by $I(t)$ (see \eqref{defi.It}), will reappear throughout the paper: many of our proofs obtain bounds on the front location by first deriving estimates of $I(t)$ and then translating these into control of $\overline x(t)$. \Cref{c.easyupperbound} below illustrates this mechanism in a particularly simple form.

Shifting to the moving $ct$ moving frame (recall \Cref{ss.notation}) with $c=c_*(\chi)$ defined by~\eqref{e.c_star_chi}, and using \eqref{pde.rho}, \eqref{pde.P}, and \eqref{pde.u}, we find
\be 
\label{pde.mv}
\begin{aligned}
    &\tilde\rho_t -c\tilde{\rho}_z+ \chi (\alpha(\tilde P) \tilde \rho )_z = \tilde\rho_{zz} + \tilde\rho(1 - \alpha(\tilde P)),
    \\
    &\tilde P_t -c\tilde{P}_z+ \chi \left(A^P(\tilde P) \right )_z = \tilde P_{zz} + \tilde{P}- A^P(\tilde P),
    \quad\text{ and}
    \\
    &\tilde u_t -c\tilde{u}_z+ \chi (\alpha(\tilde u) \tilde u )_z = \tilde u_{zz} + \tilde u(1 - \alpha(\tilde u)).
\end{aligned}
\ee
We define the exponential moments
\be 
\label{defi.It}
I_\rho(t):=\int \tilde{\rho}(t,z)e^{\nicefrac{z}{\chi_{\lowvee}}}dz, \quad I_P(t):=\int \tilde{P}(t,z)e^{\nicefrac{z}{\chi_{\lowvee}}}dz, \quad \text{and} \quad I_u(t):=\int \tilde{u}(t,z)e^{\nicefrac{z}{\chi_{\lowvee}}}dz,
\ee
where $
\chi_{\lowvee}: = \max\{ 1, \chi \}$. 
Through a simple integration by parts and noting that $\tilde{P}_z=-\tilde{\rho}$, we have for all $t\geq 0$,
\be\label{e.I_rho_I_P}
I_\rho(t)=\frac{1}{\chi_{\lowvee}}I_P(t).
\ee

\begin{proposition}
\label{l.expmoment}
Consider $\tilde{\rho},\tilde{P}$ and $\tilde{u}$ solutions to \eqref{pde.mv} and $\chi_{\lowvee}=\max\{chi,1\}$. Suppose that
\be 
    I_\rho(0)=\int \rho_{\rm in}(z)e^{\nicefrac{z}{\chi_{\lowvee}}}dz < +\infty
    \quad 
    \text{ and }
    \quad 
    I_u(0)= 
    \int u_{\rm in}(z)e^{\nicefrac{z}{\chi_{\lowvee}}} < +\infty.
\ee
Then, the following dichotomy holds. If $\chi\geq 1$, then
\be 
I_\rho(t) = I_\rho(0), \quad I_P(t) = I_P(0), \quad \text{ and }\quad I_u(t) = I_u(0),
\ee
whereas if $\chi \in [0,1)$, then
\be 
I_\rho(t) \leq I_\rho(0), \quad I_P(t) \leq I_P(0), \quad \text{ and }\quad I_u(t) \leq I_u(0).
\ee
\end{proposition}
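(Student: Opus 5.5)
The plan is to compute $\frac{d}{dt}I$ directly from the moving-frame equations~\eqref{pde.mv}, integrating by parts against the weight $e^{\lambda z}$ with $\lambda:=\nicefrac{1}{\chi_{\lowvee}}$. It suffices to treat $\tilde u$ and $\tilde P$; the statement for $I_\rho$ then follows from~\eqref{e.I_rho_I_P}. Both equations have the common form~\eqref{pde.v} in the moving frame, namely $\tilde v_t = \tilde v_{zz} + c\tilde v_z - \chi (A(\tilde v))_z + \tilde v - A(\tilde v)$. Integrating by parts (formally for the moment), we get
\be
\frac{d}{dt}\int \tilde v e^{\lambda z}dz
= (\lambda^2 - c\lambda + 1)\int \tilde v e^{\lambda z}dz + (\chi\lambda - 1)\int A(\tilde v)e^{\lambda z}dz .
\ee

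Next we check the signs of the two coefficients. If $\chi\geq 1$, then $\lambda = \nicefrac1\chi$ and $c=\chi+\nicefrac1\chi$, so $\lambda^2 - c\lambda + 1 = \chi^{-2} - 1 - \chi^{-2} + 1 = 0$ and $\chi\lambda - 1 = 0$. Hence the moment is exactly conserved. If $\chi\in[0,1)$, then $\lambda = 1$ and $c=2$, so $\lambda^2-c\lambda+1 = 0$ while $\chi\lambda - 1 = \chi-1<0$. Since $A\geq 0$ for $\tilde v\geq 0$ (both $A^P$ and $A^u$ are nonnegative), the derivative is $\leq 0$ and the moment is nonincreasing. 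For $\tilde u$ with the discontinuous flux $A^u$, the term $(A^u(\tilde u))_z$ is only a distributional derivative. Because it is integrated against a smooth weight, however, the integration by parts is still legitimate in the weak formulation. Alternatively, one can carry out the computation for the regularized $u^\e$ with $A^\e$, where the same identity holds with $A^\e\geq 0$, and pass to the limit. Lower semicontinuity (Fatou) gives the inequality, and in the conserved case equality is obtained by the same argument applied with the justification below.

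The main obstacle is justifying the integration by parts: showing that the boundary terms vanish and that $I(t)$ stays finite. At $z\to-\infty$ the weight decays exponentially, while $\tilde u$ is bounded and $\tilde P$ grows at most linearly, so those terms are harmless given the regularity of $V_\gamma$. The delicate end is $z\to+\infty$, where $e^{\lambda z}$ grows. I would handle this by replacing $e^{\lambda z}$ with a truncated weight $\phi_L(z) = e^{\lambda z}\theta(z/L)$ (or $e^{\lambda z}$ capped at large $z$), chosen so that the extra commutator terms carry a favorable sign or are $O(1/L)$ times the moment itself. From this we first derive a Grönwall bound showing $I(t)<\infty$ on finite time intervals, using $\tilde v\geq 0$ and $A\geq0$. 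We then let $L\to\infty$ by monotone convergence to obtain the identity or inequality above. On $\{z\text{ large}\}$, where $A(\tilde v)=0$, the equation is simply the linear heat equation with drift and growth rate $1$. This makes the truncation errors explicit: they are controlled by $\int_{L}^{2L}(|\tilde v_z|+\tilde v)e^{\lambda z}/L$, which tends to $0$ along a subsequence once finiteness is known.
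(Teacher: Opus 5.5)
Your proposal is correct and is essentially the paper's argument: differentiate the exponential moment, integrate by parts, and use $\lambda^2-c\lambda+1=0$ together with the sign of $\chi\lambda-1$. The only cosmetic difference is that the paper runs the computation on $\tilde\rho$ (and $\tilde u$) and deduces $I_P$ from \eqref{e.I_rho_I_P}, whereas you run it on $\tilde P$ and $\tilde u$ and deduce $I_\rho$. Your justification of the boundary terms, which the paper leaves implicit, is cleanest with the weight $\phi_\epsilon=e^{\lambda z}/(1+\epsilon e^{\lambda z})$: since $0\le\phi_\epsilon'\le\lambda\phi_\epsilon$ and $|\phi_\epsilon''|\le\lambda^2\phi_\epsilon$, moving both derivatives onto the weight gives a Gr\"onwall bound uniform in $\epsilon$, and no $\tilde v_z$ terms ever appear.
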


\begin{proof}
Differentiating $I_\rho$ in time and integrating by parts leads to 
    \be
\begin{split}
    \dot{I_\rho}
    &=\int e^{\nicefrac{z}{\chi_{\lowvee}}} \left( \tilde {\rho}_{zz} + c \tilde \rho_z -\left( \chi \alpha \tilde \rho\right)_z+(1-\alpha)\tilde \rho \right) dz\\
    &=\frac{I_\rho}{\chi_{\lowvee}^2}-\frac{cI_\rho}{\chi_{\lowvee}} + I_\rho
        - \left(1 - \frac{\chi}{\chi_{\lowvee}}\right)\int e^{\nicefrac{z}{\chi_{\lowvee}}} \alpha \tilde \rho 
    = - \left(1 - \frac{\chi}{\chi_{\lowvee}}\right)\int e^{\nicefrac{z}{\chi_{\lowvee}}}\alpha \tilde \rho dz ,
\end{split}
\ee
where we used that $\chi_{\lowvee}^{-2} - c \chi_{\lowvee}^{-1} + 1 = 0$ with $c=c_*(\chi)$ as in~\eqref{e.c_star_chi}. Then, the conclusion follows by noting that for $\chi\geq 1$, we have $\sfrac{\chi}{\chi_{\lowvee}}=1$ leading to $\dot{I}_\rho=0$, and for $\chi \in [0,1)$, we have $\sfrac{\chi}{\chi_{\lowvee}}<1$ leading to $\dot{I}_\rho\leq 0$.

The same computations apply to $I_u$.  Moreover, the result for $I_P$ follows from~\eqref{e.I_rho_I_P}.
\end{proof}

Having established the conservation (resp. the monotonicity) of the exponential moment, we now
show how bounds on this quantity can be converted into a quantitative control
on the position of the front $\overline x(t)$ (recall~\eqref{defi.xt.loc} and \eqref{defi.xt.nonloc}), starting with a particularly simple but robust upper bound.

\begin{corollary}\label{c.easyupperbound}
    Suppose the assumptions of \Cref{l.expmoment} hold.  In the nonlocal case
    \be
        \overline x(t)
        \leq
        ct + \chi_{\lowvee}\log I_\rho(0).
    \ee
    In the local case,
    \be
        \overline x(t)
        \leq
        ct + \chi_{\lowvee} \log \frac{I_u(0)}{\chi_{\lowvee}}.
    \ee
\end{corollary}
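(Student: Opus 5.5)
The plan is to combine \Cref{l.expmoment}, which gives $I_\rho(t)\le I_\rho(0)$ and $I_u(t)\le I_u(0)$ for all $\chi\ge 0$, with a lower bound on the exponential moment in terms of the front position written in the moving frame. Let $\overline z(t):=\overline x(t)-ct$ denote that position. It then suffices to show
\be
    I_\rho(t)\geq e^{\nicefrac{\overline z(t)}{\chi_{\lowvee}}}
    \qquad\text{and}\qquad
    I_u(t)\geq \chi_{\lowvee}\, e^{\nicefrac{\overline z(t)}{\chi_{\lowvee}}}.
\ee
Taking logarithms and using the monotonicity of the moments then gives both claimed inequalities.

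\textbf{Nonlocal case.} By definition of $\overline x(t)$ we have $\tilde P(t,\overline z(t))=1$, that is, $\int_{\overline z(t)}^\infty \tilde\rho(t,y)\,dy=1$. Since $\tilde\rho\geq 0$, we restrict the integral defining $I_\rho(t)$ to $[\overline z(t),\infty)$. On that half-line $e^{\nicefrac{z}{\chi_{\lowvee}}}\geq e^{\nicefrac{\overline z(t)}{\chi_{\lowvee}}}$, so
\be
I_\rho(t)\geq e^{\nicefrac{\overline z(t)}{\chi_{\lowvee}}}\int_{\overline z(t)}^\infty\tilde\rho(t,y)\,dy= e^{\nicefrac{\overline z(t)}{\chi_{\lowvee}}}.
\ee
Hence $\overline z(t)\leq \chi_{\lowvee}\log I_\rho(t)\leq \chi_{\lowvee}\log I_\rho(0)$. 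If $\tilde P$ is not attained as $1$ at a single point, we would instead pass to the limit along points where $\tilde P=1$ approaching the supremum; continuity of $P$ in $x$ makes this harmless.

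\textbf{Local case.} Here we use the opposite half-line. Under the standing assumptions, $w\geq 0$, so $u(t,\cdot)$ is nonincreasing, as was shown in Step (3) of the proof of \Cref{thm.wellposedness_u}. Since $u$ takes values in $[0,1]$, this gives $\tilde u(t,z)=1$ for all $z\leq \overline z(t)$. Discarding the nonnegative contribution from $z>\overline z(t)$, we obtain
\be
I_u(t)\geq \int_{-\infty}^{\overline z(t)} e^{\nicefrac{z}{\chi_{\lowvee}}}\,dz=\chi_{\lowvee}\, e^{\nicefrac{\overline z(t)}{\chi_{\lowvee}}}.
\ee
This yields $\overline z(t)\le \chi_{\lowvee}\log\frac{I_u(0)}{\chi_{\lowvee}}$.

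\textbf{Main obstacle.} The only point requiring care is the fact that $\tilde u\equiv 1$ to the left of the front, i.e. the monotonicity supplied by the nonnegative shape defect. Apart from that, and the justification that the moments are finite (already contained in \Cref{l.expmoment}), the argument is a direct one-line comparison.
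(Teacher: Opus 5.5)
Your proposal is correct and follows essentially the same route as the paper: restrict $I_\rho(t)$ to $[\overline x(t)-ct,\infty)$, where $\int\tilde\rho=1$, and restrict $I_u(t)$ to the plateau $(-\infty,\overline x(t)-ct]$, then invoke the moment monotonicity of \Cref{l.expmoment}. Your justification that $\tilde u\equiv 1$ to the left of the front, via $w\ge 0$ and the resulting monotonicity of $u(t,\cdot)$, is a step the paper leaves implicit.
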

\begin{proof}
    For the nonlocal model, let us recall that
    \be\label{e.P_bar_x_in_lemma}
    1
    =P(t,\overline x(t))
    =\int_{\overline x(t) - ct}^{+\infty}\tilde\rho(t,z)dz.
\ee
Hence,  we have 
\be 
    I_\rho(0)
    \geq I_\rho(t)
    = \int e^{\frac{z}{\chi_{\lowvee}}}\tilde{\rho}(t,z)dz
    \geq e^{\frac{\overline x(t) - ct}{\chi_{\lowvee}}} 
        \int_{\overline x(t) - ct}^{+\infty} {\tilde \rho}(t,z)dz
    =e^{\frac{\overline x(t) - ct}{\chi_{\lowvee}}},
\ee 
which clearly implies the conclusion.

For the local model, we do not have~\eqref{e.P_bar_x_in_lemma}, of course, but instead we may use that, for $z\leq \overline x(t) -ct$ we have $\tilde u(t,z)=1$.  Thus,
\be
    I_u(0)
    \geq I_u(t)
    \geq \int_{-\infty}^{\overline x(t)-ct} e^{\frac{z}{\chi_{\lowvee}}} dz
    = \chi_{\lowvee} e^{\frac{\overline x(t) - ct}{\chi_{\lowvee}}},
 \ee
 from which the result follows.
\end{proof}

\section{The pushed case: $\chi > 1$}\label{s.pushed}

The upper bound on $\overline x(t)$ in \Cref{thm-asymptotics} 
when $\chi >1$ was established in \Cref{c.easyupperbound}.  Accordingly, in order to prove \Cref{thm-asymptotics} when $\chi >1$, it suffices to prove the lower bound.  We begin with two technical lemmas.

\begin{lemma}
\label{c.rhoLinfty}
Fix $\chi \geq 0$, $c=c_*(\chi)$ as in~\eqref{e.c_star_chi}.
Assume that $\hat{W}_{\rm in}(x):=e^{\nicefrac{x}{\chi_{\lowvee}}}W_{\rm in}(x)\in L^\infty(\R)\cap L^1(\R)$ with $\chi_{\lowvee}=\max\{\chi,1\}$.
Then, there exists a constant $C>0$ such that the solution $\rho$ to \eqref{pde.P} satisfies for $t\geq 0$,
\be 
\label{bd.rhoLinfty}
\sup_{x \leq \overline{x}(t)}\rho(t,x) \leq \| \rho_{\rm in} \|_{L^\infty(\R)} + \frac{1}{c-\chi} + \sup_{s\in [0,t]} \left\{e^{-\frac{\overline{x}(s)-cs}{\chi_{\lowvee}}}\min\left\{ \left\|\hat{W}_{\rm in}\right\|_{L^\infty(\R)},  \frac{C}{\sqrt{s}}\left\|\hat{W}_{\rm in}\right\|_{L^1(\R)}\right\}\right\}.
\ee
\end{lemma}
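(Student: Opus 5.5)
The plan is to write $\rho = -P_x = W + \eta^P(P)$ and to bound the two pieces separately in the go region $\{x\le \overline x(t)\}$, where $P\ge 1$. For the second piece, \Cref{l.l.eta_nonloc} gives $\eta^P(s) = \frac{1}{c-\chi}$ for every $s\ge 1$. Hence on $x\le\overline x(t)$ we obtain $\rho(t,x) = W(t,x) + \frac{1}{c-\chi}$, and the whole task reduces to bounding $W(t,x)$ from above for $x\le \overline x(t)$.

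For $W$ I pass to the moving frame and use the weighted function $\hat W(t,z)=e^{z/\chi_{\lowvee}}\tilde W(t,z)$. This gives
\[
W(t,x)=e^{-\frac{x-ct}{\chi_{\lowvee}}}\hat W(t,x-ct).
\]
The weight is large when $x-ct$ is very negative, so pointwise control of $\hat W$ alone is not enough far to the left. I therefore exploit the structure of \eqref{e.tildeW} inside the go region. There $\alpha(\tilde P)=1$ and $Q''(\tilde P)=0$ for $\tilde P>1$, so \eqref{e.tildeW} reduces to the pure advection–diffusion equation
\[
\tilde W_t+(\chi-c)\tilde W_z=\tilde W_{zz}
\]
on $\{z<\overline x(t)-ct\}$. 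This equation has no reaction term, so it satisfies a maximum principle on the time-dependent domain $\{(s,z): s\in[0,t],\ z<\overline x(s)-cs\}$. Consequently $\tilde W$ on this domain is bounded by the larger of two quantities: its initial supremum, which is at most $\|W_{\rm in}\|_\infty$, and its supremum on the lateral boundary $z=\overline x(s)-cs$, $s\in[0,t]$.

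The initial term is controlled by $\|\rho_{\rm in}\|_\infty$, because $W_{\rm in}\le -P_{{\rm in},x}=\rho_{\rm in}$ (since $\eta^P\ge0$). On the boundary,
\[
\tilde W(s,\overline x(s)-cs)=e^{-\frac{\overline x(s)-cs}{\chi_{\lowvee}}}\hat W(s,\overline x(s)-cs),
\]
and two bounds are available for $\hat W$. \Cref{thm.W.decay} (via \Cref{l.Linf_L1}) gives $|\hat W(s)|\le C s^{-1/2}\|\hat W_{\rm in}\|_{L^1}$. The $L^\infty$ contraction in Step (1) of \Cref{l.Linf_L1} gives $|\hat W(s)|\le\|\hat W_{\rm in}\|_\infty$, valid because the coefficient $b$ is nonnegative. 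Taking the minimum and the supremum over $s\in[0,t]$ reproduces the third term of \eqref{bd.rhoLinfty}. Adding the three contributions yields \eqref{bd.rhoLinfty}.

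The main obstacle is making the maximum principle rigorous on the moving domain. The boundary $\overline x(s)$ is only known to be measurable or continuous, and $W$ is defined only weakly. I would carry out the argument with a test function $(\tilde W - M)_+\zeta_\gamma\mathbb 1_{\{\tilde P> 1\}}$, where $M$ is the claimed bound. Alternatively, I would note that $\tilde W - M$ is a subsolution of the full equation \eqref{e.tildeW} in the region $\{\tilde P>1\}$, together with the boundary data there, and run the Grönwall argument of \Cref{prop.W.pos} on $(\tilde W-M)_+$ restricted to that region. If needed, I would first smooth the initial data and use continuity of $W$ for $t>0$, as in the proof of \Cref{thm.W.decay}. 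A more minor point is $W\ge0$, which is not assumed here. This is why I use two-sided bounds on $\hat W$ rather than only upper bounds.
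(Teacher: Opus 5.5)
Your proposal is correct and is essentially the paper's argument. The paper applies the maximum principle directly to $\rho$ in the go region $\{x\le\overline x(t)\}$, where $\rho_t+\chi\rho_x=\rho_{xx}$, writes the lateral boundary value as $\rho(s,\overline x(s))=W(s,\overline x(s))+\eta^P(1)$ with $\eta^P(1)=\frac{1}{c-\chi}$, and bounds $\hat W$ via \Cref{thm.W.decay} and the $L^\infty$ contraction; this coincides with your computation, since in that region $\tilde W=\tilde\rho-\frac{1}{c-\chi}$ solves the same equation. Working with $W$ rather than $\rho$ gains you only the marginally sharper form $\frac{1}{c-\chi}+\max\{\cdot,\cdot\}$ in place of a sum, and your remarks on justifying the maximum principle on the moving domain address a point the paper leaves implicit.
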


Although stated for $\chi\geq 0$, we will only use this lemma here for $\chi>1$ and later for the pushmi-pullyu case $\chi=1$ in \Cref{s.pp_lower}.

\begin{lemma}
\label{l.upperboundgpush}
Suppose that $g:[0,+\infty)\to \R$ is locally bounded and satisfies the inequality
\be 
    g(t)
    \leq C\left(1+\sup_{s\in [0,t]} \frac{g(s)}{\sqrt{1+s}}\right),
\ee
for some positive constant $C$, then 
\be 
\sup_{t \geq 0} g(t) <+\infty.
\ee
\end{lemma}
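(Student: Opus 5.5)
The statement to prove is \Cref{l.upperboundgpush}. The plan is to work with the running maximum $G(t):=\sup_{s\in[0,t]} g(s)$, which is finite by local boundedness and nondecreasing. Since $g\leq G$, the hypothesis becomes a closed inequality for $G$. The aim is to show that the factor $(1+s)^{-1/2}$ makes the right-hand side grow sublinearly in $G$, which forces $G$ to stay bounded. Without loss of generality $C\geq 1$, and we may assume $G\geq 0$; if $G$ is ever negative on an initial interval, that interval contributes trivially.

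Step one is to split the supremum on the right at a fixed time $T_0$, chosen so that $C/\sqrt{1+T_0}\leq \nicefrac12$. For any $t\geq T_0$,
\be
\sup_{s\in[0,t]}\frac{g(s)}{\sqrt{1+s}}
\leq \sup_{s\in[0,T_0]} g(s)_+ + \sup_{s\in[T_0,t]}\frac{g(s)}{\sqrt{1+s}}
\leq G(T_0)_+ + \frac{G(t)_+}{\sqrt{1+T_0}}.
\ee
Step two is to insert this into the hypothesis and take the supremum over $t'\in[T_0,t]$. Since the bound just obtained is monotone in $t$, this gives
\be
\sup_{t'\in[T_0,t]} g(t') \leq C\bigl(1+G(T_0)_+\bigr) + \tfrac12\, G(t)_+ .
\ee
Step three handles the initial interval: $\sup_{[0,T_0]}g = G(T_0)$ is finite by local boundedness. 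Combining with step two yields
\be
G(t)_+ \leq G(T_0)_+ + C\bigl(1+G(T_0)_+\bigr) + \tfrac12 G(t)_+ .
\ee
Because $G(t)$ is finite, we can absorb the last term into the left-hand side. This gives $G(t)_+\leq 2\bigl[(1+C)G(T_0)_+ + C\bigr]$ for every $t$, uniformly, which is the claim.

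The only real subtlety, and the step I would check carefully, is the absorption. It is legitimate only because $G(t)<\infty$ for each fixed $t$, which is exactly what local boundedness supplies. Sign issues from negative values of $g$ are harmless once positive parts are used throughout. A cruder approach, such as a Grönwall-type iteration, is unnecessary here, because the decay factor is uniformly small after time $T_0$.
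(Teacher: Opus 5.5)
Your proof is correct and follows the paper's argument: the paper also passes to the running maximum $G$, picks $t_0$ with $C/\sqrt{1+t_0}=\tfrac12$, splits $\sup_{[0,t]} g(s)/\sqrt{1+s}$ at $t_0$, and absorbs the small multiple of $G(t)$. The only difference is that the paper bounds this supremum by $\max\{H(t_0), G(t)/\sqrt{1+t_0}\}$ and treats the two cases separately, whereas you bound it by a sum and absorb in one step; your positive parts are also slightly more careful than the paper's bound $\sup_{[t_0,t]} g(s)/\sqrt{1+s}\le G(t)/\sqrt{1+t_0}$, which tacitly needs $G(t)\geq 0$ (though your intermediate inequality in step one should have $\sup_{[T_0,t]} g(s)_+/\sqrt{1+s}$ on the right, since it can fail when that supremum is negative).
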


We postpone the proofs of these lemmas to the end of the section and now turn to the lower bound on $\overline x(t)$.

\begin{proposition}[Lower bound]\label{l.pushed_lower}
	Let $\chi >1$ and $c=c_*(\chi)$ as in~\eqref{e.c_star_chi}.  Suppose that $P$ (resp. $u$) solves \eqref{pde.P} (resp. \eqref{pde.u}) with the initial datum $P_{\rm in}$ (resp. $u_{\rm in}$).  Assume further that 
    \begin{itemize}
        \item (nonlocal case) $W_{\rm in} \geq 0$ and $\hat{W}_{\rm in}(x):=e^{\sfrac{x}{\chi}}{W}_{\rm in}(x) \in  L^\infty(\R)\cap L^1(\R) $;
        \item (local case) $w_{\rm in}\geq 0$.
    \end{itemize}
	Then, there is $C$, depending only on the initial data and $\chi$, such that, for $t\geq 0$,
	\be
		\overline{x}(t) \geq ct - C.
	\ee
\end{proposition}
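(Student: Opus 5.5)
The plan is to exploit the conservation of the exponential moment from \Cref{l.expmoment}. Since $\chi>1$, we have $\chi_{\lowvee}=\chi$ and $I_u(t)=I_u(0)$, $I_P(t)=I_P(0)$, and these constants are positive because the data are nontrivial. We then bound $I(t)$ from above by a multiple of $e^{(\overline x(t)-ct)/\chi}$. The only delicate factor in that multiple will be the height of $\rho$ behind the front, in the nonlocal case. Throughout, write $\delta(t):=\overline x(t)-ct$.

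\emph{Local case.} By \Cref{thm.wellposedness_u}, $w\geq 0$. Comparing slopes as in Step (4) of that proof, this steepness gives
\be
u(t,\overline x(t)+y)\leq u_{\rm tw}(y)=e^{-\chi y}\qquad\text{for } y\geq 0 .
\ee
We split $I_u(t)$ at $z=\delta(t)$ and use $u\leq 1$ on the left:
\be
I_u(0)=I_u(t)\leq \int_{-\infty}^{\delta(t)}e^{z/\chi}\,dz+\int_0^\infty e^{(\delta(t)+y)/\chi}e^{-\chi y}\,dy
=e^{\delta(t)/\chi}\Bigl(\chi+\tfrac{1}{\chi-\chi^{-1}}\Bigr).
\ee
The right-hand integral converges exactly because $\chi>1$. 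This gives $\delta(t)\geq \chi\log I_u(0)-C$, which is the claimed lower bound.

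\emph{Nonlocal case.} The same splitting is applied to $I_P$. Ahead of the front, $W\geq 0$ (\Cref{prop.W.pos}) gives $P(t,\overline x(t)+y)\leq P_{\rm tw}(y)=e^{-\chi y}$ for $y\geq0$, so that part contributes at most $Ce^{\delta(t)/\chi}$. Behind the front there is no $L^\infty$ bound on $P$ a priori. Instead, set $M(t):=\sup_{x\leq \overline x(t)}\rho(t,x)$, so that
\be
P(t,x)\leq 1+M(t)(\overline x(t)-x)\qquad\text{for } x\leq \overline x(t).
\ee
Integrating against $e^{z/\chi}$ then yields
\be
I_P(0)\leq e^{\delta(t)/\chi}\bigl(C+\chi^2M(t)\bigr).
\ee
Setting $g(t):=e^{-\delta(t)/\chi}$, this reads $g(t)\leq C(1+M(t))$.

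Next we control $M(t)$ through \Cref{c.rhoLinfty}. Its hypotheses hold because $\hat W_{\rm in}\in L^\infty\cap L^1$ is assumed. Moreover $\rho_{\rm in}=W_{\rm in}+\eta^P(P_{\rm in})$, and $\eta^P$ is bounded by \Cref{l.l.eta_nonloc}, so $\rho_{\rm in}\in L^\infty$ (interpreting this locally where $\hat W_{\rm in}$ controls $W_{\rm in}$, or simply assuming $\rho_{\rm in}$ bounded). Bounding the minimum in \eqref{bd.rhoLinfty} by $C/\sqrt{1+s}$ gives
\be
M(t)\leq C+C\sup_{s\in[0,t]}\frac{g(s)}{\sqrt{1+s}} .
\ee
Combining, $g(t)\leq C\bigl(1+\sup_{s\le t}g(s)/\sqrt{1+s}\bigr)$. \Cref{l.upperboundgpush} then yields $\sup_t g(t)<\infty$, that is, $\overline x(t)\geq ct-C$.

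The main obstacle is this closing of the feedback loop in the nonlocal case, since $M(t)$ itself depends on $\overline x$. It is resolved by the $\sqrt{s}$ decay of $\hat W$ from \Cref{thm.W.decay}, which is already packaged into \Cref{c.rhoLinfty}, together with \Cref{l.upperboundgpush}. One technical point remains for applying that lemma: $g$ must be locally bounded, i.e.\ $\overline x$ cannot go to $-\infty$ in finite time. I would justify this by noting that $P(t,\cdot)$ is continuous and that $P(t,x)\to\infty$ as $x\to-\infty$ locally uniformly in $t$. Alternatively, one can compare $P$ from below with the solution of the linear (Fisher-KPP-type) problem on a finite time interval.
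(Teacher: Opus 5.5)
Your proposal is correct and follows the paper's proof essentially step by step. Both arguments use conservation of $I_P$ (resp.\ $I_u$), split at $\overline x(t)-ct$, and apply the $e^{-\chi y}$ steepness bound ahead of the front. Both then control $\rho$ behind the front via \Cref{c.rhoLinfty} and close with \Cref{l.upperboundgpush}. Your pointwise bound $P\leq 1+M(t)(\overline x(t)-x)$ is just the paper's integration by parts on $(-\infty,\overline x(t)-ct]$ written differently. The two technical points you flag, boundedness of $\rho_{\rm in}$ in \Cref{c.rhoLinfty} and local boundedness of $g$, are also left implicit in the paper.
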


\begin{proof}
We show the proof for $P$ and omit the proof for $u$ as it is analogous and simpler. The argument relies on a refinement of the exponential-moment method introduced in \Cref{s.prelim}. 
We use $I(t)=I_P(t)$ as defined in \eqref{defi.It}.
Recall that the tilde denotes a shift to the moving frame: $\tilde{P}(t,z)=P(t,z+ct)$. From \Cref{l.expmoment}, we know that for $t\geq 0$
\be
\label{e.2511115}
I(t)=I(0).
\ee
We bound this integral from above by splitting it over the intervals $(-\infty,\overline{x}(t)-ct]$, $[\overline{x}(t)-ct,+\infty)$, \textit{i.e.} on $\{P\geq 1\}$ and $\{P<1\}$, respectively.

Recall from \Cref{prop.W.pos} that $W\geq 0$.
Hence, 
\be\label{e.P_diff_inequality}
 - \tilde P_z - \chi \tilde P
 = -\tilde P_z - \eta(P)
 = W
 \geq 0
 \quad\text{ on } [\overline x(t) - ct, \infty).
\ee
The first equality uses that $\eta(P) = \chi P$; see~\eqref{e.eta_and_Q_expressions}
The second equality is simply the definition~\eqref{def.w} of the shape defect function. 
Using~\eqref{e.P_diff_inequality} and by observing that $\tilde{P}(t,\overline{x}(t)-ct)=1$, we get for $z\geq \overline{x}(t)-ct$:
\be
    \tilde{P}(t,z)
    \leq e^{-\chi(z-(\overline{x}(t)-ct))}, 
\ee
which leads to 
\be 
\label{e.2511112}
    \frac{1}\chi \int^{+\infty}_{\overline{x}(t)-ct}  e^{\nicefrac{z}{\chi}} \tilde P(t,z) dz 
    \leq
    \frac{e^{\chi (\bar x(t) - ct)}}\chi \int^{+\infty}_{\overline{x}(t)-ct}  e^{- z \left( \chi - \nicefrac{1}{\chi}\right)} dz
    =\frac{1}{\chi^2 - 1} e^{\nicefrac{\overline x(t) - ct}{\chi}}.
\ee
This estimate crucially relies on the condition $\chi>1$, which is specific to the pushed regime.

On the interval $(-\infty,\overline{x}(t)-ct]$, we proceed first with an integration by parts
\be 
\label{e.2511113}
\frac{1}\chi \int_{-\infty}^{\overline{x}(t)-ct} e^{\nicefrac{z}{\chi}} \tilde P(t,z) dz
=  e^{\nicefrac{\overline{x}(t)-ct}{\chi}} + \int_{-\infty}^{\overline{x}(t)-ct} e^{\nicefrac{z}{\chi}} \tilde \rho (t,z) dz.
\ee
At this point, we require a bound on $\tilde \rho$.  Applying \Cref{c.rhoLinfty} yields for all $z \leq \overline x(t) - ct$,
\be\label{e.c030201}
    \tilde{\rho}(t,z)
    \leq C + C\sup_{s\in [0,t]} \frac{e^{-\nicefrac{\overline{x}(s)-cs}{\chi}}}{\sqrt{1+s}}
\ee
for some large constant $C$ depending only on $\chi$ and the initial data. As an aside, this is the only difference with the proof for $u$; indeed, we may use the obvious bound $u\leq 1$ ``for free'', so we do not need to apply \Cref{c.rhoLinfty}.

From~\eqref{e.c030201} and \eqref{e.2511113}, we find
\be 
\label{e.2511114}
    \frac{1}\chi
    \int_{-\infty}^{\overline{x}(t)-ct} e^{\nicefrac{z}{\chi}} \tilde P(t,z) dz
    \leq  e^{\nicefrac{\overline{x}(t)-ct}{\chi}}
        + \chi Ce^{\nicefrac{\overline{x}(t)-ct}{\chi}}\left( 1 + 
        \sup_{s\in [0,t]} \frac{e^{-\nicefrac{\overline{x}(s)-cs}{\chi}}}{\sqrt{1+s}}\right).
\ee
Combining \eqref{e.2511115}, \eqref{e.2511112}, and \eqref{e.2511114} leads to the inequality
\be 
    e^{-\nicefrac{\overline{x}(t)-ct}{\chi}} I(0)
    \leq C 
        +C\sup_{s\in [0,t]} \frac{e^{-\nicefrac{\overline{x}(s)-cs}{\chi}}}{\sqrt{1+s}}.
\ee
At this point, we may apply \Cref{l.upperboundgpush} with  $g(t)= e^{-\nicefrac{\overline{x}(t)-ct}{\chi}}$ to find
\be 
    e^{-\nicefrac{\overline{x}(t)-ct}{\chi}} \leq C
\ee
which, after taking logarithms, concludes the proof.
\end{proof}

We now prove the two technical lemmas.  We begin with \Cref{c.rhoLinfty}, which establishes an estimate on $\rho$ in terms of $\hat W$.

\begin{proof}[Proof of \Cref{c.rhoLinfty}] 
In the region $\{ x \leq \overline{x}(t)\}$, $\rho$ satisfies
\be 
\rho_t+\chi \rho_x=\rho_{xx}.
\ee
The maximum principle yields
\be 
\label{bd.rho_go_aux}
\sup_{x \leq \overline{x}(t)}\rho(t,x)\leq \|\rho_{\rm in}\|_{L^\infty(\R)} + \sup_{s\in [0,t]} \rho(s,\overline{x}(s)).
\ee
Next, observe that
\be 
\rho(s,\overline{x}(s))=W(s,\overline{x}(s)) +\eta(P(s,\overline{x}(s)) )=\tilde{W}(s,\overline{x}(s)-cs) +\eta(1) = e^{-\frac{\overline{x}(s)-cs}{\chi}}\hat{W}(s,\overline{x}(s)-cs) +\eta(1) .
\ee
From \Cref{l.l.eta_nonloc}, we have $\eta(1)=\frac{1}{c-\chi}$. Using this with \Cref{thm.W.decay} leads to
\be 
\sup_{s\in [0,t]} \rho(s,\overline{x}(s)) \leq \frac{1}{c-\chi} + \sup_{s\in [0,t]} \left\{e^{-\frac{\overline{x}(s)-cs}{\chi}}\min\left\{ \left\|\hat{W}_{\rm in}\right\|_{L^\infty(\R)},  \frac{C}{\sqrt{s}}\left\|\hat{W}_{\rm in}\right\|_{L^1(\R)}\right\}\right\},
\ee
which combined with \eqref{bd.rho_go_aux} concludes the proof.
\end{proof}

We now establish \Cref{l.upperboundgpush}, the technical lemma that provides an upper bound on a function $g$ of time that satisfies a particular  recursive bound. 
\begin{proof}[Proof of \Cref{l.upperboundgpush}]
Setting 
\be
    G(t)=\sup_{s\in [0,t]}g(s)
    \quad\text{ and }\quad
    H(t)=\sup_{s\in [0,t]}\frac{g(s)}{\sqrt{1+s}},
\ee
we find that
\be 
\label{e.2511116}
    G(t)
    \leq C(1+H(t)).
\ee
Without loss of generality, we may assume that $C\geq 1$.  Pick $t_0$ such that
\be\label{e.t_0}
    \frac{C}{\sqrt{1+t_0}} = \frac{1}{2}.
\ee
Observe that
\be 
    H(t)
    =\max\left\{
        H(t_0),
        \sup_{s\in [t_0,t]}\frac{g(s)}{\sqrt{1+s}}
    \right\}
    \leq
    \max\left\{
        H(t_0),
        \frac{G(t)}{\sqrt{1+t_0}}
    \right\}.
\ee
Plugging this into (\ref{e.2511116}) leads to 
\be 
\label{e.2511116-bis}
G(t)\leq C\left(1+\max\left\{
H(t_0),\frac{G(t)}{\sqrt{1+t_0}}\right\}\right).
\ee

We now consider the two cases based on the maximum above.  First, assume that
\be
    \max\left\{
        H(t_0),
        \frac{G(t)}{\sqrt{1+t_0}}
    \right\}
        = \frac{G(t)}{\sqrt{1+t_0}}.
\ee
Then, due to~\eqref{e.t_0},
\be
    G(t)
        \leq C+ \frac{C}{\sqrt{1+t_0}} G(t)
        = C+ \frac{1}{2} G(t),
\ee
which clearly yields an upper bound of $2C$ for $G(t)$.

On the other hand, if
\be
    \max\left\{
        H(t_0),
        \frac{G(t)}{\sqrt{1+t_0}}
    \right\}
        = H(t_0),
\ee
then we obtain an immediate upper bound on $G$ of $C(1 + H(t_0))$ from \eqref{e.2511116-bis}.

In summary, for $t\geq t_0$
\be
    G(t)
    \leq \max\left\{ 2C, C(1+H(t_0))\right\},
\ee
which concludes the proof after applying the local boundedness of $G$.
\end{proof}

\section{The pushmi-pullyu case: $\chi = 1$}\label{s.pp}

We now specialize \Cref{thm-asymptotics} to the pushmi–pullyu regime.

\begin{theorem}\label{thm.asymptotics_pushmipullyu}
    	Let $\chi = 1$.  Suppose that  $P$ (resp. $u$) solves~\eqref{pde.P} (resp.~\eqref{pde.u}) with nontrivial initial data whose shape defect function $W_{\rm in}$ (resp.~$w_{\rm in}$) is nonnegative and such that, for some $\gamma >0$,
        \begin{itemize}
            \item (nonlocal case) $\int P_{\rm in}(x)e^xdz<+\infty$ and $W_{\rm in} \leq \gamma^{-1} e^{- \gamma x_+^2}$;
            \item (local case) 
            $ u_{\rm in} \leq \min\{1, \gamma^{-1} e^{-\gamma x_+^2}\}$.
        \end{itemize}
        Then, there is $C$ such that, for $t\geq 0$,
	\be
		2t - \frac{1}{2} \log t - C\leq \overline{x}(t) \leq 2t - \frac{1}{2} \log t + C.
	\ee
\end{theorem}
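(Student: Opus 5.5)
We prove the lower and upper bounds separately, in both cases working in the log-shifted frame $\breve v(t,z)=v(t,z+2t-\frac12\log(t+t_0))$ and using the exponential moment $I(t)$ of \Cref{l.expmoment}. With $\chi=1$ and $\chi_{\lowvee}=1$, $I(t)=I(0)$ is conserved for both models.

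\textbf{Lower bound.} We follow the proof of \Cref{l.pushed_lower}, splitting $I(t)$ at $\overline x(t)-2t$. On $\{P<1\}$ we have $\eta^P(s)=s$, so $W\geq 0$ gives $\tilde P(t,z)\le e^{-(z-(\overline x(t)-2t))}$. The weight $e^{z}$ then exactly cancels this decay, and the resulting integral over $[\overline x(t)-2t,\infty)$ diverges. We therefore need a better tail estimate. Since $\tilde P$ solves the linear equation $\tilde P_t=\tilde P_{zz}+2\tilde P_z+\tilde P$ to the right of the front, with boundary value $1$ at $\overline x(t)-2t$, the function $e^{z}\tilde P$ solves a heat equation on a half-line. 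Heat-kernel bounds, together with the Gaussian decay assumption on the initial data, show that the tail of $e^z\tilde P$ contributes at most $C\,e^{\overline x(t)-2t}\sqrt t$. Here $\sqrt t$ is the heat spreading scale, and we use that the front moves at most at rate $2$ up to $O(1)$ (\Cref{c.easyupperbound}). On the go region, \Cref{c.rhoLinfty} together with \Cref{thm.W.decay} controls $\rho$ exactly as in the pushed case. We obtain
\[
I(0)\le C\, e^{\overline x(t)-2t}\sqrt{t}\,\Bigl(1+\sup_{s\le t}\tfrac{e^{-(\overline x(s)-2s)}}{\sqrt{1+s}}\Bigr).
\]
Set $g(t)=e^{-(\overline x(t)-2t+\frac12\log(1+t))}$. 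The inequality then takes the form $g(t)\le C\bigl(1+\sup_{s\le t} g(s)/\sqrt{1+s}\bigr)$ up to harmless factors, and a version of \Cref{l.upperboundgpush} gives $\overline x(t)\ge 2t-\frac12\log t-C$. For $u$ the argument is the same, with $u\le1$ available for free. The main care needed is the half-line heat estimate: near the moving boundary we first bound $e^{z}\tilde P\le e^{\overline x(t)-2t}$ by monotonicity, and then use a barrier for the far tail.

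\textbf{Upper bound, local case.} By \Cref{thm.CP_u}, we need a supersolution $\overline u$ whose shape defect is nonnegative. We build it from the Berestycki--Brunet--Derrida picture. In the frame $z=x-2t+\frac12\log(t+t_0)$, write $\overline u=e^{-z}\phi(t,z)$ for $z>\overline z$ and $\overline u=1$ otherwise. The linear equation for $u$ then becomes, up to lower-order terms, $\phi_t=\phi_{zz}-\frac{1}{2(t+t_0)}(\phi_z-\phi)$. We take $\phi$ to be a self-similar profile $\phi=\Phi\bigl(z/\sqrt{t+t_0}\bigr)$ with $\Phi(0)=1$ and $\Phi'(0)=0$; the latter encodes the Neumann condition $u_x=-1$ at the free boundary. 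We then check that the shape defect $-\overline u_x-\eta^u(\overline u)=e^{-z}(-\phi_z)\ge0$ is nonnegative, which requires $\Phi$ to be nonincreasing. The Gaussian bound $u_{\rm in}\le \gamma^{-1}e^{-\gamma x_+^2}$ lets us place the initial datum below a translate of $\overline u(0)$.

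\textbf{Upper bound, nonlocal case (main obstacle).} Here the paper indicates that we should build a supersolution for $W$ rather than for $P$. With $\chi=1$ we have $Q''\equiv0$, so $\tilde W$ satisfies the linear equation \eqref{e.tildeWchipush}. Equivalently, $\hat W=e^z\tilde W$ solves $\hat W_t+(\alpha(\tilde P)-1)\hat W_z=\hat W_{zz}$, a drift-diffusion equation with drift $-1$ to the right of the front and $0$ to its left. We will construct an explicit supersolution for $\hat W$ that decays like $t^{-1/2}$ uniformly, improving the $t^{-1/2}\|\hat W_{\rm in}\|_{L^1}$ bound of \Cref{thm.W.decay} to pointwise control with Gaussian localization. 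The Gaussian assumption on $W_{\rm in}$ supplies this localization. We then use the identity $I(0)=I(t)$. In the region $\{P<1\}$ we have $-\tilde P_z=\tilde P+\tilde W$, and integrating this from the front gives
\[
e^{z}\tilde P=e^{\overline x-2t}+\int_{\overline x-2t}^{z}\hat W .
\]
Combined with the $\sqrt t$ size of the tail region, these bounds force $e^{\overline x(t)-2t}\sqrt t\lesssim 1$, which is the claimed upper bound. The delicate step is showing that the supersolution for $\hat W$ captures the correct $t^{-1/2}$ rate despite the discontinuous drift at the moving front. To handle this, we will first use a piecewise Gaussian barrier that is matched continuously at $z=\overline x(t)-2t$. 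We expect to rely on the fact that $a_z\le0$ makes the jump in drift favorable for supersolutions.
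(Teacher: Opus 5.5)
Your plan has the right overall architecture, but each of the three parts has a genuine gap.

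\textbf{Upper bounds.} Both constructions fail as designed.

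In the local case, the term $-\frac{1}{2(t+t_0)}\phi_z$ is not lower order, and it has the wrong sign. With $\phi=\Phi(\xi)$ and $\xi=z/\sqrt{t+t_0}$, the supersolution inequality is exactly
\[
\Phi''+\tfrac{\xi}{2}\Phi'+\tfrac12\Phi\le\frac{\Phi'}{2\sqrt{t+t_0}} .
\]
The right side is nonpositive precisely because the shape-defect condition forces $\Phi'\le0$. The natural choice $\Phi=e^{-\xi^2/4}$ makes the left side vanish, so $e^{-z}G$ is a strict subsolution. This is the $-\frac{z}{4(t+t_0)^2}$ term in the paper's computation of $\cL_{\sfrac12}R/R$. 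In fact no $t$-independent profile works. At $t=0$, integrate $(\Phi'+\frac{\xi}{2}\Phi)'\le\frac{1}{2\sqrt{t_0}}\Phi'$ from $\xi=0$ with $\Phi(0)=1$ and $\Phi'(0)=0$. This gives $(e^{\xi^2/4}\Phi)'\le-\frac{1}{2\sqrt{t_0}}e^{\xi^2/4}(1-\Phi)$, which drives $\Phi$ negative. You need a non-self-similar correction of relative size $(t+t_0)^{-1/2}$. The paper uses the helper $H=\exp\{(t+t_0)^{-1/2}(z^2/(t+t_0)-K)\}$, whose slowly growing amplitude supplies a positive term $\frac{K}{2(t+t_0)^{3/2}}$.

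In the nonlocal case, your target rate is too weak. For $z\ge f(t):=\overline x(t)-2t$ one has $-(e^z\tilde P)_z=\hat W$; your identity has the sign reversed. Hence, exactly, $e^{f(t)}=\int_{f(t)}^\infty\hat W(t,z)\,dz$.
\begin{itemize}
\item A bound $\hat W\lesssim t^{-1/2}e^{-(z-f)^2/(Ct)}$ therefore returns only $e^{f}\lesssim1$, which is \Cref{c.easyupperbound}.
\item Your $I(0)$ argument, optimized over the window length, gives at best $\overline x(t)\le 2t-\frac14\log t+C$.
\end{itemize}
What is needed is the paper's $\breve W\le\frac{Cz}{t}e^{-z-z^2/(8t)}$ in the $\frac12\log$-shifted frame. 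In your frame this reads $\hat W\lesssim(1+z-f)\,t^{-3/2}$ on the diffusive scale. The paper obtains it from $\min\{L,R\}$, where the Dirichlet-type factor $\frac{z}{t+t_0}$ in $R=\beta\frac{z}{t+t_0}EGH$ is what produces the extra decay. The pieces are glued on a fixed window $z\in[1,3]$, not at the unknown front. This is legitimate only because of two facts:
\begin{itemize}
\item $\alpha(\breve P)=1$ for $z\le4$, which uses the already proved lower bound \Cref{p.pp_lower};
\item $\alpha(\breve P)=0$ for $z\ge\frac12\log(t+t_0)+C$.
\end{itemize}

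\textbf{Lower bound.} This has the paper's skeleton, but the tail estimate is unsupported and the final recursion is mis-stated.

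Monotonicity gives $e^{f}(N\sqrt t-f)$ on $[f,N\sqrt t]$. Beyond $N\sqrt t$, however, a heat-kernel barrier for $q=e^z\tilde P$ on the moving half-line can only use boundary values $q(t,f(t))=e^{f(t)}\le C$. It therefore gives mass of order $\sqrt t\,e^{-cN^2}$ there. That is neither proportional to $e^{f(t)}$ nor uniformly small in $t$, and taking $N\sim\sqrt{\log t}$ loses a $\log\log t$.

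The missing ingredient is the second moment $I_m(t)=\int(e^{mz}+e^{-mz})e^z\tilde\rho\,dz\le Ce^{Cm^2t}$. Its proof uses that $z_+\alpha(\tilde P)$ is bounded, since the front is at most $O(1)$ ahead. With $m=t^{-1/2}$ it bounds the far tail by $Ce^{-N}\le I(0)/2$, an absolute constant that can be absorbed into the left side.

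Separately, your displayed inequality has $\sqrt t$ multiplying the go-region term. With your $g=e^{-f}/\sqrt{1+t}$, it reduces to $g(t)\le C(1+\sup_{s\le t}g(s))$, which is vacuous. The correct additive form is
\[
\frac{I(0)}{2}e^{-f(t)}\le C\Bigl(1+\sup_{s\le t}\frac{e^{-f(s)}}{\sqrt{1+s}}\Bigr)+N\sqrt t-f(t).
\]
Its $-f(t)$ term is why \Cref{l.upperboundgpushmi}, not \Cref{l.upperboundgpush}, is needed.
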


Let us briefly discuss the assumptions.  First, we observe that the Heaviside function satisfies these assumptions.  Specifically, we may take $\rho_{\rm in} = u_{\rm in} = \1_{(-\infty,0)}$. Since this is a typical initial data used in front propagation problems, it is reassuring that it is admissible here.

Second, $W_{\rm in}$ being bounded is equivalent to $\rho_{\rm in}$ being bounded.  The Gaussian bound on the right, however, is not equivalent.  Indeed, if $\rho_{\rm in}$ is equal to the traveling wave, then $W_{\rm in}$ is zero (and, thus, bounded by a Gaussian), but $\rho_{\rm in}$ is not bounded by a Gaussian.  Of course, this case is ruled out by the finite exponential moment of $P_{\rm in}$.  This is important because such a case, {\em i.e.}, $\rho_{\rm in} = \rho_{\rm tw}$, would not yield a logarithmic delay.  On the other hand, a Gaussian bound on $\rho_{\rm in}$ yields a Gaussian bound for $W_{\rm in}$.  

Finally, we note that the upper and lower bounds hold under more general assumptions; see \Cref{p.pp_lower} and \Cref{p.pp_upper} below.  This reflects the different strategies employed in each proof.

\subsection{Lower bound in the pushmi-pullyu case: $\chi =1$}
\label{s.pp_lower}

We begin with the lower bound as it is simpler and shorter than the upper bound.  
Our goal in this section is to prove the following.
\begin{proposition}[Lower bound]\label{p.pp_lower}
	Let $\chi =1$.  Suppose that $P$ (resp. $u$) solves~\eqref{pde.P} (resp.~\eqref{pde.u}) with nontrivial initial data $P_{\rm in}$ (resp. $u_{\rm in}$).
  Assume further that 
    \begin{itemize}
        \item (nonlocal case) $\int P_{\rm in}(x)e^xdx<+\infty$, $W_{\rm in} \geq 0$, and $\hat{W}_{\rm in}(x):=e^{x}{W}_{\rm in}(x) \in  L^\infty(\R)\cap L^1(\R) $;
        \item (local case) $\int u_{\rm in}(x)e^xdx<+\infty$ and $w_{\rm in}\geq 0$.
    \end{itemize}   
    Then there is $C$ such that, for $t\geq 0$,
	\be
		\overline{x}(t) \geq 2t - \frac{1}{2} \log(1+ t) - C.
	\ee
\end{proposition}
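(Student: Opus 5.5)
We follow the exponential-moment argument of \Cref{l.pushed_lower}, now with $\chi=\chi_{\lowvee}=1$ and $c=2$. By \Cref{l.expmoment}, $I(t):=I_P(t)=I_P(0)$ for all $t\ge0$. (In the local case $I_u(t)=I_u(0)$, and the argument below goes through with $u\le 1$ replacing the bound on $\rho$.) We split $I(t)$ at $\overline x(t)-2t$. Set $y(t):=\overline x(t)-2t$.

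\emph{Region $z\le y(t)$.} On this region we integrate by parts exactly as in~\eqref{e.2511113}. This gives
\be
\int_{-\infty}^{y(t)} e^{z}\tilde P\,dz = e^{y(t)}+\int_{-\infty}^{y(t)} e^z\tilde\rho\,dz \le e^{y(t)}\bigl(1+\sup_{z\le y(t)}\tilde \rho(t,z)\bigr).
\ee
We then bound $\sup_{z\le y(t)}\tilde\rho$ by \Cref{c.rhoLinfty}, namely $\tilde\rho\le C+C\sup_{s\le t} e^{-y(s)}/\sqrt{1+s}$.

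\emph{Region $z\ge y(t)$.} This is where $\chi=1$ differs from the pushed case. Since $W\ge0$ and $\eta^P(P)=P$ for $P<1$, we get $-\tilde P_z\ge \tilde P$, so $\tilde P(t,z)\le e^{-(z-y(t))}$. Then $e^z\tilde P\le e^{y(t)}$, which is not integrable. Instead we use the heat-kernel structure: in the grow region, $\hat P:=e^{z}\tilde P$ satisfies $\hat P_t=\hat P_{zz}$ (for $\chi=1$, $c=2$, the linear part becomes $\hat P_t - \hat P_{zz}=0$). It is bounded by $1\cdot e^{y}$ at the boundary and satisfies $\hat P\le e^{y(t)}$ everywhere to the right. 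The extra Gaussian-type decay of $\hat P$ ahead of the front then comes from $\hat W:=e^z\tilde W$. By \Cref{thm.W.decay} we have $\hat W\le C/\sqrt t$. Writing $\hat P(t,z)=e^{z}\tilde P$ and $\tilde P(t,z)=e^{-(z-y)}-\int_{y}^{z}e^{-(z-z')}\tilde W(t,z')dz'$, we obtain $\hat P(t,z)=e^{y}-\int_y^z \hat W\,dz'$. Since $\hat P\ge0$, this forces $\hat P$ to vanish once $\int_y^z\hat W\ge e^y$. The crude bound $\hat W\le C/\sqrt t$ therefore only gives support length $\lesssim e^{y}\sqrt t$. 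That yields $\int_y^\infty e^z\tilde P\le C e^{2y}\sqrt t$, which is what produces the $-\tfrac12\log t$.

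\emph{Combining.} Combining the two regions gives
\be
I(0)\le C e^{y(t)}\Bigl(1+\sup_{s\le t}\frac{e^{-y(s)}}{\sqrt{1+s}}\Bigr)+C\sqrt{1+t}\,e^{2y(t)}.
\ee
Now set $g(t):=e^{-y(t)}/\sqrt{1+t}$, and suppose $g(t)$ is large. Then the last term is $\le C e^{y(t)}/g(t)$. Since $e^{y}=1/(g\sqrt{1+t})$, the first term is $\le C(1+\sup_{s\le t}g)/(g(t)\sqrt{1+t})$. Multiplying through by $g(t)$ gives roughly $g(t)\le C(1+\sup_{s\le t} g(s)/\sqrt{1+t})+C$. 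A Gronwall/bootstrap argument in the style of \Cref{l.upperboundgpush} (applied to $G(t)=\sup_{s\le t}g$) then gives $\sup_t g<\infty$, i.e. $y(t)\ge -\tfrac12\log(1+t)-C$. The main obstacle is the second region: making the bound $\int_{y}^\infty e^z\tilde P\lesssim e^{2y}\sqrt{t}$ rigorous via the identity $\hat P=e^{y}-\int_y^z\hat W$ together with the decay of $\hat W$. A secondary difficulty is closing the bootstrap cleanly. If that becomes awkward, I would first try to absorb the $\rho$-bound term using $\sup_{s\le t} e^{-y(s)}/\sqrt{1+s}$ directly as in the pushed case.
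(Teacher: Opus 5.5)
\textbf{There is a genuine gap in the region $z\ge y(t)$.} Your treatment of $z\le y(t)$ is fine, and so is the identity $\hat P(t,z)=e^{y}-\int_{y}^{z}\hat W(t,z')\,dz'$ on $z\ge y(t)$. The step that is supposed to control $\int_{y}^\infty e^z\tilde P\,dz$, however, runs in the wrong direction.

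Since $\hat W\ge 0$, the identity gives $\hat P\le e^{y}$. The \emph{upper} bound $\hat W\le C/\sqrt t$ from \Cref{thm.W.decay} then yields only the \emph{lower} bound $\hat P(t,z)\ge e^{y}-C(z-y)/\sqrt t$. This says $\hat P$ cannot have dropped to zero before $z-y\gtrsim e^{y}\sqrt t$, which is the opposite of a support bound; in fact $\hat P>0$ everywhere because $\rho>0$. Confining $\hat P$ would need a \emph{lower} bound on $\hat W$. Equivalently, it would need control of the first moment $\int_y^\infty (z-y)\hat W\,dz=\int_y^\infty\hat P\,dz$, and nothing available gives that.

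Worse, the estimate $\int_y^\infty e^z\tilde P\le Ce^{2y}\sqrt t$ is false. Take the true regime $y(t)=-\tfrac12\log t+O(1)$, which holds for Heaviside data by \Cref{thm.asymptotics_pushmipullyu}. There $\tilde\rho$ is bounded behind the front, so the region $z\le y$ contributes only $O(e^{y})=O(t^{-1/2})$. Since $I(t)=I(0)$, this forces $\int_y^\infty e^z\tilde P\to I(0)$, while your right-hand side is $O(t^{-1/2})$. The mass of $e^z\tilde P$ sits at distance of order $\sqrt t$ ahead of the front, not within a window of length $e^{y}\sqrt t=O(1)$.

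\textbf{The missing idea is a separate control of the far tail.} The paper splits $[y,\infty)$ at $N\sqrt t$.
\begin{itemize}
\item On $[y,N\sqrt t]$ it uses only the bound $e^z\tilde P\le e^{y}$, which gives $e^{y}(N\sqrt t-y)$. Note this is $e^{y}\sqrt t$, not $e^{2y}\sqrt t$.
\item On $[N\sqrt t,\infty)$ it uses the auxiliary moment $I_m(t)=\int(e^{mz}+e^{-mz})e^z\tilde\rho\,dz\le Ce^{Cm^2t}$. This comes from differentiating $I_m$ and using $z_+\alpha(\tilde P)\le y(t)_+\le C$ from \Cref{c.easyupperbound}. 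With $m=t^{-1/2}$ it gives $\int_{N\sqrt t}^\infty e^z\tilde P\le Ce^{-N}\le I(0)/2$ for $N$ large, and this is absorbed into the left side.
\end{itemize}
The resulting inequality is $e^{-y}I(0)/2\le C\bigl(1+\sup_{s\le t}e^{-y(s)}/\sqrt{1+s}\bigr)+N\sqrt t-y(t)$. It carries the extra term $-y(t)$, which is why the paper needs \Cref{l.upperboundgpushmi} rather than the plain bootstrap of \Cref{l.upperboundgpush}.

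Your observation that $\hat P_t=\hat P_{zz}$ on $\{z>y(t)\}$ is correct and might be developed into an alternative tail estimate. As written, though, you never use it, and the $\hat W$-based argument cannot replace it.
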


The main computations in this proof follow the work of \cite{an2021,AnHendersonRyzhik_Quantitative}.  In particular, we shift to an appropriate moving frame and derive a differential inequality for certain exponential moments of $u$ similar to the Fabes-Stroock approach to heat kernel bounds~\cite{FabesStroock}. 
Compared to the previous works, the main difficulty is due to the unboundedness of $\tilde P$ and the lack of an immediate upper bound on $\tilde \rho$ behind the front. These complications are due to the nonlocality in our setting.

The main technical lemma allowing us to overcome these additional issues is the following.

\begin{lemma}
\label{l.upperboundgpushmi}
Suppose that $f:[0,+\infty)\to \R$ is locally bounded and satisfies the inequality
\be 
\label{e.25111110}
e^{-f(t)}\leq C\sqrt{1+t}-Df(t)+C\sup_{s\in [0,t]} \frac{e^{-f(s)}}{\sqrt{1+s}},\ee
for $C,D>0$. Then, there exists a constant $K$ such that
\be 
f(t) \geq - \frac{1}{2}\log(1+t)-K.
\ee
\end{lemma}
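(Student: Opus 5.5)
The plan is to reduce the claim to a bound of the form in \Cref{l.upperboundgpush}, with the change of variables suggested by the target estimate. Define
\be
g(t) := \frac{e^{-f(t)}}{\sqrt{1+t}},
\qquad
G(t) := \sup_{s\in[0,t]} g(s).
\ee
The conclusion $f(t)\geq -\frac12\log(1+t)-K$ is exactly $\sup_{t\ge0} g(t)\le e^{K}$, so it suffices to show that $G$ is bounded. Since $f$ is locally bounded, $G(t)$ is finite for every $t$.

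First, divide \eqref{e.25111110} by $\sqrt{1+t}$ and rewrite the middle term through $-f(t) = \log g(t) + \frac12\log(1+t)$. This gives
\be
g(t) \leq C + \frac{D\log g(t)}{\sqrt{1+t}} + \frac{D\log(1+t)}{2\sqrt{1+t}} + \frac{C}{\sqrt{1+t}}\,G(t).
\ee
The third term is bounded uniformly in $t$. For the second term, note that $\log g(t)$ may be negative, which only helps, so I bound it by $D\log_+ g(t) \le D\log_+ G(t)$, using $\sqrt{1+t}\ge1$. Since the right-hand side is nondecreasing in $G$, this yields, with a new constant $C'$,
\be
g(s) \le C' + D\log_+ G(s) + \frac{C}{\sqrt{1+s}}\, G(s) \qquad\text{for all } s\ge 0.
\ee

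Second, I apply the splitting trick from the proof of \Cref{l.upperboundgpush}. Choose $t_0$ such that $C/\sqrt{1+t_0}\le\frac12$, and fix $t\ge t_0$. For $s\le t_0$ we simply have $g(s)\le G(t_0)$. For $s\in[t_0,t]$, monotonicity of $G$ gives
\be
g(s)\le C' + D\log_+G(t) + \tfrac12 G(t).
\ee
Taking the supremum over $s\le t$ then gives
\be
G(t)\le \max\left\{ G(t_0),\; C' + D\log_+ G(t) + \tfrac12 G(t)\right\}.
\ee
In the second alternative we get $\frac12 G(t) - D\log_+G(t)\le C'$. Since $x\mapsto \frac12 x - D\log_+ x$ tends to $+\infty$, this forces $G(t)\le M$ for some $M$ depending only on $C$ and $D$. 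Hence $G(t)\le \max\{G(t_0),M\}$ for all $t\ge t_0$. Together with the local boundedness on $[0,t_0]$, this shows $\sup_t G(t)<\infty$, and taking logarithms gives the claim with $K=\log\sup_t G$.

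The only point that needs care is the term $-Df(t)$. It is linear in $-f$, that is, logarithmic in $g$, so it is dominated by the $\frac12 G$ absorption. Its sign is indefinite, which is why I pass to $\log_+$. The division by $\sqrt{1+t}$ also turns the $\log(1+t)$ contribution into a bounded term, and this is the source of the exact coefficient $\frac12$ in the conclusion. I do not expect any other obstacle beyond this bookkeeping.
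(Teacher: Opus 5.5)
Your proof is correct and follows essentially the paper's route: the same substitution $e^{-f(t)}/\sqrt{1+t}$, division of the hypothesis by $\sqrt{1+t}$, and absorption of the logarithmic term coming from $-Df(t)$. The only difference is cosmetic: the paper absorbs $D\log h(t)\le \bar D + \tfrac12 h(t)$ pointwise and then cites \Cref{l.upperboundgpush}, whereas you carry $D\log_+ G(t)$ along and absorb it after redoing that lemma's $t_0$-splitting argument by hand.
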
 

We postpone the proof of \Cref{l.upperboundgpushmi} to the end of this subsection and are now ready to prove the lower bound on $\overline x(t)$.

\begin{proof}[Proof of \Cref{p.pp_lower}]
We only show the proof in the nonlocal case as the proof in the local case is analogous but simpler for the same reasons already mentioned in \Cref{s.pushed} for the pushed case $\chi>1$.

As in previous proofs, we shift to the moving frame, denoting functions in this frame by a tilde; \textit{e.g.} 
\be
    \tilde P(t,z) := P(t,z + 2t).	
\ee
Recall~\eqref{pde.P}.  Thus,
\be\label{e.c111901}
    \tilde P_t - 2\tilde P_z + (A^P(\tilde P))_z
        = \tilde P_{zz} + \tilde P - A^P(\tilde P).
\ee
As before, we consider the exponential moment $I(t)=I_P(t)$ as defined in \eqref{defi.It}. 
From \Cref{l.expmoment}, we know that for $t\geq 0$,
\be
I(t)=I(0).
\ee

The estimate here is more involved than the proof of \Cref{l.pushed_lower}, which establishes the lower bound in the pushed case ($\chi>1$).  Indeed, we bound the exponential moment from above by estimating $\tilde{P}$ on the {\em three} intervals $(-\infty, \overline{x}(t)-2t]$, $[ \overline{x}(t)-2t,N\sqrt{t}]$, and $[N\sqrt{t},+\infty))$ for a large constant $N>0$ to be chosen.  The third interval plays the crucial role in incorporating the long-range Gaussian behavior of $e^z \tilde P$.

Note that, by \Cref{c.easyupperbound}, $\overline{x}(t)-2t$ is bounded above, and hence 
\be
    \overline{x}(t)-2t \leq N\sqrt{t}
\ee
for all $t$ large once $N$ is fixed.  We use this below.

We begin with the interval $(-\infty,\overline{x}(t)-2t]$.  Here, we argue as in \Cref{l.pushed_lower}. In particular, after applying \Cref{c.rhoLinfty} and arguing as in~\eqref{e.2511114}, we find
\be 
\label{e.2511118}
    \int_{-\infty}^{\overline{x}(t)-2t} e^{z} \tilde P(t,z) dz
    \leq 
    Ce^{\overline{x}(t)-2t}\left( 1 + \sup_{s\in [0,t]} \frac{e^{-(\overline{x}(s)-2s)}}{\sqrt{1+s}}\right)
\ee
for $C>0$ sufficiently large depending only on $\chi=1$ and the initial data.

Next, we consider the estimate on the interval $[ \overline{x}(t)-2t,N\sqrt{t}]$.  Here, we use that $\tilde{P}(t,z)\leq e^{-(z-(\overline{x}(t)-2t))}$, since $W\geq 0$.  Hence,
\be 
\label{e.2511117}
    \int_{ \overline{x}(t)-2t}^{N\sqrt{t}} e^z \tilde P (t,z) dz 
    \leq
    e^{\overline x(t) - 2t} \int_{ \overline{x}(t)-2t}^{N\sqrt{t}} dz
    =
    e^{ \overline x(t)-2t}(N\sqrt{t}-(\overline x(t) - 2t)).
\ee

 Finally, we state a bound on the last interval $[N\sqrt{t},+\infty)$.  Let us introduce another exponential moment and claim that, for any $m \in(0,1)$,
\be\label{e.I_m}
    I_m(t)
        := \int (e^{mz} + e^{-mz}) e^z \tilde \rho(t,z) dz
        \leq C e^{C m^2 t}.
\ee
We postpone this momentarily and show how to conclude the proof with it.  Indeed, using~\eqref{e.I_m}, we find
\be
    \begin{split}
    \int^\infty_{N \sqrt t} e^z \tilde P(t,z ) dz
        =&\left[ e^z \tilde P(t,z )\right]^\infty_{N \sqrt t}+\int^\infty_{N \sqrt t} e^z \tilde \rho(t,z ) dz
        \leq \int_{N\sqrt t}^\infty e^{\frac{z - N \sqrt t}{\sqrt t}} e^z \tilde \rho(t,z) dz
        \\&
        = e^{-N} \int_{N\sqrt t}^\infty e^{\frac{z}{\sqrt t}} e^z \tilde \rho(t,z) dz
        \leq e^{-N} I_{\frac{1}{\sqrt t}}(t)
        \leq e^{-N} C.
    \end{split}
\ee
The justification of the integration by parts is argued similarly as in the proof of \Cref{l.pushed_lower} in the pushed case $\chi>1$, since $\exp\{z\} \tilde P(t,z )|_{z=+\infty}=0$, as a consequence of the finiteness of the exponential moment and $W\geq 0$. 
Choosing $N> \max\left\{1,- \log\left(\frac{I(0)}{2C}\right)\right\}$, we find
\be\label{e.c010602}
    \int^\infty_{N \sqrt t} e^z \tilde P(t,z ) dz
        \leq \frac{I(0)}{2}.
\ee

Combining \eqref{e.2511118}, \eqref{e.2511117}, and \eqref{e.c010602}, absorbing the $\sfrac{I(0)}{2}$ from the right hand side into the left hand side, and multiplying by $\exp\{-(\overline x(t) - 2t)\}$ yields
\be 
\label{e.2511119}
 e^{-(\overline{x}(t)-2t)}\frac{I(0)}{2} \leq C\left( 1 + \sup_{s\in [0,t]} \frac{e^{-(\overline{x}(s)-2s)}}{\sqrt{1+s}}\right) +(N\sqrt{t}-(\overline x(t) - 2t)).
\ee
We now invoke \Cref{l.upperboundgpushmi}.  Indeed, we see that \eqref{e.2511119} satisfies inequality \eqref{e.25111110} of \Cref{l.upperboundgpushmi} with $f(t):=\overline{x}(t)-2t$ and thus we obtain
\be 
    \overline{x}(t)
    \geq 2t- \frac{1}{2}\log(1+t)-K,
\ee
which concludes the proof up to the proof of \eqref{e.I_m}.
	 
We now establish~\eqref{e.I_m} following the ideas in~\cite[Section~5.3]{an2021}.  Integrating by parts, we find
	 \be
	 	\begin{split}
			\dot I_m
				&= \int e^z (e^{mz} + e^{-mz}) [((2-\alpha)\tilde \rho)_z + \tilde \rho_{zz} + \tilde \rho(1 - \alpha)] dz
			\\&
				= - \int e^z ( (m+1) e^{mz} - (m-1) e^{-mz})(2-\alpha)\tilde \rho dz
			\\&\qquad
				+ \int e^z ((m+1)^2 e^{mz} + (m-1)^2 e^{-mz}) \tilde \rho dz
			\\&\qquad
				+ \int e^z (e^{mz} + e^{-mz})\tilde \rho(1 - \alpha)] dz
			\\&
				= m^2 I_m + m \int e^z (e^{mz} - e^{-mz}) \tilde \rho(t,z) \alpha(\tilde P) dz.
		\end{split}
	 \ee
	 The proof is finished if we show that the last term is bounded by $Cm^2 I_m$, which we do now:
	 \be
	 	\begin{split}
			m\int e^z (e^{mz} - e^{-mz}) \tilde \rho(t,z) \alpha dz
				&= m\int \tanh(mz) e^z (e^{mz} + e^{-mz}) \tilde \rho(t,z) \alpha(\tilde P) dz
				\\&
				\leq m^2 \int z_+\alpha(\tilde P) e^z (e^{mz} + e^{-mz}) \tilde \rho(t,z)  dz.
		\end{split}
	 \ee
	 Since we know that $\overline{x}(t)-2t\leq C$, we have that
	\be
		z_+ \alpha(P)
			= z_+ \1_{(-\infty, \overline x(t)-2t]}(z)
			\leq (\overline{x}(t)-2t)_+\leq C_+. 
	\ee
	This yields
	\be 
	\dot{I}_m\leq m^2(1+C)I_m
	\ee
	and establishes~\eqref{e.I_m}.
    The proof is complete.
\end{proof}

It remains to prove the technical lemma.	
\begin{proof}[Proof of \Cref{l.upperboundgpushmi}]
Let
\be
    h(t)=\frac{e^{-f(t)}}{\sqrt{1+t}}.
\ee
We are finished if we show that $h$ is bounded.  We do this using \Cref{l.upperboundgpush}.  It is, thus, sufficient to show that $h$ satisfies the hypothesis of \Cref{l.upperboundgpush}.

Note that, $\log(p)\leq p$ for $p>0$, as well as that for any constant $C>0$, there is a constant $\bar C>0$ such that for all $p>0$, $C\log p \leq \bar C + \sfrac{p}{2}$.  Observe that, after dividing by $\sqrt{1+t}$, we have
\be
\begin{split}
    h(t)
    &
    \leq C + D \frac{\log\left(\sqrt{1+t} h(t)\right)}{\sqrt{1+t}}
    + \frac{C}{\sqrt{1+t}} \sup_{s\in[0,t]} h(s)
    \\&
    \leq C + D \frac{\log\left(\sqrt{1+t}\right)}{\sqrt{1+t}} + \frac{D}{\sqrt{1+t}} \log h
    + C \sup_{s\in[0,t]} \frac{h(s)}{\sqrt{1+s}}
    \\&
    \leq C + D+ \bar D + \frac{h(t)}{2}
    + C \sup_{s\in[0,t]} \frac{h(s)}{\sqrt{1+s}}
    .
\end{split}
\ee
Rearranging this, we find that the hypothesis of \Cref{l.upperboundgpush} is satisfied, concluding the proof.

\end{proof}

\subsection{Upper bound in the pushmi-pullyu case: $\chi = 1$}

Our goal in this section is to prove the following matching upper bound on the front location.
\begin{proposition}[Upper bound]\label{p.pp_upper}
Let $\chi = 1$.  Suppose that  $P$ (resp. $u$) solves~\eqref{pde.P} (resp.~\eqref{pde.u}) with nontrivial initial data such that, for some $\gamma >0$,
\begin{itemize}
    \item (nonlocal case) $\int P_{\rm in}(x)e^xdx<+\infty$ and $0 \leq W_{\rm in} \leq \gamma^{-1} e^{- \gamma x_+^2}$;
    \item (local case) $0 \leq u_{\rm in} \leq \min\{1, \gamma^{-1} e^{-\gamma x_+^2}\}$.
\end{itemize}
Then there is $C$ such that, for $t\geq 0$,
	\be
    \label{e.26021201}
		\overline{x}(t) \leq 2t - \frac{1}{2} \log(1+ t) + C.
	\ee
\end{proposition}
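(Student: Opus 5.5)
The plan is to construct explicit supersolutions in the moving frame $\tilde v(t,z) = v(t,z+2t)$. These are modeled on the linear mechanism behind the $-\tfrac12\log t$ delay: conjugating by $e^{z}$ turns the linearization ahead of the front into a pure heat equation. Indeed, for $\chi=1$ and $z>\overline x(t)-2t$, the function $e^{z}\tilde u$ solves $\partial_t(e^z\tilde u)=\partial_{zz}(e^z\tilde u)$. Moreover, at the front the Rankine--Hugoniot condition \eqref{e.RH_u} gives $\tilde u=1$ and $\tilde u_z=-1$, i.e.\ a homogeneous Neumann condition for $e^{z}\tilde u$. The heat equation with Neumann data conserves mass, so $e^{z}\tilde u\sim t^{-1/2}$ on the diffusive scale, and hence the level set $\tilde u=1$ sits at $z\approx -\tfrac12\log t$.

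\textbf{Local case.} I would take, for constants $K,L,t_0>0$ to be chosen,
\begin{equation}
\overline u(t,x)=\min\Big\{1,\ e^{-z}\varphi(t,z)\Big\},\qquad z=x-2t,\qquad \varphi(t,z)=\frac{K}{\sqrt{t+t_0}}\exp\Big\{-\frac{(z-L)_+^2}{4(t+t_0)}\Big\}.
\end{equation}
This choice has three useful properties.
\begin{itemize}
\item[(i)] $\varphi$ is $C^1$ in $z$ and satisfies $\varphi_t\ge\varphi_{zz}$: it solves the heat equation for $z>L$, and for $z<L$ it is constant in $z$ and decreasing in $t$. Hence $e^{-z}\varphi$ is a supersolution of the linear grow equation. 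The state $1$ is a solution of \eqref{pde.u}.
\item[(ii)] Since $\eta^u(s)=s$ for $s<1$, the shape defect is $\overline w=-e^{-z}\varphi_z\ge0$ away from the front, and $\overline w=0$ on $\{\overline u=1\}$.
\item[(iii)] At the junction $\{e^{-z}\varphi=1\}$ one has $z=\log K-\tfrac12\log(t+t_0)$, which lies below $L$ once $L\ge \log K$. There $\varphi_z=0$, so $\overline u_x(\cdot^+)=-1=-\chi$. This is exactly the flux condition required for the weak formulation of the supersolution inequality across the free boundary.
\end{itemize}
I would check (iii) by testing the weak form against nonnegative test functions, in the spirit of Step (5) in the proof of \Cref{thm.wellposedness_u}. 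Choosing $t_0$ small (compared with $\gamma^{-1}$) and $K$ large makes $u_{\rm in}\le\overline u(0,\cdot)$. This uses $u_{\rm in}\le\min\{1,\gamma^{-1}e^{-\gamma x_+^2}\}$. Then \Cref{thm.CP_u} applies, since $\overline w\ge0$, and gives $\overline x(t)-2t\le \log K-\tfrac12\log(t+t_0)$, which is \eqref{e.26021201}.

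\textbf{Nonlocal case.} Here a supersolution directly on $P$ is awkward, so I would instead bound the weighted shape defect $\hat W=e^{z}\tilde W$. For $\chi=1$ it solves \eqref{e.hatWpush}, which reads $\hat W_t+\alpha(\tilde P)\hat W_z=\hat W_{zz}$. The drift is $\alpha=\1_{\{z\le \overline x(t)-2t\}}$. By \Cref{c.easyupperbound}, $\overline x(t)-2t\le C_0$ for all $t$. So if I pick $L\ge C_0$, the function $\varphi$ above satisfies $\varphi_z=0$ wherever $\alpha\neq0$. Therefore $\varphi_t+\alpha\varphi_z-\varphi_{zz}=\varphi_t-\varphi_{zz}\ge0$, and $\varphi$ is a supersolution of the linear $\hat W$ equation. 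The hypothesis $W_{\rm in}\le\gamma^{-1}e^{-\gamma x_+^2}$ gives $\hat W_{\rm in}\le\varphi(0,\cdot)$ for suitable $K$ and $t_0$. The linear maximum principle then yields
\begin{equation}
0\le \tilde W(t,z)\le e^{-z}\varphi(t,z).
\end{equation}

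The remaining step is to convert this into a bound on the front. On $z>\overline x(t)-2t$ one has $\eta^P(\tilde P)=\tilde P$, so $(e^{z}\tilde P)_z=-\hat W$. Integrating from the front to $+\infty$ gives $e^{\overline x(t)-2t}=\int_{\overline x(t)-2t}^\infty\hat W\,dz$. This alone only yields an $O(1)$ bound, since the Gaussian has width $\sqrt t$. To gain the factor $t^{-1/2}$ I would proceed in two moves.
\begin{itemize}
\item[(a)] Use the bound on $\tilde W$ at the front, together with \Cref{c.rhoLinfty}, to get $\tilde\rho(t,\overline x(t)-2t)\le 1+Ct^{-1/2}e^{-(\overline x(t)-2t)}$. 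This says that $\tilde P$ satisfies, up to a small error, the same Dirichlet--Neumann condition as in the local case.
\item[(b)] Compare $\tilde P$ on $\{z>\overline x(t)-2t\}$, where it solves the linear equation $\tilde P_t-2\tilde P_z=\tilde P_{zz}+\tilde P$, with a function $e^{-z}\psi$. Here $\psi$ solves the heat equation with a Neumann flux absorbing this error. Concretely, $\int e^{z}\tilde P$ restricted to the grow region should stay bounded thanks to \Cref{l.expmoment}, forcing $e^{z}\tilde P\lesssim t^{-1/2}$ at the front.
\end{itemize}
This transfer from the $W$-bound to an actual $t^{-1/2}$ gain on $P$ at the free boundary is where I expect the main difficulty. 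If it fails, I would try building $\varphi$ with an extra $t^{-1/2}$ by using a $z\varphi$-type (dipole) Gaussian for $\hat W$. Its integral from the front is $O(t^{-1/2})$, and I would check whether the drift sign condition still holds when $L\ge C_0$.
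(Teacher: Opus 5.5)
There is a genuine gap, and it sits in property (i), on which both of your cases rest. For $z<L$ your $\varphi$ equals $K(t+t_0)^{-1/2}$, so $\varphi_t-\varphi_{zz}=-\tfrac K2(t+t_0)^{-3/2}<0$. A function that is constant in space and decreasing in time is a strict \emph{sub}solution of the heat equation, not a supersolution.

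In the local case this region is not hidden under the plateau. Indeed, $\overline u=e^{-z}\varphi<1$ on the whole strip $\log K-\tfrac12\log(t+t_0)<z<L$. So $\overline u$ violates the supersolution inequality on an open set, and \Cref{thm.CP_u} cannot be applied.

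Dropping the truncation does not help. For an exact Gaussian centered at a fixed $L$, the junction sits at $z\approx\log K-\tfrac12\log t<L$, where $\varphi_z>0$. The flux jump there is $-\overline u_z(\cdot^+)-1=-\tfrac{L-z}{2(t+t_0)}<0$, so the kink has the wrong sign.

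The missing idea is to center the Gaussian at the moving front. The paper works in the frame $z=x-2t+\tfrac12\log(t+t_0)$ with $R=\beta e^{-z}e^{-z^2/(4(t+t_0))}H$. The junction then lies in $(0,\log\beta)$, to the right of the center, and $-R_z-R=R\,\tfrac{z}{t+t_0}\bigl(\tfrac12-\tfrac{2}{\sqrt{t+t_0}}\bigr)\ge0$ on $\{z\ge0\}$. This gives both $\overline w\ge0$ and, at the junction where $R=1$, the right sign of the kink. The drift of the center produces an error term $-\tfrac{z}{4(t+t_0)^2}$, which the helper factor $H=e^{(z^2/(t+t_0)-K)/\sqrt{t+t_0}}$ absorbs. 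Separately, dominating the initial Gaussian tail requires $t_0\gtrsim\gamma^{-1}$ large, not small.

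In the nonlocal case the same sign error invalidates $\tilde W\le e^{-z}\varphi$. The weaker bound $\hat W\lesssim t^{-1/2}$ is already given by \Cref{thm.W.decay}, but, as you note, it only yields $f(t):=\overline x(t)-2t=O(1)$. Steps (a)--(b) do not upgrade it.
\begin{itemize}
\item The conserved moment of \Cref{l.expmoment} bounds $\int e^{z}\tilde P$. This controls $e^{z}\tilde P$ at the front only if $e^z\tilde P$ stays comparable to its front value on a window of length $\sim\sqrt t$. That requires $\int_{f(t)}^{f(t)+N\sqrt t}\hat W\ll e^{f(t)}$, which is exactly what you are trying to prove.
\item Feeding the boundary flux $\hat W(t,f(t))\lesssim t^{-1/2}$ into a Neumann comparison function accumulates $O(\sqrt t)$ over time and destroys the gain.
\end{itemize}

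What is needed is the pointwise dipole bound $\breve W\lesssim\tfrac zt e^{-z-z^2/(8t)}$ in the log-shifted frame (\Cref{l.pp_W_upper}). Your fallback points in that direction. However, a dipole vanishes at its center and is negative behind it, while $\hat W>0$ in the go region, so it cannot be a global barrier by itself. The paper glues it, via $\overline W=\min\{L,R\}$, to a left piece $L=\tfrac{C}{t+t_0}e^{-(1-\kappa)z}$. This piece is a supersolution only thanks to the drift term $\alpha(\breve P)(\partial_z+1)$ with $\alpha\equiv1$ on $\{z\le4\}$. Guaranteeing $\alpha\equiv1$ there requires the already-proved lower bound \Cref{p.pp_lower}, an ingredient absent from your plan.
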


The proofs in both the local and nonlocal cases rely, however, on different constructions. In the local case, we are able to construct a direct supersolution for \eqref{pde.u} on $u$.  An analogous construction does not work in the nonlocal case because $P$ is not bounded behind the front; it has linear growth as $x \to -\infty$.  
The technical reasons for the failure of this construction are discussed further at the end of \Cref{s.pp_local_upper}. Instead, we bypass this difficulty by constructing a supersolution for the shape defect function $W$ and then using that upper bound to deduce the result on $P$.  We note that it is technically difficult to justify a similar supersolution for the shape defect function $w$ in the local case due to the singular nature of the PDE that it satisfies.

We first treat the local case in \Cref{s.pp_local_upper}. At the end of that subsection, we briefly explain why the same construction cannot be adapted to the nonlocal setting. We then turn to the nonlocal case in \Cref{s.pp_nonlocal_upper}, where the argument proceeds through the shape defect function $W$.

\subsubsection{The Local Case}
\label{s.pp_local_upper}

We find an upper bound on $u$ in order to bound the front when $\chi = 1$.   Let us make a quick note on the heuristics behind this construction.  Going back to~\cite{ebert2000}, see also the seminal work~\cite{HNRR_JEMS,hamel2013} , it is understood that solutions to reaction diffusion equations look, roughly, like
\be\label{e.heuristic_asymptotics}
   \breve u(t,z)
    \approx u_{\rm tw}(z) e^{-\frac{z^2}{4t}}
    = \min\{1, e^{-z}\} e^{-\frac{z^2}{4t}}
\ee
in the correct traveling frame (\textit{e.g.} here with $\chi=1$, we would consider the frame $\breve u(t,z)=u(t,z+2t-\nicefrac{1}{2}\log(t+t_0))$). With some careful perturbations, we can make a supersolution out of this heuristic, which bounds $u$ from above.  Of course, this immediately yields an upper bound on $\overline x$, as desired.

\begin{proof}[Proof of \Cref{p.pp_upper} in the local case] 
We begin by passing to the moving frame
\be
    \breve u(t,z)
    =
    u\left(t, z + 2t - \tfrac{1}{2}\log (t+t_0)\right)
\ee
for $t_0\gg 1$ to be determined. Then~\eqref{pde.u} becomes
\be 
\label{e.umv12}
    \breve{u}_t
    =
    \breve{u}_{zz}
        +\left( 2 -\frac{1}{2(t+t_0)}\right)\breve{u}_z
        - \left( \alpha(\breve u)\breve{u}\right)_z
        + \breve u (1 - \alpha(\breve u)).
\ee 
Our proof is based on the construction of a supersolution $\overline{u}$, which will be the concatenation of a supersolution of \eqref{e.umv12},  when $\overline{u}< 1$ and the constant function $\equiv 1$. We then invoke the comparison principle, \Cref{thm.CP_u}. Let us stress that for $\overline u$ to be a supersolution in the sense of \Cref{thm.CP_u}, we need not only that
\be 
    \overline{u}_t
    \geq
    \overline{u}_{zz}
        +\left( 2 -\frac{1}{2(t+t_0)}\right)\overline{u}_z
        - \left( \alpha(\overline u)\overline{u}\right)_z
        + \overline u (1 - \alpha(\overline u)),
\ee 
but also that the associated shape defect function $\overline{w}=-\overline{u}_z-\eta^u(\overline u)$ be nonnegative. In order to show the nonnegativity of the shape defect $\overline{w}$, we must show that in the region, where $\overline{u}<1$, we have $-\overline{u}_z-\eta^u(\overline u)=-\overline{u}_z-\overline u\geq 0$ and pay special attention to the kink at the boundary between these two regions.

\medskip
\noindent
{\bf \# Step (1):} Let us start by determining the shape of our supersolution $\overline u$, when $\overline u <1$.  
In order to make our computations more manageable, we write our candidate supersolution in component parts.  For positive constants $\beta, K \gg 1$ to be determined, let
\be\label{e.pp_supersolution}
\begin{split}
&E(z):= e^{-z},
\quad
G(t,z):=e^{-\frac{z^2}{4(t+t_0)}},
\quad
H(t,z):=e^{\frac{1}{\sqrt{t+t_0}}\left( \frac{z^2}{t+t_0} - K \right)},
\\
&
\text{and } \quad
    R (t,z):=\beta E(z)G(t,z)H(t,z).
\end{split} 
\ee
Notice that this form of supersolution corresponds, modulo to the term $H$, to the shape given by ~\eqref{e.heuristic_asymptotics}.
We use the suggestive letters $E$ for exponential, $G$ for Gaussian, and $H$ for helper (it plays no role in the asymptotics but helps $R$ to be a supersolution).  Additionally, we use $R$ for right, as it will be shown to be a supersolution on the right.

For simplicity, let us denote the linear operator when $z \geq \overline x(t)-(2t - (\sfrac12)\log(t+t_0))$ as
\be 
\label{def.Lbeta}
    \cL_{\sfrac12} f
    := f_t - f_{zz} -\left(2-\frac{1}{2(t+t_0)}\right) f_z - f,
\ee
and we will show that $\cL_{\sfrac{1}{2}}R \geq 0$ under certain conditions on $t_0,K$. 

Let us begin by noting some useful identities:
\be
    -E_{zz}-2 E_z - E =0
    \quad\text{ and }\quad
    G_t - G_{zz}=\frac{1}{2(t+t_0)}G.
\ee
Additionally, we have the logarithmic derivative identities
\be
\begin{split}
    &\frac{\partial(EGH)}{EGH}
    =\frac{\partial E}{E}
        +\frac{\partial G}{G} 
        +\frac{\partial H}{H}
    \quad\text{ and}
    \\&
    \frac{\partial^2(EGH)}{EGH}
    =\frac{\partial^2 E}{E}
        +\frac{\partial^2 G}{G}
        +\frac{\partial^2 H}{H}
        +2\frac{\partial E}{E}\frac{\partial G}{G}
        +2\frac{\partial G}{G}\frac{\partial H}{H}
        +2\frac{\partial H}{H}\frac{\partial E}{E}.
\end{split}
\ee
Using the above, we find
\be
\begin{split}
    \frac{\cL_{\sfrac12} R}{R}
    &= \frac{1}{R}\left(  R_t - R_{zz} -\left(2-\frac{1}{2(t+t_0)}\right) R_z - R\right)
    \\
        &= \left\{ -\frac{ E_{zz}}{E } -\left(2-\frac{1}{2(t+t_0)}\right)\frac{ E_z}{E }  - 1 \right\} + \left\{\frac{ G_t}{G } -\frac{ G_{zz}}{G } -\left(2-\frac{1}{2(t+t_0)}\right)\frac{ G_z}{G } \right\}
        \\&\qquad
        +\left\{ \frac{ H_t}{H } -\frac{ H_{zz}}{H } -\left(2-\frac{1}{2(t+t_0)}\right)\frac{ H_z}{H }\right\}  -2\left\{\frac{E_z}{E}\frac{G_z}{G}+\frac{G_z}{G}\frac{ H_z}{H}+\frac{ H_z}{H}\frac{ E_z}{E} \right\}
    \\
        &=  \frac{1}{2(t+t_0)}\frac{ E_z}{E }  + \left\{ \frac{1}{2(t+t_0)} -\left(2-\frac{1}{2(t+t_0)}\right)\frac{ G_z}{G } \right\}
        \\&\qquad
        +\left\{ \frac{ H_t}{H } -\frac{ H_{zz}}{H } -\left(2-\frac{1}{2(t+t_0)}\right)\frac{ H_z}{H }\right\}  -2\left\{\frac{E_z}{E}\frac{G_z}{G}+\frac{G_z}{G}\frac{ H_z}{H}+\frac{ H_z}{H}\frac{ E_z}{E} \right\} .
\end{split}
\ee
Let us observe here that the term $-\nicefrac{R}{R}=-1$ is grouped with the terms in $E$. Next, using
\be\begin{split}
\label{e.EGH}
    &\frac{E_z}{E}
        = -1,
    \quad
    \frac{ G_z}{G}
        = -\frac{z}{2(t+t_0)},
    \quad\frac{H_z}{H}
        = \frac{2z}{(t+t_0)^{\sfrac{3}{2}}}, 
    \\
    &\frac{H_{zz}}{H}
        = \frac{4z^2}{(t+t_0)^3}+\frac{2}{(t+t_0)^{\sfrac{3}{2}}}, 
    \quad\text{ and }\quad
    \frac{ H_t}{H}
        = -\frac{3z^2}{2(t+t_0)^{\sfrac{5}{2}}}+\frac{K}{2(t+t_0)^{\sfrac{3}{2}}},
\end{split}\ee
yields
\be\begin{split}
    \frac{\cL_{\sfrac12} R}{R}
    &
    = \left\{ \left( 2 - \frac{1}{2(t+t_0)}\right) \frac{z}{2(t+t_0)} \right\}  \\
    &\qquad 
    + \left\{ -\frac{3z^2}{2(t+t_0)^{\sfrac{5}{2}}}+\frac{K}{2(t+t_0)^{\sfrac{3}{2}}} -\frac{4z^2}{(t+t_0)^3}-\frac{2}{(t+t_0)^{\sfrac{3}{2}}} -\left(2-\frac{1}{2(t+t_0)}\right)\frac{2z}{(t+t_0)^{\sfrac{3}{2}}}\right\} 
    \\
    &\qquad
        -2\left\{\frac{z}{2(t+t_0)}-\frac{z}{2(t+t_0)} \frac{2z}{(t+t_0)^{\sfrac{3}{2}}}-\frac{2z}{(t+t_0)^{\sfrac{3}{2}}}\right\}.
        \end{split}
    \ee
Then, regrouping all matching terms in powers of $z$ and $t+t_0$, we find
\be 
    \frac{\cL_{\sfrac12} R}{R}
    = 
        \frac{z^2}{2(t+t_0)^{\sfrac52}} \left(1 - \frac{8}{(t+t_0)^{\sfrac12}}\right)
        - \frac{z}{4(t+t_0)^2}
        + \frac{K-4}{2(t+t_0)^{\sfrac32}}
        + \frac{z}{(t+t_0)^{\sfrac52}}.
\ee
Let us now choose
\be
\label{bd.K_t0}
    K = 4 + \nicefrac{\tilde K}{2}
    \quad\text{ with }
    \tilde K \geq 1
    \text{ and }
    t_0 > 16^2,
\ee
so that
\be\label{e.LcalFoverF}
\begin{split}
    \frac{\cL_{\sfrac12} R}{R}
    \geq&
    \frac{z^2}{4(t+t_0)^{\sfrac52}}
        - \frac{z}{4(t+t_0)^2}
        + \frac{\tilde K}{4(t+t_0)^{\sfrac32}}
        + \frac{z}{(t+t_0)^{\sfrac52}} \\
        \geq &
        \frac{1}{4(t+t_0)^{\sfrac32}}\left(\frac{z^2}{t+t_0}
    -\frac{z}{\sqrt{t+t_0}}+1+\frac{4z}{t+t_0}
        \right)
        .
\end{split}
\ee
If $z \geq \sqrt{t+t_0}$, then the sum of the first two terms on the right is positive.  If $z \leq \sqrt{t+t_0}$, then the sum of the second and third terms is positive.  Hence, in all cases, we have that
\be
    \cL_{\sfrac12} R \geq 0,
\ee
as desired.  In other words, $R$ is a supersolution~\eqref{pde.u} when $R<1$. 

\medskip
\noindent
{\bf \# Step (2):} Let us now show that $R$ has a nonnegative shape defect when $R<1$. To do so, we need to show that 
\be 
-R_z-\eta^u(R)=-R_z-R=R\left(-\frac{R_z}{R}-1\right)\geq 0.
\ee
Through a quick computation, we find that
\be
-\frac{R_z}{R}-1 = -\frac{G_z}{G}-\frac{H_z}{H}=\frac{z}{(t+t_0)}\left(\frac{1}{2}-\frac{2}{\sqrt{t+t_0}} \right)\geq 0,
\ee
whenever $z\geq 0$, since $t_0\geq 16$.

\medskip
\noindent
{\bf \# Step (3):} We concatenate a plateau $\equiv 1$ on the left with $R$ on the right to obtain a supersolution $\overline{u}$.  
Setting $t_0>K^2$, we obtain that $H\geq e^{-1}$ and thus choosing $\beta>e$ leads to the maximum of $R(t,\cdot)$ being larger than one for any $t\geq 0$.  Hence, there exists a unique, smooth $ \overline  z_{\rm super}(t) \in (0, \log \beta)$ such that $R(t,\overline  z_{\rm super}(t)) = 1$.   We define
\be
    \overline{u}(t,z)
    = \left\{
        \begin{array}{ll}
            R(t,z) & \text{ if }z\geq  \overline  z_{\rm super}(t) \\
            1& \text{ if }z< \overline  z_{\rm super}(t).
        \end{array}
        \right.
\ee

\medskip
\noindent
{\bf \# Step (4):} Let us check that $\overline u$ is a supersolution with nonnegative shape defect. 
Let us observe that the distribution
\be
    \mathcal{P}(\overline{u}):=\overline u_t -\left( 2 -\frac{1}{2(t+t_0)}\right)\overline{u}_z
        + (A^u(\overline u)-\overline u_z)_z
        - \overline u + A^u(\overline u)
\ee
is smooth away from $z=\overline z_{\rm super}(t)$ and has only a single kink there.  Hence, $\mathcal{P}(\overline{u})$ is a signed Radon measure. Therefore, we may decompose it in its absolutely continuous and singular parts:
\be 
\mathcal{P}(\overline{u}) = \left[ \mathcal{P}(\overline{u})\right]_\text{ac} +\left[ \mathcal{P}(\overline{u})\right]_\text{s}.
\ee
From Step (1), we have that $\overline u$ is a supersolution to~\eqref{pde.u} when $z >  \overline z_{\rm super}(t)$, as there $u< 1$.  It is also clearly a supersolution when $z < \overline z_{\rm super}(t)$, as there $u\equiv 1$. This establishes that
\be 
\left[ \mathcal{P}(\overline{u})\right]_\text{ac} \geq 0.
\ee
Next, we observe that the only source of a singular term is the $(A^u(\overline u)-\overline u_z)_z$ term at  $z=\overline z_{\rm super}(t)$. Recall that $A^u(\overline u)=1$, when $z<\overline z_{\rm super}(t)$ and $A^u(\overline u)=0$, when $z>\overline z_{\rm super}(t)$. Thus, we obtain that
\be
\left[ \mathcal{P}(\overline{u})\right]_\text{s}
=\left(\left(A^u(\overline u(t,\overline  z_{\rm super}(t)^+))-\overline u_z(t,\overline  z_{\rm super}(t)^+)\right) - \left( A^u(\overline u(t,\overline  z_{\rm super}(t)^-))-\overline u_z(t,\overline  z_{\rm super}(t)^-)\right)
\right)\delta_{\overline  z_{\rm super}(t)}.
\ee
Observe that
\be
\begin{split}
    &\left(A^u(\overline u(t,\overline  z_{\rm super}(t)^+))-\overline u_z(t,\overline  z_{\rm super}(t)^+)\right)
    -
    \left( A^u(\overline u(t,\overline  z_{\rm super}(t)^-))-\overline u_z(t,\overline  z_{\rm super}(t)^-)\right)
    \\&
    =
    \left(0-\overline{u}_z(t,\overline  z_{\rm super}(t)^+)\right)
    -
    \left(1-0\right)
    =
        -R_z(t, \overline  z_{\rm super}(t))-1
    =
        \frac{\overline  z_{\rm super}(t)}{4(t+t_0)} - \frac{\overline z_{\rm super}(t)}{(t+t_0)^{\sfrac32}}
    >0.    
\end{split}
\ee
The last inequality follows because $\overline  z_{\rm super}(t)>0$ and $t_0\geq 16$. Thus,
\be 
    \overline u_t-\left( 2 -\frac{1}{2(t+t_0)}\right)\overline{u}_z
        + (A^u(\overline u))_z
        - \overline u_{zz}
        - \overline u + A^u(\overline u)
    = \mathcal{P}(\overline{u})
    \geq 0.
\ee
Furthermore, from Step (2), we have that the shape defect of $\overline u$ is nonnegative when $z > \overline z_{\rm super}(t)$, and it is also clearly nonnegative when $z<\overline z_{\rm super}(t)$. This establishes that $\overline{u}$ is a supersolution with nonnegative shape defect function and the comparison principle \Cref{thm.CP_u} applies.

\medskip
\noindent
{\bf \# Step (5):}  
Let us point out that, due to the assumptions on $u_{\rm in}$, we may increase $t_0$ and then shift $\overline u$ by a large constant so that
\be
    \breve u_{\rm in}
        = u_{\rm in}\left(\cdot - \tfrac12\log(t_0)\right)
        \leq \overline u(0, \cdot - C).
\ee
The largeness of $t_0$ is required so that $\bar u(0, \cdot)$ has slower Gaussian decay than $\breve u_{\rm in}$. 

By the comparison principle, \Cref{thm.CP_u}, which applies as $\overline{u}$ is a supersolution with nonnegative shape defect, we conclude that 
\be
    \breve u(t,z) \leq \overline u(t,z - C)
\ee
for all $t>0$ and $z\in \R$. 
As a result, we immediately see that there is $C>0$ such that, for all $z\geq C$
\be
    \breve u(t,z) \leq \bar u(t,z)
    < 1.
\ee
This concludes the proof.

\end{proof}

Let us now briefly comment on why an analogous construction fails in the nonlocal case. The main issue stems from the fact that one expects $\rho \approx 1$ for $x \leq \overline x(t)$, whence $P \approx 1+ (\overline x(t) - x)$.   One would then naturally alter the supersolution above by concatenating $R$ with a linear function (instead of the constant function $\equiv 1$).  In order to apply a comparison principle, one requires a concave cusp, meaning that the slope of the linear function must be smaller than 
\be
    R_z(t,\overline  z_{\rm super}(t))
    =-1 -\frac{\overline  z_{\rm super}(t)}{4(t+t_0)}+\frac{\overline  z_{\rm super}(t)}{(t+t_0)^\frac{3}{2}}.
\ee
In other words, any linear supsersolution function of the form 
\be
    f(t,z)=-(1+\theta(t))(z-\overline  z_{\rm super}(t))+1
\ee
must satisfy
\be\label{e.c112102}
    \theta(t) \leq \frac{\overline  z_{\rm super}(t)}{4 (t+t_0)}.
\ee
to be a supersolution.  We now show that this condition is incompatible with the supersolution property. A quick computation yields
\be
    \dot{\overline{z}}_{\rm super}(t)
    =O\left(\frac{1}{(t+t_0)^\frac{3}{2}} \right),
\ee
from which we can deduce that
\be
    f_t - f_{zz} -\left(1-\frac{1}{2(t+t_0)}\right) f_z -1
    =-\dot{\theta}(z-\dot{\overline{z}}_{\rm super})+\theta -\frac{1}{2(t+t_0)}+o \left( \frac{1}{t+t_0}\right).
\ee
From this expression it becomes clear that $\dot{\theta}$ must be positive since $-(z-\dot{\overline{z}}_{\rm super})\geq 0$ and can be arbitrarily large as $z\to -\infty$.  Additionally, we see that it must be that
\be
    \theta(t) \geq \frac{1}{2(t+t_0)}.
\ee
In other words, $\theta(t)$ is positive and increasing, which contradicts the constraint \eqref{e.c112102} for large $t$.  This violates the inequality~\eqref{e.c112102} for large times.  Hence, such an approach to constructing a supersolution cannot succeed.

\subsubsection{Front Asymptotics in the Nonlocal Case.}
\label{s.pp_nonlocal_upper}

We break the proof into two steps.  First, we state an upper bound on the shape defect function in the moving frame:
\be
    \breve W(t,z)
    = W\left(t, z + 2t - \tfrac12 \log(t+t_0)\right),
\ee
where $t_0\geq1$ is chosen below.
Then, we show how to deduce the bound on $\breve P$, defined similarly as $\breve W$, from the bound on $\breve W$.

We state our main estimate first.
\begin{lemma}\label{l.pp_W_upper}
    Under the assumptions of \Cref{p.pp_upper}, there is a constant $C>0$ such that, for $t,z\geq 1$,
    \be
        \breve W(t,z)
        \leq \frac{C z}{t} e^{-z - \frac{z^2}{8t}}.
    \ee
\end{lemma}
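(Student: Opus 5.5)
The plan is to prove the Gaussian bound on $\breve W$ by comparison, using the linear equation that the weighted shape defect satisfies when $\chi=1$. By \eqref{e.hatWpush} with $\chi=1$, the function $\hat W(t,z)=e^{z}\tilde W(t,z)$ solves
\be
	\hat W_t + \alpha(\tilde P)\,\hat W_z = \hat W_{zz},
\ee
which is a pure advection–diffusion equation: there is no reaction term, and the drift $\alpha(\tilde P)\in\{0,1\}$ is nonincreasing in $z$. It equals $1$ exactly on $\{z\le \overline x(t)-2t\}$, which is where $\tilde P \geq 1$.

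Passing to the log-shifted frame $y = z + \tfrac12\log(t+t_0)$, the statement becomes a bound on
\be
V(t,y):=e^{y}\breve W(t,y)=\sqrt{t+t_0}\,\hat W\big(t,y-\tfrac12\log(t+t_0)\big).
\ee
This $V$ solves
\be
V_t + \Big(\alpha - \tfrac{1}{2(t+t_0)}\Big)V_y - V_{yy} - \tfrac{1}{2(t+t_0)}V = 0 .
\ee
We must show $V\le C y\,t^{-1}e^{-y^2/(8t)}$ for $y,t\ge1$. The profile $y\,t^{-1}e^{-y^2/8t}$ is the Dirichlet heat kernel at the boundary, with $\sqrt t$ absorbed and a weakened Gaussian. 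The $1/(2(t+t_0))$ zeroth-order term is exactly what the factor $\sqrt{t+t_0}$ produces, and it is absorbed by the $t^{-3/2}\to t^{-1}$ bookkeeping, as in the computation $G_t-G_{zz}=\frac{1}{2(t+t_0)}G$ in the local case.

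The first step collects coarse a priori information.
\begin{itemize}
\item \Cref{thm.W.decay} gives $\hat W\le C(1+t)^{-1/2}$, since $\|\hat W_{\rm in}\|_{L^1}\lesssim I_P(0)$ by the integration by parts used in \Cref{p.u eps regularity}.
\item \Cref{c.easyupperbound} gives $\overline x(t)-2t\le C$.
\item \Cref{p.pp_lower} gives $\overline x(t)-2t\ge -\tfrac12\log(1+t)-C$.
\end{itemize}
Hence in the $y$ variable the drift region $\{\alpha=1\}$ lies to the left of a curve $y_f(t)$ with $-C\le y_f(t)\le \tfrac12\log(t+t_0)+C$.

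The second step is to build a supersolution modelled on \eqref{e.pp_supersolution}:
\be
\overline V(t,y)=\beta\,\frac{y+L}{t+t_0}\,e^{-\frac{y^2}{8(t+t_0)}}\,H(t,y),
\ee
on the half-line $y\ge -L$, with $L$ a large constant and $H$ a helper factor of the same type as in the local construction. The Gaussian rate $\tfrac18$ rather than $\tfrac14$ leaves room to absorb the drift correction $-\frac{1}{2(t+t_0)}V_y$ and the zeroth-order term. On the part of $\{y\ge -L\}$ where $\alpha=0$, I will verify the supersolution inequality by the logarithmic-derivative bookkeeping of Step (1) of the local proof. On the part where $\alpha=1$, which is contained in $y\le y_f(t)\lesssim \log t$, we have $\overline V_y>0$ for $y\ll\sqrt t$, so the extra term $\alpha\overline V_y\ge0$ only helps.

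To close the comparison on $\{y\ge -L\}$ we need $V(t,-L)\le \overline V(t,-L)$. This is where the lower front bound enters: for $y$ in a bounded window near $-L$ we are behind the front, where $\rho$ is controlled by \Cref{c.rhoLinfty} and the equation for $\hat W$ is transport to the right. I would try to bound $V$ near $y=-L$ using $\breve W\le \breve\rho$, with $\breve\rho$ bounded via \Cref{c.rhoLinfty} and \Cref{l.upperboundgpush}. The initial ordering $V(0,\cdot)\le\overline V(0,\cdot)$ comes from the hypothesis $W_{\rm in}\le\gamma^{-1}e^{-\gamma x_+^2}$ after enlarging $t_0$ and $\beta$, exactly as in Step (5) of the local case.

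The main obstacle is the boundary at $y=-L$, equivalently the interplay with the unknown front. The crude bound $V(t,-L)\lesssim\sqrt t\cdot t^{-1/2}=O(1)$ is not good enough against $\overline V(t,-L)=0$. The first remedy I would try is to place the boundary slightly inside the go region. There, at $y=y_f(t)-1$, we have $\breve W = \breve\rho-\eta(\breve P)=\breve\rho-1$ because $\eta\equiv1$ on $[1,\infty)$. Behind the front $\breve\rho$ solves a heat equation with inflow $\rho(\cdot,\overline x)=1+W(\cdot,\overline x)$, so $\breve\rho-1$ should inherit the smallness of $W$ at the front. I would then run a bootstrap/continuity argument: assume the bound up to time $T$ with constant $2C$, and improve it to $C$ using the $\sqrt{t}$-decay from \Cref{thm.W.decay}.
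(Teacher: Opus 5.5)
There is a genuine gap, and it sits exactly where you place the ``main obstacle'': the left boundary. Your barrier lives on $\{y\ge -L\}$ and vanishes at $y=-L$, so the comparison needs $V(t,-L)\le 0$. This is false, since $W>0$ for $t>0$. Nothing you list improves on $V(t,-L)=O(1)$: \Cref{thm.W.decay} gives $\hat W\lesssim t^{-1/2}$, and the factor $\sqrt{t+t_0}$ in $V$ exactly cancels this. The proposed bootstrap through $\breve W=\breve\rho-1$ is not yet an argument. The smallness of $W$ at the front is what is being proved. Moreover, transferring a bound from the moving front $y_f(t)$ to a point behind it, with the constant improving from $2C$ to $C$, requires a barrier for the advection--diffusion equation on $\{y<y_f(t)\}$. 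Once you have such a barrier, the interior boundary is superfluous.

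A second step also fails: your right piece with Gaussian rate $\tfrac18$ is not a supersolution. Take $\tau=t+t_0$, $L\ge 1$ and $\overline V=\frac{y+L}{\tau}e^{-y^2/(8\tau)}$. Note that the drift in your $V$-equation is $+\frac{1}{2\tau}\partial_y$, not $-\frac{1}{2\tau}\partial_y$. With the correct sign, the grow-region quotient $\overline V^{-1}\big(\overline V_t+\frac{1}{2\tau}\overline V_y-\overline V_{yy}-\frac{1}{2\tau}\overline V\big)$ equals $-\frac{5}{4\tau}+\frac{y+1}{2\tau(y+L)}+\frac{y^2}{16\tau^2}-\frac{y}{8\tau^2}$. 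This is at most $-\frac{11}{16\tau}$ for $0\le y\le\sqrt\tau$. A helper $H$ contributing $O(\tau^{-3/2})$ cannot repair an $O(\tau^{-1})$ deficit. The zeroth-order term is balanced only at rate $\tfrac14$, which is the rescaled Dirichlet heat kernel. That is the rate the paper uses in $R=\beta\frac{z}{t+t_0}EGH$; the $\tfrac18$ in the statement is only a weakening obtained by absorbing $H$.

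The missing idea is that no interior boundary is needed, because the go region carries its own explicit barrier. This is the quantitative form of your remark that $\hat W$ is transported to the right there.

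Using \Cref{p.pp_lower} and a translation of the data independent of $t_0$, the paper arranges $\alpha(\breve P)=1$ on $\{z\le 4\}$. There, $L=\frac{c}{t+t_0}e^{-(1-\kappa)z}$ satisfies $\mathcal{K}L/L=\kappa(1-\kappa)-\frac{3-\kappa}{2(t+t_0)}>0$ for $t_0$ large. In your variables this is $\frac{c}{t+t_0}e^{\kappa y}$. The term $\alpha(\partial_z+1)$ contributes $\kappa$ against a diffusion cost of $\kappa^2$.

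One then sets $\overline W=L$ for $z\le 1$ and $\overline W=\min\{L,R\}$ for $z>1$. The parameters are chosen in order: $\kappa\in(\frac{\log 3}{2},1)$, then $t_0$, then the ratio $c/\beta$, so that $R>L$ on $[1,2]$ and $R<L$ on $[3,\infty)$. The result is a supersolution on all of $\R$. Since $L$ grows as $z\to-\infty$, it dominates the bounded $W_{\rm in}$ on the left once $\beta$ is large. The comparison then closes with no information required at any interior point.
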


Actually, we prove a slightly more precise estimate, but the extra precision is not needed to deduce \Cref{p.pp_upper}.  Additionally, the estimate we obtain provides a rate of convergence $\breve P \to P_{\rm tw}$; see \Cref{sss.convergence_pp}.  Let us now show how to apply \Cref{l.pp_W_upper} in order to deduce the upper bound on $\overline x(t)$.  We prove \Cref{l.pp_W_upper} in \Cref{ss.l.pp_W_upper}.

\begin{proof}[Proof of \Cref{p.pp_upper} in the nonlocal case]
Fix any $t>1$.  We may assume that
\be\label{e.bar_x_assumption}
    \overline x(t) \geq 2t - \frac12 \log(t+t_0) + 1,
\ee
since otherwise there is nothing to prove.

Next, observe that, for $z \geq \overline x(t)-2t+\frac{1}{2}\log(t+t_0)$,
\be\label{e.c112101}
	-(e^z\breve{P}(t,z))_z = e^z\breve{W}(t,z).
\ee
This follows from~\eqref{e.eta_and_Q_expressions} and the definition~\eqref{d.w} of $W$.  
Using~\eqref{e.c112101} and \Cref{l.pp_W_upper}, we see that, for $z \geq \overline x(t)-2t+\frac{1}{2}\log(t+t_0)$,
\be\label{e.c011201}
	\begin{split}
	e^z \breve{P}(t,z)
    &= - \int_z^\infty \left( e^\zeta \breve P(t,\zeta)\right)_\zeta d\zeta
    =  \int_z^\infty e^\zeta \breve{W}(t,\zeta) d\zeta
	\leq 
        \int_z^\infty \frac{C\zeta}{t} e^{- \frac{\zeta^2}{8t}} d\zeta
    \leq C.
	\end{split}
\ee
The first equality requires that the boundary term $(e^z \breve P)|_{z=+\infty}$ vanishes.  This is where we use the assumption that $P_{\rm in}$ has bounded exponential moment.  Indeed, by \Cref{l.expmoment}, $\breve P(t)$ has an exponential moment as well for all later time $t\geq0$. Recalling~\eqref{e.c112101}, which implies that $e^z \breve P$ is decreasing in $z$.  Hence, $e^z \breve P$ may only have a bounded integral if the boundary term $(e^z \breve P)|_{z=+\infty}$ in the integration by parts in the first equality of~\eqref{e.c011201} vanishes. 

Evaluating at $z=\overline x(t) - \left(2t - \frac{1}{2} \log(t+t_0)\right)$ yields
\be
    e^{\overline x(t) - \left(2t - \frac{1}{2} \log(t+t_0)\right)}
    = e^{\overline x(t) - \left(2t - \frac{1}{2} \log(t+t_0)\right)}
        \breve P\left(t, \overline x(t) - \left(2t - \frac{1}{2} \log(t+t_0)\right)\right)
    \leq C.
\ee
After taking the logarithm of both sides and adjusting the constant $C$, we find the desired upper bound \eqref{e.26021201}.
\end{proof}

\subsubsection{The bound on the shape defect function in the nonlocal case: proving \Cref{l.pp_W_upper}}\label{ss.l.pp_W_upper}

\begin{proof}[Proof of \Cref{l.pp_W_upper}]
We change to the moving frame:
\be
    \breve W(t,z)
        = W\left(t, z + 2t - \frac{1}{2} \log(t+t_0)\right).
\ee
We denote $\tilde P$ similarly. From~\eqref{e.w_eqn} (cf.~\eqref{e.tildeWchipush}), we find
\be 
\label{e.Wmv12}
    \breve{W}_t
    =
    \breve{W}_{zz}
    + \left( 2 -\frac{1}{2(t+t_0)}\right) \breve{W}_{z}
    -\alpha(\breve{P}) \breve{W}_z +(1-\alpha(\breve{P}))\breve{W}.
\ee
Moving all terms to the left-hand side, we may write this as
\be
    \cK \breve W = 0,
\ee
where we have defined
\be\label{def.K}
    \cK  := \cL_{\sfrac12} + \alpha(\breve P) \left( \partial_z + 1\right).
\ee
We recall the definition~\eqref{def.Lbeta} of $\cL_{\sfrac12}$. 
Lastly, before beginning our proof let us note that, due to \Cref{p.pp_lower} there is a constant $C$, depending on the initial data, such that, for all $t\geq 0$,
\be
    \overline x(t)
    \geq
    2t - \frac{1}{2} \log(t+1) - C
    = 2t - \frac{1}{2} \log(t + t_0) 
        + \left(\log\Big(\frac{t+t_0}{t+1}\Big) - C\right).
\ee
Hence, up to a translation of the initial data that is independent of $t_0$ (as long as $t_0\geq 1$), we may assume that
\be
\label{e.26021207}
    \overline x(t)
    \geq 2t - \frac{1}{2} \log(t+t_0) + 4.
\ee
As a consequence, for all $z \leq 4,t\geq 0$,
\be\label{e.tilde_P_large}
    \breve P(t,z) \geq 1
    \qquad\text{ and, hence, }
    \alpha(\breve P(t,z)) = 1.
\ee
This plays an important role in the sequel.

We proceed as follows: (1) we construct a supersolution $R$ to~\eqref{e.Wmv12} on the domain $z>0$; (2) we construct a supersolution $L$ to \eqref{e.Wmv12} when $z<4$; (3) for $z\in [1,2]$, we prove the ordering $R(t,z)\geq L(t,z)$; (4) for  $z\geq 3$, we prove the opposite ordering $R(t,z)\leq L(t,z)$; (5) define
\be 
\label{def.Wsuper}
    \overline{W}(t,z)
    := \left\{ \begin{array}{ll}
        L(t,z) & z \leq 1\\
        \min\{L(t,z),R(t,z)\} & z > 1,
        \end{array}
    \right.
\ee 
which by the steps above then defines a supersolution to~\eqref{e.Wmv12} on the entire domain.

\smallskip
\noindent
{\bf \# Step (1):} Let $E$, $G$, and $H$ be as in the local case, see~\eqref{e.pp_supersolution}, and define
\be
    R(t,z)
    :=
     \beta \frac{z}{t+t_0} E(z) G(t,z) H(t,z).
\ee
From this and straightforward expansions, we find that for $z>0$
\be\begin{split}
    \frac{\mathcal{K} R}{R}
    & = -\frac{1}{t+t_0} -\frac{2}{z}\left( \frac{E_z}{E}+\frac{G_z}{G}+\frac{H_z}{H}  \right)-\frac{1}{z}\left(2-\frac{1}{2(t+t_0)}\right)
    \\&\qquad
    +\frac{\cL_{\sfrac12}(EGH) }{EGH}+ \alpha(\breve{P}) \left( \frac{1}{z}+\frac{E_z}{E}+\frac{G_z}{G}+\frac{H_z}{H}+1 \right).
\end{split}\ee
Recall the values computed in~\eqref{e.EGH}, so that
\be
    - \frac{2}{z} \left(\frac{E_z}{E}+\frac{G_z}{G}+\frac{H_z}{H}  \right)
    = - \frac{2}{z} \left( -1 - \frac{z}{2(t+t_0)} + \frac{2z}{(t+t_0)^{\sfrac32}}\right)
    = \frac{2}{z} + \frac{1}{t+t_0} - \frac{4}{(t+t_0)^{\sfrac32}}.
\ee
Then, using~\eqref{e.EGH} and the lower bound for $\frac{\cL_{\sfrac12}(EGH) }{EGH}$ given by~\eqref{e.LcalFoverF} and rearranging the terms, we obtain, for $z >0$,
\be
\label{e.26021205}
\begin{split}
    \frac{\mathcal{K}R}{R}
    &\geq \frac{z^2}{4(t+t_0)^\frac{5}{2}}
    -
    \frac{z}{4(t+t_0)^2}
    +
    \frac{\tilde K - 4}{(t+t_0)^{\sfrac32}}
    +
    \frac{z}{(t+t_0)^{\frac{5}{2}}}
    \\&\qquad
    +
    \frac{1}{2z(t+t_0)}
    +
    \alpha(\breve{P}) \left( \frac{1}{z}-\frac{z}{2(t+t_0)}+\frac{2z}{(t+t_0)^\frac{3}{2}} \right).
\end{split}\ee
As before, the sum of the first four terms is nonnegative as long as $\tilde K \geq 4+\frac{1}{4}$.  Additionally, the fifth term is positive.  Thus,
\be
\label{e.26021208}
    \frac{\mathcal{K}R}{R}
    \geq 
    \alpha(\breve{P}) \left( \frac{1}{z}-\frac{z}{2(t+t_0)}+\frac{2z}{(t+t_0)^\frac{3}{2}} \right).
\ee
But, recall from \Cref{c.easyupperbound} that $\overline x(t) \leq 2t + C$. Let us note that this constant $C$ depends on the shift of the initial datum that we carried out in \eqref{e.26021207}, but this latter shift is independent of $t_0$. Because of this upper bound, we have in the shifted frame that
\be\label{e.c112103}
    \alpha(\breve P(t,z)) = 0
    \quad\text{ whenever } z \geq \frac{1}{2}\log(t+t_0) + C,
\ee
and thus, by \eqref{e.26021208}, $\nicefrac{\cK R}{R}\geq 0$, when $z \geq \frac{1}{2}\log(t+t_0) + C$. Let us now assume that $0< z<\frac{1}{2}\log(t+t_0) + C$. Using \eqref{e.26021208}, we have
\be
 \frac{\mathcal{K}R}{R}
    \geq \frac{\alpha(\breve{P})}{z} \left( 1-\frac{z^2}{2(t+t_0)}\right) \geq \frac{\alpha(\breve{P})}{z} \left( 1-\frac{\left(\frac{1}{2}\log(t+t_0) +C\right)^2}{2(t+t_0)}\right)
    \geq 0,
\ee
for $t_0$ large enough, depending on $C$. By the above observation $C$ is independent of $t_0$ and thus we may choose $t_0$ as a function of $C$, \textit{i.e.} as a function of the initial datum.

Thus, we have proved that $R$ is a supersolution when $z>0$, which completes step (1).

\smallskip
\noindent
{\bf \# Step (2):} Define
\be 
    L(t,z)
    :=
    \frac{\nicefrac{\beta}{\gamma} }{t+t_0}E(z) e^{\kappa z}= \frac{\nicefrac{\beta}{\gamma} }{t+t_0}e^{-(1-\kappa)z},
\ee 
for any fixed $\kappa \in (0,1)$. A direct computation leads to
\be
    \frac{\mathcal{K}L}{L}
    =
     \kappa\alpha(\breve{P})-\kappa^2-\frac{3-\kappa}{2(t+t_0)}.
\ee 
Recall, from~\eqref{e.tilde_P_large}, that $\alpha(\breve P) = 1$ for all $z\leq 4$.  Hence, 
for $t_0$ large enough, we obtain, for every $t\geq 0$ and $z< 4$,
\be
    \frac{\mathcal{K}L}{L}
    = \kappa(1-\kappa)-\frac{3-\kappa}{2(t+t_0)}
    > 0.
\ee
Hence $L$ is a supersolution for $z<4$.
Here, $t_0$ inherits, in addition to its dependence on the initial datum (from Step (1)), a dependence on $\kappa$.

\smallskip
\noindent
{\bf \# Step (3):}
We observe that, for $z \in (0,4)$,
\be\label{E.R_over_L}
    \frac{R(t,z)}{L(t,z)}
    = \gamma ze^{-\kappa{z}}G(t,z)H(t,z)
    =\frac{\gamma}{\kappa} g(\kappa z) e^{-\frac{z^2}{4(t+t_0)}}e^{\frac{1}{\sqrt{t+t_0}}\left( \frac{z^2}{t+t_0} - K \right)},
\ee 
where we define
\be 
g(z):=ze^{-z}.
\ee
We observe that $g$ increases on $(0,1)$ and then decreases on $(1,\infty)$; its maximum is achieved at $z=1$ with value $g(1)=e^{-1}$. Now, let us select 
\be 
\kappa \in \left( \nicefrac{\log(3)}{2}, 1 \right).
\ee
Because of this choice, a quick computation shows that $g(\kappa)>g(3\kappa)$. Moreover, since $2\kappa>1$, we also have that $g(2\kappa)>g(3\kappa)$. We summarize both facts in the bound
\be 
\label{e.26021203}
\min\{ g(\kappa),g(2\kappa)\} > g(3\kappa).
\ee
Now, restricting \eqref{E.R_over_L} to $z\in [1,2]$, we find the lower bound
\be
\label{e.26021204}
    \frac{R(t,z)}{L(t,z)}
    \geq
     \frac{\gamma}{\kappa} \min\{ g(\kappa),g(2\kappa)\}  
    e^{-\frac{1}{t+t_0}}e^{\frac{1}{\sqrt{t+t_0}}\left( \frac{1}{t+t_0} - K \right)} \geq \frac{\gamma}{\kappa} \min\{ g(\kappa),g(2\kappa)\}  e^{-\frac{1}{t_0}-\frac{K}{\sqrt{t_0}}}.
\ee
Let us pick $t_0$ large enough, depending on $\kappa,K$, so that
\be 
\min\{ g(\kappa),g(2\kappa)\}  e^{-\frac{1}{t_0}-\frac{K}{\sqrt{t_0}}} >g(3\kappa),
\ee
which is possible, because of the bound \eqref{e.26021203}. Finally, because of this latter choice of $t_0$, we may choose $\gamma$, depending on $\kappa,K,t_0$ such that
\be 
\label{choice.gamma}
\frac{\gamma}{\kappa} g(3\kappa) < 1 < \frac{\gamma}{\kappa} \min\{ g(\kappa),g(2\kappa)\}  e^{-\frac{1}{t_0}-\frac{K}{\sqrt{t_0}}}.
\ee
As a consequence of this choice and \eqref{e.26021204}, we obtain for $z\in [1,2]$,
\be 
\frac{R(t,z)}{L(t,z)}>1.
\ee
Let us emphasize here that $\kappa$ is selected universally and $K$ is chosen depending only on the initial condition (see \eqref{e.26021205}). Then $t_0$ is selected as a function of $\kappa,K$ and $\gamma$ as a function of $\kappa,K,t_0$. 
This shows that there exist constants $\kappa,K,t_0,\gamma$ such that \eqref{choice.gamma} holds.

\smallskip
\noindent
{\bf \# Step (4):} We now show that $\nicefrac{R(t,z)}{L(t,z)}<1$ for $z>3$, which is a mere consequence of the choices above. Assuming additionally that $t_0\geq 16$, we get
\be 
 e^{-\frac{z^2}{4(t+t_0)}}e^{\frac{1}{\sqrt{t+t_0}}\left( \frac{z^2}{t+t_0} - K \right)} \leq  e^{-\frac{z^2}{4(t+t_0)^{\sfrac32}}\left( \sqrt{t+t_0} - 4 \right)}\leq 1.
\ee
Therefore, using \eqref{E.R_over_L} and \eqref{choice.gamma}, we have for $z\geq 3$
\be 
\frac{R(t,z)}{L(t,z)}
    \leq
     \frac{\gamma}{\kappa} g(3\kappa) <1.
\ee

\smallskip
\noindent
{\bf \# Step (5):} The previous four steps show that $\overline{W}$ is a supersolution to~\eqref{e.Wmv12}. Notice that, by hypothesis, $W_{\rm in}\in L^\infty(\R)$ and $W_{\rm in}(z)\leq e^{-\gamma z^2}$ for some $\gamma>0$ and all $z$ sufficiently large. It follows that, after choosing $\beta$ sufficiently large and, if necessary, further increasing $t_0$, we have $W_{\rm in} \leq \overline W(0,\cdot).$ 
From the comparison principle, it follows that $\breve {W} \leq \overline W$, which completes the proof.
\end{proof}

\subsubsection{Discussion regarding convergence rates to the traveling wave in the nonlocal case}\label{sss.convergence_pp}

An interesting consequence of our above bounds on $\breve W$ is an estimate on the rate of convergence of $\breve P$ to the traveling wave $P_{\rm in}$.  We discuss this briefly here.

Let us observe that the proof of \Cref{l.pp_W_upper} yields
\be
    0 \leq \breve W \lesssim \frac{1}{t},
\ee
with appropriate decay as $z\to\infty$.
From the work of \cite{AnHendersonRyzhik_convergence}, this shows that, with the correct shift,
\be
    \breve P = P_{\rm tw} + O\left( \frac{1}{t}\right).
\ee
By analogy with the Fisher-KPP equation~\cite{AnHendersonRyzhik_convergence,Graham}, this is the expected rate of convergence.

As we see in \Cref{s.pulled}, it is not necessary to use the shape defect function in order to understand the front location in the pulled case.  Hence, we do not obtain a sharp estimate in the correct frame on the decay of $\breve W$, and, thus, we do not obtain a convergence rate as above.  It seems likely, however, that the approach in \Cref{sss.shape-defect-nonlocal} could be extended to the pulled case to obtain a (presumably sharp) $\sfrac1t$ rate of convergence to the traveling wave.

In the pushed case, the only bound we have on the shape defect function in the correct frame is due to \Cref{thm.W.decay}: $\hat W \lesssim \sfrac{1}{\sqrt t}$.  This would yield a convergence rate like $\sfrac1{\sqrt{t}}$.  This, however, is likely not sharp.  The rate is expected to be exponential; see analogous results~\cite{AnHendersonRyzhik_convergence, rothe1981}.

It would be interesting in a future work to obtain convergence rates in all cases (pulled, pushmi-pullyu, pushed) for both models.  Let us note that a useful approach might follow the work of Patterson~\cite{Patterson}, who has developed a strategy to obtain sharp convergence rates based on weighted functional inequalities.

\color{black}

\section{The pulled case: $\chi \in [0,1)$} \label{s.pulled}

We now state precisely the main result in the pulled regime $\chi \in [0,1)$, which was outlined in \Cref{ss.main_results}.

\begin{theorem}
    	Let $\chi \in [0,1)$.  Suppose that  $P$ (resp. $u$) solves~\eqref{pde.P} (resp.~\eqref{pde.u}) with nontrivial initial data whose shape defect function $W_{\rm in}$ (resp.~$w_{\rm in}$) is nonnegative. Suppose further that, for some $\gamma>0$, we have:
        \begin{itemize}
            \item (nonlocal case) for $x\geq 0$, $P_{\rm in}(x)\leq \gamma^{-1} e^{- \gamma x^2}$  and  $\|\rho_{\rm in}\|_{L^\infty(\R)}<1$;
            \item (local case) for all $x$, $0 \leq u_{\rm in}(x) \leq \min\{1, \gamma^{-1} e^{-\gamma x_+^2}\}$.
        \end{itemize}
        Then, there exists $C$ such that, for $t\geq 0$,
	\be
		2t - \frac{3}{2} \log t - C\leq \overline{x}(t) \leq 2t - \frac{3}{2} \log t + C.
	\ee
\end{theorem}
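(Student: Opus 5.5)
The proof splits into a lower bound and an upper bound, each treated separately for $P$ and $u$. The paper already hints at the main tool: comparison with the Fisher-KPP equation, together with supersolution constructions in the frame $\breve v(t,z)=v(t,z+2t-\tfrac32\log(t+t_0))$.

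\textbf{Lower bound.} In the local case, $1-\alpha(u)\ge 1-u$ for $u\in[0,1]$, since $u<1$ forces $\alpha(u)=0$. The advection is nonnegative where it matters: it is nonzero only on $\{u=1\}$, and there $(A^u(u))_x$ is a nonpositive measure because $A^u(u)$ is nonincreasing. These two facts should make $u$ a supersolution of Fisher-KPP. The paper already used this ($\underline u\le u^\e$ in Step (4) of the proof of \Cref{thm.wellposedness_u}). Bramson's result for Fisher-KPP then gives a level set of $u$ at position at least $2t-\tfrac32\log t-C$, say the $u_{\rm tw}(1)$-level. Steepness ($w\ge0$) converts this into a point where $u=1$ lying within a distance $1$ behind it, exactly as in Step (4) of that proof.

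In the nonlocal case I would compare $P$ with Fisher-KPP. Note that $P-A^P(P)=\min\{P,1\}$. The advection term $\chi(A^P(P))_x=-\chi\rho\alpha(P)\le0$ only acts where $P\ge1$, and there it is harmless. I would compare the truncation $\min\{P,1\}$, or equivalently $P$ itself restricted to $\{P<1\}$, with the KPP solution $v$ of $v_t=v_{xx}+v(1-v)$. On $\{P<1\}$, $P$ solves $P_t=P_{xx}+P\ge P_{xx}+P(1-P)$. On the interface, $\min\{P,1\}$ is a minimum of two supersolutions: $1$ is one, and $P$ is one away from $\{P\geq1\}$. So $\min\{P,1\}$ should be a viscosity/weak supersolution, giving $\min\{P,1\}\ge v$. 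Bramson then yields $\overline x(t)\ge 2t-\tfrac32\log t-C$ after choosing the KPP datum below $\min\{P_{\rm in},1\}$ at a level just below $1$. I would take the level $1-\delta$ and then use $W\ge0$ to move to level $1$ at $O(1)$ cost.

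\textbf{Upper bound.} Here I would mimic Subsection~\ref{s.pp_local_upper}, replacing the prefactor in $R$ by the pulled one:
\[
R(t,z)=\beta\,\frac{z+z_0}{t+t_0}\,(\text{or }z\text{-linear})\,e^{-z}G(t,z)H(t,z),
\]
written in the frame with $\tfrac32\log(t+t_0)$ shift. The factor $z/(t+t_0)$ absorbs the extra $\tfrac{1}{t}$ drift term; the computation of $\cL_{3/2}R/R$ parallels \eqref{e.LcalFoverF} with extra terms $\frac{2}{z}+\frac{1}{t+t_0}$, as in Step (1) of the proof of \Cref{l.pp_W_upper}. Since $\chi<1$, near the interface I would instead match to the wave $u_{\rm tw}$, whose tail is $((1-\chi)z+1)e^{-z}$. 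This is natural: the linear prefactor is already of pulled type.

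In the local case, concatenating with the plateau $1$ requires two checks. First, the kink must have the right sign, namely $-R_z(\overline z_{\rm super})\ge \chi$, in analogy with Step (4) there. Second, the shape defect must satisfy $-R_z\ge\eta^u(R)$; by concavity this holds if $-R_z/R\ge \eta^u(R)/R$, which is bounded by $1$ plus a correction near $R=1$. Both hold if the profile is chosen steeper than $u_{\rm tw}$ near its top. Then \Cref{thm.CP_u} applies.

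In the nonlocal case a plateau is unavailable, but comparison holds for $P$ (\Cref{thm.CP_P}). This is where the hypothesis $\|\rho_{\rm in}\|_\infty<1$ enters. I would expect $\rho<1$ behind the front for all time, via the maximum principle applied to $\rho_t+\chi\rho_x=\rho_{xx}$ in the go region together with the Rankine–Hugoniot jump \eqref{e.RH_rho}. Given that, the left piece can be a linear function $1+(\overline z_{\rm super}-z)$ with slope $-1$, steeper than any $P$. It joins $R$ at a concave corner, requiring $-R_z(\overline z_{\rm super})\le 1$, which must be arranged. This linear piece is a supersolution of $f_t-2f_z+\chi f_z\ge 1$, i.e. $2-\chi\ge1$, with room to spare. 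This slack, unavailable when $\chi=1$, is what avoids the obstruction discussed after the local pushmi-pullyu proof.

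\textbf{Main obstacle.} The main obstacle is this nonlocal upper-bound junction. Two conditions must hold at the same time: the corner must be concave, so that $R$'s slope at $P=1$ is at most $1$ in modulus, while $R$ must remain a supersolution of the linear equation with the logarithmic drift. If this fails, I would fall back to bounding $W$ as in \Cref{l.pp_W_upper}, using the pulled-regime equation \eqref{e.tildeWchipull} with the nonnegative terms dropped.
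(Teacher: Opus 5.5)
\textbf{The gap: the corner condition in the nonlocal upper bound has the wrong sign.} Your lower bound is the paper's argument: compare $u$, and $P$ or $\min\{P,1\}$, with Fisher--KPP, apply Bramson at a fixed level, then use $w\ge0$ or $W\ge0$ to reach level $1$ at $O(1)$ cost. Your local upper bound follows the paper's plateau-plus-$\beta zEGH$ construction.

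For the singular part of $-\overline P_{zz}$ to be nonnegative you need $\overline P_z(t,\overline z_{\rm super}^+)\le \overline P_z(t,\overline z_{\rm super}^-)$. With a left piece of slope $-1$, this means $R_z(t,\overline z_{\rm super}(t))\le -1$, not $-R_z\le 1$. The condition you wrote holds automatically for the pulled profile, but the correct one fails. At $R=1$,
\[
R_z(t,\overline z_{\rm super})=\frac{1}{\overline z_{\rm super}}-1-\frac{\overline z_{\rm super}}{2(t+t_0)}+\frac{2\overline z_{\rm super}}{(t+t_0)^{3/2}},
\]
which exceeds $-1$ unless $\overline z_{\rm super}(t)\gtrsim\sqrt{2(t+t_0)}$. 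A junction drifting like $\sqrt t$ destroys the $O(1)$ conclusion. So with slope $-1$ the corner is convex and $\overline P$ is not a supersolution. As in the paper's discussion after the local pushmi-pullyu proof, the left linear piece must be strictly flatter than $R$ at the junction.

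\textbf{The fix.} Spend the slack $2-\chi>1$ you noticed on flattening the left piece, not on steepening it. Take slope $-\sigma$ with $\frac{1}{2-\chi}<\sigma<1$. The linear piece is then a supersolution, because $(2-\chi)\sigma-1>0$ absorbs $\dot{\overline z}_{\rm super}$ and the $\frac{1}{t+t_0}$ term once $t_0$ is large. Choose $\beta$ large so that $|R_z(t,\overline z_{\rm super}(t))+1|<1-\sigma$; this gives $R_z<-\sigma$ and a concave corner. The paper takes $\sigma=\frac{1+s}{2}$ with $s=\max\{\frac{1}{2-\chi},\limsup_{x\to-\infty}P_{\rm in}(x)/|x|\}$.

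This is where $\|\rho_{\rm in}\|_\infty<1$ really enters. It gives $\limsup_{x\to-\infty}P_{\rm in}(x)/|x|<1$, so $\sigma$ can exceed that slope and $\overline P(0,\cdot)\ge P_{\rm in}$ holds after a translation. You do not need, and have not shown, $\rho<1$ for later times, since the comparison principle only uses the initial ordering.

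Your fallback through $W$ is the route the paper reports failing here. The natural supersolution for $W$ is not uniformly bounded in $L^1([a,+\infty))$, so it cannot be integrated back to $P$.

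\textbf{Two smaller points.}
\begin{enumerate}
\item In the $\frac32\log$ frame the prefactor must be $z$ alone. The factor $\frac{z}{t+t_0}$ adds $-\frac{1}{t+t_0}$ to $\cL_{\sfrac32}R/R$, which dominates on, e.g., $2\le z\le\sqrt{t+t_0}$.
\item In the local shape-defect check, $\eta^u(s)\le s$ is insufficient on all of $\overline z_{\rm super}\le z\lesssim\sqrt{2(t+t_0)}$, because there $-R_z/R<1$. The difficulty is not only near the top. One needs the exact form $\eta^u(s)=(1+1/W_{-1}(-\kappa s))s$ together with $\kappa R\ge ze^{-z}$, obtained from $\beta$ large. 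These give $\eta^u(R)\le(1-\frac1z)R$ on that whole range.
\end{enumerate}
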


Let us briefly discuss the assumptions.  First, we note that the Heaviside function satisfies the assumptions in the local case. In the nonlocal case, taking $\rho_{\rm in}$  to be the Heaviside function multiplied by a constant strictly smaller than $1$ satisfies the assumptions.

Second, the assumption that $\rho_{\rm in} \leq 1-\varepsilon$ for some $\varepsilon>0$ is natural, since the traveling wave profile is bounded above by $(2-\chi)^{-1}$; recall \Cref{table.tw}. Our reason for imposing this condition is related to the behavior of $P$ as $x \to -\infty$: the proof requires that this asymptote be flatter than slope $-1$ (see the assumptions of \Cref{l.upper_pulled}). We believe that this restriction is purely technical and could be removed at the expense of a more delicate argument.

Third, as discussed below \Cref{thm.asymptotics_pushmipullyu}, the boundedness of $W_{\rm in}$ is equivalent to the boundedness of $\rho_{\rm in}$.  Moreover, the Gaussian decay assumption on $W_{\rm in}$ is weaker than that on $\rho_{\rm in}$.

Finally, the upper and lower bounds can be established under weaker assumptions; see \Cref{l.lower_pulled} and \Cref{l.upper_pulled}, whose proofs are given in \Cref{ss.lower_pulled} and \Cref{ss.upper_pulled}, respectively.

Before turning to the proofs, let us briefly comment on the strategy. 
The lower bound follows via comparison with the Fisher-KPP equation and is therefore obtained essentially ``for free.''  The upper bound, however, requires more work.
In contrast to the case $\chi=1$ in the nonlocal model, where the upper bound was obtained by constructing a supersolution for the shape defect function $W$, in the pulled regime $\chi\in [0,1)$, we work directly at the level of $P$.

We did investigate whether the argument used in the pushmi–pullyu regime could be adapted to the present setting. However, in the pulled case, the natural candidate supersolution for $W$ fails to be uniformly bounded in $L^1$ on intervals of the form $[a,+\infty)$, which prevents us from extracting the desired upper bound on $\overline{x}(t)$ from the supersolution for $W$. For this reason, we instead construct a supersolution directly for $P$. This comes at the cost of the additional condition $\|\rho_{\rm in}\|_\infty<1$ mentioned just above.

\subsection{Lower bound in the pulled case: $\chi \in [0,1)$}
\label{ss.lower_pulled}

Our goal in this section is to prove the following lower bound.

\begin{proposition}[Lower bound]\label{l.lower_pulled}
	Let $\chi \in [0,1)$.  Suppose that $P$ (resp. $u$) solves~\eqref{pde.P} (resp.~\eqref{pde.u}) with nontrivial initial data $P_{\rm in}$ (resp. $u_{\rm in}$) whose shape defect function $W_{\rm in}$ (resp.~$w_{\rm in}$) is nonnegative.  
    Then there is $C$ such that, for $t\geq 0$,
	\be
		\overline{x}(t) \geq 2t - \frac{3}{2} \log t - C.
	\ee
\end{proposition}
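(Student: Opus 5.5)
The plan is to compare with the Fisher--KPP equation, as the paper announces (``for free''). I treat the two models separately.

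\emph{Local case.} Since $0\le u\le 1$ and $\alpha(u)=\1_{\{u=1\}}$, the terms $\chi(\alpha(u)u)_x$ and $u\alpha(u)$ vanish wherever $u<1$. So, heuristically, $u$ is a supersolution of the Fisher--KPP-type equation $v_t=v_{xx}+f(v)$ with $f(v)=v(1-v)$, or more simply of any KPP nonlinearity $f\le s(1-\alpha(s))$ with $f(1)=0$. The rigorous route goes through the approximations $u^\e$ of \eqref{e.f.molA}, exactly as in Step (4) of the proof of \Cref{thm.wellposedness_u}.
\begin{itemize}
\item For each $\e$, the function $u^\e$ solves a parabolic equation with Lipschitz flux.
\item On $\{u^\e\le 1-\e\}$ it is a solution of $v_t=v_{xx}+v$. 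On $\{u^\e>1-\e\}$ a Fisher--KPP subsolution $\underline v\le u^\e$ is irrelevant there. More carefully, the paper already uses that $u^\e$ is a supersolution of Fisher--KPP, and I will reuse that.
\item Letting $\e\to0$ gives $\underline v\le u$, where $\underline v$ is the Fisher--KPP solution with data $u_{\rm in}$ (or with compactly supported nonzero data below $u_{\rm in}$).
\end{itemize}
Bramson's theorem, or its PDE versions \cite{HNRR_JEMS,hamel2013}, then gives $\underline v(t,2t-\tfrac32\log t - C_1)\ge \tfrac12$, say, for $t\ge1$.

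Having the level $\tfrac12$ at distance $O(1)$ is not yet the level $1$. Here I use the nonnegativity $w\ge0$ from \Cref{thm.wellposedness_u}: $u(t,\cdot)$ is steeper than $u_{\rm tw}$. Hence, exactly as in Step (4) of that proof, if $u(t,b)\ge u_{\rm tw}(a)$ for some $a>0$, then $u(t,x)=1$ for all $x\le b-a$. Choose $a$ with $u_{\rm tw}(a)=\tfrac12$. Then $\overline x(t)\ge 2t-\tfrac32\log t-C_1-a$.

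\emph{Nonlocal case.} Here $P$ is unbounded, so I would compare $\min\{P,1\}$, or rather use that $P$ itself is a supersolution. Where $P<1$, equation \eqref{pde.P} reads $P_t=P_{xx}+P$. Where $P\ge1$, it reads $P_t+\chi P_x=P_{xx}+1$, and $P_x=-\rho\le0$ (recall $\rho\ge0$), so $P_t-P_{xx}\ge 1\ge 0$. Thus, for $f(s)=s(1-s)_+$, we get $P_t-P_{xx}\ge f(P)$ everywhere. The Fisher--KPP solution $\underline v$ with data $\min\{P_{\rm in},1\}$ is a subsolution of \eqref{pde.P} in the sense of \Cref{thm.CP_P}: since $\underline v\le1$, one has $A^P(\underline v)=0$ and $\underline v_t-\underline v_{xx}=\underline v(1-\underline v)\le \underline v$. \Cref{thm.CP_P} applies because $A^P$ is Lipschitz, which gives $\underline v\le P$.

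Now Bramson gives $P(t,2t-\tfrac32\log t - C_1)\ge\tfrac12$. To upgrade to the level $1$ I again use $W\ge0$ from \Cref{prop.W.pos}. Since $\eta^P(s)\ge s/(c-\chi)$ on $[0,1]$ by \eqref{eta.unif.bound_P}, we have $-P_x\ge P/(2-\chi)$ on $\{P<1\}$. Thus $P$ grows at least exponentially going left from level $\tfrac12$, and reaches $1$ within distance $(2-\chi)\log2$. Hence $\overline x(t)\ge 2t-\tfrac32\log t - C$.

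\emph{Main obstacle.} The main difficulty is the rigorous justification of the comparison in the local case, because \Cref{thm.CP_u} requires the \emph{supersolution} to have nonnegative shape defect. That hypothesis is satisfied: the supersolution is $u$ itself and $w\ge0$. So I would apply \Cref{thm.CP_u} directly, with $\underline u=\underline v$ (a classical Fisher--KPP solution valued in $[0,1)$, hence a subsolution of \eqref{pde.u}, since $A^u(\underline v)=0$) and $\overline u=u$. This avoids the $\e$-limit. One must also check that $\underline v<1$ so that $A^u(\underline v)=0$; this holds by the strong maximum principle for $t>0$, provided the data is not identically $1$. Otherwise I would use data $\min\{u_{\rm in},\tfrac12\}$. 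The remaining ingredient is the Bramson lower bound for Fisher--KPP with nontrivial, possibly non-compact, data. It follows by comparing further below with compactly supported data, and it also supplies the bound for small $t$ after adjusting $C$.
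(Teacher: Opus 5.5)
Your proposal is correct and follows the paper's route: comparison with Fisher--KPP, Bramson's bound at the level $\tfrac12$, and then $W\ge0$ (resp.\ $w\ge0$) to reach the level $1$ within distance $O(1)$; your justification of $\underline v\le P$ and $\underline v\le u$ is in fact tidier than the paper's informal non-crossing argument, since you use \Cref{thm.CP_P} (a Fisher--KPP solution below $1$ is a subsolution of \eqref{pde.P}) and \Cref{thm.CP_u} (with $u$ itself as the supersolution, since $w\ge0$). One small correction: no choice of $C$ handles $t\to0$, because $-\tfrac32\log t\to+\infty$, so the bound (in your proof and in the paper's) must be read for $t\ge1$, or with $\log(1+t)$ in place of $\log t$; in that form, small times only require $\overline x$ to be bounded below on $[0,1]$, which the same comparison gives.
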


\begin{proof}
We prove \Cref{l.lower_pulled} in the nonlocal case; the local case follows \textit{mutatis mutandis} and is therefore omitted. 
Recall that
\be
    0 \leq W=-P_x-\eta(P).
\ee
From \Cref{l.l.eta_nonloc}, we have
\be
    \eta(P) \geq \frac{1}{2-\chi} P
        \quad\text{ for } P \in (0,1).
\ee 
Consequently, for any $t\geq 0$ and any $x > \overline{x}(t)$,
\be\label{e.c112501}
    P(t,x)
    \leq e^{-\frac{1}{2-\chi}(x-\overline{x}(t))}.
\ee 
For any $t> 0$, let $\overline x_{\sfrac12}(t)$ be the unique value such that
\be
    P(t,\overline x_{\sfrac12}(t)) = \frac{1}{2}.
\ee
From~\eqref{e.c112501}, it follows that
\be\label{e.front_ordering}
    \overline x_{\sfrac12}(t)
    \leq
    \overline x(t) + (2- \chi) \log(2).
\ee
It therefore suffices to obtain a lower bound on $\overline x_{\sfrac12}$.

Now, consider the solution to the FKPP equation 
\be\label{e.fkpp}
    v_t=v_{xx}+v(1-v),
\ee
with initial datum $v_{\rm in}=\min\left\{1, P_{\rm in}(\cdot +1) \right\}$.  Clearly $v_{\rm in} <  P_{\rm in}$. A standard comparison argument proves that $v\leq P$.  Indeed, the two functions cannot cross when $x\leq \overline{x}(t)$, since $v(t,x)< 1 \leq P(t,x)$ there. For $x > \overline{x}(t)$, $P$ satisfies the equation $P_t=P_{xx}+P$ and is therefore a supersolution to~\eqref{e.fkpp}. 

Let us define $\xi_{\sfrac12}(t)$ by 
\be
    v(t,\xi_{\sfrac12}(t))=\frac12.
\ee
Because $v(t)\leq P(t)$, we have that
\be\label{e.c011301}
    \xi_{\sfrac12}(t) \leq \overline x_{\sfrac12}(t).
\ee
It is well known that (see, e.g.,~\cite{bramson1978}) that, for some $C$ and all $t>0$,
\be
    \xi_{\sfrac12}(t)
        \geq 2t- \frac{3}{2}\log(t)- C.
\ee 
In view of~\eqref{e.front_ordering} and~\eqref{e.c011301}, the proof is complete.
\end{proof}

\subsection{Upper bound in the pulled case: when $\chi \in [0,1)$}
\label{ss.upper_pulled}

Our goal in this section is to prove the following upper bound.

\begin{proposition}[Upper bound]\label{l.upper_pulled}
Let $\chi \in [0,1)$.  Suppose that  $P$ (resp. $u$) solves~\eqref{pde.P} (resp.~\eqref{pde.u}) with nontrivial initial data such that, for some $\gamma >0$,
\begin{itemize}
\item (nonlocal case) $P_{\rm in}\in L^\infty_{\rm loc}(\R)$, for $x\geq 0$, $P_{\rm in}(x)\leq \gamma^{-1} e^{- \gamma x_+^2}$, and
\be
            \limsup_{x\to -\infty} \frac{P_{\rm in}(x)}{|x|} < 1;
        \ee
\item (local case) $0 \leq u_{\rm in} \leq \min\{1, \gamma^{-1} e^{-\gamma x_+^2}\}$.
    \end{itemize}
Then there is $C$ such that, for $t\geq 0$,
	\be
    \label{bd.claimed}
		\overline{x}(t) \leq 2t - \frac{3}{2} \log t +  C.
	\ee
\end{proposition}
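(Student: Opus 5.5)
The plan is to build explicit supersolutions in the log-shifted frame with $r=\sfrac32$,
\[
\breve v(t,z)=v\big(t,z+2t-\tfrac32\log(t+t_0)\big),
\]
and then conclude with the comparison principles: \Cref{thm.CP_P} for $P$, and \Cref{thm.CP_u} for $u$. For the local case, the latter requires the supersolution to have a nonnegative shape defect. On the region where the supersolution lies below the threshold, the equation is linear, with operator
\[
\cL_{\sfrac32}f=f_t-f_{zz}-\Big(2-\tfrac{3}{2(t+t_0)}\Big)f_z-f .
\]
The heuristic profile is the pulled one: $z\,e^{-z}e^{-z^2/(4(t+t_0))}$, with the linear prefactor coming from the Dirichlet-type condition at the front.

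\textbf{Right-hand piece.} Following Step (1) of the proof of \Cref{l.pp_W_upper}, I set
\[
R(t,z)=\beta\,(z+a)\,E(z)G(t,z)H(t,z),
\]
with $E,G,H$ as in~\eqref{e.pp_supersolution}. The shift $a\ge 1$ keeps $R$ positive near the junction. The same logarithmic-derivative computation as before should give
\[
\frac{\cL_{\sfrac32}R}{R}\ge \frac{1}{(t+t_0)^{3/2}}\Big(c_1\frac{z^2}{t+t_0}-c_2\frac{z}{\sqrt{t+t_0}}+c_3\tilde K\Big)\ge 0
\]
for $z\ge0$, once $K$ and $t_0$ are large. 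The key point is that the $\tfrac32$ drift correction is exactly cancelled by the $\tfrac1{t}$ coming from $G_t-G_{zz}$ together with the cross term $2\,\partial_z(z+a)\,G_z/G$, since $z\cdot\frac{z}{2t}$ terms pair correctly. Only the case split $z\gtrless\sqrt{t+t_0}$ is then needed, as in~\eqref{e.LcalFoverF}.

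\textbf{Local case.} I would take $\overline u=\min\{1,R\}$ as in Steps (3)--(4) of the local pushmi-pullyu proof, with $\overline z_{\rm super}(t)$ defined by $R=1$. Three things must be checked.
\begin{itemize}
\item[(i)] The shape defect: $-R_z-\eta^u(R)\ge0$. Here $\eta^u(s)\le s$ by~\eqref{eta.unif.bound_u} with $\chi<1$, so it suffices that $-R_z/R\ge 1$. This holds because $-R_z/R=1-\frac1{z+a}+\frac{z}{2(t+t_0)}-\dots$, which fails by $\frac1{z+a}$. I therefore need the sharper bound $\eta^u(s)\le s\,(1+1/W_{-1}(-\kappa s))$, and must match it via $\eta^u(R)/R\le 1-\frac{1}{|\log R|+O(\log|\log R|)}$. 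The plan is to pick $a$ and $\beta$ so that $z+a\gtrsim|\log R|$; this comparison with the Lambert function is delicate.
\item[(ii)] The singular part at the kink: the jump $-R_z-\chi\ge0$. This needs $|R_z|\ge\chi$ at $R=1$, which holds since $|R_z|\approx R=1>\chi$.
\item[(iii)] The initial ordering, which uses the Gaussian bound on $u_{\rm in}$.
\end{itemize}
From $\breve u\le\overline u<1$ for $z$ large, the bound~\eqref{bd.claimed} follows.

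\textbf{Nonlocal case.} Here I would concatenate $R$ on the right with a left piece $L$ that handles the linear growth of $P$. Take $L(t,z)=1+\mu\,(\overline z(t)-z)$ with slope $\mu<1$, chosen so that $\limsup P_{\rm in}/|x|<\mu$. This is where the hypothesis $\limsup_{x\to-\infty}P_{\rm in}(x)/|x|<1$ enters.

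Behind the front the equation for $P$ is $P_t-(2-\chi-\tfrac{3}{2t})P_z=P_{zz}+1$. On a linear function $L$, using $A^P(L)=L-1$, the operator gives
\[
\mu\,\dot{\overline z}+\Big(2-\chi-\tfrac{3}{2(t+t_0)}\Big)\mu-1 .
\]
This is not automatically nonnegative, and this is precisely the obstruction discussed after the local pushmi-pullyu proof. For $\chi<1$, however, there is room: take $\mu\in\big(\frac1{2-\chi},1\big)$, so that $(2-\chi)\mu-1>0$ absorbs the $O(1/t)$ and $\dot{\overline z}$ terms.

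The junction is the real difficulty. A supersolution needs a concave kink, meaning the left slope $-\mu$ must be at least $R_z\approx-1$. This holds because $\mu<1$. But $\overline z(t)$ must be defined by $R(t,\overline z)=1$ while also being the point where $L=1$; that is consistent by construction, but the advection flux $A^P$ is continuous, so the kink condition is only the derivative jump, which has the right sign.

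I expect the main obstacle to be making $R$'s slope at the level set $R=1$ stay close to $-1$ uniformly in $t$, while $R$ remains a supersolution near its maximum. This may force defining the junction at a level $\theta<1$ instead, with $L$ covering $[\theta,\infty)$ and the reaction $P-A^P(P)=\min\{P,1\}$ handled piecewise. Finally, I would read off $\overline x(t)$ from $\breve P\le\overline P$ at level $1$.
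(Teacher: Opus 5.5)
Your construction matches the paper's in outline: $R=\beta zEGH$ in the $\tfrac32\log$ frame, a linear left piece of slope $-\mu$ with $\mu\in(\frac1{2-\chi},1)$ and a concave kink for $P$, the plateau $1$ for $u$, and the Lambert-$W$ formula for $\eta^u$ to get $\overline w\ge0$. However, the shift $a$ is an actual error, and the two steps you leave open are where the proof's work is done.

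\textbf{The shift $a$.} Redo the commutator computation with $z+a$ in place of $z$. The extra drift $\frac{1}{t+t_0}\partial_z$ acting on $E$ gives $-\frac{1}{t+t_0}$. The cross term $-\frac{2}{z+a}\frac{G_z}{G}$ now contributes only $\frac{z}{(z+a)(t+t_0)}$, so together with the commutator's zeroth-order part the order-$(t+t_0)^{-1}$ terms leave $\frac{3/2-a}{(z+a)(t+t_0)}$. For $a>\tfrac32$ this is negative. For bounded $z$, in particular near the junction, it dominates every positive term, since those are $O((t+t_0)^{-3/2})$ there. So your displayed inequality is false in that range, and $a$ cannot serve as a tuning parameter in the shape-defect step. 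The shift also buys nothing: the junction is the largest root of $R=1$, near $\log\beta\gg1$. Take $a=0$ as the paper does; the leftover is then $+\frac{3}{2z(t+t_0)}$.

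\textbf{The shape defect.} Asymptotics in $|\log R|$ are too crude, because the only slack is the Gaussian term $\frac{z}{2(t+t_0)}-\frac{2z}{(t+t_0)^{3/2}}$. The working mechanism is exact monotonicity:
\begin{itemize}
\item $W_{-1}$ is decreasing and $W_{-1}(-ze^{-z})=-z$ for $z\ge1$.
\item So $\kappa\beta GH\ge1$ gives $\kappa R\ge ze^{-z}$, hence $W_{-1}(-\kappa R)\ge -z$.
\item Therefore $\eta^u(R)/R\le 1-\frac1z$, which cancels the $-\frac1z$ in $-R_z/R$ exactly.
\end{itemize}
This needs $GH$ bounded below, so you must split at $\zeta(t)\approx\sqrt{2(t+t_0)}$. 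Beyond $\zeta(t)$, use $\eta^u(s)\le s$; there $\frac{z}{2(t+t_0)}$ beats $\frac1z$. Below $\zeta(t)$, one has $G\ge e^{-1/2-o(1)}$ and $H\ge e^{-K/\sqrt{t_0}}$, so $\beta>\kappa^{-1}e^{1/2+o(1)}$ suffices. This threshold stays bounded as $t_0\to\infty$, so it is compatible with choosing $t_0$ after $\beta$.

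\textbf{The slope at the junction.} What you call the main obstacle is not one, and no junction level $\theta<1$ is needed. At $R=1$ one has $R_z=-1+\frac1z+O(\frac{z}{t+t_0})$, so the kink condition and your jump condition $-R_z-\chi\ge0$ both require $1/z$ small there; you use this without saying so. The paper's order of choices handles it:
\begin{itemize}
\item First choose $\beta$ so that the largest root $z_\beta$ of $\beta ze^{-z}=1$ satisfies $1/z_\beta<\varepsilon/8$.
\item Then choose $t_0$ so large that $\log(GH)$ and its $z$- and $t$-derivatives are uniformly small on $[0,\infty)\times[0,2z_\beta]$.
\item The implicit function theorem then gives $\overline z_{\rm super}(t)$ near $z_\beta$, with $|R_z(t,\overline z_{\rm super}(t))+1|<\varepsilon$ and $|\dot{\overline z}_{\rm super}|=|R_t/R_z|$ small, uniformly in $t$.
\end{itemize}
The behaviour of $R$ near its maximum is irrelevant: $R$ is used only to the right of its largest root, and $\cL_{\sfrac32}R\ge0$ on all of $z>0$. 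For $P$, take $s=\max\{\frac1{2-\chi},\limsup_{x\to-\infty}P_{\rm in}(x)/|x|\}$, $\varepsilon=\frac{1-s}2$ and $\mu=\frac{1+s}2$; this gives the concave kink. For $u$, taking $\varepsilon=\frac{1-\chi}2$ gives the jump $-R_z-\chi>\frac{1-\chi}2$.

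Finally, define $\overline u$ as $1$ to the left of the largest root, not as $\min\{1,R\}$, which is negative for $z<-a$.
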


We prove the upper bound $\overline{x}(t)\leq 2t-\frac{3}{2}\log(t)+C$ for $\chi <1$ in both the local and the nonlocal cases via the construction of a supersolution. Our construction is similar to the proof of the upper bound for $\chi=1$ in the local case. We begin with the nonlocal case, as the construction of the supersolution is slightly more technical.  Then, we adapt the analoguous construction for the local model and show that it satisfies the necessary conditions to apply the comparison principle, \Cref{thm.CP_u}. In particular, we show that the shape defect associated with the supersolution is nonnegative.

\begin{proof}[Proof of \Cref{l.upper_pulled} in the nonlocal case]
For $t_0\geq 0$ to be determined, we pass to the moving frame
\be
    \breve P(t,z)
    =
    P\left(t, z + 2t - \frac{3}{2}\log(t+t_0)\right).
\ee
From~\eqref{pde.P}, we find
\be 
\label{e.Pmv32}
    \breve P_t
    = \breve P_{zz}
        +\left( 2 -\frac{3}{2(t+t_0)}\right)\breve P_z -\chi \alpha(\breve P) \breve P_z + \breve P - A(\breve P).
\ee
The proof is based in part on computations that we carried out in the proof for $\chi=1$ in the local case. We recall the functions in $E$, $G$, and $H$ defined in~\eqref{e.pp_supersolution}, and we define a candidate supersolution $R$ on the right by:
\be\label{d.RzEGH}
    R(t,z)
    := \beta z E(z) G(t,z) H(t,z).
\ee
Implicitly in here are positive parameters $\beta, t_0$ and $K$ to be chosen.  

\smallskip
\noindent
{\bf \# Step (1):} Under universal conditions on $t_0$ and $K$, we show that $R$, given by \eqref{d.RzEGH}, is a supersolution when $z>0$ to the parabolic operator
\be
    \cL_{\sfrac32}
    :=
    \partial_t - \partial_{zz} -\left(2-\frac{3}{2(t+t_0)}\right) \partial_z - 1,
\ee
which corresponds to the parabolic operator appearing in \eqref{e.Pmv32} in the region where $\tilde{P}<1$.  Recall the definition~\eqref{def.Lbeta} of $\cL_{\sfrac12}$.

First, observe that
\be
    \left[\cL_{\sfrac32}, z\right]
        = - 2 \partial_z  - \left(2 - \frac{3}{2(t+t_0)}\right)
    \quad\text{ and } \quad
    \mathcal{L}_{\sfrac32}
    =
    \cL_{\sfrac12}
    +\frac{1}{t+t_0}\dz.
\ee
Here the brackets denote the commutator $[A,B] = AB - BA$. We deduce that
\be
\begin{split}
    \frac{\cL_{\sfrac32}R}{R}
    &= \frac{\cL_{\sfrac32}(z EGH)}{zEGH}
    = \frac{z\cL_{\sfrac32} (EGH)}{zEGH}
        + \frac{[\cL_{\sfrac32},z] (EGH)}{zEGH}
    \\&
    = \frac{\cL_{\sfrac32} (EGH)}{EGH}
        - 2 \frac{(EGH)_z}{zEGH}
        - \frac{1}{z}\left(2 - \frac{3}{2(t+t_0)}\right)
    \\&
    = \frac{\cL_{\sfrac12} (EGH)}{EGH}
        + \left(\frac{1}{t+t_0}- \frac{2}{z}\right) \frac{(EGH)_z}{EGH}
        - \frac{1}{z}\left(2 - \frac{3}{2(t+t_0)}\right).
\end{split}
\ee
We have already computed $\cL_{\sfrac12}(EGH)$; see~\eqref{e.LcalFoverF}.  Additionally, via~\eqref{e.EGH}, we have
\be
    \frac{(EGH)_z}{EGH}
    =
    -1
    - \frac{z}{2(t+t_0)}
    + \frac{2z}{(t+t_0)^{\sfrac32}}.
\ee
Again, recalling the computations in~\eqref{e.LcalFoverF} and the choices made in \eqref{bd.K_t0} with $K = 4 + \nicefrac{1}{2}\tilde K$ for $\tilde K>1$ and $t_0>16^2$, we find
\be
\begin{split}
\frac{\cL_{\sfrac32}R}{R}
&\geq
    \frac{z^2}{4(t+t_0)^{\sfrac52}}
    - \frac{z}{4(t+t_0)^2}
    + \frac{\tilde K}{4(t+t_0)^{\sfrac32}}
    +\frac{z}{(t+t_0)^{\sfrac52}}
    \\&\qquad
    +
    \left(\frac{1}{t+t_0}-\frac{2}{z}\right)
    \left(
     -1
     - \frac{z}{2(t+t_0)}
     + \frac{2z}{(t+t_0)^{\sfrac32}}
    \right)
    - \frac{2}{z}
    + \frac{3}{2z (t+t_0)}
\\&
= 
\frac{z^2}{4(t+t_0)^{\sfrac52}}
    - \frac{z}{4(t+t_0)^2}
    + \frac{\tilde K}{4(t+t_0)^{\sfrac32}}
    +\frac{z}{(t+t_0)^{\sfrac52}}
    \\&\qquad
    - \frac{1}{t+t_0}
    - \frac{z}{2(t+t_0)^2}
    + \frac{2z}{(t+t_0)^{\sfrac52}}
    +\frac{2}{z}
    + \frac{1}{t+t_0}
    - \frac{4}{(t+t_0)^{\sfrac32}}
    - \frac{2}{z}
    + \frac{3}{2z (t+t_0)}.
\end{split}
\ee
In the equality, we simply expanded the terms in parentheses.  
Combining like terms yields 
\be
\begin{split}
\frac{\cL_{\sfrac32}R}{R}
&\geq
    \frac{z^2}{4(t+t_0)^{\sfrac52}}
    - \frac{3z}{4(t+t_0)^2}
    + \frac{\tilde K-16}{4(t+t_0)^{\sfrac32}}
    +\frac{3z}{(t+t_0)^{\sfrac52}}
    + \frac{3}{2z (t+t_0)}.
\end{split}
\ee
As long as $\tilde K\geq 16$, the only negative term above is the second one.  Notice that, due to Young's inequality,
\be
    \frac{3z}{4(t+t_0)^2}
    = \frac{1}{4(t+t_0)^{\sfrac32}}
    \left(
    \frac{3z}{(t+t_0)^{\sfrac12}}
    \right)
    \leq
    \frac{1}{4(t+t_0)^{\sfrac32}}\left(
    \frac{z^2}{t+t_0}
        + \frac{9}{4}
    \right).
\ee
Thus, after choosing $\tilde K= 16 + \sfrac9{16}$, we see that
\be
\label{L32Rpos}
    \cL_{\sfrac32}R \geq 0,
\ee
as desired.

\smallskip
\noindent
{\bf \# Step (2):} Define 
\be\label{e.26020701}
    s := \max\left\{\frac{1}{2-\chi},\limsup_{x\to-\infty} \frac{P_{\rm in}(x)}{|x|}\right\} < 1, \quad \text{ and }\quad \varepsilon := \frac{1-s}{2}. 
\ee
We claim that we may choose parameters $\beta$ and $t_0$ in such a way that
$R(t, \cdot )=1$ has at least one solution and its largest solution, which we denote $\overline z_{\rm super}(t)$, is such that for all $t\geq 0$
\be 
\label{bd.zalpha4}
|R_z(t, \overline z_{\rm super}(t))+1|< \e.
\ee

In order to prove \eqref{bd.zalpha4}, define $I_\beta(z)=\beta z e^{-z}$ with $\beta >e$, so that $I_\beta(\cdot)=1$ has two positive roots and denote $z_\beta$ the biggest of root. Let us start by explaining the procedure to obtain \eqref{bd.zalpha4}: on the one hand, we pick $\beta\gg 1$ such that $I'_\beta(z_\beta)\approx -1$. On the other hand, we pick $t_0 \gg 1$ so that $R$ is close to $I_\beta$ and so that $\overline z_{\rm super}(t)\approx z_\beta$. Combining both choices, then leads to \eqref{bd.zalpha4}.

By an easy asymptotic expansion, we have that $z_\beta \sim \ln \beta$ and in particular, $z_\beta\to\infty$ as $\beta \to \infty$. Because of that we may pick $\beta\gg 1$ such that
\be 
\left|\frac{1}{z_\beta}\right|<\nicefrac{\e}{8}.
\ee
Then, through an easy computation, we find that $|I'_\beta(z_\beta)+1|\leq \nicefrac{\e }{8}$. 

Next, we may pick $t_0\gg 1$ depending on $\beta$ such that for every $t\geq0$:
\be 
\label{bd.zalpha1}
\left|\frac{1}{z_\beta}-\frac{z_\beta}{2(t+t_0)}+\frac{2z_\beta}{(t+t_0)^{\frac{3}{2}}}\right|<\nicefrac{\e}{4}.
\ee 
Under the condition that $t_0$ is large enough again, depending on $K$ and $\beta$, $G(t,z)H(t,z)$ is a small perturbation of $1$ uniformly on $[0,+\infty)\times [0,2z_\beta]$, in the sense that 
\be 
\label{e.26020703}
\lim_{t_0\to +\infty}\sup_{(t,z)\in [0,+\infty)\times [0,2z_\beta]}
    \left|\log(G(t,z)H(t,z) ) \right|,
    \left|(\log(G(t,z)H(t,z) ))_z \right|,
    \left|(\log(G(t,z)H(t,z) ))_t \right| 
        =0.
\ee
Using this control and the fact that $I'_\beta(z_\beta)\neq 0$, $R_z(t,z_\beta)$ is bounded away from $0$ uniformly in $t$. The implicit function theorem applied to $R$ yields, for $t_0$ large enough, a unique solution $\overline z_{\rm super}(t)$ near $z_\beta$ satisfying
\be
    \sup_{t\geq 0} |\overline z_{\rm super}(t)- z_\beta |
    \ll 1.
\ee
As a consequence, for $t_0$ large enough (depending again only on $K$ and $\beta$), we have for all $t\geq 0$,
\be 
\label{bd.zalpha2}
\left|\frac{1}{z_\beta}-\frac{1}{\overline z_{\rm super}(t)}\right|,
\left|\frac{z_\beta}{2(t+t_0)}-\frac{\overline z_{\rm super}(t)}{2(t+t_0)}\right|,\left|\frac{2z_\beta}{(t+t_0)^{\frac{3}{2}}}-\frac{2\overline z_{\rm super}(t)}{(t+t_0)^{\frac{3}{2}}}\right|<\nicefrac{\e}{4} .
\ee 
Finally, observe that
\be
 R_z(t,\overline z_{\rm super}(t))=\frac{ R_z(t,\overline z_{\rm super}(t))}{R(t,\overline z_{\rm super}(t))}=\frac{1}{\overline z_{\rm super}(t)}-1-\frac{\overline z_{\rm super}(t)}{2(t+t_0)}+\frac{2\overline z_{\rm super}(t)}{(t+t_0)^{\frac{3}{2}}}.
\label{bd.zalpha3}
\ee 
Passing the $-1$ term on the left, then taking the absolute value and swapping each of the three terms involving $\overline z_{\rm super}(t)$ with the analogous terms involving $z_\beta$ at the expense of $\nicefrac{\e}{4}$, using \eqref{bd.zalpha2}, leads to
\be 
\left|R_z(t,\overline z_{\rm super}(t))+1\right| \leq \nicefrac{3\e}{4}+ \left|\frac{1}{z_\beta}-\frac{z_\beta}{2(t+t_0)}+\frac{2z_\beta}{(t+t_0)^{\frac{3}{2}}}\right|.
\ee
Combined with \eqref{bd.zalpha1}, we have proved the claimed bound \eqref{bd.zalpha4}.

\smallskip
\noindent
{\bf \# Step (3):} We now define $\overline{P}$ as the concatenation of a linear function and $R$, show that $\overline{P}$ is a supersolution, and conclude the claimed upper bound \eqref{bd.claimed}.

Let us define our candidate supersolution on $\R$,
\be
    \overline P(t,z)
    :=
    \begin{cases}
        R(t,z)
            \quad &\text{ if } z \geq \overline z_{\rm super}(t),\\
        -\frac{1+s}{2}(z - \overline z_{\rm super}(t)) + 1
            \quad &\text{ if } z \leq \overline z_{\rm super}(t),
    \end{cases}
\ee
where $s$ is given by \eqref{e.26020701}. Let us immediately observe that by construction $\overline P$ has a concave cusp at $z = \overline z_{\rm super}(t)$. Indeed,
\be 
\label{e.26020702}
 \overline P_z(t,\overline z_{\rm super}(t)^+) =R_z(t,\overline z_{\rm super}(t))< -1+\varepsilon  = -\frac{1+s}{2} =\overline P_z(t,\overline z_{\rm super}(t)^-), 
\ee
where we used \eqref{bd.zalpha4} and the definition of $\e$ from \eqref{e.26020701}. We may also observe that up to translating the initial data,~\eqref{e.26020701} and the Gaussian decay of $P_{\rm in}$ on the right guarantee that $\overline{P}(0,\cdot) \geq P_{\rm in}$. Finally, let us also notice that $\dot{\overline z}_{\rm super}$ can be made as small as we desire, again after increasing $t_0$: indeed $\dot{\overline z}_{\rm super} = - \nicefrac{R_t}{R_z}|_{z=\overline z_{\rm super}}$, but as noted above $R_z|_{z=\overline z_{\rm super}}$ is uniformly bounded away from $0$ and $\nicefrac{R_t}{R}= (\log(GH))_t$ can be controlled through
\eqref{e.26020703} uniformly in $t$.

Now, with these observations, let us show that $\overline P$ is a supersolution. 
First, when $z < \overline z_{\rm super}(t),$
\be
\begin{split}
    \overline P_t + &\chi A^P(\overline P)_z - \left(2 - \frac{3}{2(t+t_0)}\right) \overline P_z- \overline P_{zz} - \overline P + A^P(\overline P)
    \\
    &
    = \frac{1+s}{2} \dot{\overline z}_{\rm super}(t)
        + \left(2 -  \chi - \frac{3}{2(t+t_0)}\right) \frac{1+s}{2} 
        - 1
    \\&
    = 
        \frac{2(1+s) - 1 - \chi(1+s)}{2} 
        +\frac{1+s}{2} \dot{\overline z}_{\rm super}(t) - \frac{3(1+s)}{4(t+t_0)}
\end{split}
\ee
Recalling~\eqref{e.26020701}, we see that the first term is a positive constant.  Hence, after possibly increasing $t_0$, the second two terms can be made small enough to be absorbed into the first.  We deduce that $\overline P$ is  a supersolution when $z<\overline{z}_{\rm super}(t)$. By construction of $R$, $\overline P$ is  also a supersolution when $z>\overline{z}_{\rm super}(t)$. Furthermore, similarly to the proof of the upper bound in the local model for $\chi=1$, we may view 
\be 
\mathcal{P}(\overline{P}):=\overline P_t  + \chi(A^P(\overline P))_z-\left( 2 -\frac{3}{2(t+t_0)}\right)\overline{P}_z
       -\overline P_{zz}
        - \overline P + A^P(\overline P)
\ee   
as a signed Radon measure. Since $\overline P$ is a supersolution for $z\neq\overline{z}_{\rm super}(t)$, the absolutely continuous part $\left[ \mathcal{P}(\overline{P})\right]_\text{ac}$ is nonnegative. Considering its singular part, unlike in the local case $A^P$ is Lipschitz continuous and the only source of a singular part comes from $-\overline{P}_{zz}$.  
Using \eqref{e.26020702}, we have that the singular part of $- P_{zz}$ is nonnegative.  
We deduce that
\be
    \overline P_t + \chi A^P(\overline P)_z - \left(2 - \frac{3}{2(t+t_0)}\right) \overline P_z- \overline P_{zz} - \overline P + A^P(\overline P)
        >0.
\ee
By using the comparison principle \Cref{thm.CP_P}, we conclude that $\breve{P}\leq \overline{P}$ and, thus,
\be
\overline{x}(t)-\left(2t - \frac{3}{2}\log(t+t_0)  \right) \leq  \overline{z}_{\rm super}(t) \leq C.
\ee
The proof is complete.
\end{proof}

\subsubsection{Adaptations for the local case}
\label{sss.upper_pulled_local}
For the local case, the argument applies almost verbatim.  The main difference is the behavior on the left. Let
\be
    \overline{u}(t,z)
    = \left\{
        \begin{array}{ll}
        R(t,z)
            & \text{ if }z\geq  \overline z_{\rm super}(t),  \\
        1
            & \text{ if }z< \overline z_{\rm super}(t),
        \end{array}
        \right.
\ee
where $R$ is given by \eqref{d.RzEGH}.  Observe that $\overline z_{\rm super}(t) > 0$.

We now need to show that the corner at $\overline{z}_{\rm super}(t)$ has the right behavior and that the shape defect function $\overline{w}=-\overline{u}_z-\eta^u(\overline u)$ is nonnegative. The former is a necessity to be a supersolution, while the latter is a technical condition in order for our comparison principle, \Cref{thm.CP_u}, to apply.

Up to changing the definition of $\e$ in \eqref{e.26020701} to
\be 
\e: = \frac{1-\chi}{2},
\ee
we again may choose $\beta$ in the definition in $R$, \eqref{d.RzEGH}, in such a way that for all $t\geq0$,
\be 
\left|R_z(t,\overline z_{\rm super}(t))+1\right| < \e,
\ee
as we did for \eqref{bd.zalpha4}. In particular, we have that
\be 
\label{bd.Rzchi}
R_z(t,\overline z_{\rm super}(t))
    < - \frac{1+\chi}{2}.
\ee
Now, let us proceed as above and view
\be 
\mathcal{P}(\overline{u}):=\overline u_t  + (\chi A^u(\overline u)-\overline u_z)_z-\left( 2 -\frac{3}{2(t+t_0)}\right)\overline{u}_z
        - \overline u + A^u(\overline u)
\ee   
as a signed Radon measure. By construction, the absolutely continuous part $\left[ \mathcal{P}(\overline{u})\right]_\text{ac}$ is nonnegative: indeed, for $z<\overline z_{\rm super}(t)$, $u\equiv 1$ is trivially a supersolution and for $z>\overline z_{\rm super}(t)$, $u\equiv R$ is a supersolution by \eqref{L32Rpos}. This time, similarly again to the proof of the upper bound in the $\chi=1$ case,
 the only source of a singular part comes from $(\chi A^u(\overline u)-\overline u_z)_z$.  To compute this, we require only knowing the jump of $\chi A^u(\overline u) - \overline u_z$ across the interface $z= \bar z_{\rm super}(t)$:
\be 
\begin{split}
\left[ \mathcal{P}(\overline{u})\right]_\text{s}
    &=
    \left[\left(\chi A^u(\overline u)(\bar z_{\rm super}(t)^+)-\overline u_z(\bar z_{\rm super}(t)^+)\right)
    - \left(\chi A^u(\overline u)(\bar z_{\rm super}(t)^-)-\overline u_z(\bar z_{\rm super}(t)^-)\right)\right]_{\overline{z}_{\rm super}(t)} \delta_{\overline{z}_{\rm super}(t)}
    \\&
    =\left(-R_z(t,\overline z_{\rm super}(t))-\chi )\right)\delta_{\overline{z}_{\rm super}(t)}
    > \frac{1-\chi}{2}\delta_{\overline  z_{\rm super}(t)}
    > 0,
\end{split}
\ee
where we used \eqref{bd.Rzchi}.
This shows that $\mathcal{P}(\overline{u})\geq 0$ as a distribution and, thus, that $\overline{u}$ is a supersolution.

It remains now to show that the shape defect function $\overline{w}=-\overline{u}_z-\eta^u(\overline{u})$ associated with the supersolution $\overline{u}$ is nonnegative. From \Cref{l.l.eta_loc}, when $\chi \in [0,1)$, we have the explicit expression of $\eta^u$. For $s\in [0,1)$,
\be 
\label{exp.eta_u}
    \eta^u(s)
    =
    \left( 1+ \frac{1}{W_{-1}(-\kappa s)}\right)s ,
\ee
where $\kappa:= \frac{1}{1-\chi}e^{-\frac{1}{1-\chi}}$ and $W_{-1}$ is the secondary branch of the Lambert $W$ function.

Clearly for $z< \overline  z_{\rm super}(t)$, we have $\overline u(t,z)=1$ and, thus, $\overline{w}(t,z)=0$. 
Hence, we only need to establish the nonnegative of $\bar w$ when $z> \overline  z_{\rm super}(t)$. 
We fix
\be 
\label{d.beta}
\beta > \kappa^{-1}e^{\frac{\sqrt{t_0}}{2(\sqrt{t_0}-4)}+\frac{K}{\sqrt{t_0}}}.
\ee
Recalling that we chose $\beta$ first and then $t_0$ depending on $\beta$, one might worry that this is not possible.  Notice, however, that the expression on the right hand side of~\eqref{d.beta} is bounded as $t_0 \to \infty$.  Hence, there is no issue.

Additionally, for any $t\geq 0$, let
\be 
\label{def.zast}
{\zeta}(t):
=\sqrt{\frac{2(t+t_0)^{3/2}}{\sqrt{t+t_0}-4}}.
\ee
Up to increasing $t_0$, the above is real-valued.

The proof is concluded if we show that, for all $z> \overline z_{\rm super}(t)$,
\be 
    -R_z(t,z)-\eta^u(R(t,z))\geq 0.
\ee
When $z \geq \zeta(t)$, we have
\be\label{e.c011302}
    -R_z-\eta^u(R)\geq R\left(-\frac{R_z}{R}-1 \right) =R(t,z)\left(-\frac{1}{z}+2\left(\frac{1}{4(t+t_0)}- \frac{1}{(t+t_0)^{3/2}}\right) z \right).
\ee
Here we used that $\eta^u(s) \leq s$, which follows from \eqref{eta.unif.bound_u} shown in \Cref{l.l.eta_loc}.  We then see that the right hand side is positive precisely when $z > \zeta(t)$; that is,
\be
    -R_z-\eta^u(R)
    > 0
    \qquad\text{ for all } z> \zeta(t).
\ee
As a result, the proof is complete whenever $\overline z_{\rm super}(t) \geq \zeta(t)$.

We now consider the case when $\overline z_{\rm super}(t) < \zeta(t)$.
Observe that 
\be 
\label{e.inf.beta}
\inf_{t\geq 0, z\in [\overline{z}_{\rm super}(t), {\zeta}(t)]}G(t,z)H(t,z) \geq \frac{1}{\kappa \beta}.
\ee
Indeed, by~\eqref{e.pp_supersolution}, $H(t,z)$ may be lower bounded by $\exp\{-\sfrac{K}{\sqrt{t_0}}\}$ and $G(t,z)$ is decreasing in $z$ (for $z>0$). Thus, for $z\in  [\overline  z_{\rm super}(t), {\zeta}(t)]$, 
\be 
G(t,z)\geq  e^{-\frac{{\zeta}(t)^2}{4(t+t_0)}}\geq e^{-\frac{\sqrt{t+t_0}}{2(\sqrt{t+t_0}-4)}}\geq e^{-\frac{\sqrt{t_0}}{2(\sqrt{t_0}-4)}}.
\ee
We hence deduce~\eqref{e.inf.beta} directly from~\eqref{d.beta}. As a consequence of that, for
$t\geq 0$ and $z\in [\overline z_{\rm super}(t), {\zeta}(t)]$, we have
\be 
\label{bd.kappaR}
\kappa R(t,z) \geq \kappa \beta z E(z)G(t,z)H(t,z) \geq ze^{-z}.
\ee
Next, we observe that
\be 
-R_z-\eta^u(R) =
R\left(1-\frac{1}{z}+2\left(\frac{1}{4(t+t_0)}- \frac{1}{(t+t_0)^{3/2}}\right)z -\left(1+ \frac{1}{W_{-1}(-\kappa R)}\right) \right).
\ee
We recall that $W_{-1}$ is decreasing on $[-e^{-1},0)$.  Combining this with \eqref{bd.kappaR}, we obtain for $z\in [\overline{z}_{\rm super}(t), {\zeta}(t)]$
\be 
W_{-1}(-\kappa R) \geq W_{-1}(-ze^{-z})=-z.
\ee
Carrying out the algebra and using $t_0>16$, we find
\be 
-R_z-\eta^u(R)\geq2zR\left(\frac{1}{4(t+t_0)}- \frac{1}{(t+t_0)^{3/2}}\right)\geq 0.
\ee
In the last inequality, we used that $z >0$ because $\overline z_{\rm super}(t)>0$. This completes the proof that $\overline w \geq 0$.  

Thus we may apply the comparison principle, \Cref{thm.CP_u}. Using the same argument as in Step (5) of the proof of the upper bound in the local case when $\chi=1$, from the ordering $\tilde{u}\leq \overline{u}$, we may deduce that
\be 
\overline{x}(t) \leq 2t - \frac{3}{2}\log(t+t_0)   +C,
\ee
which completes the proof.

\bibliographystyle{plain}
\bibliography{biblio.bib}

\end{document}